\newtheorem{theorem}{Theorem}[section]
\newtheorem{corollary}[theorem]{Corollary}
\newtheorem{lemma}[theorem]{Lemma}
\newtheorem{remark}[theorem]{Remark}
\newtheorem{definition}[theorem]{Definition}
\newtheorem{hypothesis}[theorem]{Hypothesis}
\newtheorem{proposition}[theorem]{Proposition}
\newcommand{\Soc}{\mathrm{Soc}}
\newcommand{\G}{\mathrm{G}}
\newcommand{\Sz}{\mathrm{Sz}}
\newcommand{\Aut}{\mathrm{Aut}}
\newcommand{\Sp}{\mathrm{Sp}}
\newcommand{\PSL}{\mathrm{PSL}}
\newcommand{\A}{\mathrm{A}}
\newcommand{\C}{\mathrm{C}}
\newcommand{\D}{\mathrm{D}}
\newcommand{\Cos}{\mathrm{Cos}}
\newcommand{\Stab}{\mathrm{Stab}}
\newcommand{\Ree}{\mathrm{Ree}}
\newcommand{\PSp}{\mathrm{PSp}}
\newcommand{\R}{\mathrm{R}}
\newcommand{\m}{\mathrm{mult}}
\newcommand{\ICF}{\mathrm{InsolCF}}
\newcommand{\ppd}{\mathrm{ppd}}
\newcommand{\PGO}{\mathrm{PGO}}
\newcommand{\POmega}{\mathrm{P\Omega}}
\newcommand{\Sy}{\mathrm{S}}
\newcommand{\PGL}{\mathrm{PGL}}
\newcommand{\PSU}{\mathrm{PSU}}
\newcommand{\Out}{\mathrm{Out}}
\newcommand{\GU}{\mathrm{GU}}
\newcommand{\F}{\mathrm{F}}
\newcommand{\GL}{\mathrm{GL}}
\newcommand{\SL}{\mathrm{SL}}
\newcommand{\PGammaSp}{\mathrm{P\Gamma Sp}}
\newcommand{\Rad}{\mathrm{Rad}}
\newcommand{\Syl}{\mathrm{Syl}}
\newcommand{\Span}{\mathrm{Span}}
\newcommand{\GO}{\mathrm{GO}}
\newcommand{\PGSp}{\mathrm{PGSp}}
\newcommand{\E}{\mathrm{E}}
\begin{document}
\title{Vertex-primitive \(s\)-arc-transitive digraphs of symplectic groups}


\thanks{$^*$Corresponding author. Email address: Lei.Chen@research.uwa.edu.au}
\thanks{The paper is part of the first author's PhD thesis under the supervision of the second and third authores, and he acknowledges the support of travel grant PO19400093.}

\author{Lei Chen\(^*\)}
\address{Department of Mathematics and Statistics, The University of Western Australia, Perth WA 6009}
\email{Lei.Chen@research.uwa.edu.au}

\author{Michael Giudici}
\address[Michael Giudici]{Department of Mathematics and Statistics, The University of Western Australia, Perth WA 6009}
\email{Michael.Giudici@uwa.edu.au}

\author{Cheryl E. Praeger}
\address{Department of Mathematics and Statistics, The University of Western Australia, Perth WA 6009}
\email{Cheryl.Praeger@uwa.edu.au}
\maketitle
\begin{abstract}
 A digraph is $s$-arc-transitive if its automorphism group is transitive on directed paths with $s$ edges, that is, on $s$-arcs. Although infinite families of finite $s$-arc transitive digraphs of arbitrary valency were constructed by the third author in 1989, existence of a vertex-primitive $2$-arc-transitive digraph was not known until an infinite family was constructed by the second author with Li and Xia in 2017. This led to a conjecture by the second author and Xia in 2018 that, for a finite vertex-primitive $s$-arc-transitive digraph, $s$ is at most $2$, together with their proof that it is sufficient to prove the conjecture for digraphs with an almost simple group of automorphisms. This paper confirms the conjecture for finite symplectic groups.    

\textit{Key word:} vertex-primitive; \(s\)-arc-transitive; digraph; symplectic groups; maximal subgroups
\end{abstract}

\section{Introduction}

 The exploration of finite \(s\)-arc-transitive digraphs dates back to at least 1989 when the third author \cite{Praeger} constructed, for each \(s\) and \(d\), an infinite family of finite vertex-imprimitive \(s\)-arc-transitive digraphs of valency \(d\) that are not \((s+1)\)-arc-transitive. In that paper she asked if there exists a finite vertex-primitive \(s\)-arc-transitive digraph with $s\geqslant2$, and whether there
is an upper bound on \(s\) for such digraphs.  The situation for digraphs is very different from that for graphs, where for a finite \(s\)-arc transitive graph of valency at least 3, the parameter $s$ is at most 8, see  \cite{W}, and vertex-primitive $8$-arc-transitive examples exist.

The first question in \cite{Praeger} was answered positively nearly 30 years later in \cite{infinite} by the second author with Li and Xia,  and  shortly afterwards the second question about determining an upper bound on \(s\) was reduced \cite{quasi} to the case where the graph automorphism group is almost simple, and in that paper it was conjectured that \(s\leqslant2\). Since then this conjecture has been settled for several infinite families of almost simple groups: by the second author with Li and Xia for groups with socle \(\PSL_{n}(q)\) in \cite{linear}, by the authors with Pan, Wu and Yin \cite{alternating, alternating2} for alternating and symmetric groups, and the authors \cite{reeandsuzuki} for the small Ree groups and the Suzuki groups. The aim of this paper is to prove the conjecture for the finite symplectic groups:

\begin{theorem}\label{maintheorem}
    Let \(\Gamma\) be a \(G\)-vertex-primitive \((G,s)\)-arc-transitive digraph such that \(\Soc(G)=\PSp_{2n}(q)'\) with \(n\geqslant 2\). Then \(s\leqslant2\).
\end{theorem}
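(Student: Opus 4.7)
The plan is to prove the contrapositive: assume for contradiction that $\Gamma$ is a $G$-vertex-primitive $(G, s)$-arc-transitive digraph with $s \geqslant 3$ and $\Soc(G) = \PSp_{2n}(q)'$, $n \geqslant 2$. Fix a vertex $v$, an out-neighbour $u$, and a $2$-arc $(v, u, w)$, and set $L := G_v$, $H := G_{vu}$, $K := G_{vuw}$. Vertex-primitivity forces $L$ to be maximal in $G$; arc-transitivity gives $H = L \cap L^g$ for some $g \in G \setminus L$; and $s \geqslant 3$ forces $H$ to act transitively on the out-neighbours of $u$ and $K$ on those of $w$, producing a strongly constrained descending chain $L > H > K$ together with specific conjugacy relations in $G$.

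The main framework is Aschbacher's classification of the maximal subgroups of $G$, which partitions candidates for $L$ into the geometric classes $\mathcal{C}_1, \ldots, \mathcal{C}_8$ together with the class $\mathcal{S}$ of almost simple irreducible subgroups. For each class I would: (i) describe the action of $L$ on the out-neighbourhood of $v$ and identify candidate arc stabilizers $H$; (ii) use the $2$-arc condition $H \leqslant L^g$ with $g \notin L$ to restrict $H$ via subgroup-lattice analysis in $L$; and (iii) use the $3$-arc condition to force further descent until a contradiction is reached. Key numerical tools are divisibility relations on the out-valency $|L : H|$ (strengthened for longer arcs), Liebeck's order bounds on maximal subgroups of classical groups, and Zsigmondy-style primitive prime divisors of $q^i - 1$ appearing in orders of semisimple elements of $L$. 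The template established for $\PSL_n(q)$ in \cite{linear} and for alternating and symmetric groups in \cite{alternating, alternating2} should adapt to this setting, with the extra bookkeeping required by the alternating symplectic form.

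For most classes the argument should be relatively quick: a $(G, s)$-arc-transitive digraph with $s \geqslant 3$ essentially requires $|L|^{s-1}$ to dominate $|G|$, and together with Liebeck's bound this eliminates the classes $\mathcal{C}_4$--$\mathcal{C}_8$ and $\mathcal{S}$ outside small exceptions, which can then be handled directly from the known tables of maximal subgroups. The field-extension class $\mathcal{C}_3$ and subfield class $\mathcal{C}_5$ should yield to Zsigmondy arguments on the natural module, and the imprimitive class $\mathcal{C}_2$ is controlled by showing that $H$ must essentially preserve the block system stabilised by $L$.

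The main obstacle will be the reducible class $\mathcal{C}_1$, and especially the stabilizer of a totally isotropic subspace, which is a maximal parabolic subgroup of $\PSp_{2n}(q)$: here $L$ has a large unipotent radical with Levi complement of type $\GL_k \times \Sp_{2n - 2k}$, so the possible structures for $H$ and the required conjugating elements realising the $2$-arc and $3$-arc are most varied. Eliminating these parabolic cases will demand a careful analysis of how the unipotent radical of $L$ intersects its $G$-conjugates, rather than the crude order bounds available elsewhere; this is the symplectic analogue of the delicate parabolic analysis carried out for $\PSL_n(q)$ in \cite{linear}, and is likely where most of the paper's technical work will lie.
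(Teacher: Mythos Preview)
Your overall Aschbacher-by-cases framework matches the paper, but your assessment of where the difficulty lies is inverted, and the central mechanism you plan to use is not the one that actually carries the argument.

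First, the parabolic $\mathcal{C}_1$ case you flag as the main obstacle is in fact trivial: for the stabiliser of a totally isotropic subspace, every orbital of $G$ is self-paired (this is cited from \cite{small}), so there is no digraph at all. The genuine $\mathcal{C}_1$ work is for the stabiliser of a \emph{non-degenerate} subspace, and the paper handles it by a direct geometric argument constructing two $2$-arcs $W_1\to W_2\to W_3$ and $W_1\to W_2\to W_3'$ with incompatible intersection patterns, so no element of $L$ can map one to the other. There is no unipotent-radical analysis of the kind you anticipate.

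Second, the case you dismiss as easy, $\mathcal{C}_2$, is where the paper invests its heaviest new machinery: a self-contained theory of homogeneous factorisations of $\Sp_{2m}(q)\wr\Sy_k$ (Lemmas~\ref{prerequisites}--\ref{Sp23}), forcing $k=2$ and then analysing the diagonal versus direct structure of the factors. Your plan to show that $H$ ``must essentially preserve the block system'' is not enough; the delicate point is ruling out factorisations where the factors project onto the full base group but sit diagonally.

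Third, the engine driving almost every case in the paper is not a crude inequality of the shape $|G_v|^{s-1}\gtrsim |G|$ with Liebeck's bound, but rather the \emph{homogeneous factorisation} constraint $G_v=G_{uv}G_{vw}$ with $G_{uv}\cong G_{vw}$ (Lemma~\ref{0} and Definition~\ref{hmfac}), combined with the normaliser obstruction of Lemma~\ref{normal} and the prime-by-prime valency bound $|G_v|_p\geqslant \gamma_p^3$ of Lemma~\ref{valency}. These feed into the classification of core-free factorisations of almost simple groups (\cite{LPS,LX,LWX}), which is what actually eliminates $\mathcal{C}_3$--$\mathcal{C}_9$. You also omit the three novelty families $\mathcal{A}_1,\mathcal{A}_2,\mathcal{A}_3$ that arise when $(n,p)=(2,2)$ and $G\nleqslant\PGammaSp_4(q)$; these require separate treatment.
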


The condition \(n\geqslant2\) is imposed since \(\PSp_{2}(q)\cong\PSL_{2}(q)\), and the conjecture has already been proved for this case in \cite{linear}. As we noted above,  the first infinite family of examples of \(G\)-vertex-primitive \((G,2)\)-arc-transitive digraphs was given in \cite{infinite}; these examples have \(G=\PSL_{3}(p^{2})\) and vertex stabiliser \(\A_{6} \), where \(p\geqslant7\) is a prime such that \(p\equiv \pm2\pmod{5}\). In seeking the smallest example of such a digraph, Yin, Feng and Xia \cite{small} identifed the first member of this family, namely with \(p=7\) and \(3,075,815,460 > 3\cdot 10^9\) vertices, as being the smallest vertex-primitive \(2\)-arc-transitive digraph.

\medskip
\noindent
\textit{Summary of concepts and notation above and our proof strategy:}\quad 
A transitive action of a group  \(G\) is primitive if and only if the stabiliser of a point is a maximal subgroup of  \(G\). Also \(G\) is said to be an \emph{almost simple group} if  \(G\) has a unique \emph{minimal normal subgroup} \(T\) and \(T\) is a nonabelian simple group. This implies (identifying $T$ with the group of inner automorphisms of $T$) that \(T\triangleleft G\leqslant \Aut(T)\). 

\medskip

Thus we will investigate all the maximal subgroups of all the almost simple groups $G$ with socle \(\PSp_{2n}(q)'\). Aschbacher \cite{asch} developed a useful tool to classify the maximal subgroups of classical groups: they can be divided  into nine classes \(\mathcal{C}_{1},\ldots,\mathcal{C}_{9}\). The maximal subgroups in classes \(\mathcal{C}_{1}-\mathcal{C}_{8}\) are the so called \emph{maximal subgroups of geometric type}. Each has some geometric interpretation, and the specific subgroup structure can be found in \cite{kleidman}. The maximal subgroups of \(G\) in class \(\mathcal{C}_{9}\) are almost simple with socle \(S\), say, and the preimage of \(S\) in \(\Sp_{2n}(q)\) is absolutely irreducible and not writable over any proper subfield of \(\mathbf{F}_{q}\).
For the special case where $2n=4$ the automorphism group \(\Aut(\PSp_{4}(2^{f}))\nleqslant\PGammaSp_{4}(2^{f})\). Here, in addition to the families \(\mathcal{C}_{1}-\mathcal{C}_{9}\) of maximal subgroups, if \(G\nleqslant \PGammaSp_{4}(2^{f})\) then by \cite[Section 14]{asch} there are three classes of so-called  \emph{novelty maximal subgroups} \(\mathcal{A}_{i}\) for \(i=1,2,3\) to be considered. Note that a maximal subgroup  \(H\) of  \(G\) is a novelty maximal subgroup if \(H\cap \PSp_{4}(2^{f})\) is not maximal in \(\PSp_{4}(2^{f})\). 

\medskip

After giving some necessary preliminary results and discussion in Sections~\ref{s:prelim} and \ref{s:factn}, we give the main analysis for Theorem~\ref{maintheorem} in Section~\ref{sect:mainthm}. We  treat in separate subsections the possibilities that a vertex stabiliser $G_v$ is a maximal subgroup lying in  \(\mathcal{C}_{1},\ldots,\mathcal{C}_{9}\) and in \(\mathcal{A}_{1},\ldots,\mathcal{A}_{3}\). We remark that our treatment in Subsection~\ref{sub:c2} of the case where $G_v$ is a  \(\mathcal{C}_{2}\)-subgroup requires significant new theoretical results about homogeneous factorisations of wreath products which we develop in Subsection~\ref{s:homfact}.   This analysis is very different from the geometrical approach used in    Proposition~\ref{c1nonde} to deal with the \(\mathcal{C}_{1}\)-subgroups.

\medskip
The methods employed in this paper are a significant development of those used to analyse the analogous problem for linear groups with socle \(\PSL_{n}(q)\) in \cite{linear}. The new results or ones similar to them, especially those in Section~\ref{s:homfact} on homogeneous factorisations of wreath products, should find application for other classical groups. In particular, the analysis for unitary and orthogonal groups should follow a similar pattern to that for the symplectic groups \(\PSp_{2n}(q)\) in this paper, although additional new ideas will be needed.  Indeed,  in his PhD thesis  \cite{LChenThesis}, the first author  has thoroughly analysed,  for unitary groups \(\PSU_{m}(q)\), the cases where a vertex stabiliser is a maximal subgroup of type \(\mathcal{C}_{1}\), \(\mathcal{C}_{3}\), or \(\mathcal{C}_{4}\). The authors together with Xia are in the process of completing this analysis for the other kinds of maximal subgroups in forthcoming work. 
The situation for orthogonal groups might be trickier, since there are three different types of orthogonal groups, and many more factorisations.

\medskip

Now we remind the reader about some of the terms used above.  A \emph{digraph} \(\Gamma\) is a pair \((V,\to)\) such that \(V\) is the set of vertices and \(\to\) is an anti-symmetric and irreflexive relation on \(V\); $\Gamma$  is said to be \emph{finite} if \(|V|\) is finite and all the digraphs we consider in this paper will be finite. For a non-negative integer \(s\), we call a sequence \(v_{0}, v_{1},\dots, v_{s}\) in \(V\) an \emph{\(s\)-arc} if \(v_{i}\to v_{i+1}\) for each \(i\in\{0,1,\dots,s-1\}\). Note that a 1-arc is simply called an \emph{arc}. For \(G\leqslant \Aut(\Gamma)\), we say that \(\Gamma\) is a \((G,s)\)-arc-transitive digraph if \(G\) acts transitively on the set of \(s\)-arcs of \(\Gamma\), and that \(\Gamma\) is   \emph{\(G\)-vertex-primitive} if \(G\) acts primitively on \(V\). We note that an \((s+1)\)-arc-transitive digraph is naturally \(s\)-arc-transitive if every \(s\)-arc extends to an \((s+1)\)-arc. In particular, \((s+1)\)-arc-transitivity implies \(s\)-arc-transitivity for a \(G\)-vertex-primitive digraph. 

For comparison, note that a graph \(\Sigma\) is a pair \((V,\sim)\) such that \(V\) is the vertex set and \(\sim\) is a symmetric and irreflexive relation on \(V\). As in the case of digraphs, a sequence \(v_{0}, v_{1},\ldots, v_{s}\) is an \emph{\(s\)-arc} if \(v_{i}\sim v_{i+1}\) for \(0\leqslant i\leqslant s-1\) and \(v_{i}\neq v_{i+2}\) for each \(i\). A graph \(\Sigma\) is said to \emph{\(s\)-arc-transitive} if the automorphism group of \(\Sigma\) acts transitively on the set of \(s\)-arcs.

\section{Preliminaries}\label{s:prelim}

\subsection{Notation}

Throughout the paper, we will use the following notation:

\medskip
\noindent
For a group \(G\), we denote by 
\begin{itemize}
    \item \(\Soc(G)\), \(\R(G)\) and \(G^{(\infty)}\) the product of the minimal normal subgroups of \(G\), the largest soluble normal subgroup of \(G\) and the smallest normal subgroup of \(G\) such that \(G/G^{(\infty)}\) is soluble, respectively;
    \item \(\ICF(G)\) the set of insoluble composition factors of \(G\);
    \item \(\m_{G}(T)\) the multiplicity of a simple group \(T\) as an insoluble composition factor of \(G\);
    \item and for a prime \(p\) and  \(x\in G\), we say that $x$ is \emph{\(p\)-regular} if the order of \(x\) is not divisible by \(p\).
\end{itemize}
For two groups \(A\) and \(B\), we denote by 
\begin{itemize}
    \item \(A\times B\), \(A:B\), \(A.B\)  the direct product of \(A\) and \(B\), the semi-direct product of \(A\) and \(B\), and some unspecified extension of \(A\) by \(B\), respectively; and
    \item if  \(B\) acts on a set \(\Omega\), we let \(\{A_\omega\mid \omega\in \Omega\}\) be a family of isomorphic copies of \(A\) indexed by \(\Omega\) and \(K:=\prod_{\omega\in\Omega}A_\omega\). Then the \emph{wreath product}  \(A\wr B\) of \(A\) by \(B\) is the semidirect product \(K:B\), and \(K\) is the \emph{base group} of \(A\wr B\), where \(B\) acts on \(K\) by \((a_\omega)^{b}=(a_{\omega^{b^{-1}}})\), for \((a_\omega)\in K\) and \(b\in B\). 
\end{itemize}
%
For a finite set \(X\) and a finite integer \(n\), we denote by 
\begin{itemize}
    \item \(\Pi(X)\) and \(\Pi(n)\) the set of prime divisors of \(|X|\) and \(n\), respectively; and
    \item \(n_{p}\) the \(p\)-part of \(n\), where \(p\) is a prime.
\end{itemize}

\subsection{Number theoretic results}
Throughout the paper, we will frequently investigate the prime divisors of the orders of groups, so we also give some number theoretic results that we will use.
\begin{definition}
    For integers \(n\geqslant2\) and \(m\geqslant2\), a prime number \(p\) is called a \emph{primitive prime divisor} of \(n^{m}-1\) if \(p\) divides \(n^{m}-1\) but \(p\nmid (n^{i}-1)\) for any positive integer \(i<m\). For \(n^{m}-1\), we denote by \(\ppd(n,m)\) the set of all primitive prime divisors of \(n^{m}-1\).
\end{definition}
The next corollary is a product of an elegant theorem of Zsigmondy \cite{Zsigmondy}.
\begin{corollary}\label{existppd}
   For \(n, m\geqslant2\), the set \(\ppd(n,m)\neq \varnothing\) except when \((n,m)=(2,6)\), or \(n+1\) is a power of \(2\) and \(m=2\). Moreover, if $r\in\ppd(n,m)$, then $r\geqslant m+1$.
\end{corollary}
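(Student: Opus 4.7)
The plan is to separate the existence claim from the bound, since the former is a deep classical result while the latter is a short consequence of elementary group theory.

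First, for the existence of $r\in\ppd(n,m)$ outside the two listed exceptional families, the plan is simply to invoke Zsigmondy's theorem from \cite{Zsigmondy}, which gives the result together with exactly the stated list of exceptions. I would then briefly verify that the exceptions are genuine by direct inspection: for $(n,m)=(2,6)$ one factors $2^6-1=63=3^2\cdot 7$ and observes that $3\mid 2^2-1$ and $7\mid 2^3-1$, so no prime divisor of $2^6-1$ is primitive; for $m=2$ with $n+1=2^k$ one factors $n^2-1=(n-1)(n+1)=2^k(n-1)$ and notes that every prime divisor either divides $n-1=n^1-1$ or equals $2$, and $2$ also divides $n-1$ since $n$ is odd. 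Thus $\ppd(n,2)=\varnothing$ in this case.

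Second, for the bound $r\geqslant m+1$, the plan is to work in the multiplicative group $(\mathbb{Z}/r\mathbb{Z})^{*}$. Since $r\mid n^m-1$, we have $\gcd(n,r)=1$, so $n$ has a well-defined multiplicative order $d$ modulo $r$. The congruence $n^m\equiv 1\pmod{r}$ forces $d\mid m$, while the primitivity assumption rules out any proper divisor of $m$ (otherwise $r$ would divide some $n^i-1$ with $i<m$), so $d=m$. Applying Lagrange's theorem in $(\mathbb{Z}/r\mathbb{Z})^{*}$ then gives $m=d\mid r-1$, and in particular $r\geqslant m+1$.

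The only potential obstacle is Zsigmondy's theorem itself, which is the deep input; but as it is cited directly from \cite{Zsigmondy}, it may be taken as given. The remainder of the proof is a routine factorisation check for the two exceptional families and a three-line Fermat/Lagrange argument in $(\mathbb{Z}/r\mathbb{Z})^{*}$, so no essential difficulty arises.
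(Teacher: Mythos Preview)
Your proposal is correct and matches the paper's approach: the paper states this corollary without proof, simply attributing it to Zsigmondy's theorem \cite{Zsigmondy}, and your argument supplies precisely the standard details (Zsigmondy for existence, direct factorisation for the exceptions, and the order-of-$n$-modulo-$r$ argument via Lagrange for the bound $r\geqslant m+1$).
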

\begin{lemma}{(Legendre's formula)}\label{sizeppd}\cite[Lemma 2.1]{linear}
 For any positive integer \(n\) and prime \(p\) we have \((n!)_{p}<p^{n/(p-1)}\).
\end{lemma}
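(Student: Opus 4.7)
The plan is to apply Legendre's formula for the $p$-adic valuation of a factorial, namely $v_p(n!) = \sum_{i=1}^{\infty} \lfloor n/p^i \rfloor$, and then majorise the sum by a geometric series. Since $(n!)_p = p^{v_p(n!)}$, the claim reduces to establishing the strict inequality $v_p(n!) < n/(p-1)$.

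First, apply the termwise bound $\lfloor x\rfloor \leqslant x$ to obtain
\[
v_p(n!) \;=\; \sum_{i=1}^{\infty} \lfloor n/p^i \rfloor \;\leqslant\; \sum_{i=1}^{\infty} \frac{n}{p^i} \;=\; \frac{n}{p-1},
\]
the last equality being the standard geometric series sum. The remaining task is to upgrade this to a strict inequality. For this I would observe that the left-hand sum is actually finite: letting $k$ denote the largest integer with $p^k \leqslant n$, every term with $i > k$ vanishes. On the right-hand side the corresponding terms $n/p^i$ are strictly positive (since $n\geqslant 1$), so dropping them yields $v_p(n!) < n/(p-1)$, and exponentiating gives $(n!)_p < p^{n/(p-1)}$.

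The only edge case worth flagging is $n < p$, where $k = 0$ and the Legendre sum is empty: here $v_p(n!) = 0$ and the bound $0 < n/(p-1)$ is immediate from $n\geqslant 1$. There is no substantive obstacle; the argument is essentially a one-line calculation once Legendre's formula is invoked, with the strictness of the inequality being the only point requiring a moment of care.
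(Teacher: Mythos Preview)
Your argument is correct and is the standard derivation from Legendre's formula: bound each floor term by the corresponding real number, sum the geometric series, and obtain strictness from the fact that the Legendre sum truncates while the geometric series does not. The paper itself does not prove this lemma at all; it simply quotes it from \cite[Lemma 2.1]{linear}, so your proof fills in exactly what a reader would supply.
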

\begin{corollary}\label{n-3}
For any positive integer \(n\geqslant5\), we have \(n_{2}< 2^{n-4}\).
\end{corollary}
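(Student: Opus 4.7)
The plan is to reduce the claim to the elementary statement that $2^k\leqslant 2^{n-5}$ whenever $n\geqslant 5$ and $2^k$ is the exact $2$-part of $n$. Write $n_2=2^k$, so the inequality $n_2<2^{n-4}$ is equivalent to $k\leqslant n-5$. The only tool needed is the trivial divisibility bound $n\geqslant 2^k$, together with a comparison of $2^k$ against the linear function $k+5$.

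First I would dispose of the odd case: if $n$ is odd then $k=0$, and the hypothesis $n\geqslant 5$ gives $n-5\geqslant 0 = k$. Next, for the even case I would split on the value of $k\geqslant 1$. For $k\in\{1,2,3\}$ I would simply use the sharpest lower bound on $n$ forced by the $2$-adic condition together with $n\geqslant 5$: when $k=1$ we have $n\geqslant 6$; when $k=2$ the condition $n_2=4$ together with $n\geqslant 5$ forces $n\geqslant 12$; when $k=3$ it forces $n\geqslant 8$. In each of these three subcases $k\leqslant n-5$ is immediate.

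For $k\geqslant 4$ I would invoke $n\geqslant 2^k$ and prove $2^k\geqslant k+5$ by a one-line induction: the base $k=4$ gives $16\geqslant 9$, and the inductive step follows from $2^{k+1}=2\cdot 2^k\geqslant 2(k+5)=(k+1)+5+(k+4)>(k+1)+5$. Hence $n\geqslant 2^k\geqslant k+5$, which is exactly what was needed.

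There is essentially no obstacle here; the entire content of the corollary is the elementary fact that the function $n\mapsto 2^{n-4}$ eventually dominates the $2$-part of $n$, and the threshold $n=5$ is tight precisely because the small values of $k$ already give the required slack once the lower bound $n\geqslant 2^k$ is imposed.
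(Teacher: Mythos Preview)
Your proof is correct. Writing $n_2=2^k$ and reducing to $k\leqslant n-5$, then handling small $k$ by the $2$-adic constraint on $n$ and large $k$ by $n\geqslant 2^k\geqslant k+5$, is a clean elementary argument.

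The paper takes a different route: it checks $5\leqslant n\leqslant 8$ directly, and for $n\geqslant 9$ uses the identity $n_2=(n!)_2/((n-1)!)_2$ together with Lemma~\ref{sizeppd} (Legendre's formula), which gives $(n!)_2<2^n$ and hence $(n!)_2\leqslant 2^{n-1}$, while $((n-1)!)_2\geqslant (8!)_2=2^7$. This yields $n_2\leqslant 2^{n-8}<2^{n-4}$. So the paper's argument leans on the factorial bound already stated earlier, whereas yours is self-contained and uses only the trivial divisibility inequality $n\geqslant n_2$. Your approach is arguably more direct for this particular statement; the paper's has the virtue of reusing an existing lemma rather than introducing a separate case analysis.
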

\begin{proof}
The result is trivially true for \(n\leqslant 8\). Thus it suffices for us to consider the case where \(n\geqslant 9\). We note that \(((n-1)!)_{2}\geqslant (8!)_{2}=2^{7}\) when \(n\geqslant 9\). By Lemma \ref{sizeppd} we obtain that \((n!)_{2}< 2^{n}\) and therefore \((n!)_{2}\leqslant 2^{n-1}\). Thus
\[n_{2}=\frac{(n!)_{2}}{((n-1)!)_{2}}\leqslant \frac{2^{n-1}}{2^{7}}<2^{n-4}
\] and the result follows.
\end{proof}
\subsection{Graph theoretic results}\label{cosgh}
Here we will give some idea of how to construct a digraph. For a group \(G\) and subgroup \(H\) of \(G\), let \(V:=\{Hx\mid x\in G\}\) and \(g\) be an element in \(G\setminus H\) such that \(g^{-1}\notin HgH\). We define a binary relation \(\to\) on \(V\) such that \(Hx\to Hy\) if and only if \(yx^{-1}\in HgH\) for any \(x,y\in G\). Then \((V,\to)\) is a digraph and we denote it by \(\Cos( G, H, g)\). In particular, the action \(R_{H}(G)\) of \(G\) on \(V\) is the right multiplication which preserves the relation \(\to\). Thus \(R_{H}(G)\leqslant \Aut(\Cos(G, H, g))\). By \cite[Lemma 2.4]{quasi}, \(\Cos(G, H, g)\) is \(R_{H}(G)\)-arc-transitive.

Here are some results in \cite{quasi} and \cite{linear} that reveal some important properties for an \(s\)-arc-transitive digraph \(\Gamma\) where \(s\geqslant2\). 
The following results give the conditions for a \(G\)-vertex-primitive digraphs to be \(s\)-arc-transitive, which are cited directly from \cite{linear}.

\begin{lemma}\label{s-1}\cite[Lemma 2.12]{linear}
  Let \(\Gamma\) be a digraph and \(G\leqslant \Aut(\Gamma)\) such that \(G\) acts \(s\)-arc-transitively on \(\Gamma\) with \(s\geqslant2\). Let \(L\) be a non-trivial normal subgroup of \(G\). If \(L\) acts transitively on the vertex set of \(\Gamma\), then \(L\) acts \((s-1)\)-arc-transitively on \(\Gamma\).
\end{lemma}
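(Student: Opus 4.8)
My plan has two parts. The heart of the matter is the following claim: \emph{if $G$ acts $2$-arc-transitively on a digraph $\Gamma$ and $L\triangleleft G$ is transitive on vertices, then $L$ is transitive on arcs.} I would prove this by a colouring argument. Since $G$ is vertex- and arc-transitive, $\Gamma$ is regular of some valency $d$, and we may assume $d\geqslant1$. Let $\mathcal{O}_1,\dots,\mathcal{O}_m$ be the $L$-orbits on the arc set. Because $L\triangleleft G$ and $G$ is arc-transitive, $G$ permutes $\mathcal{O}_1,\dots,\mathcal{O}_m$ transitively, so they all have the same size; together with the vertex-transitivity of $L$ this forces each $\mathcal{O}_i$ to contain exactly $d/m$ arcs leaving, and exactly $d/m$ arcs entering, any given vertex (in particular $m\mid d$). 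Now fix a vertex $v$ and let $A_2(v)$ be the set of $2$-arcs $(a,v,b)$ with middle vertex $v$; it has size $d^2$, and since $G$ is transitive on all $2$-arcs the point stabiliser $G_v$ is transitive on $A_2(v)$. Colour $(a,v,b)\in A_2(v)$ by the pair $(i,j)$ with $(a,v)\in\mathcal{O}_i$ and $(v,b)\in\mathcal{O}_j$; by the count just made, all $m^2$ pairs in $\{1,\dots,m\}^2$ occur. The group $G_v$ permutes $\mathcal{O}_1,\dots,\mathcal{O}_m$ via a homomorphism $\sigma\colon G_v\to\Sym(m)$, and each $g\in G_v$ carries a $2$-arc of colour $(i,j)$ to one of colour $(\sigma(g)(i),\sigma(g)(j))$, i.e.\ $G_v$ acts on colour pairs by the \emph{diagonal} action. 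A surjective $G_v$-equivariant image of a transitive $G_v$-set is transitive, so $\sigma(G_v)$ acts transitively on $\{1,\dots,m\}^2$ diagonally; but the diagonal orbit of $(1,1)$ lies inside $\{(z,z):z\in\{1,\dots,m\}\}$, which is a proper subset once $m\geqslant2$. Hence $m=1$ and $L$ is arc-transitive.

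Granting the claim, the lemma follows by induction on $s$: the statement reads ``$G$ $s$-arc-transitive and $L\triangleleft G$ vertex-transitive imply $L$ is $(s-1)$-arc-transitive'', and for $s=1$ this is the hypothesis (``$0$-arc-transitive'' just means vertex-transitive). For $s\geqslant2$, consider the \emph{arc digraph} $\Gamma^{(1)}$, whose vertices are the arcs of $\Gamma$ and in which $(u,v)\to(v,w)$ whenever $u\to v\to w$ in $\Gamma$. One checks that $\Gamma^{(1)}$ is again a finite digraph, that its $(s-1)$-arcs correspond $G$-equivariantly to the $s$-arcs of $\Gamma$, and that a subgroup of $G$ is vertex-transitive on $\Gamma^{(1)}$ exactly when it is arc-transitive on $\Gamma$. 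Since $G$ is $2$-arc-transitive (as $s\geqslant2$), the claim gives that $L$ is arc-transitive on $\Gamma$, hence vertex-transitive on $\Gamma^{(1)}$; and $\Gamma^{(1)}$ is $(G,s-1)$-arc-transitive. Applying the inductive hypothesis to $\Gamma^{(1)}$ shows $L$ is $(s-2)$-arc-transitive on $\Gamma^{(1)}$, that is, $(s-1)$-arc-transitive on $\Gamma$, as required.

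The main obstacle is the claim, and within it the final step is the key one: it is the only place where $2$-arc-transitivity (rather than plain arc-transitivity) is used, via the fact that $G$ is transitive on $2$-arcs sharing a middle vertex, which makes the induced action on pairs of arc-colours simultaneously transitive and diagonal --- an impossibility unless there is a single colour. Everything else --- constructing $\Gamma^{(1)}$ and the correspondence of its arcs, the orbit count at a vertex, and the passage between $\Gamma$ and $\Gamma^{(1)}$ --- is routine bookkeeping, so I expect the full proof to be brief.
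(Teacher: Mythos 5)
Your proof is correct. The paper does not give its own proof of this lemma --- it is quoted from \cite[Lemma 2.12]{linear} --- so there is no in-paper argument to compare against. Your route is a genuinely different and rather elegant alternative to the usual direct induction on the length of the arc: you isolate the key claim (that $G$ $2$-arc-transitive and $L\trianglelefteq G$ vertex-transitive force $L$ arc-transitive), prove it by colouring the transitive $G_v$-set $A_2(v)$ with pairs of $L$-orbit labels and noting that the resulting surjective $G_v$-equivariant image $\{1,\dots,m\}^2$ would then be a transitive \emph{diagonal} $G_v$-set, impossible for $m\geqslant2$ since the diagonal $\{(z,z)\}$ is a proper invariant subset; and then bootstrap from $s=2$ to general $s$ by replacing $\Gamma$ with the arc digraph $\Gamma^{(1)}$. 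The payoff is that one small algebraic fact (equivariant images of transitive sets are transitive) does all the real work. The cost is the bookkeeping that you correctly flag: checking that $\Gamma^{(1)}$ is irreflexive and anti-symmetric (it is, since $\Gamma$ is), that $G$ acts faithfully on its arcs so that $G\leqslant\Aut(\Gamma^{(1)})$ (true once the out-valency is positive, so no vertex is isolated), and that $i$-arcs of $\Gamma^{(1)}$ correspond $G$-equivariantly to $(i+1)$-arcs of $\Gamma$. Two details worth stating explicitly when you write this up: the deduction that $G$ is vertex- and arc-transitive from $2$-arc-transitivity uses that $\Gamma$ is $L$-vertex-transitive (hence $G$-vertex-transitive) and has positive out-valency, so every $i$-arc extends to an $(i+1)$-arc; and in the claim, surjectivity of the colouring map onto all $m^2$ pairs rests on each $L$-orbit of arcs having at least one representative entering and one leaving every vertex, which is exactly where the vertex-transitivity of $L$ enters.
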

\begin{lemma}{\rm \cite[Lemma 2.14]{linear}\label{normal}}
Let \(\Gamma\) be a connected \(G\)-arc-transitive digraph with arc \(v\rightarrow w\). Let \(g\in G\) such that \(v^{g}=w\). Then \(g\) normalises no proper nontrivial normal subgroup of \(G_{v}\).
\end{lemma}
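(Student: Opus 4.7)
The plan is by contradiction. Suppose $N$ is a proper nontrivial normal subgroup of $G_v$ that is normalised by $g$. Set $H:=\langle G_v, g\rangle$. Since both $G_v$ and $g$ normalise $N$, we have $N\triangleleft H$. The goal is to prove that $H$ is transitive on $V$; once this is established, the fact that $N\leqslant G_v$, combined with normality of $N$ in a vertex-transitive subgroup, will force $N$ to fix every vertex pointwise, contradicting faithfulness of the action of $G\leqslant\Aut(\Gamma)$ on $V$.

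For vertex-transitivity of $H$: by $G$-arc-transitivity, $G_v$ acts transitively on both the out-neighbours and the in-neighbours of $v$. Since $g\in H$ gives $w=v^g\in v^H$, and $G_v\leqslant H$, every out-neighbour of $v$ belongs to $v^H$. For an arbitrary in-neighbour $u$ of $v$, arc-transitivity supplies $z\in G$ with $(v,w)^z=(u,v)$; then $z^{-1}$ sends $v$ to $w$, which forces $z^{-1}\in G_v g$ and hence $z\in g^{-1}G_v\subseteq H$, so $u=v^z\in v^H$. The same argument applied at each vertex $v^h\in v^H$, with $g^h$ and $G_v^h\leqslant H$ playing the roles of $g$ and $G_v$, shows that $v^H$ is closed under taking in- and out-neighbours. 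Since $\Gamma$ is connected, $v^H=V$.

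To conclude, for each $h\in H$ the element $h$ normalises $N$, so $hNh^{-1}=N\leqslant G_v$, which means $N$ fixes $v^h$. Thus $N$ fixes every vertex of $v^H=V$ pointwise, so $N$ lies in the kernel of $G$'s action on $V$; since this kernel is trivial, $N=1$, the desired contradiction. The main obstacle is the in-neighbour step: one must verify that the element $z\in G$ carrying $v$ to an in-neighbour actually lies in $\langle G_v,g\rangle$, and this reduces to the coset identity $z^{-1}\in G_v g$ obtained from the fact that both $z^{-1}$ and $g$ send $v$ to $w$. Once this is in place, the remainder of the argument is essentially formal.
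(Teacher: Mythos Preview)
Your argument is correct. The paper does not give its own proof of this lemma; it is quoted verbatim from \cite[Lemma 2.14]{linear} and used as a black box throughout. Your proof is the standard one for this type of statement: show that $H=\langle G_v,g\rangle$ is vertex-transitive by checking that $v^H$ is closed under in- and out-neighbours, then use normality of $N$ in $H$ together with $N\leqslant G_v$ to force $N$ into the kernel of the action.

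One small simplification you might note for the in-neighbour step: rather than producing $z$ via arc-transitivity and then arguing that $z\in g^{-1}G_v$, you can observe directly that $g^{-1}\in H$ sends $v$ to an in-neighbour of $v$ (namely $v^{g^{-1}}$, since applying $g^{-1}$ to the arc $v\to w$ gives the arc $v^{g^{-1}}\to v$), and then $G_v\leqslant H$ carries this to every in-neighbour. Your version via the coset identity $z^{-1}\in G_vg$ is equally valid, just slightly longer.
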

\begin{lemma}\label{0}\cite[Lemma 2.11]{linear}
Let \(\Gamma\) be a digraph, and \(v_{0}\rightarrow v_{1}\rightarrow\ldots\rightarrow v_{s-1}\rightarrow v_{s}\) be a \(s\)-arc of \(\Gamma\) with \(s\geqslant2\). Suppose that \(G\) acts arc-transitively on \(\Gamma\). Then \(G\) acts \(s\)-arc-transitively on \(\Gamma\) if and only if \(G_{v_{1}\ldots v_{i}}=G_{v_{0}v_{1}\ldots v_{i}}G_{v_{1}\ldots v_{i}v_{i+1}}\) for each \(i\in\{1,\ldots,s-1\}\). 

\end{lemma}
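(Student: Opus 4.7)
The plan is to proceed by induction on $s$, reducing the lemma to a single-step equivalence: given that $G$ is already $i$-arc-transitive, $G$ is $(i+1)$-arc-transitive if and only if $G_{v_1\ldots v_i} = G_{v_0 v_1 \ldots v_i}\, G_{v_1 \ldots v_i v_{i+1}}$. Applied at $i=1,\ldots,s-1$ this gives the lemma, since each factorisation records precisely the extra transitivity needed to pass from $i$-arcs to $(i+1)$-arcs.

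For the key single-step equivalence I would first note that since $G$ acts arc-transitively on $\Gamma$ it is vertex-transitive, every vertex has positive in- and out-degree, and every $i$-arc extends both forwards and backwards. Together with $i$-arc-transitivity this gives that $G_{v_1\ldots v_i}$ acts transitively on the set of in-neighbours of $v_1$ (map any $i$-arc $(w,v_1,\ldots,v_i)$ to $(v_0,v_1,\ldots,v_i)$) and on the out-neighbours of $v_i$. Reducing the middle $(i-1)$-arc of an arbitrary $(i+1)$-arc to $(v_1,\ldots,v_i)$ via $(i-1)$-arc-transitivity (which follows from $i$-arc-transitivity in this setting), $(i+1)$-arc-transitivity becomes equivalent to $G_{v_1\ldots v_i}$ acting transitively on the Cartesian product of the in-neighbourhood of $v_1$ with the out-neighbourhood of $v_i$; and by the standard criterion that a group transitive on each factor is transitive on the product iff a point-stabiliser in one factor is transitive on the other, this is in turn equivalent to $G_{v_0 v_1\ldots v_i}$ acting transitively on the out-neighbours of $v_i$.

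To translate this transitivity into the claimed factorisation I would compare orbit sizes via orbit--stabiliser: transitivity of $G_{v_0\ldots v_i}$ on the $G_{v_1\ldots v_i}$-orbit of $v_{i+1}$ is equivalent to
\[
|G_{v_0\ldots v_i}|/|G_{v_0\ldots v_{i+1}}| \;=\; |G_{v_1\ldots v_i}|/|G_{v_1\ldots v_{i+1}}|,
\]
i.e.\ to $|G_{v_1\ldots v_i}|\cdot|G_{v_0\ldots v_{i+1}}| = |G_{v_0\ldots v_i}|\cdot|G_{v_1\ldots v_{i+1}}|$. Using $G_{v_0\ldots v_i}\cap G_{v_1\ldots v_{i+1}} = G_{v_0\ldots v_{i+1}}$, this rearranges to $|G_{v_0\ldots v_i}\cdot G_{v_1\ldots v_{i+1}}| = |G_{v_1\ldots v_i}|$, which, because the product is automatically contained in the subgroup $G_{v_1\ldots v_i}$, is exactly the factorisation $G_{v_1\ldots v_i} = G_{v_0\ldots v_i}\, G_{v_1\ldots v_{i+1}}$. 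The induction on $s$ is then routine: the forward direction uses that $(s+1)$-arc-transitivity implies $s$-arc-transitivity (every $s$-arc extends to an $(s+1)$-arc in the vertex-transitive connected setting), and the reverse direction feeds the factorisations into the key claim one level at a time. The main subtlety is the bookkeeping between the three equivalent conditions (transitivity on the product, transitivity of a point stabiliser on one factor, and the subgroup factorisation); the arithmetic identifying the last two is the step that most benefits from being written out carefully.
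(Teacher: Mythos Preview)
The paper does not prove this lemma itself; it is quoted directly from \cite[Lemma~2.11]{linear}. Your argument is correct and is the standard one: reduce to the single-step claim that, given $i$-arc-transitivity, $(i{+}1)$-arc-transitivity is equivalent to the factorisation $G_{v_1\ldots v_i}=G_{v_0\ldots v_i}G_{v_1\ldots v_{i+1}}$, and then induct.

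Two minor remarks. First, you invoke vertex-transitivity and connectivity, but neither is assumed nor needed: arc-transitivity together with the existence of a single $s$-arc (with $s\geqslant 2$) already forces every arc to extend both forward and backward, which is all you use. Second, the detour through the Cartesian product of in- and out-neighbourhoods is unnecessary; the shorter route is to observe directly that, given $i$-arc-transitivity, $(i{+}1)$-arc-transitivity is equivalent to $G_{v_0\ldots v_i}$ acting transitively on the out-neighbours of $v_i$, and then apply the Frattini-type criterion (for $H\leqslant K$ with $K$ transitive on $\Omega$, $H$ is transitive iff $K=HK_\omega$) with $K=G_{v_1\ldots v_i}$, $H=G_{v_0\ldots v_i}$, and $\omega=v_{i+1}$. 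Your orbit--stabiliser computation is exactly this criterion written out in terms of orders.
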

Note that the factorisation of \(G_{v_{1}\ldots v_{i}}\) occurring in Lemma \ref{0} satisfies the property that the two factors are isomorphic to each other. 
\begin{definition}\label{hmfac}
    For a group \(G\) and proper subgroups \(H, K\leqslant G\), we say \(G=HK\) is a \emph{homogeneous factorisation} if \(H\cong K\).
\end{definition}
We now set out the following hypothesis that we will use throughout this section.

\begin{hypothesis}\label{ga}
\rm{Let \(\Gamma\) be a vertex-primitive \((G,s)\)-arc-transitive digraph for some \(s\geqslant 2\), and let \(u\to v\to w\) be a \(2\)-arc and \(g\in G\) such that \((u,v)^{g}=(v,w)\). Then by Lemma \(\ref{0}\) we have \((G_{uv})^{g}=G_{vw}\) and \(G_{v}=G_{uv}G_{vw}\) is a homogeneous factorisation.}

\end{hypothesis}

Note that a necessary condition for a digraph \(\Gamma\) to be \(G\)-vertex-primitive and \((G,2)\)-arc-transitive is that \(G_{v}\) is a maximal subgroup of $G$ and  \(G_{v}\) admits a homogeneous factorisation. Therefore, to disprove the \(2\)-arc-transitivity of \(G\) it suffices for us to show that \(G_{v}\) does not have such a homogeneous factorisation.

We have the following elementary results.

\begin{lemma}\label{coprime ab}
Suppose that \(\Gamma\) is a \((G,s)\)-arc-transitive digraph with arc \(u\to v\). Let \(g\in G\) and \((u,v)^{g}=(v,w)\). Suppose that \(G_{v}=T:K\), where \(T\cong C_{a}\) and \(|K|=b\) such that \(\gcd(a,b)=1\). Then \(s\leqslant 1\).
\end{lemma}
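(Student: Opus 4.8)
The plan is to argue by contradiction, so suppose $s\geqslant 2$. The first step is to apply Lemma~\ref{0} to the $2$-arc $u\to v\to w$, which gives $G_v=G_{uv}G_{vw}$; and since $g$ conjugates $G_u$ to $G_v$ and $G_v$ to $G_w$, it also gives $(G_{uv})^g=G_{vw}$, so that $|G_{uv}|=|G_{vw}|$ and $|G_{uv}|\cdot|G_{vw}|=|G_v|\cdot|G_{uv}\cap G_{vw}|$. Set $\pi:=\Pi(a)$. Since $\gcd(a,b)=1$ and $|G_v|=ab$, the subgroup $T$ is a normal Hall $\pi$-subgroup of $G_v$; hence $T$ is the unique Hall $\pi$-subgroup of $G_v$ and every $\pi$-subgroup of $G_v$ is contained in $T$.

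The crucial step will be to show that $T\leqslant G_{uv}\cap G_{vw}$. For any subgroup $H\leqslant G_v$, the intersection $T\cap H$ is a normal $\pi$-subgroup of $H$ whose quotient $H/(T\cap H)\cong HT/T$ embeds in $G_v/T\cong K$ and so is a $\pi'$-group; hence $|T\cap H|$ is the $\pi$-part of $|H|$. I would apply this with $H=G_{uv}$, with $H=G_{vw}$, and with $H=G_{uv}\cap G_{vw}$: since $|G_{uv}|=|G_{vw}|$ and $T$ is \emph{cyclic} (so that a subgroup of $T$ is determined by its order), the three subgroups $T\cap G_{uv}$, $T\cap G_{vw}$ and $T\cap G_{uv}\cap G_{vw}$ all coincide, say with $T_0$. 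Comparing $\pi$-parts in $|G_{uv}|\cdot|G_{vw}|=|G_v|\cdot|G_{uv}\cap G_{vw}|$, where the $\pi$-part of $|G_v|=ab$ is $a$ because $b$ is a $\pi'$-number, then gives $|T_0|^2=a\,|T_0|$; hence $|T_0|=a$ and $T=T_0\leqslant G_{uv}\cap G_{vw}$.

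The contradiction now follows quickly. Since $T\leqslant G_{uv}$ and $(G_{uv})^g=G_{vw}\leqslant G_v$, the conjugate $T^g$ is a $\pi$-subgroup of $G_v$ of order $a=|T|$, so $T^g=T$; thus $g$ normalises $T$. As $a\geqslant2$, $T$ is a nontrivial normal subgroup of $G_v$, so Lemma~\ref{normal} applied to the arc $v\to w$ (using that $\Gamma$ is connected) forces $T=G_v$, i.e.\ $G_v$ is cyclic. But then $G_{uv}$ and $G_{vw}$, being subgroups of the cyclic group $G_v$ of equal order with $G_{uv}G_{vw}=G_v$, must both equal $G_v$; hence $G_u=G_v$, and propagating this equality along arcs and using connectedness we get $G_v=1$, contradicting $a\geqslant2$. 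Therefore $s\leqslant1$. I expect the crucial step to be the main obstacle: it is essential both that $s\geqslant2$ provides the \emph{exact} factorisation $G_v=G_{uv}G_{vw}$ and that $T$ is cyclic, since otherwise equal orders need not force equal subgroups and the $\pi$-part count yields only the inequality $|T_0|^2\leqslant a\,|T_0|$, which is useless.
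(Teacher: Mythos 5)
Your proposal is correct, and the overall skeleton matches the paper's: invoke Lemma~\ref{0} to get the homogeneous factorisation $G_v=G_{uv}G_{vw}$ with $G_{uv}^g=G_{vw}$, produce a nontrivial normal subgroup of $G_v$ that lies in $G_{uv}\cap G_{vw}$ and is $g$-invariant, and finish with Lemma~\ref{normal}. The difference is in the middle step. The paper takes a single prime $p\mid a$, observes that the unique subgroup $P$ of order $p$ lies in $T$ and is therefore the unique subgroup of $G_v$ of that order, and then uses only $\Pi(G_{uv})=\Pi(G_{vw})=\Pi(G_v)$ (which already follows from the factorisation) to place $P$ inside $G_{uv}\cap G_{vw}$, giving $P^g=P$ at once. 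You instead establish the stronger fact that all of $T$ lies in $G_{uv}\cap G_{vw}$, by feeding the exact order identity $|G_{uv}|\cdot|G_{vw}|=|G_v|\cdot|G_{uv}\cap G_{vw}|$ into the observation that $T\cap H$ is a normal Hall $\pi$-subgroup of every $H\leqslant G_v$ and that cyclicity of $T$ makes such subgroups unique for each order. Both arguments are valid. Your sharper intermediate claim has the small cost of producing a subgroup ($T$ itself) that could in principle equal $G_v$, which is why you need the final paragraph to dispose of the case $T=G_v$; the paper's prime-order subgroup $P$ sidesteps that case except in the most degenerate situations, which is why its write-up is a little shorter. If you want to streamline, you could adopt the paper's lighter intermediate step, but as written your proof is complete and correct.
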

\begin{proof}
Suppose for a contradiction that \(s\geqslant2\). Then we deduce by Lemma \ref{0} that \(G_{v}=G_{uv}G_{vw}\) with \(G_{uv}^{g}=G_{vw}\). Let us denote by \(\pi\) the natural projection from \(G_{v}\) to \(G_{v}/T\). Then for any subgroup \(H\leqslant G_{v}\) we have that \(|H|=|H\cap T||\pi(H)|\). Let \(p\) be a prime divisor of \(a\) and \(P\) be a subgroup of order \(p\). Then we have that \(|P|=|P\cap T||\pi(P)|\). Since \(\pi(P)\leqslant K\) and \(|K|=b\) is not divisible by \(p\), we have that \(P\leqslant T\). Moreover, let \(Q\) be another subgroup of order \(p\) in \(G_{v}\), then we have that \(Q\leqslant T\). Since \(T\) is cyclic, we deduce that \(P=Q\) and \(P\) is the unique subgroup of order \(p\) in \(G_{v}\). Note that \(\Pi(G_{v})=\Pi(G_{uv})=\Pi(G_{vw})\). Thus \(p\in\Pi(G_{uv})\cap\Pi(G_{vw})\) and this gives that \(P\leqslant G_{uv}\cap G_{vw}\). Since \(P\) is the unique subgroup of order \(p\) in \(G_{v}\), we have that \(P^{g}=P\). However, this contradicts Lemma \ref{normal}.
\end{proof}
We will investigate the conditions of \(3\)-arc-transitivity for different groups under Hypothesis \ref{ga} in the following lemmas.
\begin{lemma}\label{valency}
  Suppose that Hypothesis \ref{ga}  holds and \(s\geqslant3\). Then, for each prime \(p\) dividing \(|G_{v}|\), we have \(|G_{v}|_{p}\geqslant \gamma_{p}^{3}\) where \(\gamma\) is the valency of the digraph.
\end{lemma}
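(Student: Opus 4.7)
The plan is to show that $|G_v|$ factors as $\gamma^{3}$ times the order of a 3-arc stabiliser; the conclusion about $p$-parts then follows at once. To carry this out, I fix a 3-arc $v_{0}\to v_{1}\to v_{2}\to v_{3}$ with $v=v_{1}$ and compute $|G_{v_{0}v_{1}v_{2}v_{3}}|$ by applying the factorisation principle of Lemma \ref{0} (together with Hypothesis \ref{ga}) twice.

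First, Hypothesis \ref{ga} applied to the 2-arc $v_{0}\to v_{1}\to v_{2}$ gives the homogeneous factorisation $G_{v_{1}}=G_{v_{0}v_{1}}G_{v_{1}v_{2}}$ with $G_{v_{0}v_{1}}^{g}=G_{v_{1}v_{2}}$ for some $g\in G$. Since the valency equals $\gamma=|G_{v_{1}}:G_{v_{0}v_{1}}|=|G_{v_{1}}:G_{v_{1}v_{2}}|$, both factors have order $|G_{v_{1}}|/\gamma$, and the order identity $|HK|=|H||K|/|H\cap K|$ yields
\[
|G_{v_{0}v_{1}v_{2}}|=|G_{v_{0}v_{1}}\cap G_{v_{1}v_{2}}|=\frac{|G_{v_{0}v_{1}}||G_{v_{1}v_{2}}|}{|G_{v_{1}}|}=\frac{|G_{v_{1}}|}{\gamma^{2}}.
\]
Second, because $s\geqslant3$, Lemma \ref{0} supplies the further factorisation $G_{v_{1}v_{2}}=G_{v_{0}v_{1}v_{2}}G_{v_{1}v_{2}v_{3}}$. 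The factors are isomorphic: transitivity on 3-arcs produces $g'\in G$ with $(v_{0},v_{1},v_{2})^{g'}=(v_{1},v_{2},v_{3})$, so $G_{v_{0}v_{1}v_{2}}^{g'}=G_{v_{1}v_{2}v_{3}}$, and in particular they have equal order. Repeating the previous order identity gives
\[
|G_{v_{0}v_{1}v_{2}v_{3}}|=\frac{|G_{v_{0}v_{1}v_{2}}|\cdot|G_{v_{1}v_{2}v_{3}}|}{|G_{v_{1}v_{2}}|}=\frac{(|G_{v_{1}}|/\gamma^{2})^{2}}{|G_{v_{1}}|/\gamma}=\frac{|G_{v_{1}}|}{\gamma^{3}}.
\]

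Rearranging, $|G_{v}|=\gamma^{3}\cdot|G_{v_{0}v_{1}v_{2}v_{3}}|$, and taking $p$-parts for any prime $p$ dividing $|G_{v}|$ yields $|G_{v}|_{p}=\gamma_{p}^{3}\cdot|G_{v_{0}v_{1}v_{2}v_{3}}|_{p}\geqslant\gamma_{p}^{3}$, as required. There is no serious obstacle here; the argument is pure bookkeeping for the two successive homogeneous factorisations. The only point requiring attention is the justification that the second factorisation really is homogeneous, which is where the hypothesis $s\geqslant3$ is used, via the transitivity of $G$ on 3-arcs to supply the conjugating element $g'$.
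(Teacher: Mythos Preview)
Your proof is correct, but it takes a different and more elaborate route than the paper. The paper's argument is a one-line application of the orbit--stabiliser theorem: since $G$ is $3$-arc-transitive, $G_{v}$ acts transitively on the set $\Omega$ of $3$-arcs starting at $v$, and $|\Omega|=\gamma^{3}$, so $\gamma^{3}$ divides $|G_{v}|$. You instead reach the same divisibility $|G_{v}|=\gamma^{3}\cdot|G_{v_{0}v_{1}v_{2}v_{3}}|$ by iterating the factorisation of Lemma~\ref{0} twice and tracking orders. Both arguments are really computing the index of a $3$-arc stabiliser in $G_{v}$; the paper does it in one step by counting, while you unwind it through the successive homogeneous factorisations. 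Your approach has the minor advantage of making explicit how the bound degrades step by step (which could be useful if one wanted sharper information about intermediate stabilisers), but the paper's direct orbit count is cleaner and avoids invoking Lemma~\ref{0} or the conjugating element $g'$ at all. One small remark: the element $g'$ with $(v_{0},v_{1},v_{2})^{g'}=(v_{1},v_{2},v_{3})$ already exists by $2$-arc-transitivity, so you do not need the full strength of $3$-arc-transitivity for that particular step (though of course $s\geqslant3$ is needed for the factorisation $G_{v_{1}v_{2}}=G_{v_{0}v_{1}v_{2}}G_{v_{1}v_{2}v_{3}}$ itself).
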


\begin{proof}
 Let \(\Omega\) be the set of \(3\)-arcs starting with \(v\) in \(\Gamma\). Then \(|\Omega|=\gamma^{3}\) and so \(|\Omega|_{p}^{3}=\gamma_{p}^{3}\). Since \(G\) acts \(3\)-arc-transitively on \(\Gamma\) we have \(G_{v}\) acts transitively on \(\Omega\) and therefore \(|G_{v}|_{p}\geqslant \gamma_{p}^{3}\).
\end{proof}

\subsection{Computational methods}\label{r:magma}
We used the computer system MAGMA \cite{magma} to check small groups to investigate if they admit homogeneous factorisations with certain properties, and if so find all of them. For a group \(G\), a necessary condition for \(G\) to admit a homogeneous factorisation \(G=HK\) is that for any prime \(p\in \Pi(G)\), we have \(|H|_{p}=|K|_{p}\geqslant |G|_{p}^{1/2}\). We use the MAGMA command \textbf{LowIndexSubgroups} to generate the set `\textbf{lis}' of all pairs $(H,K)$ of subgroups $H,K$ of \(G\), up to conjugacy, with this property for all $p\in\Pi(G)$. (Note that \textbf{LowIndexSubgroups} generates the representatives of conjugacy classes.) 
We examine the orders of these subgroups and create a sub-list `\textbf{lis2}' consisting of pairs \((H,K)\) from `\textbf{lis}' such that \(|H|=|K|\). To decide, for a pair $(H,K)$ in `\textbf{lis2}', whether \(G=HK\) it suffices to check if \(|G|\cdot|H\cap K|=|H|\cdot|K|\),  and we note that this equality is independent of the choice of $H,K$ in their conjugacy classes. For these groups $G$, it was possible to  use the MAGMA command \textbf{meet} to compute the intersection $H\cap K$ and to check whether this equality held by typing the command 
\begin{center}
\begin{verbatim}# G eq (#a[1])*(#a[2])/#(a[1] meet a[2]) \end{verbatim}    
\end{center}
where, for `\textbf{a}' in `\textbf{lis2}',  `\textbf{a[1]}' and `\textbf{a[2]}' stand for \(H\) and \(K\), respectively. Thus a further sub-list `\textbf{lis3}' is generated consisting of all pairs $(H,K)$ in `\textbf{lis2}' for which $G=HK$.  Finally, for pairs $(H,K)$ in `\textbf{lis3}', we check if \(H\) and \(K\) are isomorphic via the MAGMA command \textbf{IsIsomorphic(a[1],a[2])}, producing a final sub-list `\textbf{lis4}' of pairs $(H,K)$ corresponding to homogeneous factorisations of $G$.
Here is an example of how we check the homogeneous factorisations of \(G_{v}=\Sp_{2}(3)\wr\Sy_{4}\) in Lemma \ref{sp23c2}.
\begin{verbatim}
> G:=WreathProduct(Sp(2,3),Sym(4));
> lis:=LowIndexSubgroups(G,1152);
> lis2:=[<x,y>:x,y in lis|x ne y and #x eq #y];    
> lis3:=[a:a in lis2| #H eq (#a[1])*(#a[2])/#(a[1] meet a[2])];
> #lis4:=[a: a in lis3|IsIsomorphic(a[1],a[2])];
\end{verbatim}    
With similar code, we  check for homogeneous factorisations of \(G_{v}=\Sp_{2}(3)\wr\Sy_{4}\) in Lemma \ref{sp23c2}, and for \(G_{v}=(\PSp_{2}(3)\times\PGO_{4}^{-}(3)).2\) and \((\PSp_{2}(3)\times\PGO_{4}^{-}(3)).[4]\) in Lemma \ref{l:c4-1}. In each of these cases the resulting `\textbf{lis4}' was the empty list, showing that none of these groups have homogeneous factorisations.


\section{Group factorisations}\label{s:factn}

In this chapter, we will develop some results on various group factorisations, which will play an important role in Sections 4 and 5.

\subsection{Basic facts about group factorisations}

The following lemma investigates necessary conditions for a group to admit a factorisation.

\begin{lemma}\label{HK}
Let \(G\) be a finite group and \(T\) be a proper non-trivial subgroup of \(G\) such that $T$ is not normal in $G$. Suppose that \(g\in G\), and that \(H,K\leqslant G\) are such that \(H\leqslant N_{G}(T)\) and \(K\leqslant N_{G}(T^{g})\). Then \(HK\neq G\).
\end{lemma}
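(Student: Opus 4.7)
The plan is to argue by contradiction, assuming $HK=G$ and deriving that $T$ must be normal in $G$, contrary to the hypothesis. The key observation is the standard identity $N_G(T^g)=g^{-1}N_G(T)g$, which lets us replace $K$ by its conjugate $K':=gKg^{-1}\leqslant N_G(T)$ and thereby express the purported factorisation as an identity within $N_G(T)$-translates of a single element.

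First I would write $K=g^{-1}K'g$ with $K'\leqslant N_G(T)$, and substitute into $HK=G$ to obtain $Hg^{-1}K'=Gg^{-1}=G$. Since $H\leqslant N_G(T)$ and $K'\leqslant N_G(T)$, the left-hand side is contained in the single $(N_G(T),N_G(T))$-double coset $N_G(T)\,g^{-1}\,N_G(T)$, so this double coset must equal all of $G$.

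Next I would exploit the fact that $(N_G(T),N_G(T))$-double cosets partition $G$. If $N_G(T)\,g^{-1}\,N_G(T)=G$, then there is only one such double coset, forcing it to coincide with $N_G(T)\cdot 1\cdot N_G(T)=N_G(T)$, hence $N_G(T)=G$. This contradicts the hypothesis that $T$ is not normal in $G$, completing the proof.

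The argument is short and essentially a bookkeeping exercise once one recognises the correct conjugation identity; there is no real obstacle. The only thing to be careful about is the direction of conjugation in $N_G(T^g)=g^{-1}N_G(T)g$, and the resulting placement of $g^{-1}$ between the two normalisers when translating $HK=G$ — a sign error here would leave one with a product of two subgroups of $N_G(T)$ rather than a genuine double coset and collapse the argument.
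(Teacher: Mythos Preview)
Your proof is correct. The paper takes a slightly different route: it considers the conjugation action of $G$ on $\Sigma=\{T^{x}\mid x\in G\}$, observes that $N_{G}(T)$ is a point stabiliser, and argues that $G=HK$ with $H\leqslant N_{G}(T)$ forces $K$ to act transitively on $\Sigma$ while simultaneously fixing the point $T^{g}$, giving $|\Sigma|=1$ and hence $T\trianglelefteq G$. Your argument instead conjugates $K$ into $N_{G}(T)$ and reduces the equality $HK=G$ to the statement that a single $(N_{G}(T),N_{G}(T))$-double coset fills $G$, then uses the partition into double cosets to conclude $N_{G}(T)=G$. The two arguments are essentially dual formulations of the same fact---the double cosets of $N_{G}(T)$ correspond to the $N_{G}(T)$-orbits on $\Sigma$---so neither is materially more elementary; the paper's version avoids the conjugation bookkeeping you flag as the delicate step, while yours avoids introducing the action explicitly.
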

\begin{proof}
Suppose for a contradiction that \(G=HK\). Let us consider the transitive action of \(G\)  by conjugation on \( \Sigma:= \{ T^x\mid x\in G\}\). The stabiliser in $G$ of an element $T^x\in\Sigma$ is $N_G(T^x)$, and since $T$ is not normal in $G$, the size $|\Sigma|>1$. Since $H\leqslant N_G(T)$, the factorisation $G=HK$ implies that $G=N_G(T)K$,  and as $N_G(T)$ is the stabiliser of the `point' $T$ of $\Sigma$ we conclude that $K$ is transitive on $\Sigma$. However the condition $K\leqslant N_G(T^g)$ implies that $K$ stabilises $T^g\in\Sigma$, and we have a contradiction.
\end{proof}

Recall that we have defined the homogeneous factorisations in Definition \ref{hmfac}. The following results discuss homogeneous factorisations of certain groups.

\begin{lemma}\label{abp}\cite[Lemma 3.2]{linear}
For any homogeneous factorisation \(G=AB\) we have that \(\Pi(A)=\Pi(B)\) and for any prime \(p\), we have \(|A|_{p}^{2}=|B|_{p}^{2}\geqslant|G|_{p}\).
\end{lemma}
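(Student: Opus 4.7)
The plan is straightforward since the statement almost reduces to two elementary facts: the isomorphism $A\cong B$ gives a term-by-term equality of orders, and the standard formula $|G|=|A||B|/|A\cap B|$ controls the $p$-part on the right.

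First I would extract the easy half. By Definition~\ref{hmfac}, a homogeneous factorisation means $A\cong B$. In particular $|A|=|B|$, so the sets of prime divisors coincide, giving $\Pi(A)=\Pi(B)$, and for every prime $p$ we immediately have $|A|_{p}=|B|_{p}$, which yields the equality $|A|_{p}^{2}=|B|_{p}^{2}$ in the statement.

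Next I would establish the lower bound by the elementary counting identity for a product of two subgroups. Since $G=AB$, a standard counting argument (orbit–stabiliser applied to the action of $A\times B$ on $G$ by $(a,b)\cdot g=agb^{-1}$, or simply a direct double-coset count) gives
\[
|G|=\frac{|A|\cdot|B|}{|A\cap B|}.
\]
Taking $p$-parts of both sides and using $|A\cap B|_{p}\geqslant 1$, we obtain
\[
|G|_{p}=\frac{|A|_{p}\cdot|B|_{p}}{|A\cap B|_{p}}\leqslant |A|_{p}\cdot|B|_{p}=|A|_{p}^{2},
\]
where the final equality uses the previous paragraph. This completes the desired inequality $|A|_{p}^{2}\geqslant|G|_{p}$.

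There is no real obstacle here; the only point worth flagging is that the argument uses nothing about the factorisation beyond the isomorphism $A\cong B$ and the product identity, so the conclusion is essentially the weakest possible necessary condition that a homogeneous factorisation must satisfy. This is exactly why it will be useful later as a quick ruling-out tool when analysing candidate vertex stabilisers.
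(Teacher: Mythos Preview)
Your argument is correct and is the standard elementary proof: the isomorphism $A\cong B$ gives $|A|_p=|B|_p$, and the product formula $|G|=|A||B|/|A\cap B|$ yields $|G|_p\leqslant|A|_p|B|_p=|A|_p^2$. The paper does not give its own proof of this lemma but simply cites \cite[Lemma~3.2]{linear}, so there is no approach to compare against; your write-up is exactly what one would expect for this result.
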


\begin{lemma}\label{wreath}\cite[Lemma 3.5]{linear}
Suppose that \(T\wr S_{k}\leqslant G\leqslant R\wr S_{k}\) where \(T\leqslant R\). Let \(M\) be the base subgroup of  $R\wr S_k$ and \(G=AB\) be a homogeneous factorisation for \(G\) such that \(A\) is transitive on the set of $k$ components of $M$. Then with \(\varphi_{i}(A\cap M)\) being the projection of \(A\cap M\) to the \(i\)-th component of \(M\), we have \(\varphi_{1}(A\cap M)\cong\cdots\cong\varphi_{k}(A\cap M)\) and \(\Pi(\varphi_{1}(A\cap M))\supseteq \Pi(T)\).
\end{lemma}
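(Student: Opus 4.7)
The plan is to prove the two assertions separately, addressing (1) first since it is needed for (2).

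For the isomorphism of the projections in (1), I would first note that the base group $M$ is normal in $R\wr S_k$ and hence normal in $G$, so $A\cap M$ is a normal subgroup of $A$. Since $A$ is transitive on the $k$ components of $M$, for each $i\in\{1,\ldots,k\}$ I can pick an element $a_i\in A$ whose image $\sigma_i$ under the natural projection $\pi\colon R\wr S_k\to S_k$ satisfies $\sigma_i(1)=i$. Conjugation by $a_i$ is an automorphism of $A\cap M$, and the wreath action shows that this automorphism identifies the first component of $M$ with the $i$-th. Composing this automorphism with $\varphi_i$ yields an isomorphism $\varphi_1(A\cap M)\cong\varphi_i(A\cap M)$, which gives (1).

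For assertion (2), I plan a proof by contradiction. Suppose $p\in\Pi(T)$ but $p\nmid|\varphi_1(A\cap M)|$. By (1), $p\nmid|\varphi_i(A\cap M)|$ for every $i$. Since $A\cap M$ embeds into $\prod_{i=1}^{k}\varphi_i(A\cap M)$, this forces $p\nmid|A\cap M|$, so $|A|_p=|A/(A\cap M)|_p$. The quotient $A/(A\cap M)$ embeds into $G/(G\cap M)\leq(R\wr S_k)/M\cong S_k$, whence $|A|_p\leq (k!)_p$. On the other hand, applying Lemma \ref{abp} to the homogeneous factorisation $G=AB$ gives $|A|_p^2\geq|G|_p$, and since $T\wr S_k\leq G$ we have $|G|_p\geq|T|_p^{k}(k!)_p$. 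Combining these,
\[
(k!)_p^{2}\;\geq\;|A|_p^{2}\;\geq\;|T|_p^{k}(k!)_p,
\]
so $(k!)_p\geq|T|_p^{k}$. By Lemma \ref{sizeppd} (Legendre) we have $(k!)_p<p^{k/(p-1)}\leq p^{k}\leq|T|_p^{k}$, a contradiction.

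The main obstacle, and the key to the argument, is to find a lower bound for $|G|_p$ strong enough to beat the $(k!)_p^2$ bound coming from the $S_k$-embedding. The crucial point is that the hypothesis $T\wr S_k\leq G$ contributes the extra factor of $(k!)_p$, so that only the $|T|_p^k$ factor needs to be compared against $(k!)_p$; Legendre's inequality then supplies the contradiction comfortably. If one only had $T^k\leq G$ without the top $S_k$, the bound would be $|G|_p\geq|T|_p^k$ and the argument would require more care (in particular would fail for $p=2$), so the wreath-product hypothesis is essential.
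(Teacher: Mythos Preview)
The paper does not prove this lemma; it is quoted verbatim as \cite[Lemma~3.5]{linear} and used as a black box, so there is no in-paper argument to compare against. Your proof is correct: part~(1) is the standard observation that conjugation by an element of $A$ projecting to a permutation sending $1\mapsto i$ carries $\varphi_1(A\cap M)$ to (an $R$-conjugate of) $\varphi_i(A\cap M)$, and part~(2) correctly combines the bound $|A|_p\leqslant(k!)_p$ (coming from $A/(A\cap M)\hookrightarrow S_k$) with $|A|_p^2\geqslant|G|_p\geqslant|T|_p^{k}(k!)_p$ from Lemma~\ref{abp} and the hypothesis $T\wr S_k\leqslant G$, and then finishes with Lemma~\ref{sizeppd}.
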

\begin{lemma}\label{GT}
Suppose that \(G=T\wr S_{k}\), and \(G=AB\) is a homogeneous factorisation. Let \(M=T_{1}\times\cdots\times T_{k}\) be the base subgroup of \(G\), where \(T_{1}\cong\cdots\cong T_{k}\cong T\). Let us denote by \(\pi\) the projection from \(G\) to \(G/M\) and, for each $i=1,\dots, k$, let \(\varphi_{i}\) be the projection from \(M\) to \(T_{i}\). Suppose that \(A\) is transitive on \(\{T_{1},\ldots, T_{k}\}\). Then for any odd prime \(p\in \Pi(T)\), we have \(|\varphi_{1}(A\cap M)|_{p}^{2}\geqslant |T|_{p}\).
\end{lemma}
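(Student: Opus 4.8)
The strategy is to track how the $p$-part shrinks as one passes from a factor $A$ of the homogeneous factorisation down to the projection $\varphi_{1}(A\cap M)$ of its base part, losing as little as possible at each step. Write $p^{e}=|T|_{p}$ and $p^{f}=(k!)_{p}$; since $|G|=|T|^{k}\cdot k!$ we have $|G|_{p}=p^{ke+f}$. Applying Lemma~\ref{abp} to the homogeneous factorisation $G=AB$ gives $|A|_{p}^{2}\geqslant|G|_{p}$, so the exponent of $p$ in $|A|$ is at least $(ke+f)/2$.

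The first descent is from $A$ to $A\cap M$. As $M\trianglelefteq G$, the subgroup $A\cap M$ is normal in $A$, and by the second isomorphism theorem $A/(A\cap M)\cong AM/M\leqslant G/M\cong S_{k}$. Hence the exponent of $p$ in $|A|$ is the sum of the exponents of $p$ in $|A\cap M|$ and in $|AM/M|$, and the latter is at most the exponent of $p$ in $k!$, namely $f$. Combining with the bound from Lemma~\ref{abp}, the exponent of $p$ in $|A\cap M|$ is at least $(ke+f)/2-f=(ke-f)/2$.

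The second descent is from $A\cap M$ to $\varphi_{1}(A\cap M)$. Writing $a\in A$ as $a=m_{a}\sigma$ with $m_{a}\in M$ and $\sigma$ in the top group $S_{k}$, one checks directly that conjugation by $a$ preserves $A\cap M$ (since $A\cap M\trianglelefteq A$) and carries $\varphi_{i}(A\cap M)$ onto a $T$-conjugate of $\varphi_{\sigma(i)}(A\cap M)$; as $A$ is transitive on $\{T_{1},\dots,T_{k}\}$, this forces $\varphi_{1}(A\cap M),\dots,\varphi_{k}(A\cap M)$ to be pairwise isomorphic. Since $A\cap M\leqslant\varphi_{1}(A\cap M)\times\cdots\times\varphi_{k}(A\cap M)$, we obtain $|A\cap M|\leqslant|\varphi_{1}(A\cap M)|^{k}$, and therefore the exponent of $p$ in $|\varphi_{1}(A\cap M)|$ is at least $\tfrac1k\cdot\tfrac{ke-f}{2}=\tfrac{e}{2}-\tfrac{f}{2k}$.

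It remains to absorb the stray term $f/(2k)$, and this is the one point where the hypothesis that $p$ is odd is used: by Legendre's formula (Lemma~\ref{sizeppd}), $f<k/(p-1)\leqslant k/2$ because $p\geqslant3$, so $f/(2k)<1/4$ and the exponent of $p$ in $|\varphi_{1}(A\cap M)|$ is strictly greater than $e/2-1/4$. As this exponent is a non-negative integer, treating the cases $e$ even and $e$ odd separately forces it to be at least $\lceil e/2\rceil$, which is exactly $|\varphi_{1}(A\cap M)|_{p}^{2}\geqslant|T|_{p}$. The two descents should be routine, the only real care being the verification that the projections $\varphi_{i}(A\cap M)$ are all isomorphic; the substantive point is the last step, since the naive chain of inequalities is lossy by a factor on the order of $(k!)_{p}$, and it is precisely the Legendre bound for an odd prime together with the integrality of $p$-exponents that lets one recover the sharp conclusion rather than one weakened by that factor.
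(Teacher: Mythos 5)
Your proof is correct and follows essentially the same route as the paper's: both bound $|A|_p$ from below via Lemma~\ref{abp} and from above through $|A\cap M|$ and the pairwise-isomorphic projections $\varphi_i(A\cap M)$, then close the gap using Legendre's estimate $(k!)_p<p^{k/(p-1)}$ together with the hypothesis $p\geqslant 3$. The paper runs it as a proof by contradiction with the exponent inequality $2a\leqslant b-1$ assumed at the outset, whereas you present it as a direct chain of exponent inequalities with the integrality step made explicit at the end, but the underlying estimates are identical.
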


\begin{proof}
Suppose for a contradiction that there exists a prime \(p\geqslant3\) such that \(|\varphi_{1}(A\cap M)|_{p}^{2}<|T|\). Let \(p^{a}\) and \(p^{b}\) denote the \(p\)-parts of \(|\varphi_{1}(A\cap M)|\) and \(|T|\), respectively. Then \(p^{2a}\leqslant p^{b-1}\) and we have 
\begin{equation}\nonumber
\begin{split}
    |A|_{p}^{2}&\leqslant|\varphi_{1}(A\cap M)|^{2k}_{p}|\pi(A)|_{p}^{2}\\
               &=p^{2ak}|\pi(A)|_{p}^{2}\leqslant p^{2ak}(k!)^{2}_{p}<p^{2ak}p^{\frac{2k}{p-1}}\\
               &\leqslant p^{(b-1)k}p^{\frac{2k}{p-1}}\leqslant p^{bk}=|T|_{p},
               \end{split}
\end{equation}
which is a contradiction, by Lemma \ref{abp}.
\end{proof}
The following results investigate the factorisations of a group involving symmetric groups and alternating groups.

\begin{lemma}\label{k=5}
Let \(N\triangleleft G\) such that \(G/N\cong\Sy_{k}\) where \(k=5\) or \(6\) and \(|N|_{5}=1\). Suppose that \(G=HK\) is a homogeneous factorisation. Then either \(\A_{k}\triangleleft HN/N \cap KN/N \leqslant \Sy_{k}\), or, interchanging \(H\) and \(K\) if necessary, \(k=6\) and \(\PSL_{2}(5)\triangleleft HN/N\leqslant \PGL_{2}(5)\) with \(\A_{5}\triangleleft KN/N\leqslant\Sy_{5}\).
\end{lemma}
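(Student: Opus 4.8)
\textbf{Proof strategy for Lemma~\ref{k=5}.}
The plan is to pass to the quotient $\overline{G}:=G/N\cong\Sy_k$ and analyse the images $\overline{H}:=HN/N$ and $\overline{K}:=KN/N$. First I would observe that since $G=HK$ is a homogeneous factorisation, applying the natural projection gives $\overline{G}=\overline{H}\,\overline{K}$; this is still a factorisation of $\Sy_k$, and by Lemma~\ref{abp} we have $\Pi(H)=\Pi(K)$ and, for each prime $p$, $|H|_p^2\geqslant|G|_p$. The hypothesis $|N|_5=1$ is the crucial leverage: it forces $|G|_5=|\Sy_k|_5=5$, so $|H|_5=|K|_5=5$, which means $5\mid|\overline{H}|$ and $5\mid|\overline{K}|$. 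Hence both $\overline{H}$ and $\overline{K}$ contain an element of order $5$, i.e. a $5$-cycle in $\Sy_k$ (for $k=5,6$ an element of order $5$ is necessarily a single $5$-cycle), and in particular both act transitively on the $5$ or $6$ points on which $\Sy_k$ acts naturally — for $k=6$ one needs the extra remark that a subgroup containing a $5$-cycle and not fixing the sixth point, or containing a $5$-cycle at all, is transitive, which I would confirm by inspecting the cycle type.

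Next I would invoke the classification of transitive subgroups of $\Sy_5$ and $\Sy_6$ that are divisible by $5$. For $k=5$: the transitive subgroups of $\Sy_5$ are $C_5$, $D_{10}$, $\mathrm{AGL}_1(5)=F_{20}$, $\A_5$, $\Sy_5$. For $k=6$: the transitive subgroups of $\Sy_6$ of order divisible by $5$ are $\PSL_2(5)\cong\A_5$ (two conjugacy classes in $\Sy_6$ — the ``imprimitive'' $\A_5$ acting on $6$ points, which is transitive), $\PGL_2(5)\cong\Sy_5$ (transitive on $6$ points), $\A_6$, and $\Sy_6$. Then I would use the factorisation $\overline{G}=\overline{H}\,\overline{K}$ together with the order identity $|\overline{G}|\cdot|\overline{H}\cap\overline{K}|=|\overline{H}|\cdot|\overline{K}|$ and the homogeneity coming from Lemma~\ref{abp} (so $\Pi(\overline{H})=\Pi(\overline{K})$ and $2$-parts and $3$-parts are constrained) to cut down the list of possible pairs $(\overline{H},\overline{K})$. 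For $k=5$ this quickly forces $\A_5\triangleleft\overline{H}\cap\overline{K}$, because any proper transitive subgroup has order $5,10,$ or $20$, and $10\cdot10<120$, $20\cdot20/120$ is not an integer dividing correctly, etc., so at least one of $\overline{H},\overline{K}$ is $\A_5$ or $\Sy_5$, and then factorising $\Sy_5=\overline{H}\,\overline{K}$ with the other factor transitive and the $\Pi$-condition pins down that both contain $\A_5$. For $k=6$ the genuinely new possibility is the asymmetric one: $\PSL_2(5)\triangleleft\overline{H}\leqslant\PGL_2(5)$ (the transitive $\Sy_5$-type subgroup of $\Sy_6$) paired with $\A_5\triangleleft\overline{K}\leqslant\Sy_5$; I would verify this really is a factorisation of $\Sy_6$ by the order count $|\PGL_2(5)|\cdot|\Sy_5|/|\Sy_6| = 120\cdot120/720 = 20$, which must equal $|\overline{H}\cap\overline{K}|$, consistent since $F_{20}\leqslant\PGL_2(5)\cap\Sy_5$ up to conjugacy. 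The homogeneity condition $\overline{H}\cong\overline{K}$ is \emph{not} required here — Lemma~\ref{abp} only gives $\Pi$ and $p$-part equalities, and indeed $\PGL_2(5)\cong\Sy_5\cong\overline{K}$ in the edge case, so the statement is internally consistent.

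Finally I would assemble the two cases into the dichotomy as stated, being careful that when neither factor is the ``small'' $\PSL_2(5)$-type subgroup we land in the first alternative $\A_k\triangleleft\overline{H}\cap\overline{K}\leqslant\Sy_k$. The main obstacle I anticipate is the $k=6$ bookkeeping: $\Sy_6$ has the exceptional outer automorphism, so there are two classes of $\A_5$'s and two of $\Sy_5$'s inside $\Sy_6$, and only the ones transitive on $6$ points are relevant; one must keep straight which class arises as $\overline{H}$ versus $\overline{K}$ and check the order-product divisibility $|\overline{H}|\cdot|\overline{K}|\equiv 0\pmod{|\Sy_6|}$ rules out, e.g., two transitive $\A_5$'s (since $60\cdot60/720=5$ is an integer, one actually has to exhibit or exclude this factorisation directly — I would check whether an intransitive-on-cosets obstruction or a direct coset computation shows $\A_5\cdot\A_5\neq\Sy_6$ for the transitive copies, which I expect follows from both being contained in an index-$6$ subgroup pattern or from a short MAGMA check of the type described in Subsection~\ref{r:magma}). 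Modulo that finite case-check, the argument is routine.
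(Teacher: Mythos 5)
Your overall strategy (pass to the quotient $\overline{G}:=G/N\cong\Sy_k$ and use the divisibility-by-$5$ from $|N|_5=1$) matches the paper's, but there are two genuine gaps that would sink the proof as written.

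First, you assert that both $\overline{H}:=HN/N$ and $\overline{K}:=KN/N$ are transitive because each contains a $5$-cycle. That is correct for $k=5$, but false for $k=6$: a $5$-cycle in $\Sy_6$ fixes a point, so the cyclic group it generates is already intransitive. Lemma~\ref{ab} only gives that \emph{at least one} of the two images is transitive. This is not a technicality you can patch by ``inspecting the cycle type'' --- the statement you are trying to prove explicitly produces an intransitive factor in the $k=6$ exception, namely $\A_5\triangleleft\overline{K}\leqslant\Sy_5$ with $\Sy_5$ the point stabiliser. Restricting your classification to transitive subgroups of $\Sy_6$ therefore discards exactly the subgroup the conclusion requires you to find.

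Second, the mechanism you invoke to force $\A_k$ into \emph{both} images --- ``the $\Pi$-condition pins down that both contain $\A_5$'' --- does not do what you need. Lemma~\ref{abp} gives $\Pi(H)=\Pi(K)$, not $\Pi(\overline{H})=\Pi(\overline{K})$: a prime such as $3$ may be present in $H$ but carried entirely by $H\cap N$, since the hypothesis controls only $|N|_5$. Concretely, the factorisation $\Sy_5=\A_5\cdot F_{20}$ (intersection $D_{10}$) has both factors transitive of order divisible by $5$, yet $3\notin\Pi(F_{20})$; your $\Pi$-reasoning alone cannot rule this out. What actually kills it (and what the paper uses) is an \emph{insoluble composition factor} comparison: if $\overline{H}$ has a composition factor $T\cong\A_5$ or $\A_6$, then since $5\mid|T|$ and $|N|_5=1$ the group $T$ is not a section of $N$, so $T\in\ICF(H)$; since $H\cong K$ this forces $T\in\ICF(K)$, and again because $T$ is not a section of $N$ we get $T\in\ICF(\overline{K})$. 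You should replace your $\Pi$-argument with this step. The paper then sets up the dichotomy by first arranging (via $|G/N|_3\geqslant 3$) that some factor, say $\overline{H}$, has order divisible by $15$, reads off from a short computation that such a subgroup of $\Sy_5$ or $\Sy_6$ either contains $\A_k$ or (for $k=6$) is sandwiched between $\PSL_2(5)$ and $\PGL_2(5)$ or between $\A_5$ and an intransitive $\Sy_5$, transports the composition factor to $\overline{K}$ as above, and finally cites the LPS classification (\cite[Theorem D, Remark 2]{LPS}) to pin down the asymmetric $k=6$ case to $\overline{K}=\Sy_5$. Your outline reaches for the same $k=6$ factorisation and even computes the intersection order $20$ correctly, but without the composition-factor transfer and without allowing the intransitive $\Sy_5$ into the analysis, the argument does not close.
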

\begin{proof}
Since \(G=HK\), we have  \((HN/N)(KN/N)=G/N\cong\Sy_{k}\). On the other hand, since \(|N|_{5}=1\) and \(k=5\) or \(6\), we have \(|G|_{5}=5\) and hence also \(|H|_{5}=|K|_{5}>1\) by Lemma \ref{abp}. Thus \(|HN/N|_{5}>1\) and \( |KN/N|_{5}>1\). On the other hand, note that \(|G/N|_{3}\geqslant(5!)_{3}=3\). This implies that at least one of \(|HN/N|\) and \(|KN/N|\) is divisible by \(3\). Without loss of generality, assume that \(|HN/N|_{3}\geqslant3\) and so \(|HN/N|\) is divisible by \(15\). Then since \(HN/N\) is a subgroup of \(\Sy_{5}\) or \(\Sy_{6}\), we check by MAGMA \cite{magma} (see Subsection \ref{r:magma}) that either \(\A_{k}\leqslant HN/N\), or \(k=6\), \(\PSL_{2}(5)\triangleleft HN/N\leqslant \PGL_{2}(5)\) or \(\A_{5}\leqslant HN/N\leqslant\Sy_{5}\). In both cases \(HN/N\) has an insoluble composition factor \(T\) isomorphic to \(\A_{6}\) or \(\A_{5}\)(\(\cong\PSL_{2}(5)\)). Since \(|N|_{5}=1\), we see that \(T\) is not a section of \(N\) and since \(K\cong H\), it follows that \(T\) is a section of \(KN/N\). 

If \(T=\A_{k}\), then the first condition \(\A_{k}\triangleleft HN/N\cap KN/N\leqslant\Sy_{k}\) holds. Therefore we assume that \(k=6\) and \(T=\PSL_{2}(5)(\cong\A_{5})\), then since \((HN/N)(KN/N)=\Sy_{6}\) and \(\PSL_{2}(5)\triangleleft HN/N\leqslant \PGL_{2}(5)\), we deduce by \cite[Theorem D, Remark 2]{LPS} that \(\A_{5}\triangleleft KN/N=\Sy_{5}\), proving the result. 
\end{proof}

\begin{lemma}\label{ab}\cite[Theorem D and Remark 2]{LPS}
Let \(\A_{n}\leqslant G\leqslant \Sy_{n}\) and \(G\) acts on a set \(\Omega\) of size \(n\geqslant 3\). Suppose that \(G=AB\) with subgroups \(A\) and \(B\) of \(G\). Then at least one of \(A\) or \(B\) is transitive on \(\Omega\).
\end{lemma}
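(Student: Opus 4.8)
The plan is to give a direct combinatorial argument on subsets of $\Omega$, avoiding any appeal to the full classification of maximal factorisations of $\Sy_n$ and $\A_n$ (which would be an alternative route, since these factorisations are known). Suppose for a contradiction that neither $A$ nor $B$ is transitive on $\Omega$. Then $A$ fixes setwise one of its orbits, which is a nonempty proper subset of $\Omega$; replacing it by its complement if necessary we obtain a subset $\Delta$ with $A\leqslant\Stab_G(\Delta)$ and $1\leqslant|\Delta|=k\leqslant n/2$. Likewise $B\leqslant\Stab_G(\Delta')$ for some subset $\Delta'$ with $1\leqslant|\Delta'|=l\leqslant n/2$, and after interchanging $A$ and $B$ if needed (note $G=AB$ is equivalent to $G=BA$) we may assume $k\leqslant l$; in particular $k+l\leqslant n$.

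First I would record the elementary fact that if $G=AB$ and $A\leqslant H\leqslant G$, then $B$ is transitive on the coset space of $H$ in $G$: indeed $G=AB\leqslant HB$ forces $G=HB$, so every coset of $H$ has the form $Hb$ with $b\in B$. Applying this with $H=\Stab_G(\Delta)$ and identifying this coset space with the $G$-orbit $\Delta^G$, we conclude that $B$ is transitive on $\Delta^G$. Since $n\geqslant 3$, the subgroup $\Stab_{\Sy_n}(\Delta)=\Sy_k\times\Sy_{n-k}$ contains an odd permutation (because $\max(k,n-k)\geqslant 2$) and so is not contained in $\A_n$; hence $G$, which contains $\A_n$, is transitive on the set of all $k$-subsets of $\Omega$, i.e.\ $\Delta^G$ is precisely this set. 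Therefore $B$ is transitive on all $k$-subsets of $\Omega$.

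Next I would reach the contradiction. We have $B\leqslant\Stab_G(\Delta')\leqslant\Stab_{\Sy_n}(\Delta')=\Sy_l\times\Sy_{n-l}$, and the orbits of $\Sy_l\times\Sy_{n-l}$ on $k$-subsets of $\Omega$ are parametrised by the size $j$ of the intersection of a $k$-subset with $\Delta'$, which realises every value with $\max(0,k+l-n)\leqslant j\leqslant\min(k,l)$. Since $k\leqslant l$ and $k+l\leqslant n$, this range is $0\leqslant j\leqslant k$, giving $k+1\geqslant 2$ distinct orbits. As a subgroup of a group acting on a set has at least as many orbits as the group itself, $\Stab_G(\Delta')$, and hence $B$, has at least two orbits on $k$-subsets, contradicting the transitivity of $B$ established above. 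Therefore at least one of $A$, $B$ is transitive on $\Omega$.

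The only step carrying any real content is the middle one: that $G=AB$ together with the intransitivity of $A$ forces $B$ to be transitive on subsets of a fixed size. Everything else is bookkeeping, namely the ``replace by complement / swap $A$ and $B$'' normalisations needed to secure $1\leqslant k\leqslant l\leqslant n/2$, and the uniform passage from $\Sy_n$-stabilisers to $G$-stabilisers via the orbit-count inequality, which removes any need for a separate analysis of small $n$.
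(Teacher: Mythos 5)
Your proof is correct, and it takes a genuinely different route from the paper. The paper does not prove this lemma at all: it cites \cite[Theorem D and Remark 2]{LPS}, which is the classification of maximal core-free factorisations of almost simple groups with alternating socle, a substantial result that ultimately relies on the classification of finite simple groups. Your argument is elementary and self-contained. The one step with real content is the observation that $G = AB$ together with $A \leqslant H \leqslant G$ forces $B$ to be transitive on $G/H$; taking $H = \Stab_G(\Delta)$ with $\Delta$ an $A$-orbit (or its complement) of size $k \leqslant n/2$, and noting that $\Sy_k \times \Sy_{n-k}$ contains an odd permutation when $n \geqslant 3$, you conclude $B$ is transitive on all $k$-subsets. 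The contradiction then falls out by orbit-counting: $B$ sits inside $\Sy_l \times \Sy_{n-l}$ with $k \leqslant l \leqslant n/2$, whose orbits on $k$-subsets are parametrised by intersection size $j$ with $0 \leqslant j \leqslant k$, giving $k+1 \geqslant 2$ orbits. The bookkeeping (complementation to force $k, l \leqslant n/2$, swapping $A$ and $B$ using $G = AB \Leftrightarrow G = BA$, and the subgroup--orbit inequality) is all handled cleanly. What the LPS citation buys the paper is brevity and a pointer to a far stronger structural statement that the paper needs elsewhere anyway; what your argument buys is independence from the classification, which for this particular weak consequence is a genuine and worthwhile simplification.
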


\subsection{Homogeneous factorisations of \(\Sp_{2m}(q)\wr\Sy_{k}\)}\label{s:homfact}

By \cite[Table 3.5 C]{kleidman}, a maximal \(\mathcal{C}_{2}\)-subgroup of \(\Sp_{2n}(q)\) has the form \(G=\Sp_{2m}(q)\wr\Sy_{k}\) where \(n=mk\), for some $m\geqslant 1$ and $k\geqslant 2$. In this section we investigate the homogeneous factorisations of such groups.

\begin{hypothesis}\label{wrspsk}
\rm{Let \(G=\Sp_{2m}(q)\wr\Sy_{k}\) such that \(m\geqslant1, k\geqslant2\), and \(q=p^{f}\) for some prime \(p\), with \((m,p)\neq(2,2)\), and \((m,q)\neq (1,2), (1,9)\). Let us denote by
\(M\) the base subgroup of \(G\), \(M_{i}\) the \(i\)-th component of \(M\) for \(i=1,\ldots, k\) and \(Z\) the centre of \(M_{1}\). We also define \(\pi\) and \(\varphi_{i}\) to be the natural projections from \(G\) to \(G/M\) and from \(M\) to \(M_{i}\) for \(i=1,\ldots, k\), respectively. Let \(G=HK\) be a homogeneous factorisation. Then \(\Sy_{k}=\pi(G)=\pi(H)\pi(K)\) and by Lemma \ref{ab} we deduce that at least one of \(\pi(H)\) or \(\pi(K)\) is transitive. Without loss of generality, assume that \(\pi(H)\) is transitive.}
\end{hypothesis}

\begin{lemma}\label{prerequisites}
Suppose that Hypothesis \ref{wrspsk} holds and \((m,q)\neq (1,3)\). Then the following hold:
\begin{description}
\item[(a)] Up to isomorphism, \(H\cap M\) has a unique insoluble composition factor \(T\), and \(T=\PSp_{2m}(q), \Omega_{2m}^{-}(q)\) (\(q\) even), \(\PSp_{2}(q^{2})\) \((m=2)\), or \(\A_{c}\) (with \(c=7, 8\) and \((m,q)=(3,2)\)).
\item[(b)] \(Z.T\leqslant \varphi_{1}(H\cap M)\leqslant (Z.T).2\). Moreover, either \(Z=1\), or \(Z.T\) is quasisimple. In particular, for any odd prime \(s\) we have \(|\varphi_{1}(H\cap M)|_{s}=|T|_{s}\).
\item[(c)] Let \(\ell\) denote the multiplicity of \(T\) as a composition factor in \(H\cap M\). Then \(\ell\) divides \(k\), and \(\pi(H)\) preserves a partition of \(\{1,\ldots,k\}\) with \(k/\ell\) parts of size \(\ell\).
\end{description}
\end{lemma}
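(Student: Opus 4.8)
The plan is to reduce parts~(a) and~(b) to a classification of the subgroups \(X\leqslant\Sp_{2m}(q)\) whose order is divisible by every prime dividing \(|\Sp_{2m}(q)|\), and then to deduce~(c) from the structure of the layer of \(H\cap M\). First I would apply Lemma~\ref{wreath} with \(R=T=\Sp_{2m}(q)\) and \(A=H\) --- legitimate because \(\pi(H)\) is transitive on the \(k\) components of \(M\) --- to get \(\varphi_{1}(H\cap M)\cong\cdots\cong\varphi_{k}(H\cap M)\), say all isomorphic to a subgroup \(X\) of \(\Sp_{2m}(q)\), together with \(\Pi(X)\supseteq\Pi(\Sp_{2m}(q))\); Lemma~\ref{GT} supplies the quantitative refinement \(|X|_{s}^{2}\geqslant|\Sp_{2m}(q)|_{s}\) for every odd prime \(s\). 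Since \(H\cap M\) embeds in \(\prod_{i=1}^{k}\varphi_{i}(H\cap M)\cong X^{k}\), its insoluble composition factors lie among those of \(X\). The hypotheses of the lemma, in particular \((m,p)\neq(2,2)\) and \((m,q)\notin\{(1,2),(1,3),(1,9)\}\), ensure that \(\PSp_{2m}(q)\) is nonabelian simple and that \(\Sp_{2m}(q)\) is its full covering group, with centre \(Z\) of order \(\gcd(2,q-1)\). Thus it remains to determine the insoluble composition factors of an arbitrary \(X\leqslant\Sp_{2m}(q)\) with \(\Pi(X)\supseteq\Pi(\Sp_{2m}(q))\), and to pin \(X\) down up to the sandwich \(Z.T\leqslant X\leqslant(Z.T).2\).

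This classification of \(X\) is the core of the argument and the main obstacle. For \(m\geqslant2\) with \((m,q)\neq(3,2)\), Corollary~\ref{existppd} gives a primitive prime divisor \(r\in\ppd(q,2m)\), with \(r\geqslant 2m+1\), dividing \(|X|\), and usually a second one in \(\ppd(q,2m-2)\). By the classification of subgroups of symplectic groups containing such primitive prime divisor elements --- equivalently, by running through Aschbacher's classes using the subgroup data in~\cite{kleidman} --- \(X\) is \(\Sp_{2m}(q)\) itself or lies in a maximal subgroup in one of \(\mathcal{C}_{1},\ldots,\mathcal{C}_{9}\); I would then test each candidate against \(\Pi(X)\supseteq\Pi(\Sp_{2m}(q))\) with the help of the \(s\)-part bounds. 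Reducible and imprimitive subgroups (\(\mathcal{C}_{1},\mathcal{C}_{2}\)), tensor, subfield and extraspecial-type subgroups (\(\mathcal{C}_{4}\)--\(\mathcal{C}_{7}\)), and all but finitely many \(\mathcal{C}_{9}\)-subgroups lose a primitive prime divisor of some \(q^{2i}-1\) and are excluded; a \(\mathcal{C}_{3}\) field-extension subgroup \(\Sp_{2m/r}(q^{r}).r\) with \(r\) prime survives only when \(r=2\) and \(m=2\), giving \(\Sp_{2}(q^{2}).2\); and a \(\mathcal{C}_{8}\)-subgroup survives only as \(\mathrm{O}_{2m}^{-}(q)\) with \(q\) even. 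The case \(\Sp_{6}(2)\), where \(\ppd(2,6)=\varnothing\) --- together with a handful of other small cases such as \(\Sp_{8}(2)\) and \(\Sp_{4}(q)\) with \(q+1\) a power of \(2\) --- is disposed of by direct computation as in Subsection~\ref{r:magma}; it is here that the factors \(T=\A_{7},\A_{8}\) inside \(\Sp_{6}(2)\) appear. In every surviving case \(X\) has a unique insoluble composition factor, equal to the \(T\) of part~(a), and \(X\) lies between \(Z.T\) and \((Z.T).2\): either \(X=\Sp_{2m}(q)=Z.\PSp_{2m}(q)\); or \(X\leqslant\Sp_{2}(q^{2}).2\) with \(X\cap\Sp_{2}(q^{2})=\Sp_{2}(q^{2})=Z.T\), since \(\Sp_{2}(q^{2})\) has no proper subgroup having \(\PSp_{2}(q^{2})\) as a section; or \(\Omega_{2m}^{-}(q)=Z.T\leqslant X\leqslant\mathrm{O}_{2m}^{-}(q)=(Z.T).2\) (with \(Z=1\)); or \(X\leqslant\Sp_{6}(2)\) with \(\A_{c}\leqslant X\), by the explicit check. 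This establishes~(a) and the first assertion of~(b).

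The remainder of~(b) is formal: \(Z=1\) whenever \(q\) is even, in particular when \(T\in\{\Omega_{2m}^{-}(q),\A_{c}\}\), whereas for \(q\) odd --- where \(T=\PSp_{2m}(q)\), or \(T=\PSp_{2}(q^{2})\) with \(m=2\) --- the group \(Z.T\) equals \(\Sp_{2m}(q)\) or \(\Sp_{2}(q^{2})\) and is quasisimple; and since \(|Z|\leqslant2\) and \(|X:Z.T|\leqslant2\), both contribute only powers of \(2\), so \(|\varphi_{1}(H\cap M)|_{s}=|Z.T|_{s}=|T|_{s}\) for every odd prime \(s\). For~(c), let \(E=E(H\cap M)\) be the layer, the central product of the components of \(H\cap M\). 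As \(E\) is characteristic in \(H\cap M\trianglelefteq H\), the group \(H\) permutes the components of \(E\), and because conjugation by elements of \(H\cap M\leqslant M\) fixes every coordinate it normalises each component individually. Every component \(C\) is subnormal in \(H\cap M\leqslant M_{1}\times\cdots\times M_{k}\), hence is a diagonal subgroup over its support \(S_{C}=\{i:\varphi_{i}(C)\neq1\}\), with each nontrivial \(\varphi_{i}(C)\) equal to the unique component \(Z.T\) of \(\varphi_{i}(H\cap M)\) provided by~(b), so \(C\cong Z.T\). Distinct components commute, and \(\Sp_{2m}(q)\) contains no two commuting subgroups isomorphic to \(Z.T\), because in each case of~(a) the centraliser of \(Z.T\) in \(\Sp_{2m}(q)\) is soluble; hence the supports \(S_{C}\) are pairwise disjoint, and as \(H\cap M\) is subdirect in \(\prod_{i}\varphi_{i}(H\cap M)\) some component covers each coordinate, so the \(S_{C}\) partition \(\{1,\ldots,k\}\). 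This block system is \(\pi(H)\)-invariant, so by transitivity of \(\pi(H)\) all blocks have the same size; thus the number \(\ell\) of components divides \(k\) and \(\pi(H)\) preserves the resulting partition. Finally \(\ell\) is the multiplicity of \(T\) in \(H\cap M\), because the layer contributes exactly \(\ell\) copies of \(T\) while \(C_{H\cap M}(E)\) is soluble and \((H\cap M)/(E\cdot C_{H\cap M}(E))\) embeds in a direct product of copies of \(\Out(Z.T)\), which is soluble. This yields~(c); the only genuinely hard step is the subgroup classification above, parts~(b) and~(c) being structural bookkeeping once it is available.
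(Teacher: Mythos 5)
Your overall architecture mirrors the paper's: apply Lemma~\ref{wreath} and Lemma~\ref{GT} to pin down the common projection $X:=\varphi_1(H\cap M)$, classify the proper $X\leqslant\Sp_{2m}(q)$ with full prime spectrum, and then read off the structure of $H\cap M$. The paper instead cites the published classification \cite[Corollary 5, Tables 10.3, 10.7]{transitive} of maximal subgroups meeting every prime order of $\PSp_{2m}(q)$ (after a reduction, their Claim~1, from $\Pi(X)=\Pi(\Sp_{2m}(q))$ down to $\Pi(XZ/Z)=\Pi(\PSp_{2m}(q))$), which is faster than re-running Aschbacher's classes by hand; your layer argument for part~(c) is in substance exactly Scott's Lemma, which the paper cites directly. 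Those are stylistic differences, not defects.

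There is, however, a genuine gap. Your classification step is explicitly launched only ``for $m\geqslant2$ with $(m,q)\neq(3,2)$,'' and the exceptional small cases you enumerate ($\Sp_6(2)$, $\Sp_8(2)$, $\Sp_4(q)$ with $q+1$ a $2$-power) all have $m\geqslant 2$. But Hypothesis~\ref{wrspsk} allows $m=1$, and for $m=1$ with $q$ a Mersenne prime $\geqslant 7$ the Borel subgroup $\E_q:\C_{(q-1)/2}$ of $\PSp_2(q)$ has full odd prime spectrum, giving a genuine proper candidate $X$ with insoluble part trivial — case~(iii) of the paper's proof. This is an infinite family, so ``disposed of by direct computation'' is not an available escape, and Lemma~\ref{GT} does not touch it since the obstruction lives at the prime $2$. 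The paper closes this case with a global $2$-part estimate on the whole factorisation ($|H|_2^2\leqslant 2^{2k}(k!)_2^2 < 2^{(a+1)k}(k!)_2=|G|_2$, using $q+1=2^a$ with $a\geqslant3$), and some such argument is indispensable; without it, part~(a) is unproved for $m=1$. A secondary, smaller point: your justification that ``some component covers each coordinate'' leans on the coordinate projections of the layer being the full component $Z.T$; this is true because $\varphi_i((H\cap M)')=X'=Z.T$ for every $i$ (using perfection of $Z.T$), but you should say so, since taking layers does not commute with homomorphic images in general.
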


\begin{proof} 
Suppose that Hypothesis \ref{wrspsk} holds. Then since \(\pi(H)\) is transitive, it follows from Lemma \ref{wreath} that \(\varphi_{1}(H\cap M)\cong\cdots\cong\varphi_{k}(H\cap M)\) and \(\Pi(\varphi_{1}(H\cap M))=\Pi(\varphi(\Sp_{2m}(q)))\). Note that \(Z=\C_{(2,q-1)}\) and \(M_{1}/Z=\PSp_{2m}(q)\).

\medskip
\noindent
\textit{Claim~1:} \(\Pi(\varphi_{1}(H\cap M)Z/Z)\cup\{2\}=\Pi(\PSp_{2m}(q))\). Moreover, \(\Pi(\varphi_{1}(H\cap M)Z/Z)=\Pi(\PSp_{2m}(q))\) unless \(q=p\) is a Mersenne prime and \(m=1\).

\medskip
\noindent

For \(q\) even we have \(Z\) is trivial and so \(\varphi_{1}(H\cap M)Z/Z=\varphi_{1}(H\cap M)\) and \(\PSp_{2m}(q)=\Sp_{2m}(q)\). Thus the claim directly follows. 

For \(q\) odd we have \(Z\cong \C_{2}\) and so
\[
    \Pi(\varphi_{1}(H\cap M)Z/Z)\cup\{2\}=\Pi(\varphi_{1}(H\cap M))=\Pi(\Sp_{2m}(q))=\Pi(\PSp_{2m}(q)).
\]
By \cite[Corollary 6 (c)]{transitive} we deduce that for a subgroup \(S\) of \(\PSp_{2m}(q)\) such that \(q\) is odd and \(\Pi(S)\cup\{2\}=\Pi(\PSp_{2m}(q))\), either \(\Pi(S)=\Pi(\PSp_{2m}(q))\) or \(q=p\) is a Mersenne prime and \(m=1\). Therefore Claim~1 follows.

As a result of Claim~1 and \cite[Corollary 5, Table 10.3 and  10.7]{transitive}, one of the following holds. 
\begin{description}
\item[(i)] \(m, q\) are even, \(Z=1\) and \(\Omega^{-}_{2m}(q)\leqslant\varphi_{1}(H\cap M)\leqslant
\Omega^{-}_{2m}(q).2\).
\item[(ii)] \(m=3\), \(q=2\), \(Z=1\) and \(\A_{c}\leqslant\varphi_{1}(H\cap M)\leqslant\Sy_{c}\) where \(c=7,8\).
\item[(iii)] \(m=1\), \(q=p\) and \(\varphi_{1}(H\cap M)Z/Z\leqslant \C_{q}:\C_{\frac{q-1}{2}}\), where \(q=2^{a}-1\) is a Mersenne prime.
\item[(iv)] \(m=2\), \(q=7\) and \(\varphi_{1}(H\cap M)Z/Z=\A_{7}\).
\item[(v)] \(m=2\), \(q=3\) and \(\varphi_{1}(H\cap M)Z/Z\leqslant 2^{4}.\A_{5}\) or \(\Sy_{5}\).
\item[(vi)] \(m=2\), and \(\PSp_{2}(q^{2})\leqslant\varphi_{1}(H\cap M)Z/Z\leqslant \PSp_{2}(q^{2}).2\).
\item[(vii)] \(\varphi_{1}(H\cap M)Z/Z=\PSp_{2m}(q)\).
\end{description}
Note that \(\PSU_{4}(2)\cong\PSp_{4}(3)\) for case (iv), and the possibilities of \(\A_{6}\) and \(\Sy_{6}\) are included in case (vi) since \(\PSL_{2}(9)\cong\A_{6}\). Also \(\PSL_{2}(q)\cong\PSp_{2}(q)\) gives the examples in (iii).

\medskip
\noindent
\textit{Claim~2:}
Up to isomorphism, \(H\cap M\) has a unique insoluble composition factor \(T\), and \(T=\PSp_{2m}(q)\), \(\Omega_{2m}^{-}(q)\) (\(q\) even), \(\PSp_{2}(q^{2})\) (\(m=2\)), or \(\A_{c}\) (with \(c=7,8\) and \((m,q)=(3,2)\)).

\medskip
\noindent
To prove Claim~2 it is sufficient to show that cases (iii)-(v) do not arise. Let us define \(r\) by
\[
    r:=\begin{cases}
    2&\text{if \((m,q)=(1,2^{a}-1)\)}\\
    3&\text{if \((m,q)=(2,3)\)}\\
    7&\text{if \((m,q)=(2,7)\)}.
    \end{cases}
\]
Then  \[|\varphi_{1}(H\cap M)Z/Z|_{r}=\begin{cases}
|q(q-1)/2|_{r}=1&\text{if \( r=2\)}\\|2^{4}.\A_{5}|_{r}=|\Sy_{5}|_{r}=r&\text{if  \(r=3\)}\\|\A_{7}|_{r}=r&\text{if \(r=7\).}
\end{cases}\]
If \(r=2\) (in case (iii)), then 
\[
|\varphi_{1}(H\cap M)|_{2}\leqslant|\varphi_{1}(H\cap M)Z/Z|_{2}|Z|_{2}\leqslant 2.
\] 
and
\begin{equation}\label{3}|H|_{2}^{2}\leqslant|\varphi_{1}(H\cap M)|_{2}^{2k}(k!)_{2}^{2}\leqslant 2^{2k}(k!)_{2}^{2}<2^{3k}(k!)_{2}.
\end{equation} 
On the other hand, noting that \(q+1 = 2^a\geqslant 8\) (since \((m,q)\neq (1,3)\)), so  \(|\Sp_{2}(q)|_{2}=|\SL_{2}(q)|_{2}=(q+1)_{2}(q-1)_{2}=2^{a+1}\geqslant 2^{4}\) and therefore
\[
    |G|_{2}=|\Sp_{2}(q)|_{2}^{k}(k!)_{2}=2^{(a+1)k}(k!)_{2}\geqslant2^{4k}(k!)_{2}>|H|_{2}^{2}.
\] which is a contradiction. 

If \(r\) is odd (in case (iv) or (v)), then \(r=q\) and so \(|Z|_{r}=(2)_{r}=1\) and therefore  \(|\varphi_{1}(H\cap M)|_{r}=r\). However,
it follows from Lemma \ref{GT} that \(|\varphi_{1}(H\cap M)|_{q}\geqslant |\Sp_{4}(q)|_{q}^{1/2} = q^2\), which is a contradiction. Thus none of cases (iii), (iv), (v) hold for $G$, and Claim~2, hence also part (a) of Lemma \ref{prerequisites}, is proved. 

\medskip
We now prove part (b). Note that \(|Z|=1\) when \(q\) is even. Thus part (b) holds for \(T=\PSp_{2m}(2^{f})\), \(\Omega_{2m}^{-}(q)\) and \(\A_{c}\) for \(c=7,8\). In the remaining cases \(T=\PSp_{2m}(q)\), or \(m=2\) and \(T=\PSp_{2}(q^{2})\) with \(q\) odd in both cases. By \cite[Theorem 8.7]{Taylor} the symplectic groups are perfect over a field of odd order \(q\), and this implies that \(\Sp_{2m}(q)\) has no subgroup isomorphic to \(\PSp_{2m}(q)\) or \(\PSp_{2}(q^{2})\). Thus \(Z.T\) is non-split. Since \(|Z|=2\) for \(q\) odd, we deduce that \(Z.T\) is also quasisimple and the assertion about odd \(q\) follows immediately. Thus part (b) of Lemma \ref{prerequisites} holds.

Let \(\ell:=\m_{H\cap M}(T)\) and \(Z(M)\) be the centre of \(M\). Then part (c) follows from Scott's Lemma, see \cite[Theorem 4.16]{csaba}.
\end{proof}

\begin{lemma}\label{Sp}
Suppose that Hypothesis \ref{wrspsk} holds and \((m,q)\neq (1,3)\). Then \(k=2\).
\end{lemma}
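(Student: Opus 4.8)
The plan is to push further the analysis set up in Lemma~\ref{prerequisites}, exploiting the fact that the unique insoluble composition factor \(T\) of \(H\cap M\) satisfies, up to an outer automorphism of order dividing \(2\) and a central \(2\)-group, \(\varphi_{1}(H\cap M)\approx Z.T\) with \(T\) one of the four possibilities in part~(a). The key numerical input will be Lemma~\ref{abp}: for the homogeneous factorisation \(G=HK\) and every prime \(p\in\Pi(G)\) we have \(|H|_{p}^{2}\geqslant|G|_{p}\). Since \(\pi(H)\) is transitive on \(\{1,\dots,k\}\), we can bound \(|H|_{p}\leqslant|\varphi_{1}(H\cap M)|_{p}^{k}\,(k!)_{p}\) and compare with \(|G|_{p}=|\Sp_{2m}(q)|_{p}^{k}(k!)_{p}\), so that the factorisation forces \(|\varphi_{1}(H\cap M)|_{p}^{2k}\,(k!)_{p}^{2}\geqslant|\Sp_{2m}(q)|_{p}^{k}(k!)_{p}\), i.e. roughly \(|\varphi_{1}(H\cap M)|_{p}^{2}\gtrsim|\Sp_{2m}(q)|_{p}\) after absorbing the factorial factor via Lemma~\ref{sizeppd} (exactly as in the proof of Lemma~\ref{GT}, which already gives this for odd \(p\)). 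Thus \(T\) must have order ``square-root-comparable'' to \(|\Sp_{2m}(q)|\) simultaneously at every odd prime; by Lemma~\ref{prerequisites}(b) we have \(|\varphi_{1}(H\cap M)|_{s}=|T|_{s}\) for odd \(s\), so the condition is \(|T|_{s}^{2}\geqslant|\Sp_{2m}(q)|_{s}\) for all odd \(s\in\Pi(G)\), together with \(\Pi(T)\cup\{2\}\supseteq\Pi(\PSp_{2m}(q))\) from Claim~1.

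The heart of the argument is then to show that this size/prime condition cannot hold for any \(k\geqslant3\). First I would dispose of the sporadic small cases \(T=\A_{7},\A_{8}\) (with \((m,q)=(3,2)\)): here \(|\Sp_{6}(2)|\) has primes \(7\) and \(5\) (and \(3^{4}\)), and \(|\A_{8}|_{3}=3^{2}\) while \(|\Sp_{6}(2)|_{3}=3^{4}\), so already \(|T|_{3}^{2}=3^{4}=|\Sp_{6}(2)|_{3}\) only barely; combined with the factorial correction and \(k\geqslant3\) this is violated — this should be a short explicit computation, and one can also just invoke the MAGMA check of \(\Sp_{2}(3)\wr\Sy_{4}\)-type groups if needed, though here the group is \(\Sp_{6}(2)\wr\Sy_{k}\). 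For the generic cases, the cleanest tool is primitive prime divisors. If \(T=\PSp_{2m}(q)\) with \(m\geqslant2\), then \(|\Sp_{2m}(q)|=|\PSp_{2m}(q)|\cdot(2,q-1)\), so the condition \(|T|_{s}^{2}\geqslant|\Sp_{2m}(q)|_{s}\) becomes \(|\PSp_{2m}(q)|_{s}^{2}\geqslant|\PSp_{2m}(q)|_{s}\cdot(2,q-1)_s\), which is automatic for odd \(s\) — so the odd-prime count alone does \emph{not} immediately kill case (vii). Here one must bring in the prime \(2\): by Claim~1, \(\varphi_{1}(H\cap M)\) may miss the prime \(2\) only in the Mersenne \(m=1\) case (already excluded by \((m,q)\neq(1,3)\)? no — excluded by hypothesis only when \((m,q)=(1,3)\); the Mersenne subcase (iii) was eliminated in Claim~2), so \(2\in\Pi(\varphi_{1}(H\cap M))\) and \(|\varphi_{1}(H\cap M)|_{2}\leqslant|Z.T|_{2}\cdot2 = |\Sp_{2m}(q)|_{2}\cdot? \). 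The point is that \(|Z.T|_2\) for \(T=\PSp_{2m}(q)\) equals \(|\Sp_{2m}(q)|_2\) exactly (since \(\Sp_{2m}(q)=Z.\PSp_{2m}(q)\)), so at \(p=2\) the inequality \(|\varphi_{1}(H\cap M)|_{2}^{2k}(k!)_2^2\geqslant|\Sp_{2m}(q)|_2^{k}(k!)_2\) reads \(|\Sp_{2m}(q)|_2^{2k}(2)^{2k}(k!)_2^2\geqslant|\Sp_{2m}(q)|_2^{k}(k!)_2\), which is again automatic. So the prime-counting at a single prime is insufficient; instead I would combine all primes: take the product over a suitable set \(S\) of primes — say the ppd's \(r\in\ppd(q,2i)\) for \(1\le i\le m\) (these are exactly the primes showing that \(\PSp_{2m}(q)\) and \(\PSp_{2}(q^2)\) differ in size) — and show that already \(\prod_{r\in S}|T|_r^2 < \prod_{r\in S}|\Sp_{2m}(q)|_r\) cannot accommodate \(k\ge 3\) once one also accounts for the genuinely missing factor. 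The cleanest version: in case (vi), \(T=\PSp_2(q^2)\), the index \(|\Sp_{2m}(q)|/|Z.T|\) is a large integer divisible by all ppd's \(\ppd(q,2i)\) for \(2\le i\le m\) plus further \(q\)-power and \(\ppd(q,i)\) contributions; the inequality from Lemma~\ref{abp} applied to just the prime in \(\ppd(q,2m)\) (which exists by Corollary~\ref{existppd} outside tiny exceptions, handled separately) gives \(|T|_r^{2k}(k!)_r^2\ge |\Sp_{2m}(q)|_r^k (k!)_r\); but \(r\nmid|\PSp_2(q^2)|\) when \(m\ge3\) (as \(r=\ppd(q,2m)\) and \(\ppd(q^2,\cdot)\subseteq\ppd(q,\text{even})\) only hits \(2,4,\dots\)), forcing \(r\mid(k!)\), hence \(r\le k\), contradicting \(r\ge 2m+1\ge k+1\) unless \(k\) is large — and then iterate / compare with a second ppd. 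I would systematize this into: (1) choose \(r=\ppd(q,2m)\), observe \(r\nmid |T|\) in cases (vi) with \(m\ge 3\) and in case (i) \(\Omega_{2m}^-(q)\) check \(r\mid|\Omega_{2m}^-(q)|\) or not — in fact \(\ppd(q,2m)\mid|\Omega_{2m}^-(q)|\), so case (i) needs a different prime, e.g. \(\ppd(q,2m-2)\) or \(\ppd(q,m)\) (odd \(m\)) which divides \(|\Sp_{2m}(q)|\) but not \(|\Omega_{2m}^-(q)|\)); (2) conclude \(r\mid k!\), so \(r\le k\); (3) since \(r\ge 2m+1\) (Corollary~\ref{existppd}) and \(n=mk\ge 2m\) we still have room, so also use that \(|H|_r\le |\varphi_1(H\cap M)|_r^k (k!)_r = (k!)_r\) while \(|G|_r\ge r^k\) when \(r\mid|\Sp_{2m}(q)|\) to the first power per component — wait, \(r=\ppd(q,2m)\) divides \(|\Sp_{2m}(q)|\) exactly once (to a small power), so \(|G|_r = r^{k}\cdot(k!)_r\) roughly, and Lemma~\ref{abp} demands \((k!)_r^2\ge r^k\), i.e. \(r^{2\lfloor k/r\rfloor+\cdots}\ge r^k\), impossible for \(k\ge 2\) unless \(r\le\) something tiny. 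This last inequality \((k!)_r^2\ge r^k\) with \(r\ge 2m+1\) and \(k\ge 3\) is the crux and is false (for \(r> k\) it says \(1\ge r^k\); for \(r\le k\), \((k!)_r\le r^{k/(r-1)}<r^{k/2}\) by Lemma~\ref{sizeppd}, so squared it is \(<r^k\)).

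So the skeleton is: assume \(k\ge3\); using Lemma~\ref{prerequisites}(a),(b), for each of the four possible \(T\) select a prime \(r\) that divides \(|\Sp_{2m}(q)|\) to a low power but \emph{not} \(|\varphi_{1}(H\cap M)|\) (equivalently not \(|Z.T|\)) — namely \(r=\ppd(q,2m)\) for \(T\in\{\PSp_2(q^2)\ (m\ge3)\}\) and for \(\Omega_{2m}^-(q)\); \(r\in\{5,7\}\) appropriately for \(T\in\{\A_7,\A_8\}\) and \(\PSp_2(q^2),\ m=2\) (where \(T=\PSp_2(q^2)\) has the same order as \(\PSp_4(q)\) up to \(2\)-part and nothing is missing, so this case is special and is actually case (vii)-like — handled by noting \(\PSp_4(q)=Z.\PSp_4(q)\) and using instead a prime from \(\ppd(q,4)\) versus the fact that in case (vi) \(\varphi_1(H\cap M)\le\PSp_2(q^2).2\) has \(r\)-part equal to \((q^2-1)_r\) which is strictly smaller than \(|\Sp_4(q)|_r=(q^2-1)_r(q^4-1)_r\) when \(r=\ppd(q,4)\)); then Lemma~\ref{abp} at \(r\) gives \(|H|_r\le(k!)_r\) but \(|G|_r\ge r^k(k!)_r/\text{const}\), forcing \((k!)_r^2\gtrsim r^k\), which by Corollary~\ref{existppd} (\(r\ge\) the relevant exponent \(+1\)) and Lemma~\ref{sizeppd} is impossible for \(k\ge3\). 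The handful of degenerate pairs where the chosen \(\ppd\) fails to exist (Corollary~\ref{existppd}: \((q,2m)=(2,6)\), or \(q+1\) a \(2\)-power with exponent \(2\)) are finitely many and can be dealt with directly, either by an ad hoc prime or by the MAGMA computation described in Subsection~\ref{r:magma}. I expect the main obstacle to be the bookkeeping in case (i), \(T=\Omega_{2m}^-(q)\) with \(q\) even, where \(\ppd(q,2m)\) \emph{does} divide \(|\Omega_{2m}^-(q)|\), so one must instead argue with a prime like \(\ppd(q,m)\) (for \(m\) odd) or \(\ppd(q,2m-2)\), track carefully which \(\ppd\)'s lie in \(|\Omega_{2m}^-(q)|\) versus \(|\Sp_{2m}(q)|\) (this is classical: \(|\Sp_{2m}(q):\Omega_{2m}^-(q)| = \frac{q^m+1}{(2,q-1)}\cdot(\text{stuff})\), and \(\ppd(q,2i)\) for \(i<m\) dividing \(q^m+1\) only when... ), and confirm the chosen \(\ppd\) is large enough — and, as always, to verify the small exceptional \((q,m,k)\) by hand or by MAGMA.
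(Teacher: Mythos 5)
Your proposal rests almost entirely on choosing, for each possible insoluble composition factor \(T\) from Lemma~\ref{prerequisites}(a), a prime \(r\) dividing \(|\Sp_{2m}(q)|\) but not \(|\varphi_1(H\cap M)|\), and then deriving \((k!)_r^2\gtrsim r^k\) from Lemma~\ref{abp}. There are two genuine problems with this.  First, as you yourself acknowledge, the method gives you nothing when \(T=\PSp_{2m}(q)\), because then \(\varphi_1(H\cap M)=\Sp_{2m}(q)\) and no prime is ``missing''; your suggestion to ``combine all primes'' is not an argument, and this is precisely the hardest case.  Second, even in the remaining cases the claimed discrepancy at the chosen \(\ppd\) does not exist: for \(T=\PSp_2(q^2)\) (\(m=2\)) and \(r\in\ppd(p,4f)\), one has \(|\PSp_2(q^2)|_r=(q^4-1)_r=|\Sp_4(q)|_r\) (your assertion that the \(r\)-part of \(\PSp_2(q^2).2\) is \((q^2-1)_r\) is simply false, since \((q^2-1)_r=1\) for such \(r\)); and for \(T=\Omega_{2m}^-(q)\) with \(q\) even, a computation of \(r\)-parts at \(r\in\ppd(q,2m)\) (or \(\ppd(q,m)\)) shows again that \(|T|_r\) is large enough that the inequality from Lemma~\ref{abp} is automatically satisfied.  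So the ppd sieve does not, by itself, eliminate a single one of the four families of \(T\) for all \(k\geqslant3\).

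The paper's proof takes a quite different, structural route.  It first shows (Claim~1, via an imprimitivity/Bertrand argument for \(\ell>1\) and a Burnside-type prime-count for \(\ell=1\)) that the multiplicity \(\ell=\m_{H\cap M}(T)\) equals \(k\), hence \((H\cap M)'\cong X^k\) with \(X=Z.T\) (Claim~2).  It then transfers this structure through the isomorphism \(\psi\colon H\to K\), showing \(\psi((H\cap M)')\leqslant K\cap M\) (Claim~3), and eliminates all \(T\neq\PSp_{2m}(q)\) not by arithmetic but by conjugacy: all subgroups of \(M_1\) isomorphic to \(X\) are \(M_1\)-conjugate (\cite{asch,holt}), so \((H\cap M)'\) and \(\psi((H\cap M)')\) are \(M\)-conjugate, and \(G=HK\leqslant N_G(N)N_G(N^g)\) contradicts Lemma~\ref{HK} (Claim~4).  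Finally, for \(T=\PSp_{2m}(q)\) the argument forces \(M\leqslant H\cap K\), reduces to a homogeneous factorisation of \(\Sy_k\), invokes \cite{BP} to pin down \(k=6\) with \(\pi(H)=\PGL_2(5)\), \(\pi(K)=\Sy_5\), and then derives a contradiction from the \(\Sy_5\)-orbit of length \(1\) normalising \(M_6\).  None of these steps appear in your sketch, and the prime-counting you do propose cannot replace them.
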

\begin{proof}
Suppose for a contradiction that Hypothesis \ref{wrspsk} holds with \(k\geqslant3\). Then by part (a) and (b) of Lemma \ref{prerequisites},  \(\varphi_{1}(H\cap M)\) has a unique insoluble composition factor \(T\) and \(Z.T\leqslant \varphi_{1}(H\cap M)\leqslant (Z.T).2\). Moreover by part (c) of Lemma \ref{prerequisites} we have \(\ell\) divides \(k\) (recall that \(\ell:=\m_{H\cap M}(T)\)).

\medskip
\noindent
\textit{Claim~1:}
 \(\ell=k\). 

\medskip
\noindent
Suppose to the contrary that \(\ell<k\). Suppose first that $\ell>1$. Then the partition in part (c) of Lemma \ref{prerequisites} is nontrivial and  \(\pi(H)\) is imprimitive.
Then  \(\pi(H)\leqslant \Sy_{k/\ell}\wr\Sy_{\ell}\), and $k\geqslant 4$ and is not prime.  If \(k\geqslant8\), then by Bertrand's Postulate \cite[Theorem 8.7]{ivan}, there exists a prime \(\phi(k)\) such that \(k/2<\phi(k)\leqslant k-2\). Let us define \(d\) by 
\[
    d=\begin{cases}
    3&\text{if \(k=4\)}\\
    5&\text{if \( k=6\)}\\
    \phi(k)&\text{if \(k\geqslant8\)}.
    \end{cases}
\]
Note that \(d>k/2\) and so \(d>k/\ell\) and \(d>\ell\). Consequently, \(d\) does not divide \((\frac{k}{\ell})!\) or \(\ell!\). Hence \(|\pi(H)|_{d}=1\) while \((k!)_{d}=d\). Let us denote by \(d^{c}:=|\Sp_{2m}(q)|_{d}\). By part (b) of Lemma \ref{prerequisites},  \(|\varphi_{1}(H\cap M)|_{d}=|T|_{d}\) since \(d\) is odd. Thus we have that:
\[
    |H|_{d}^{2}\leqslant(|T|^{k/2}_{d}|\pi(H)|_{d})^{2}=|T|_{d}^{k}\leqslant|\Sp_{2m}(q)|^{k}_{d}=d^{kc}
\]
and 
\[
    |H|_{d}^{2}\geqslant|G|_{d}=|\Sp_{2m}(q)|_{d}^{k}(k!)_{d}=d^{kc+1},
\]
which is a contradiction. 

Hence $\ell=1$. Since \(\Sp_{2m}(q)\) is insoluble, we deduce by Burnside's \(p^{a}q^{b}\) Theorem \cite[ Theorem 3.3, page 131]{Gorenstein} that \(|\Sp_{2m}(q)|\) has at least three prime divisors. In particular, there exists \(r\in\Pi(\Sp_{2m}(q))\) such that \(r\geqslant5\). Let us denote by \(r^{a}\) the \(r\)-part of \(|\Sp_{2m}(q)|\). Then \(a\geqslant1\), and by Lemma \ref{sizeppd}, \((k!)_{r}\leqslant r^{k/4}< r^{ak/3}\)  as \(r\geqslant 5\). Thus, since \(2+k/3<k\),
\[
    |H|_{r}^{2}\leqslant|\Sp_{2m}(q)|_{r}^{2}(k!)_{r}^{2}=r^{2a}(k!)_{r}^{2}\leqslant r^{2a}r^{ak/3}(k!)_{r}< r^{ak}(k!)_{r}, 
\]
and 
\[
  |H|_{r}^{2}\geqslant  |G|_{r}=|\Sp_{2m}(q)|_{r}^{k}(k!)_{r}=r^{ak}(k!)_{r},
\]
which is a contradiction. This proves Claim~1.

\medskip
\noindent
\textit{Claim~2:} Let \(X=Z.T\) as in part (b) of Lemma \ref{prerequisites}. Then $(H\cap M)'\cong X^k$, and in particular $Z(M)\lhd (H\cap M)'$ and $(H\cap M)'/Z(M)\cong T^k$ is a minimal normal subgroup of $H/Z(M)$. Moreover, if $N\lhd H$ and $N < (H\cap M)'$, then $N\leqslant Z(M)=Z((H\cap M)')$. 

\medskip
\noindent
Note that \(Z\) is trivial when \(q\) is even, and in this case $Z(M)=1$ and the equality $(H\cap M)'\cong T^k=X^k$ follows from part (b)-(c) of Lemma \ref{prerequisites} and Claim~1. The other assertions follow from the fact that $\pi(H)$ is transitive and \(\PSp_{2m}(q)\) is non-abelian simple for the values of \(m\) and \(q\) considered. 
Thus we may assume that $q$ is odd. Then, by part (b) of Lemma \ref{prerequisites}, \(Z.T\) is a non-split extension with \(|Z|=2\), and  \(\varphi_{i}((H\cap M)')=X\) for \(1\leqslant i\leqslant k\).

For \(1\leqslant i\leqslant k\), define the projection \(\widehat{\pi_{i}}: M\to\prod_{j\neq i}M_{j}\). Then \((H\cap M)'\cap M_{i}\) is the kernel of \(\widehat{\pi_{i}}\) restricted to \((H\cap M)'\). Thus
\begin{equation}
(H\cap M)'/((H\cap M)'\cap M_{i})\cong\widehat{\pi_{i}}((H\cap M)')\leqslant\prod_{j\neq i}\varphi_{j}((H\cap M)')\cong X^{k-1}.
\end{equation}
Since \(X^{k-1}\) has only \(k-1\) composition factors isomorphic to \(T\), we conclude that \((H\cap M)'\cap M_{i}\) also has a composition factor \(T\). Since \(Z.T\) is non-split, it follows that \(Z.T\leqslant (H\cap M)'\cap M_{i}\), and so \((H\cap M)'\cap M_{i}=Z.T\) by part (b) of Lemma \ref{prerequisites}. This holds for all \(i\), and so \((H\cap M)'\cong X^{k}\). In particular $(H\cap M)'$ contains $Z^k=Z(M)=Z((H\cap M)')$ and $(H\cap M)'/Z(M)\cong T^k$. Since $\pi(H)$ is a transitive subgroup of $S_k$ it follows that $(H\cap M)'/Z(M)$ is a minimal normal subgroup of $H/Z(M)$. Finally suppose that $N\lhd H$ and $N<(H\cap M)'$. If $N\not\leqslant Z(M)$, then $NZ(M)/Z(M)$ is a nontrivial normal subgroup of $H/Z(M)$ contained in $(H\cap M)'/Z(M)$ and it follows from the minimality of $(H\cap M)'/Z(M)$ that $NZ(M) = (H\cap M)'$.
Since $X=Z.T$ is non-split, it follows that $\varphi_i(N)=X$ for all $i$, and the argument at the beginning of this paragraph (applied to $N$ instead of $(H\cap M)'$) shows that $N\cong X^k$, and hence $N=(H\cap M)'$, which is a contradiction. Hence $N\leqslant Z(M)$, and Claim~2 is proved.

\medskip

We now consider the structure of $K$ in the homogeneous factorisation $G=HK$. Recall that  $H$ and $K$ are isomorphic.

\medskip
\noindent
\textit{Claim~3:}\label{LM}
Let $\psi:H\to K$ be an isomorphism, and let $L:=\psi((H\cap M)')$ and $Y:=\psi(Z((H\cap M)'))$. 
 Then $Y<L\leqslant K\cap M$, $Y\lhd K$, $L\cong X^k$, $Y=Z(L)$, and each $N\lhd K$ with $N<L$ satisfies $N\leqslant Y$. 

 \medskip
 
It follows from Claim~2 that $Y\lhd K$, $L\cong X^k$, $Y=Z(L)$, and each $N\lhd K$ with $N<L$ satisfies $N\leqslant Y$. It remains to prove that $L\leqslant K\cap M$. 
Suppose to the contrary that  $L\not \leqslant K\cap M$. Then $N:=L\cap M$ is equal to $L\cap (K\cap M)$ (since $L<K$), $N$ is a proper subgroup of $L$ (as otherwise $L=L\cap (K\cap M)\leqslant K\cap M$), and $N\lhd K$ (since both $L$ and $K\cap M$ are normal subgroups of $K$). Therefore $N\leqslant Y$, by the last part of the claim. This implies that \(N\) is soluble and therefore the set of insoluble composition factors of \(L\) and \(L/N\) coincide. Note that 
\[
    L/N=L/(L\cap M)=\pi(L)\leqslant \Sy_{k}.
\] This together with the fact that \(L\cong X^{k}=(Z.T)^{k}\) has \(k\) copies of insoluble composition factors isomorphic to \(T\) implies that \(L/N\) has \(k\) copies of insoluble composition factors isomorphic to \(T\) and so \((k!)\) is divisible by \(|T|^{k}\), which is impossible. Thus Claim~3 is proved.   

\medskip
\noindent
\textit{Claim~4:}
\(T=\PSp_{2m}(q)\).

\medskip
\noindent
Assume that \(T\neq\PSp_{2m}(q)\). Then by Lemma \ref{prerequisites} we have \(T=\Omega_{2m}^{-}(q)\), \(\PSp_{2}(q^{2})\) or \(\A_{c}\) for \(c=7,8\).  By \cite[Theorem \(B\Gamma\)]{asch} and \cite[Table 8.12]{holt} we deduce that all the subgroups of \(M_{1}\) isomorphic to \(X=Z.T\) are conjugate. Therefore all the subgroups of \(M\) isomorphic to \(X^{k}\) are conjugate. Thus $N:=(H\cap M)'$ is conjugate to the subgroup $L$ of  Claim~3, so \( N^{g}=L\) for some \(g\in M\). Note that $N$ is not normal in $G$ for any of these groups $T$ (as otherwise $N\lhd M$ and each composition factor of $N$ would be $\PSp_{2m}(q)$).  Consequently, we have $g\in G$, \(H\leqslant N_{G}(N)\), \(K\leqslant N_{G}(N^{g})\), and  $G=HK$, which violates Lemma \ref{HK}, and proves the claim.

We conclude from Claim~2 that the subgroups $(H\cap M)'$ and $L$ of $M$ from Claims~3 and~4 satisfy \((H\cap M)'\cong L\cong \Sp_{2m}(q)^k\), and hence  \((H\cap M)'=L=M\). Thus \(H=M.\pi(H)\) and \(K=M.\pi(K)\), and since \(H\cong K\) we deduce that \(\Sy_{k}=\pi(H)\pi(K)\) is a homogeneous factorisation. Moreover each of $\pi(H), \pi(K)$ is a proper subgroup of $\Sy_k$ since $H, K$ are proper subgroups of $G$. Since $\Sy_k$ has no homogeneous factorisation if \(k\leqslant4\), we have  \(k\geqslant5\). It follows from \cite[Theorem 1.1]{BP} that \(k=6\), \(\pi(H)=\PGL_{2}(5)\) and \(\pi(K)=\Sy_{5}\). Then \(\pi(K)\) has two orbits on \(\{M_{1},\ldots, M_{6}\}\), of lengths \(5\) and \(1\), respectively. Without loss of generality, we assume that \(\{M_{6}\}\) is the orbit of length \(1\). Then \(M_{6}\) is normalised by \(\pi(K)\) and therefore \(M_{6}\triangleleft K\) and $M_6 < L$, which contradicts the last assertion of Claim~3. This contradiction completes the proof of Lemma~\ref{Sp}.
\end{proof}

\begin{lemma}\label{k=2}
Suppose that Hypothesis \ref{wrspsk} holds, \((m,q)\neq (1,3)\) and \(k=2\). Then, interchanging \(M_{1}\) and \(M_{2}\), and \(H\) and \(K\) if necessary, we have \(K^{(\infty)}=M_{1}\) with \(K\leqslant M\).
\end{lemma}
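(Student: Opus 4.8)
The plan is to push the analysis of Lemma~\ref{Sp} one step further in the case $k=2$ and then pin down $K$ exactly. Write $\sigma:=|\Sp_{2m}(q)|$, fix an isomorphism $\psi\colon H\to K$, and recall from Lemma~\ref{prerequisites} that $H\cap M$ has a unique insoluble composition factor $T$ with $Z.T\leqslant\varphi_1(H\cap M)\leqslant(Z.T).2$ and $\ell:=\m_{H\cap M}(T)$ dividing $2$. I would first rule out $\ell=2$. The arguments of Claims~2 and~3 in the proof of Lemma~\ref{Sp} apply verbatim when $k=2$, giving $(H\cap M)'\cong X^2$ with $X:=Z.T$ and $\psi((H\cap M)')\leqslant K\cap M$. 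If $T=\PSp_{2m}(q)$ then $X=\Sp_{2m}(q)$ (by perfectness of $\Sp_{2m}(q)$, \cite[Theorem 8.7]{Taylor}), so $(H\cap M)'=M\leqslant H$, whence $H=G$ since $\pi(H)=\Sy_2$, contradicting that $H$ is proper. If instead $T\in\{\Omega_{2m}^-(q),\PSp_2(q^2),\A_7,\A_8\}$, then by \cite[Theorem $B\Gamma$]{asch} and \cite[Table 8.12]{holt} all subgroups of $M$ isomorphic to $X^2$ are $G$-conjugate; as $(H\cap M)'$ is not normal in $G$, conjugating it onto $\psi((H\cap M)')$ and applying Lemma~\ref{HK} gives a contradiction. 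Hence $\ell=1$.

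With $\ell=1$, the subgroups $(H\cap M)\cap M_i$ are soluble (they lie in $Z(M_i)$) and $D:=H^{(\infty)}=(H\cap M)^{(\infty)}$ is a perfect group with unique composition factor $T$ satisfying $\varphi_i(D)=Z.T$ for $i=1,2$; as $\pi(H)=\Sy_2$ interchanges $M_1$ and $M_2$, both projections of $D$ are nontrivial, so $D$ is a full diagonal subgroup of $M_1\times M_2$ isomorphic to $Z.T$. Now $|H|=2|H\cap M|\leqslant 4|Z.T|$, and for each exceptional value of $T$ one checks that $|Z.T|$ is far smaller than $\sigma$ (this is where the exclusions of Hypothesis~\ref{wrspsk}, in particular $(m,p)\neq(2,2)$, enter), so $|H|^2<2\sigma^2=|G|$, contradicting Lemma~\ref{abp}. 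Therefore $T=\PSp_{2m}(q)$, $Z.T=\Sp_{2m}(q)=M_i$, and $D=\{(x,\theta(x)):x\in M_1\}$ is a full diagonal of $M$ with $\varphi_i(D)=M_i$, for some isomorphism $\theta\colon M_1\to M_2$; in particular $D\cong\Sp_{2m}(q)$.

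To finish, note that $K^{(\infty)}=\psi(D)\cong\Sp_{2m}(q)$ is a quasisimple subgroup of $K\cap M\leqslant M_1\times M_2$, so each of its two projections is $1$ or the whole factor. If one projection is trivial, then $\psi(D)$ equals $M_1$ or $M_2$, say $M_2$; then $M_2=K^{(\infty)}\triangleleft K$, no element of $G\setminus M$ normalises $M_2$, so $\pi(K)=1$ and $K\leqslant M$; writing $K=(K\cap M_1)\times M_2$ we get $(K\cap M_1)^{(\infty)}=1$, whence $K^{(\infty)}=M_2$ and $K\cap M_1$ is soluble, and after interchanging $M_1$ and $M_2$ this is exactly the assertion. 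So suppose $\psi(D)=:D'$ is a full diagonal of $M$; I will derive a contradiction. One computes $|N_M(D')|=2\sigma$, and uses the elementary fact that every automorphism $\mu$ of $\Sp_{2m}(q)$ or $\PSp_{2m}(q)$ (for the relevant $(m,q)$) has $|\Fix(\mu)|>2$: this is immediate when $\mu$ is trivial; for a nontrivial inner automorphism $\iota_g$ the fixed-point subgroup is $C(g)\supseteq\langle g,Z\rangle$; and an automorphism with a nontrivial field, graph or diagonal component fixes a large subfield or subsystem subgroup. If $\pi(K)=1$, then $K\leqslant N_M(D')$ forces $|K|=2\sigma$, hence $H\cap M=D$ and $|H\cap K|=|H||K|/|G|=2$, whereas $H\cap K\supseteq D\cap D'\cong\Fix(\theta'^{-1}\theta)$ has order $>2$. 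If $\pi(K)=\Sy_2$, then the first two paragraphs applied to $K$ in place of $H$ show that $K\cap M$ has the same ``diagonal'' shape as $H\cap M$, and counting $(H\cap M)\cap(K\cap M)$ in terms of $\Fix$ of an automorphism of $M_1/((H\cap M)\cap M_1)$ again contradicts the value of $|H\cap K|$. This completes the identification $K^{(\infty)}=M_1$ with $K\leqslant M$.

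I expect the main obstacle to be the last paragraph: controlling $\psi(D)$ in the full-diagonal case, and verifying the fixed-point bound $|\Fix(\mu)|>2$ uniformly --- this is precisely where the small groups $\Sp_2(2)$, $\Sp_2(3)$, $\Sp_2(9)$ and $\Sp_4(2)$ must be excluded, and where one may prefer to quote known lower bounds on centraliser orders in groups of Lie type rather than argue by hand. A minor technical point is to confirm that Claims~2 and~3 of Lemma~\ref{Sp} do remain valid when $k=2$.
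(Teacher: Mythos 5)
Your first two steps (forcing $\ell=1$ and then $T=\PSp_{2m}(q)$ via the order bound $|H|^2<|G|$ in the exceptional cases) essentially reproduce Claims~1 and~2 of the paper's proof of Lemma~\ref{k=2}, and they are fine modulo small repairs (e.g.\ for $q$ even one has $|N_M(D')|=\sigma$, not $2\sigma$, and concluding that $D$ is a \emph{full} diagonal needs $D\cap M_i=1$, which requires the central-extension/Schur multiplier observation rather than just ``both projections are nontrivial''). The genuine problem is your final step, where you diverge from the paper. The paper does not analyse $K^{(\infty)}$ by hand: it proves that $\overline{H}$ is maximal in $\overline{G}=\overline{H}\,\overline{K}$, so that $\overline{G}$ acts primitively of simple diagonal type on $[\overline{G}:\overline{H}]$, and then quotes \cite[Theorem 1, Example 1.2]{transitive} on transitive subgroups of such primitive groups to force $\overline{M_1}\leqslant\overline{K}$ (the alternative, a factorisation $\PSp_{2m}(q)=B_1B_2$ with $B_1\times B_2\trianglelefteq\overline{K}$, is killed by comparing composition factors with those of $H$). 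Your substitute for this tool -- ruling out the ``$\psi(D)$ is a full diagonal'' case by computing $|H\cap K|=|H||K|/|G|$ and comparing with $|D\cap D'|\cong|\Fix(\mu)|$ -- does not close the case where $\pi(K)=\Sy_2$.

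Concretely, in that subcase both $H$ and $K$ can have order $4\sigma$ (namely $H=N_M(D).2$, $K=N_M(D').2$), whence $|H\cap K|=|H||K|/|G|=8$ exactly; on the other hand $H\cap K\supseteq\langle D\cap D',\,Z(M)\rangle$, which has order $2|\Fix(\mu)|$, and $|\Fix(\mu)|$ can be as small as $q-1$ for an inner automorphism $\mu$ of $\Sp_2(q)$. For $(m,q)=(1,5)$ (which Hypothesis~\ref{wrspsk} and the lemma allow) this gives $2\cdot 4=8$, so no contradiction arises from your count; a bound $|\Fix(\mu)|>2$ is not enough, and even the bound itself is asserted rather than proved uniformly. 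Note also that you cannot fall back on Lemma~\ref{HK} here, since full diagonal subgroups $D_\theta$ and $D_{\theta'}$ of $M$ are $G$-conjugate only when $\theta'$ differs from $\theta^{\pm1}$ by inner automorphisms, which need not hold. So the full-diagonal case for $K^{(\infty)}$ is a real gap: either you need a substantially sharper centraliser/intersection analysis, or you should import the paper's key ingredient, the structure theorem for transitive subgroups of primitive groups of simple diagonal type from \cite{transitive}.
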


\begin{proof}
Suppose that Hypothesis \ref{wrspsk} holds. Let \(\psi: H\to K\) be an isomorphism. By Lemma \ref{prerequisites} we have \(Z.T\leqslant\varphi_{1}(H\cap M)\leqslant (Z.T).2\) for \(T=\PSp_{2m}(q)\), \(\Omega_{2m}^{-}(q)\) (with \(q\) even), \(\PSp_{2}(q^{2})\) (with \(m=2\)), or \(\A_{c}\) (with \((m,q)=(3,2)\)) for \(c=7,8\). Moreover, \(\ell\)(\(:=\m_{H\cap M}(T)\)) divides \(k\). As \(k=2\), it follows that \(\ell=1\) or \(2\).

\medskip
\noindent
\textit{Claim~1:}
 \(\ell=1\).

\medskip
\noindent
Suppose for a contradiction that the assertion is false and \(\ell=2\). Then since \(H\cap M=(H\cap M_{2}).\varphi_{1}(H\cap M)\) and \(\m_{\varphi_{1}(H\cap M)}(T)=1\) , it follows that \(\m_{H\cap M_{2}}(T)=1\). Note that \(Z.T\leqslant \varphi_{2}(H\cap M)\leqslant (Z.T).2\) (as \(\varphi_{1}(H\cap M)\cong\varphi_{2}(H\cap M)\)) and \(H\cap M_{2}\triangleleft \varphi_{2}(H\cap M)\) (as \(H\cap M_{2}\triangleleft H\cap M\)). It follows that \((H\cap M_{2})'\triangleleft Z.T\). Since either \(Z.T\) is a non-split extension, or \(Z=1\) (by part (b) of Lemma \ref{prerequisites}), we deduce that \((H\cap M_{2})'\cong Z.T\). As \(\pi(H)=\Sy_{2}\) and \(H\cap M_{1}\cong H\cap M_{2}\), we conclude that \((H\cap M_{1})'\cong (H\cap M_{2})'\cong Z.T\). This implies that
\[
    (\prod_{i=1}^{2}(H\cap M_{i}))'=\prod_{i=1}^{2}(H\cap M_{i})'\cong (Z.T)^{2}
\]
and so 
\[
    |H\cap M:\prod_{i=1}^{2}(H\cap M_{i})'|\leqslant |\prod_{i=1}^{2}\varphi_{i}(H\cap M):\prod_{i=1}^{2}(H\cap M_{i})'|\leqslant 4.
\]
Hence \(H\cap M/\prod_{i=1}^{2}(H\cap M_{i})'\) is abelian and \(\prod_{i=1}^{2}(H\cap M_{i})'\geqslant (H\cap M)'\). On the other hand \(H\cap M\geqslant \prod_{i=1}^{2}(H\cap M_{i})\). Thus we deduce that \((H\cap M)'\cong \prod_{i=1}^{2}(H\cap M_{i})'\cong (Z.T)^{2}\). In particular \(Z(M)\triangleleft (H\cap M)'\) and therefore \((H\cap M)'/Z(M)\) is a minimal normal subgroup of \(H/Z(M)\) by the transitivity of \(\pi(H)\). 

Recall the isomorphism \(\psi: H\to K\). In particular, we have \(\psi((H\cap M)')/\psi(Z(M))\) is a minimal normal subgroup of \(K/\psi(Z(M))\). Note that \((K\cap M)\psi(Z(M))/\psi(Z(M))\triangleleft K/\psi(Z(M))\) as \(K\cap M\triangleleft K\). Thus one of the following occurs:
\begin{description}
\item[(i)]\(\psi((H\cap M)')/\psi(Z(M)) \cap (K\cap M)\psi(Z(M))/\psi(Z(M))=1\), or
\item[(ii)]\(\psi((H\cap M)')/\psi(Z(M))\leqslant (K\cap M)\psi(Z(M))/\psi(Z(M))\).
\end{description}
First assume that case (i) occurs. Then \(\psi((H\cap M)')\cap (K\cap M)=\psi(H\cap M)'\cap M\) is contained in \(Z(M)\), an abelian subgroup, and this implies that \(\ICF(\psi((H\cap M)'))=\ICF(\pi(\psi((H\cap M)')))\). However, this is impossible as \(\pi(\psi((H\cap M)'))\) is contained in \(\Sy_{2}\), which is cyclic.
Thus case (ii) occurs and so \(\psi((H\cap M)')\leqslant (K\cap M)Z(M)\leqslant M\). 

From the above argument we see that both \((H\cap M)'\) and \(\psi((H\cap M)')\) are contained in \(M\). For \(T=\Omega_{2m}^{-}(q)\) (with \(m,q\) even), \(\PSp_{2}(q^{2})\) (with \(q\) odd), or \(\A_{c}\) (with \((m,q)=(3,2)\) and \(c=7,8\)), we deduce by \cite[Theorem B\(\Gamma\)]{asch} and \cite[Table 8.12]{holt} that all of the subgroups of \(M_{1}\) isomorphic to \(Z.T\) are conjugate. Therefore all the subgroups of \(M\) isomorphic to \((Z.T)^{2}\) are conjugate. Thus there exists \(g\in G\) such that \(((H\cap M)')^{g}=\psi((H\cap M)')\). Note that \((Z.T)^{2}\cong (H\cap M)'\triangleleft G\). This together with the fact \((H\cap M)'\triangleleft H\) and \(\psi((H\cap M)')\triangleleft K\) implies that \(G=HK\leqslant N_{G}((H\cap M)') N_{G}(((H\cap M)')^{g}\), violating Lemma \ref{HK}. Thus \(T=\PSp_{2m}(q)\). However, then we would have \(\Sp_{2m}(q)^{2}\cong(H\cap M)'=M\) and therefore \(H=M.2=G\), which is a contradiction. Hence Claim~1 is proved.

\medskip
\noindent
\textit{Claim~2:}
 \(T=\PSp_{2m}(q)\).

\medskip
\noindent
Suppose for a contradiction that \(T\neq\PSp_{2m}(q)\). Then \(T=\Omega_{2m}^{-}(q), \PSp_{2}(q^{2})\), or \(\A_{c}\) for \(c=7,8\). A direct computation tells us that 
\begin{equation}\label{333}
    |\varphi_{1}(H\cap M)|\leqslant|(Z.T).2|<|\Sp_{2m}(q)|/3.
\end{equation}
On the other hand, since \(\ell=1\) by Claim~1 and \(H\cap M\cong (H\cap M_{2}).\varphi_{1}(H\cap M)\) and \(\m_{\varphi_{1}(H\cap M)}(T)=1\), it follows that \(H\cap M_{2}\) is soluble. As \(H\cap M_{2}\triangleleft \varphi_{2}(H\cap M)\) and \((Z.T)\leqslant \varphi_{2}(H\cap M)\leqslant (Z.T).2\), we conclude that \(H\cap M_{2}\lesssim Z\). In particular \(|H\cap M_{2}|\leqslant2\). This together with equation \eqref{333} implies that
\[
    |H|^{2}=4|H\cap M_{2}|^{2}|\varphi_{1}(H\cap M)|^{2}< 16(|\Sp_{2m}(q)|/3)^{2}\leqslant 2|\Sp_{2m}(q)|^{2}=|G|,
\]
which contradicts the assumption that \(G=HK\) is a homogeneous factorisation. Hence Claim~2 is proved.

\medskip
For any subgroup \(S\leqslant G\) we denote by \(\overline{S}:=SZ(M)/Z(M)\) and \(\phi_{i}\) the projection from \(\overline{M}\) to \(\overline{M_{i}}\). Then \(\PSp_{2m}(q)^{2}.2\cong\overline{G}=\overline{H}\,\overline{K}\). By Claim~1 and 2 we deduce that \(\overline{H\cap M}\) is a diagonal subgroup of \(\overline{M}\). In particular, there exists \(\sigma\in\Aut(\PSp_{2m}(q))\) such that \(\overline{H\cap M}=\{ (x, x^{\sigma})\mid x\in\overline{M_{1}}\}\). 

\medskip
\noindent
\textit{Claim~3:}
 \(\overline{H}\) is a maximal subgroup of \(\overline{G}\). Then \(\overline{G}\) acts primitively on \(\Omega:=[\overline{G}: \overline{H}]\).

\medskip
\noindent
Note that \(\overline{H}/(\overline{H}\cap\overline{M})\cong\pi(H)\cong\Sy_{2}\) and \(\phi_{1}(\overline{H}\cap\overline{M})\cong\phi_{2}(\overline{H}\cap\overline{M})\cong \PSp_{2m}(q)\). Let \(P\) be a subgroup satisfying \(\overline{H}\leqslant P<\overline{G}\). Then \(P/(P\cap\overline{M})\cong \overline{H}/(\overline{H}\cap\overline{M})\cong \Sy_{2}\) and \(\phi_{1}(P\cap\overline{M})\cong\phi_{2}(P\cap \overline{M})\cong \PSp_{2m}(q)\). Note that \(\PSp_{2m}(q)\) is non-abelian simple as \((m,q)\neq (1,2), (1,3)\). Then by Scott's Lemma \cite[Theorem 4.16(iii)]{csaba} we deduce that \(P\cap \overline{M}\cong \PSp_{2m}(q)^{j}\) for \(j=1,2\). Since \(P\neq \overline{G}\), it follows that \(j=1\) and \(P=\overline{H}\). Therefore \(\overline{H}\) is maximal in \(\overline{G}\) and \(\overline{G}\) acts primitively on \(\Omega\) with \(\overline{H}\) a point stabiliser, proving Claim~3.

\medskip
Since the action of \(\overline{G}\) on \(\Omega\) is primitive, it follows from the O'Nan-Scott Theorem that this action is of simple diagonal type, see \cite[Table 7.1]{csaba}. Moreover, since \(\overline{G}=\overline{H}\,\overline{K}\) the group \(\overline{K}\) acts transitively on \(\Omega\). Then, by \cite[Theorem 1, Example 1.2]{transitive}, either there exist proper subgroups \(B_{1}, B_{2}\) such that \(\PSp_{2m}(q)=B_{1}B_{2}\) and \(B_{1}\times B_{2}\triangleleft\overline{K}\), or \(\overline{K}\) contains a nontrivial subnormal subgroup of \(\overline{G}\).

First assume that the former case occurs. Note that \(\overline{K}=KZ(M)/Z(M)\cong K/(K\cap Z(M))\) and \(K\cap H\). Then 
the composition factor set for \(\overline{K}\) is contained in that for \(H\), which is \(\{\PSp_{2m}(q), 2\}\). Since \(B_{i}\neq \PSp_{2m}(q)\) for \(i=1,2\), we deduce that \(|B_{i}|\) are some powers of \(2\) for \(i=1,2\) and therefore \(|\PSp_{2m}(q)|\) is some power of \(2\), which is a contradiction.

Thus the latter case occurs and \(\overline{K}\) contains some nontrivial subnormal subgroup of \(\overline{G}\). In other words, \(\overline{M_{i}}\leqslant\overline{K}\) for \(i=1\) or \(2\). Without loss of generality, \(\overline{M_{1}}\leqslant \overline{K}\) and so \(M_{1}\leqslant K\). Moreover, since \(K\cap M/(K\cap M_{2})\cong\varphi_{1}(K\cap M)\), the group \(K\cap M=(K\cap M_{2})\times M_{1}\). If \(\pi(K)=2\), then \(M_{2}=K\cap M_{2}\cong K\cap M_{1}= M_{1} \). This would imply that \(\m_{K}(\PSp_{2m}(q))=2\), contradicting Claim~1. Thus \(\pi(K)=1\) and \(K\leqslant M\). Since \(K=(K\cap M_{2})\times M_{1}\), we have 
\[
\{\PSp_{2m}(q)\}=\ICF(K)=\ICF(K\cap M_{2})\cup\ICF(M_{1}).
\]
Moreover, since \[1=\m_{K}(\PSp_{2m}(q))=\m_{K\cap M_{2}}(\PSp_{2m}(q))+\m_{M_{1}}(\PSp_{2m}(q))\] and \(\m_{M_{1}}(\PSp_{2m}(q))=1\), it follows that \(\m_{K\cap M_{2}}(\PSp_{2m}(q))=0\). Hence \(\ICF(K\cap M_{2})=\varnothing\) and so \(K\cap M_{2}\) is soluble. Therefore \(K^{(\infty)}=M_{1}\). Since \(H\cong K\), we have \(H^{(\infty)}\cong M_{1}\cong\Sp_{2m}(q)\). Since \(H\cong(H\cap M)=\pi(H)\cong\C_{2}\), we have \(H^{(\infty)}\leqslant H\cap M\leqslant M\cong\Sp_{2m}(q)^{2}\). Note that for \(Q\leqslant M\cong\Sp_{2m}(q)^{2}\) such that \(Q\cong\Sp_{2m}(q)\), either \(Q=M_{i}\) for \(i=1\) or \(2\), or there exists \(\alpha\in\Aut(\Sp_{2m}(q))\) such that \(Q=\{(x,x^{\alpha})| x\in M_{1}\}\). If \(H^{(\infty)}=M_{i}\) for \(i=1\) or \(2\), then since \(\pi(H)\) acts transitively on \(\{M_{1}, M_{2}\}\), it follows that \(M_{3-i}\leqslant H\) and so \(M=M_{1}\times M_{2}\leqslant H\). This implies that 
\(\m_{H}(\PSp_{2m}(q))=2\), which is a contradiction to Claim~1. Hence there exists \(\alpha\in\Aut(\Sp_{2m}(q))\) such that \(H^{(\infty)}=\{(x,x^{\alpha}) \mid x\in M_{1}\}\), proving the lemma.
\end{proof}

\begin{lemma}\label{Sp23}
Suppose that Hypothesis \ref{wrspsk} holds, \((m,q)=(1,3)\) and \(k\geqslant 5\). Then \(k=6\) and \(\PSL_{2}(5)\triangleleft \pi(H)\leqslant \PGL_{2}(5)\) with \(\A_{5}\triangleleft\ \pi(K)\leqslant \Sy_{5}\).
\end{lemma}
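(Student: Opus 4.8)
\emph{Plan.} Throughout put $T:=\Sp_{2}(3)\cong\SL_{2}(3)$, so $|T|=24=2^{3}\cdot3$, $T'=Q_{8}$, $T''=Z(T)=:Z\cong C_{2}$, $T/Q_{8}\cong C_{3}$, and $M=T^{k}$; note $\Pi(M)=\{2,3\}$ and that the only subgroups of $T$ of order divisible by $6$ are $C_{6}$ and $T$. First I would pin down the projections of $H\cap M$: by Lemma~\ref{wreath}, $\Pi(\varphi_{1}(H\cap M))=\{2,3\}$ and $\varphi_{1}(H\cap M)\cong\cdots\cong\varphi_{k}(H\cap M)$, so each is $C_{6}$ or $M_{i}$; if all were $C_{6}$ then $|H\cap M|_{2}\leqslant2^{k}$, whence $|H|_{2}^{2}\leqslant(2^{k}(k!)_{2})^{2}<2^{3k}(k!)_{2}=|G|_{2}$ (using $(k!)_{2}<2^{k}$), contradicting Lemma~\ref{abp}. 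Hence $\varphi_{i}(H\cap M)=M_{i}$, and likewise $\varphi_{i}(K\cap M)=M_{i}$, so $H\cap M$ and $K\cap M$ are subdirect in $M$.

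Next I would reduce to $k\leqslant6$. As $\Pi(M)=\{2,3\}$, for every prime $r\geqslant5$ we have $|G|_{r}=(k!)_{r}$ and $|H|_{r}=|\pi(H)|_{r}$. Assuming $k\geqslant7$, I would pick (by Bertrand's Postulate, with a finite check of small $k$) a prime $r$ with $k/2<r\leqslant k$ and $r\leqslant k-3$, or $r=5$ when $k=7$; then $(k!)_{r}=r$, so Lemma~\ref{abp} forces $r\mid|\pi(H)|$, so $\pi(H)$ contains an $r$-cycle. Since an element of prime order $r>k/2$ can neither permute the blocks of a nontrivial block system nontrivially nor act nontrivially within a block, a transitive subgroup of $\Sy_{k}$ containing such a cycle must be primitive; so $\pi(H)$ is primitive with a cycle of prime length $\leqslant k-3$, giving $\pi(H)\supseteq\A_{k}$ by Jordan's theorem (for $k=7$, use that $\A_{7}$ and $\Sy_{7}$ are the only primitive groups of degree $7$ of order divisible by $5$). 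Once $k\leqslant6$ is known, Lemma~\ref{k=5} (with $N=M$, valid as $|M|_{5}=1$) leaves exactly two options, the second being precisely the assertion of the lemma, so everything reduces to excluding the first option $\A_{k}\triangleleft\pi(H)\cap\pi(K)\leqslant\Sy_{k}$ — equivalently $\pi(H)\supseteq\A_{k}$, which also handles the leftover $k\geqslant7$ case.

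So the heart of the argument is to show $\pi(H)\supseteq\A_{k}$ is impossible. I would first bootstrap: since $H\cap M$ and $K\cap M$ are soluble and $\A_{k}$ ($k\geqslant5$) is not a section of a soluble group, a normal-subgroup argument shows that $X\in\{H,K\}$ has $\Sy_{k}$ (resp.\ $\A_{k}$) as a quotient exactly when $\pi(X)=\Sy_{k}$ (resp.\ $\pi(X)\supseteq\A_{k}$); with $H\cong K$ this forces $\pi(H)=\pi(K)=\Sy_{k}$. Then I would exploit the characteristic series $M\supset M'=Q_{8}^{k}\supset Z(M)=Z^{k}$: one checks $O_{2}(H)=H\cap M'$ and $Z(O_{2}(H))=H\cap Z(M)$, both characteristic in $H$, so an isomorphism $H\to K$ identifies them with $K\cap M'$ and $K\cap Z(M)$, and the factorisation descends to homogeneous factorisations of $G/M'\cong(\mathbb{F}_{3})^{k}\rtimes\Sy_{k}$ and of $G/Z(M)$. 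The image of $H\cap M$ in $M/M'\cong(\mathbb{F}_{3})^{k}$ is a subdirect $\Sy_{k}$-submodule, hence $\langle\mathbf{1}\rangle$, the zero-sum hyperplane $W$, or $(\mathbb{F}_{3})^{k}$; since $H^{1}(\Sy_{k},(\mathbb{F}_{3})^{k})=0$ all complements of $(\mathbb{F}_{3})^{k}$ in $G/M'$ are conjugate, so I can write $\widehat{H}=U\rtimes\Sy_{k}$ and $\widehat{K}=(U\rtimes\Sy_{k})^{v}$ with $v\in(\mathbb{F}_{3})^{k}$ and $U$ one of those three submodules (the two sides agree since $|\widehat{H}|=|\widehat{K}|$ and the three orders differ for $k\geqslant3$), and a short computation gives $|\widehat{H}\cap\widehat{K}|=|U|\cdot|S|$ for a subgroup $S\leqslant\Sy_{k}$, so $|S|=|U|\,k!/3^{k}$. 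For $U=W$ this yields $|S|=k!/3$, impossible as $\Sy_{k}$ has no index-$3$ subgroup for $k\geqslant5$; for $U=\langle\mathbf{1}\rangle$ it yields the non-integer $k!/3^{k-1}$ (since $v_{3}(k!)<k/2<k-1$). In the remaining case $U=(\mathbb{F}_{3})^{k}$, i.e.\ $(H\cap M)M'=M$, conjugation by the subdirect group $H\cap M$ induces the full coordinatewise $(\mathbb{F}_{4}^{\ast})^{k}$-action on $M'/Z(M)\cong(\mathbb{F}_{4})^{k}$ (as $T/Q_{8}\cong\mathbb{F}_{4}^{\ast}$), whose only subdirect submodule is everything, so $(H\cap M')Z(M)=M'$ and hence $(H\cap M)Z(M)=M$, i.e.\ $HZ(M)=G$; then $V:=H\cap Z(M)\,(=K\cap Z(M))$ is $\langle\mathbf{1}\rangle$, $W$, or $(\mathbb{F}_{2})^{k}$ and $[G:H]=2^{k}/|V|$, where $V=(\mathbb{F}_{2})^{k}$ gives $H=G$, $V=\langle\mathbf{1}\rangle$ would make $O_{2}(H)/\langle\mathbf{1}\rangle$ a complement to the centre of the special $2$-group $Q_{8}^{k}/\langle\mathbf{1}\rangle$ (impossible, as that centre equals the derived subgroup $\cong C_{2}^{k-1}$), and $V=W$ gives $[G:H]=2$, forcing $H=M\rtimes\A_{k}$ (the unique index-$2$ subgroup of $G$, since $G^{\mathrm{ab}}\cong C_{6}$) and $\pi(H)=\A_{k}$ — contrary to $\pi(H)=\Sy_{k}$.

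With this done, $k\in\{5,6\}$ and Lemma~\ref{k=5} yields $k=6$ with $\PSL_{2}(5)\triangleleft\pi(H)\leqslant\PGL_{2}(5)$ and $\A_{5}\triangleleft\pi(K)\leqslant\Sy_{5}$; since Hypothesis~\ref{wrspsk} fixes $\pi(H)$ transitive while the relevant copy of $\Sy_{5}$ in $\Sy_{6}$ is a point stabiliser, no interchange is needed, which is the statement. I expect the main obstacle to be the last part of the previous paragraph: verifying that the relevant subgroups of $H$ are characteristic, controlling the subdirect subgroups of $\Sp_{2}(3)^{k}$ via the permutation-module submodule lattices over $\mathbb{F}_{2},\mathbb{F}_{3},\mathbb{F}_{4}$, and squeezing out the final contradiction from the (non-)splitting of $Q_{8}^{k}$ over subgroups of its centre.
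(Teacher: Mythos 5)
Your proof is correct and begins with the same moves as the paper (pin down $\varphi_i(H\cap M)\cong\Sp_2(3)$ by a $2$-part count, bootstrap to $\pi(H)=\pi(K)=\Sy_k$ in the bad case, pass to $\overline{G}=G/Q_8^k\cong C_3\wr\Sy_k$ and sort the image of $H\cap M$ among the $\mathbb{F}_3\Sy_k$-submodules), but the two pivotal steps are handled by genuinely different means. To force $\A_k\leqslant\pi(H)$ for $k\geqslant 7$ the paper invokes \cite[Lemmas 2.5, 3.6]{linear} as a black box; you instead produce a prime $r$ with $k/2<r\leqslant k-3$ (plus a bespoke argument at $k=7$) and run the transitive-with-a-long-prime-cycle, hence primitive, hence Jordan argument, which is more elementary and self-contained at the cost of a finite check for small $k$. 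In the hardest case $(H\cap M)M'=M$, the paper does a direct iterated-projection count to show $|H\cap M|=|\Sp_2(3)|^k$; you instead exploit the $(\mathbb{F}_4^\ast)^k$-module structure of $M'/Z(M)$ to get $(H\cap M')Z(M)=M'$, then split on $V=H\cap Z(M)$ viewed inside the $\mathbb{F}_2\Sy_k$-permutation module, finishing with a special-$2$-group/commutator argument for $V=\langle\mathbf{1}\rangle$ and the uniqueness of the index-$2$ subgroup $M\rtimes\A_k$ (from $G^{\mathrm{ab}}\cong C_6$) for $V=W$. This is a more structural route to the same contradiction. A few points to tighten when writing it up: (i) the claim $\varphi_i(K\cap M)=M_i$ is only available after the bootstrap gives $\pi(K)$ transitive, so it should be deferred; (ii) you should record $\varphi_i((H\cap M)')=T'=Q_8$, whence $\varphi_i(H\cap M')=Q_8$ — this is precisely what makes $(H\cap M')Z(M)/Z(M)$ subdirect in $\mathbb{F}_4^k$, and also yields $V\neq 0$ via the subdirect subgroup $(H\cap M')'\leqslant Z(M)$, which your case list silently assumes; and (iii) the appeal to $H^1(\Sy_k,\mathbb{F}_3^k)=0$ to put $\widehat{H},\widehat{K}$ in semidirect normal form is both unnecessary and not quite the right cohomological statement (splitting of $\widehat{H}$ over $U$ is governed by $H^2$, not by conjugacy of complements) — the order count $|\widehat{H}\cap\widehat{K}|=|\widehat{H}||\widehat{K}|/|\widehat{G}|$ together with $\widehat{H}\cap\widehat{K}\cap\widehat{M}=U$ already gives $|S|=|U|\,k!/3^k$ directly, as in the paper's treatment of the $\dim(k-1)$ case.
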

\begin{proof}
Since \(H\cong K\), it follows that \(\ICF(H)=\ICF(K)\). Note that \(\ICF(H)=\ICF(H\cap M)\cup\ICF(\pi(H))\) and \(\ICF(K)=\ICF(K\cap M)\cup\ICF(\pi(K))\). This together with the fact that \(M\) is soluble implies \(\ICF(\pi(H))=\ICF(\pi(K))\). We therefore deduce from \cite[Lemma 2.5]{linear} and Lemma \ref{k=5} that either both of \(\pi(H)\) and \(\pi(K)\) contain \(\A_{k}\), or \(k=6\), \(\PSL_{2}(5)\triangleleft \pi(H)\leqslant \PGL_{2}(5)\) and \(\A_{5}\triangleleft \pi(K)\leqslant \Sy_{5}\)(note that Hypothesis \ref{wrspsk} has assumed that \(\pi(H)\) is transitive). Thus in order to prove the result it suffices to show that the former case cannot occur. 

Suppose for a contradiction that \(\A_{k}\) is contained in \(\pi(H)\) and \(\pi(K)\). Then since \(M\) is soluble and has no section isomorphic to \(\A_{k}\), we deduce by \cite[Lemma 3.6]{linear} that \(\pi(H)=\pi(K)=\Sy_{k}\). This together with \cite[Lemma 3.5]{linear} implies that \(\varphi_{1}(H\cap M)\cong\cdots\cong\varphi_{k}(H\cap M)\), and \(\varphi_{1}(K\cap M)\cong\cdots\cong\varphi_{k}(K\cap M)\), and \(\{2,3\}=\Pi(\PSp_{2}(3))\subseteq\Pi(\varphi_{1}(H\cap M))\cap\Pi(\varphi_{1}(K\cap M))\).

\medskip
\noindent
\textit{Claim~1:} \(\varphi_{1}(H\cap M)\cong\varphi_{1}(K\cap M)\cong \Sp_{2}(3)\).

\medskip
\noindent
Since \(\{2,3\}\subseteq \Pi(\varphi_{1}(H\cap M))\), we have that either \(\varphi_{1}(H\cap M)\cong 2.3\) or \(\Sp_{2}(3)\). Suppose that the former case occurs. Then \(|\varphi_{1}(H\cap M)|_{2}=2\) and so \(|H\cap M|_{2}\leqslant 2^{k}\). This implies that
\[
    |H|^{2}_{2}=|H\cap M|_{2}^{2}|\pi(H)|_{2}^{2}\leqslant 2^{2k}(k!)_{2}^{2}<2^{3k}(k!)_{2}.
\]
However, since \(|G|_{2}=|M|_{2}(k!)_{2}=2^{3k}(k!)_{2}\), this implies that \(|H|_{2}^{2}<|G|_{2}\), which is impossible. Hence \(\varphi_{1}(H\cap M)\cong \Sp_{2}(3)\). 

The same argument can be applied on \(\varphi_{1}(K\cap M)\). Thus the claim is proved.

Note that \(M\) has a unique Sylow \(2\)-subgroup and we denote it by \(Q\). Then \(Q\cong (\C_{2}.\C_{2}^{2})^{k} = Q_8^k\). For any subgroup \(N\leqslant G\), we denote by \(\overline{N}:=NQ/Q\). Then \(\overline{M}\cong\C_{3}^{k}\) and  \(\C_{3}\wr\Sy_{k}\cong\overline{G}=\overline{H}\,\overline{K}\).
Since \(\pi(H)=\pi(K)=\Sy_{k}\) and \(\varphi_{1}(H\cap M)\cong\varphi_{1}(K\cap M)\), we find that \(\Sy_{k}= \pi(\overline{H})=\pi(\overline{K})\) and \(\varphi_{1}(\overline{H}\cap\overline{M})\cong\varphi_{1}(\overline{K}\cap\overline{M})\cong\C_{3}\). 

 Since \(\pi(\overline{H})=\Sy_{k}\), we see that \(\overline{H}\cap\overline{M}\) is a submodule of the permutation module \(\overline{M}\) of \(\Sy_{k}\) over a field of characteristic $3$. Similarly, \(\overline{K}\cap\overline{M}\) is a submodule of the permutation module \(\overline{M}\) of \(\Sy_{k}\). By \cite[Lemma 2]{mortimer}, the only submodules of the permutation module \(\overline{M}\) of \(\Sy_{k}\) are \(0\), \(\overline{M}\), a submodule of dimension \(1\) and a submodule of dimension \(k-1\). 

\medskip
\noindent
\textit{Claim~2:} \(\overline{M}\leqslant\overline{H}\).

\medskip
\noindent
Suppose that Claim~2 is false. Then \(\overline{H}\cap\overline{M}\) is either \(0\), a submodule of dimension \(1\) or a submodule of dimension \(k-1\).

First assume that \(\overline{H}\cap\overline{M}\) is \(0\) or a submodule of dimension \(1\). Then \(|\overline{H}\cap \overline{M}|_{3}\leqslant 3\) and 
\[
|\overline{H}|_{3}=|\overline{H}\cap\overline{M}|_{3}|\pi(\overline{H})|_{3}\leqslant 3|\pi(\overline{H})|_{3}\leqslant3(k!)_{3}.
\]
This implies that \(|H|_{3}^{2}= |Q|_{3}^{2}|\overline{H}|_{3}^{2}\leqslant 3^{2}(k!)_{3}\).
On the other hand, \(|G|_{3}=|M|_{3}(k!)_{3}=3^{k}(k!)_{3}\). Since \(|H|_{3}^{2}\geqslant|G|_{3}\), we have 
\(
3^{k}(k!)_{3}\leqslant 3^{2}(k!)_{3}^{2}.
\)
This implies that  \((k!)_{3}\geqslant 3^{k-2}\), which is impossible as \(k\geqslant5\).

Hence \(\overline{H}\cap\overline {M}\) is a submodule of \(\overline{M}\) with dimension \(k-1\). Indeed there is a unique submodule \(L\) of \(\overline{M}\) of dimension \(k-1\), namely
\[
    L=\{(x_{1},\ldots, x_{k})| x^{3}=e, x_{i}\in \{x, x^{2}, e\}, \prod_{i=1}^{k}x_{i}=e\}.
\]
Thus \(L=\overline{H}\cap\overline{M}\). Since \(H\cong K\) and \(|Q|_{3}=1\), we have that \(|\overline{H}|_{3}=|\overline{K}|_{3}\). Recall that \(\Sy_{k}= \pi(\overline{H})= \pi(\overline{K})\). Hence \(|\pi(\overline{H})|_{3}=|\pi(\overline{K})|_{3}=(k!)_{3}\) and therefore \(|\overline{H}\cap\overline{M}|_{3}=|\overline{K}\cap\overline{M}|_{3}\). Note that \(\overline{K}\cap\overline{M}\) is also a submodule of \(\overline{M}\). Thus \(\overline{K}\cap\overline{M}\) is a submodule of dimension \(k-1\). In particular, \(L=\overline{K}\cap\overline{M}\), and we note that $L$ is \(\overline{G}\)-invariant. Thus we obtain that \(\overline{H}/L\cong\pi(\overline{H})\) and \(\overline{K}/L\cong\pi(\overline{K})\).  This further implies that \(\overline{H}\cap\overline{K}\cap \overline{M}=L\) and \(\overline{H}\cap\overline{K}=L.\pi(\overline{H}\cap\overline{K})\).

Since \(\overline{G}=\overline{H}\,\overline{K}\) and \(\pi(\overline{H})=\pi(\overline{K})=\Sy_{k}\), we find that for any prime \(p\)
\begin{equation}\label{19}
    |\overline{G}|_{p}=\frac{|\overline{H}|_{p}|\overline{K}|_{p}}{|\overline{H}\cap\overline{K}|_{p}}=\frac{|L|_{p}|\pi(\overline{H})|_{p}|L|_{p}|\overline{K}|_{p}}{|L|_{p}|\pi(\overline{H}\cap\overline{K})|_{p}}=\frac{|L|_{p}(k!)_{p}^{2}}{|\pi(\overline{H}\cap\overline{K})|_{p}}.
\end{equation}
On the other hand, \(|\overline{G}|_{p}=|\overline{M}|_{p}(k!)_{p}\) and \(|\overline{M}|_{p}=|L|_{p}(3)_{p}\). This together with equation \eqref{19} implies that
\[
    (k!)_{p}=(3)_{p}|\pi(\overline{H}\cap\overline{K})|_{p}.
\]
 Since this holds for all primes $p$, we conclude that \(k!=3|\pi(\overline{H}\cap\overline{K})|\). However \(\Sy_{k}\) does not have a subgroup of index \(3\) since \(k\geqslant7\) or \(k=5\), which is a contradiction and proving the claim.

With a similar argument to that in Claim~2 we also deduce that \(\overline{M}\leqslant\overline{K}\) and so \(\overline{M}\leqslant\overline{H}\cap\overline{K}\). Hence \(|H\cap M|_{3}=|M|_{3}=3^{k}\).

Let \(X_{0}:=H\cap M\) and \(X_{j}:=\ker\varphi_{j}\cap H\) for \(j\geqslant1\). Then \(X_{j}=(\prod_{i\neq j}M_{i})\cap H\). Thus \(\bigcap_{i=0}^{j}X_{i}=(\prod_{i=j+1}^{k}M_{i})\cap H\). Since \(\prod_{i=j+1}^{k} M_{i}\) is normal in \(M\), we have that \(\bigcap_{i=0}^{j}X_{i}\cap H\) is also normal in \(M\cap H\) for \(j\geqslant0\). Hence, for \(j\geqslant0\), \(\varphi_{j+1}(\bigcap_{i=0}^{j}X_{i})\triangleleft\varphi_{j+1}(H\cap M)\cong\Sp_{2}(3)\) by Claim~1. Note that \(|\bigcap_{i=0}^{j}X_{i}|=|\bigcap_{i=0}^{j+1}X_{i}|\cdot|\varphi_{j+1}(\bigcap_{i=0}^{j}X_{i})|\). We therefore deduce inductively that
\[
    |H\cap M|=|\varphi_{1}(H\cap M)||\varphi_{2}(X_{1})|\ldots|\varphi_{k}(\bigcap_{i=1}^{k-1}X_{i})|.
\]
Since \(|H\cap M|_{3}=3^{k}\) and \(|\varphi_{j+1}(\bigcap_{i=0}^{j}X_{i})|_{3}\leqslant3\) for each \(j\geqslant0\), we find that 
\[
|\varphi_{1}(H\cap M)|_{3}=|\varphi_{2}(X_{1})|=\cdots=|\varphi_{k}(\bigcap_{i=1}^{k-1}X_{i})|_{3}=3.
\]
Recall that \(\varphi_{j+1}(\cap_{i=0}^{j}X_{i})\) is normal in \(\varphi_{j+1}(H\cap M)\cong \Sp_{2}(3)\) for each \(j\geqslant0\). This implies that \(\varphi_{j+1}(\cap_{i=0}^{j}X_{i})\cong\Sp_{2}(3)\) for each \(j\geqslant0\). Thus \(|H\cap M|=|\Sp_{2}(3)|^{k}\). This implies that \(|H|=|H\cap M|(k!)=|\Sp_{2}(3)|^{k}(k!)=|G|\) and \(H=G\), which is a contradiction, completing the proof.
\end{proof}
\subsection{Factorisations of almost simple classical groups}
 The following results are based on \cite[Theorem A]{LPS}, \cite{LX} and \cite{LWX}, which classify the core-free factorisations \(G=AB\) of an almost simple group \(G\) with its socle \(L\) being a classical group. The memoir \cite{LPS} classifies the factorisations of \(G\), for which both \(A\) and \(B\) are maximal core-free subgroups of \(G\). The paper \cite{LX} investigates the situation where both \(A\) and \(B\) are insoluble and \cite{LWX} gives a list of the factorisations such that at least one of \(A\) or \(B\) is soluble. Note that \cite{LX} and \cite{LWX} are based on \cite{LPS}. Combining the results of \cite{LX} and \cite{LWX} gives a full list of factors \(A\) and \(B\), which embodies the possible insoluble composition factor sets of \(A\) and \(B\), namely \(\ICF(A)\) and \(\ICF(B)\), respectively. 

\begin{lemma}\label{maxcore}
Let \(G\) be an almost simple group with socle \(L\). Suppose that \(G=AB\) is a core-free factorisation and let \(G^{*}=AL\cap BL\), so \(L\trianglelefteq G^{*}\leqslant G\). Then there exist core-free maximal subgroups \(X\) and \(Y\) of \(G^{*}\) such that \(G^{*}=XY\) and \(A\cap L\leqslant X\cap L\) and \(B\cap L\leqslant Y\cap L\).
\end{lemma}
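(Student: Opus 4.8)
The plan is to pass from the factorisation \(G=AB\) to a factorisation of \(G^{*}\) by intersecting the factors with \(G^{*}\), and then to enlarge the resulting factors to \emph{maximal} subgroups of \(G^{*}\) while keeping them core-free. Write \(A_{0}:=A\cap G^{*}\) and \(B_{0}:=B\cap G^{*}\). First I would record a few elementary observations. Since \(A\cap B\leqslant A\leqslant AL\) and \(A\cap B\leqslant B\leqslant BL\), we have \(A\cap B\leqslant AL\cap BL=G^{*}\), so \(A_{0}\cap B_{0}=A\cap B\cap G^{*}=A\cap B\); and since \(L\leqslant G^{*}\) we have \(A_{0}\cap L=A\cap L\) and \(B_{0}\cap L=B\cap L\). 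As \(G\) is almost simple with socle \(L\) and \(L\trianglelefteq G^{*}\leqslant G\), the centraliser \(C_{G^{*}}(L)\leqslant C_{G}(L)=1\), so \(G^{*}\) is itself almost simple with socle \(L\); hence \(L\) is the unique minimal normal subgroup of \(G^{*}\), and a subgroup of \(G^{*}\) is core-free exactly when it does not contain \(L\). In particular neither \(A_{0}\) nor \(B_{0}\) contains \(L\) (otherwise \(A\) or \(B\) would), so \(A_{0}\) and \(B_{0}\) are core-free in \(G^{*}\).

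Next I would prove that \(G^{*}=A_{0}L=B_{0}L=A_{0}B_{0}\). For the first equality, given \(g\in G^{*}\leqslant AL\), write \(g=al\) with \(a\in A\), \(l\in L\); then \(a=gl^{-1}\in G^{*}\) because \(g\in G^{*}\) and \(l^{-1}\in L\leqslant G^{*}\), so \(a\in A\cap G^{*}=A_{0}\) and \(g=al\in A_{0}L\); the reverse inclusion is trivial, and symmetrically \(G^{*}=B_{0}L\). A short index count then yields \(G^{*}=A_{0}B_{0}\): from \((AL)(BL)=G\) (this set contains \(AB=G\)) we get \(|G^{*}|=|AL|\,|BL|/|G|=|L|^{2}|A\cap B|/(|A\cap L|\,|B\cap L|)\); from \(G^{*}=A_{0}L\) we get \(|A_{0}|=|G^{*}|\,|A\cap L|/|L|\), and likewise \(|B_{0}|=|G^{*}|\,|B\cap L|/|L|\); multiplying these and dividing by \(|A_{0}\cap B_{0}|=|A\cap B|\) gives \(|A_{0}B_{0}|=|G^{*}|\), and since \(A_{0}B_{0}\subseteq G^{*}\) this forces \(A_{0}B_{0}=G^{*}\).

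For the final step, among all subgroups of \(G^{*}\) that contain \(A_{0}\) and do not contain \(L\), I would choose one, \(X\), of largest order; such subgroups exist since \(A_{0}\) is one of them. I claim \(X\) is a maximal subgroup of \(G^{*}\): if \(X<P<G^{*}\) then \(A_{0}\leqslant X\leqslant P\), so if moreover \(L\leqslant P\) then \(P\supseteq A_{0}L=G^{*}\), a contradiction; hence \(L\nleqslant P\), and \(P\) has larger order than \(X\), contradicting the choice of \(X\). Thus \(X\) is a core-free maximal subgroup of \(G^{*}\) with \(A_{0}\leqslant X\), and \(G^{*}=A_{0}B_{0}\subseteq XB_{0}\subseteq G^{*}\) gives \(G^{*}=XB_{0}\). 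Running the same argument with \(B_{0}\) in place of \(A_{0}\) (using \(G^{*}=B_{0}L\)) produces a core-free maximal subgroup \(Y\) of \(G^{*}\) with \(B_{0}\leqslant Y\), whence \(G^{*}=XB_{0}\subseteq XY\subseteq G^{*}\) gives \(G^{*}=XY\). Finally \(A\cap L=A_{0}\cap L\leqslant X\cap L\) and \(B\cap L=B_{0}\cap L\leqslant Y\cap L\), which is the assertion.

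I expect the only genuine subtlety to be in the last step: enlarging \(A_{0}\) to a maximal subgroup of \(G^{*}\) might a priori produce an overgroup of \(L\), and hence a subgroup that is not core-free. The device that rules this out is precisely the identity \(G^{*}=A_{0}L\), which forces every proper overgroup of \(A_{0}\) inside \(G^{*}\) to avoid \(L\); this is what allows \(X\) (and likewise \(Y\)) to be chosen maximal and core-free at the same time. Everything else is order bookkeeping together with the observation \(A\cap B\leqslant G^{*}\).
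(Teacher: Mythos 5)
Your proposal is correct, but it takes a more self-contained route than the paper. The paper's proof is essentially two citations: it invokes Lemma 2 of Liebeck–Praeger–Saxl (1996) to obtain the factorisation \(G^{*}=A_{0}B_{0}\) (with \(A_{0}=A\cap G^{*}\), \(B_{0}=B\cap G^{*}\)), and then Corollary 3 of the same paper to enlarge \(A_{0},B_{0}\) to core-free maximal subgroups of \(G^{*}\). You instead reprove both ingredients from first principles: you establish \(G^{*}=A_{0}L=B_{0}L\) directly from the definition of \(G^{*}\), derive \(G^{*}=A_{0}B_{0}\) by the order count \(|A_{0}B_{0}|=|A_{0}||B_{0}|/|A_{0}\cap B_{0}|=|G^{*}|\), and then carry out the enlargement step explicitly, using the identity \(G^{*}=A_{0}L\) (resp. \(B_{0}L\)) to guarantee that any proper overgroup of \(A_{0}\) in \(G^{*}\) avoids \(L\) and is therefore core-free. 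This last observation is exactly the ``genuine subtlety'' you flagged, and it is precisely the mechanism underlying the cited Corollary 3. Your version has the virtue of being self-contained and of making visible why the enlargement step cannot destroy core-freeness; the paper's version is shorter by delegating that work to the literature. Both arguments are sound, and the order bookkeeping in your derivation of \(|G^{*}|=|L|^{2}|A\cap B|/(|A\cap L||B\cap L|)\) and of \(|A_{0}B_{0}|=|G^{*}|\) checks out.
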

\begin{proof}
By \cite[Lemma 2]{1996}, for \(A^{*}:=A\cap G^{*}\) and \(B^{*}=B\cap G^{*}\), we have \(G^{*}=A^{*}B^{*}\), and by \cite[Corollary 3]{1996} there exist core-free maximal subgroups \(X^{*}\) and \(Y^{*}\) of  \(G^{*}\) with \(A^{*}\leqslant X^{*}\) and \(B^{*}\leqslant Y^{*}\), such that \(G^{*}=X^{*}Y^{*}\). By the definition of \(A^{*}\) and \(B^{*}\), \(A\cap L=A^{*}\cap L\leqslant X^{*}\cap L\) and \(B\cap L=B^{*}\cap L\leqslant Y^{*}\cap L\), proving the result.
\end{proof}
\begin{lemma}\label{sp}
Let \(G\) be an almost simple group with socle \(L=\PSp_{2m}(q)\) such that \(q=p^{f}\) is odd and \(m\geqslant2\). Suppose that \(G=AB\) is a core-free factorisation. Then, interchanging \(A\) and \(B\) if necessary, both of the following hold:
\begin{description}
\item[(i)] \(\ppd(p,2mf)\neq\varnothing\) and for each \(r\in\ppd(p,2mf)\), we have \(r\nmid|B|\).
\item[(ii)] \((L, \ICF(A), \ICF(B))\) are as in Table \ref{PSp(2m,q)}.
\end{description}

\begin{table}
    \centering
    \begin{tabular}{c|c|c|c}
    \hline
        L& \(\ICF(A)\)&\(\ICF(B)\)&Conditions  \\
         \hline
         \(\PSp_{2m}(q)\)&\(\{\PSp_{2a}(q^{b})\}\)& \(\{\PSp_{2m-2}(q)
         \}\)&\(m=ab, b>1\)\\
        
         \(\PSp_{6}(3)\)&\(\{\PSL_{2}(13)\}\)&\(\{\PSp_{4}(3)\}\)&\\
         \(\PSp_{6}(3)\)&\(\{\PSL_{2}(27)\}\)&\(\{\A_{5}\}, \varnothing\)& \\
         \(\PSp_{4}(q)\)&\(\{\PSp_{2}(q^{2})\}\)&\(\varnothing\)&\\
         \(\PSp_{4}(3)\)&\(\{\A_{6}\}, \{\A_{5}\}, \varnothing\)&\(\varnothing\)&\\
         \hline
    \end{tabular}
    \caption{Core-free factorisations \(G=AB\) for \(\Soc(G)=\PSp_{2m}(q)\) with \(q\) odd and \(m\geqslant2\)}
    \label{PSp(2m,q)}
\end{table}
\end{lemma}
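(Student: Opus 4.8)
The plan is to reduce the statement to the classification of maximal factorisations in \cite{LPS}, together with the refinements in \cite{LX} and \cite{LWX}, by first passing to a maximal factorisation via Lemma~\ref{maxcore}. So, given the core-free factorisation $G=AB$ with socle $L=\PSp_{2m}(q)$, $q$ odd, $m\geqslant 2$, first I would set $G^{*}=AL\cap BL$ and apply Lemma~\ref{maxcore} to obtain core-free maximal subgroups $X,Y$ of $G^{*}$ with $G^{*}=XY$, $A\cap L\leqslant X\cap L$, and $B\cap L\leqslant Y\cap L$. Since insoluble composition factors of $A\cap L$ (respectively $B\cap L$) are insoluble composition factors of $X\cap L$ (respectively $Y\cap L$), and since $A/(A\cap L)$ embeds in $\Out(L)$, which is soluble for these $L$, we have $\ICF(A)=\ICF(A\cap L)\subseteq\ICF(X\cap L)$ and similarly $\ICF(B)\subseteq\ICF(Y\cap L)$. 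Thus it suffices to read off, from the maximal factorisations $G^{*}=XY$ classified in \cite{LPS} (as refined by \cite{LX}, \cite{LWX}), the possible pairs $(\ICF(X\cap L),\ICF(Y\cap L))$ and check that each appears as a row of Table~\ref{PSp(2m,q)}, possibly after deleting composition factors that do not survive in the smaller subgroups $A\cap L$ and $B\cap L$.

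Concretely, I would go through \cite[Tables 1--4]{LPS} for $L=\PSp_{2m}(q)$ with $q$ odd. The generic family is the factorisation $\Sp_{2m}(q)=\Sp_{2m-2}(q)\cdot P$ where $P$ is a suitable parabolic-type or imprimitive-type subgroup, combined on the other side with a $\mathcal{C}_{3}$-subgroup $\Sp_{2a}(q^{b}).b$ with $m=ab$; this gives the first row. The sporadic factorisations of $\PSp_{6}(3)$ involving $\PSL_{2}(13)$ and $\PSL_{2}(27)$, and of $\PSp_{4}(q)$ involving $\PSp_{2}(q^{2})$ (a $\mathcal{C}_{3}$-subgroup that is transitive on a suitable set of totally isotropic subspaces), and the small exceptional factorisations of $\PSp_{4}(3)\cong\PSU_{4}(2)$ involving $\A_{6}$ and $\A_{5}$, account for the remaining rows. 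For part (i) I would use the standard order-arithmetic argument: a primitive prime divisor $r\in\ppd(p,2mf)$ exists by Corollary~\ref{existppd} (the exceptions $(p,2mf)=(2,6)$ and $p+1$ a $2$-power with $2mf=2$ do not occur here since $q$ is odd and $2mf\geqslant 4$), and $r$ divides $|L|=|\PSp_{2m}(q)|$ to the first power only via the factor $q^{2m}-1$. For each factorisation in the classification, exactly one of the two factors contains a full Sylow $r$-subgroup of $L$ while the other has order coprime to $r$; labelling that latter factor $B$ gives (i). One can cross-check against (ii): in every row of Table~\ref{PSp(2m,q)}, the groups in $\ICF(B)$ have orders not divisible by $r\in\ppd(p,2mf)$.

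The main obstacle I anticipate is bookkeeping rather than conceptual: one must carefully track which insoluble composition factors actually survive when passing from the maximal subgroups $X,Y$ down to $A\cap L$, $B\cap L$, since a non-maximal factorisation can lose composition factors (this is why some rows of Table~\ref{PSp(2m,q)} list several alternatives such as $\{\A_{6}\},\{\A_{5}\},\varnothing$ or allow $\varnothing$ for $\ICF(B)$). In particular for the $\PSp_{4}(3)$ and $\PSp_{6}(3)$ cases one should double-check against \cite{LX} and \cite{LWX}, which respectively handle the insoluble--insoluble and the soluble-factor situations, to be sure no valid triple $(L,\ICF(A),\ICF(B))$ is omitted; a short MAGMA verification as described in Subsection~\ref{r:magma} can confirm the finite list of small exceptional cases. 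The other point requiring a little care is confirming that $\Out(\PSp_{2m}(q))$ is soluble for all relevant $q$, so that $\ICF(A)=\ICF(A\cap L)$; this is immediate from the structure of $\Out(\PSp_{2m}(q))$, which is cyclic of order $f$ for $q$ odd except when $2m=4$, where it has order $2f$ and is still soluble.
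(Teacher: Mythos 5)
Your proposal is correct and follows essentially the same strategy as the paper: reduce to maximal core-free factorisations via Lemma~\ref{maxcore}, read off the possibilities from \cite{LPS} (refined by \cite{LX} and \cite{LWX}), and establish part~(i) by observing that for each maximal pair $(X\cap L, Y\cap L)$ exactly one factor has order coprime to any $r\in\ppd(p,2mf)$, combined with $|\Out(L)|_r=1$ since $r>2mf$. The only slips are cosmetic: $\Out(\PSp_{2m}(q))$ for $q$ odd is $\C_2\times\C_f$ rather than cyclic of order $f$ (still soluble, so the conclusion stands), and the roles you ascribe to \cite{LX} (insoluble--insoluble) and \cite{LWX} (soluble factor) are swapped relative to what the paper's proof actually cites, though this matches an inconsistency already present in the paper's Section~3.3 introduction.
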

\begin{proof}
Since \(p\) is odd and \(m\geqslant 2\), we deduce by Corollary \ref{existppd} that \(\ppd(p,2mf)\neq\varnothing\). Let \(r\in\ppd(p,2mf)\). Then \(r>2mf\). On the other hand, since \(G=AB\) is a core-free factorisation, we deduce by Lemma \ref{maxcore} that there exists \(G^{*}\) with \(L\triangleleft G^{*}\leqslant G\) together core-free maximal subgroups \(X\) and \(Y\) of \(G^{*}\) such that \(G^{*}=XY\), \(A\cap L\leqslant X\cap L\) and \(B\cap L\leqslant Y\cap L\). All such pairs \((X\cap L, Y\cap L)\), up to interchanging \(X\cap L\) and \(Y\cap L\), have been given in \cite[Table 1, 2, 3]{LPS}, and so \(L\), \(X\cap L\), \(Y\cap L\) are one of the following:
\begin{description}
\item[(a)] \(L=\PSp_{2m}(q)\), \(X\cap L=\PSp_{2a}(q^{b}).b\) and \(Y\cap L=P_{1}\) where \(m=ab\) and \(b\) is prime;
\item[(b)]\(L=\PSp_{4}(3)\), \(X\cap L=2^{4}.\A_{5}\) and \(Y\cap L=P_{i}\) for \(i=1,2\);
\item[(c)]\(L=\PSp_{6}(3)\), \(X\cap L=\PSL_{2}(13)\) and \(Y\cap L=P_{1}\);
\item[(d)]\(L=\PSp_{4}(3)\cong\PSU_{4}(2)\), \(X\cap L\leqslant 2^4.\A_{5}\), \(Y\cap L\leqslant P_{1}\) or \(3^{3}.\Sy_{4}\);
\item[(e)]\(L=\PSp_{4}(3)\cong\PSU_{4}(2)\), \(X\cap L\leqslant\PSp_{4}(2)\), \(Y\cap L\leqslant P_{1}\).
\end{description}
Note that in cases (d) and (e), \(P_{1}\) refers to a parabolic subgroup of \(\PSp_{4}(3)\). Observe that \(|Y\cap L|_{r}=1\) for all cases (a), (b) and (c). Since \(\Pi(\Out(L))\subseteq\{2\}\cup\Pi(f)\) and \(r>2mf\), we thence deduce that \(|\Out(L)|_{r}=1\). Consequently, \[|B|_{r}\leqslant|Y|_{r}\leqslant|Y\cap L|_{r}|\Out(L)|_{r}=|Y\cap L|_{r}=1,\] proving (i).

Note that \(\ICF(A)=\ICF(A^{(\infty)})=\ICF(A\cap L)\) and \(\ICF(B)=\ICF(B^{(\infty)})=\ICF(B\cap L)\). All of the possible triples \((L, A^{(\infty)}, B^{(\infty)})\) for \(A\) and \(B\) both insoluble and all of the possible triples \((L, A, B)\) for at least one of \(A\) or \(B\) soluble have been classified in \cite[Theorem 8.1]{LWX} and \cite[Theorem 1.1]{LX}, respectively. Thus part (ii) is an immediate result of that.
\end{proof}

\begin{proposition}\label{odd}
Let \(G\) be an almost simple group with socle \(L=\POmega_{m}^{\epsilon}(q)\), where \(q=p^{f}\) is odd, \(m\geqslant 4\), \(\epsilon\in\{+, -, \circ\}\) and \((m,\epsilon)\neq (4, +), (5, \circ), (8,+)\). Suppose that \(G=AB\) is a core-free factorisation. Then the following hold:
\begin{description}
\item[(i)] Let \[
    d:=\begin{cases}
        m-1&\text{if \(m\) is odd}\\ m &\text{if \(m\) is even, \(\epsilon=-\)}\\ m-2& \text{if \(m\) is even, \(\epsilon=+\)}.
    \end{cases}
\] Then \(\ppd(p, df)\neq\varnothing\) and let \(r\in\ppd(p, df)\). Then, interchanging \(A\) and \(B\) if necessary, \(r\) divides \(|A|\), and if also \(r\) divides \(|B|\), then \(A\cap L\) and \(B\cap L\) are contained in \(X\cap L\) and \(Y\cap L\) as listed in Table \ref{tab:YLneq1}. 
\item[(ii)] For \((m,\epsilon)=(6,+)\) let \(s\in\ppd(p,3f)\). Then, interchanging \(A\) and \(B\) if necessary, \(|A|_{s}=1\).
\item[(iii)]  \((L, \ICF(A), \ICF(B))\) are as in Table \ref{O(2m+1,q)}. In particular, \(\ICF(A)\cap\ICF(B)=\varnothing\) unless \((L,\ICF(A),\ICF(B))=(\POmega_{4}^{-}(3), \{\A_{5}\}, \{\A_{5}\})\). Moreover, if \(m\geqslant7\), then there exists \(T\in\ICF(A)\) such that \(|T|_{r}>1\) (\(r\) as defined in (i)).

\end{description}
\begin{table}[]
    \centering
    \begin{tabular}{c|c|c|c}
    \hline
     \(L\)&\(X\cap L\leqslant\) & \(Y\cap L\leqslant\)  & Comment  \\\hline
      \(\Omega_{7}(q)\)&\(\G_{2}(q)\)&\(N_{1}^{-}\) & case (4)\\ 
      \(\Omega_{7}(3)\)&\(\G_{2}(3)\)& \(\Sp_{6}(2)\) or \(\Sy_{9}\)& case (5)\\ 
      \(\POmega_{4}^{-}(3)\cong\PSL_{2}(9)\)&\(\A_{5}\) & \(\PSL_{2}(5)\)& case (10)\\ 
      \(\POmega_{m}^{+}(q)\)&\(N_{1}\)&\(\GU_{m/2}(q).2\), \(m/2\) even& case (15)\\\hline
    \end{tabular}
    \caption{Core-free factorisation \(G=XY\) for \(\Soc(G)=\POmega_{m}^{\epsilon}(q)\) with \(|Y\cap L|_{r}>1\).}
    \label{tab:YLneq1}
\end{table}
\begin{table}
    \centering
    \begin{tabular}{c|c|c|c|c}
    \hline
        &\( L\) & \(\ICF(A)\)&\(\ICF(B)\)& Conditions \\
        \hline
       (1) &\(\Omega_{m}(q)\)&\(\{\POmega_{m-1}^{-}(q)\}\) & \(\varnothing, \{\PSL_{a}(q^{b})\}, \{\PSp_{a}(q^{b})\}\)&\(ab=m-1\) \\
       &&&& \(m\) odd\\
       (2)& \(\Omega_{7}(q)\)&\(\{\G_{2}(q)'\}\)&\(\{\PSL_{4}(q)\}, \{\PSU_{4}(q)\}\),&\\
        &&&\(\{\PSp_{4}(q)\}\), \(\{\PSL_{2}(q^{2})\}\)&\\
       (3)& \(\Omega_{25}(3^{f})\)&\(\{\POmega_{24}^{-}(3^{f})\}\)&\(\{\F_{4}(3^{f})\}\)&\\
       (4)& \(\Omega_{13}(3^{f})\)&\(\{\POmega_{12}^{-}(3^{f})\}\)&\(\{\PSp_{6}(3^{f})\}\)&\\
       (5)& \(\Omega_{13}(3)\)&\(\{\POmega_{12}^{-}(3)\}\)&\(\{\PSL_{2}(13)\}\)&\\
      (6)&  \(\Omega_{9}(3)\)&\(\{\POmega_{8}^{-}(3)\}\)&\(\{\A_{5}\}, \varnothing \)&\\
     (7)&   \(\Omega_{7}(3^{f})\)&\(\{\PSU_{3}(3^{f})\}, \{{}^{2}\G_{2}(3^{f})'\}\)&\(\{\PSL_{4}(3^{f})\}\)&\(f\) odd\\
       (8)& \(\Omega_{7}(3)\)&\(\{\G_{2}(3)'\}\)&\(\{\A_{5}\}, \{\A_{6}\}\)&\\

      (9)&  \(\Omega_{7}(3)\)&\(\{\A_{9}\},\{\Sp_{6}(2)\}\)&\(\{\PSL_{3}(3)\},\{\PSL_{4}(3)\}\),&\\
        &&&\(\{\G_{2}(3)'\}\)&\\
     (10) &  \(\Omega_{7}(3)\)&\(\{\A_{7}\}, \{\A_{8}\}, \{\A_{9}\}\)&\(\{\PSL_{3}(3)\}\)&\\
        &&\(\{\PSL_{3}(4)\}, \{\Sp_{6}(2)\}\)&&\\
       (11)& \(\Omega_{7}(3)\)&\(\{\G_{2}(3)'\}, \{\Sp_{6}(2)\}\)&\(\varnothing\)&\\
        
     (12)&   \(\POmega_{4}^{-}(q)\cong \PSL_{2}(q^{2})\)&\(\varnothing\)&\(\varnothing\)&\\
     (13)&   \(\POmega_{4}^{-}(3)\cong\PSL_{2}(9)\)&\(\{\A_{5}\}\)&\(\{\A_{5}\}\)&\\
     (14)&   \(\POmega_{6}^{-}(q)\cong\PSU_{4}(q)\)&\(\{\PSU_{3}(q)\}\)&\(\{\PSL_{2}(q^{2})\},\)&\\
        &&&\(\{\PSp_{4}(q), \varnothing\)&\\
      (15)&  \(\POmega_{6}^{-}(3)\cong\PSU_{4}(3)\)&\(\{\PSL_{3}(4)\}\)&\(\{\A_{5}\}, \{\PSp_{4}(3)\},\)&\\
        &&&\(\{\PSL_{2}(9)\}, \varnothing\)&\\
    (16)&    \(\POmega_{6}^{-}(3)\cong\PSU_{4}(3)\)&\(\{\PSL_{2}(7)\}\)&\(\{\A_{6}\}\)&\\
     (17)&   \(\POmega_{6}^{-}(5)\cong\PSU_{4}(5)\)&\(\{\A_{7}\}\)&\(\{\PSL_{2}(25)\}\)&\\
      (18)&  \(\POmega_{m}^{-}(q)\)&\(\{\PSU_{m}(q)\}\)&\(\{\Omega_{m-1}(q)\}, \{\POmega_{m-2}^{-}(q)\}\)&\(m/2\) odd\\
      (19)&  \(\POmega_{6}^{+}(q)\cong\PSL_{4}(q)\)&\(\{\PSL_{2}(q^{2})\}, \{\PSp_{4}(q)\}, \varnothing\)&\(\{\PSL_{3}(q)\}\)&\\
      (20)&  \(\POmega_{6}^{+}(q)\cong\PSL_{4}(q)\)&\(\{\PSp_{4}(q)\}\)&\(\varnothing\)&\\
     (21)&   \(\POmega_{6}^{+}(3)\cong\PSL_{4}(3)\)&\(\{\A_{5}\}\)&\(\{\PSL_{3}(3)\}\)&\\
      (22)&  \(\POmega_{6}^{+}(3)\cong\PSL_{4}(3)\)&\(\{\A_{6}\}\)&\(\varnothing\)&\\
      (23)&  \(\POmega_{m}^{+}(q)\)&\(\{\Omega_{m-1}(q)\}\)&\(\{\PSL_{a}(q^{b})\}, \{\PSp_{a}(q^{b})\}\)&\\
        &&&\(\{\PSU_{m/2}(q)\}, \{\PSp_{m/2}(q)\}, \varnothing\)&\(m=ab\)\\
        &&&\(\{\PSL_{2}(q), \PSp_{m/2}(q)\}\)&\\
       (24)& \(\POmega_{m}^{+}(q)\)&\(\{\POmega_{m-2}^{-}(q)\}\)&\(\{\PSL_{m/2}(q)\}\)&\\
        (25)&\(\POmega_{m}^{+}(q)\)&\(\{\PSU_{m/2}(q)\}\)&\(\{\POmega_{m-2}^{+}(q)\}\)&\(m/2\) even\\
       (26)& \(\POmega_{16}^{+}(q)\)&\(\{\Omega_{15}(q)\}\)&\(\{\Omega_{9}(q)\}\)&\\
       (27)& \(\POmega_{12}^{+}(3)\)&\(\{\Omega_{11}(3)\}\)&\(\{\PSL_{2}(13)\}\)&\\
        \hline
    \end{tabular}
    \caption{Core-free factorisations \(G=XY\) for \(\Soc(G)=\POmega_{m}^{\epsilon}(q)\), \(q=p^{f}\) odd}
    \label{O(2m+1,q)}
\end{table}
\end{proposition}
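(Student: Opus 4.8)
The plan is to follow the structure of the proof of Lemma~\ref{sp} for the symplectic groups, using \cite[Tables~1--3]{LPS} to list the maximal factorisations of almost simple orthogonal groups and then \cite{LX} and \cite{LWX} to pin down the insoluble composition factors. First I would dispose of the number theory. Under the hypotheses --- $m\geqslant4$ with the excluded pairs $(m,\epsilon)$ removed --- a quick case check gives $d\geqslant4$ in every case, so the Zsigmondy exceptions of Corollary~\ref{existppd}, namely $(p^{f},df)=(2,6)$ (impossible since $q$ is odd) and ``$p^{f}+1$ a power of $2$ with $df=2$'' (impossible since $df\geqslant4$), do not occur; hence $\ppd(p,df)\neq\varnothing$, and each $r\in\ppd(p,df)$ satisfies $r\geqslant df+1>f$. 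Since $\Pi(\Out(L))\subseteq\{2\}\cup\Pi(f)$ and $r$ is odd with $r>f$, we get $|\Out(L)|_{r}=1$, so $|B|_{r}=|B\cap L|_{r}$ for any subgroup $B\leqslant G$. For part~(ii) the same reasoning applies to $s\in\ppd(p,3f)$, since $3f\geqslant3$ again excludes both Zsigmondy exceptions.

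Given a core-free factorisation $G=AB$, Lemma~\ref{maxcore} produces $G^{*}$ with $L\trianglelefteq G^{*}\leqslant G$ together with core-free maximal subgroups $X,Y$ of $G^{*}$ such that $G^{*}=XY$, $A\cap L\leqslant X\cap L$ and $B\cap L\leqslant Y\cap L$, and the triples $(L,X\cap L,Y\cap L)$ are read off from \cite[Tables~1--3]{LPS} under our restrictions on $(m,\epsilon)$. The integer $d$ is defined precisely so that every $r\in\ppd(p,df)$ divides $|L|$, hence divides $|A|\,|B|$; interchanging $A$ and $B$ if necessary we may assume $r\mid|A|$. I would then go through the pairs in \cite[Tables~1--3]{LPS} one at a time: in all but a short list, the factor containing $A\cap L$ is the only one whose order is divisible by $r$, so $|Y\cap L|_{r}=1$ and therefore $|B|_{r}=1$; the remaining pairs, in which both $X\cap L$ and $Y\cap L$ have order divisible by $r$, are exactly the ones recorded in Table~\ref{tab:YLneq1}. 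This proves~(i), and part~(ii) is the same scan specialised to $\POmega_{6}^{+}(q)\cong\PSL_{4}(q)$ and the prime $s$: for each admissible pair exactly one of $X\cap L,Y\cap L$ is divisible by $s$, and relabelling gives $|A|_{s}=1$.

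For part~(iii), note that $\ICF(A)=\ICF(A^{(\infty)})=\ICF(A\cap L)$ and likewise for $B$, so the possible triples $(L,\ICF(A),\ICF(B))$ are obtained from the classification of core-free factorisations of almost simple orthogonal groups --- the case that $A$ and $B$ are both insoluble from \cite{LX} and the case that one of them is soluble from \cite{LWX} --- and assembling these under our restrictions yields Table~\ref{O(2m+1,q)}. The disjointness assertion $\ICF(A)\cap\ICF(B)=\varnothing$, with the single exception $(\POmega_{4}^{-}(3),\{\A_{5}\},\{\A_{5}\})$ coming from $\POmega_{4}^{-}(3)\cong\PSL_{2}(9)\cong\A_{6}$ and the factorisation $\A_{6}=\A_{5}\cdot\PSL_{2}(5)$, is then read directly off the table. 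For the final claim, assume $m\geqslant7$; I would check row by row that the displayed composition factor $T\in\ICF(A)$ --- which in these rows is $\POmega_{m-1}^{-}(q)$, $\Omega_{m-1}(q)$, $\POmega_{m-2}^{-}(q)$, $\PSU_{m}(q)$ or $\PSU_{m/2}(q)$, or else one of $\G_{2}(q)'$, $\PSU_{3}(3^{f})$, ${}^{2}\G_{2}(3^{f})'$, or an alternating or small classical group in the $\Omega_{7}(3)$ rows --- has $|T|_{r}>1$; this holds because the multiplicative order of $q$ modulo $r$ equals $d$, so $r$ divides the cyclotomic value $\Phi_{d}(q)$, which in each case appears as a factor of $|T|$.

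The step I expect to be the main obstacle is this last verification, together with the parallel $r$-divisibility bookkeeping in part~(i): one must confirm, uniformly across the sporadic $\Omega_{7}(q)$ and $\Omega_{7}(3)$ rows and the infinite $\POmega_{m}^{\pm}(q)$ families, that the chosen primitive prime divisor $r$ lands inside an \emph{insoluble} composition factor of the intended factor rather than in a soluble part or in $\Out(L)$. The many isomorphisms among low-dimensional classical groups (for instance $\POmega_{6}^{+}(q)\cong\PSL_{4}(q)$, $\POmega_{6}^{-}(q)\cong\PSU_{4}(q)$, $\POmega_{4}^{-}(q)\cong\PSL_{2}(q^{2})$) make it easy to attach the wrong cyclotomic polynomial to a given factor, so each row of Tables~\ref{tab:YLneq1} and~\ref{O(2m+1,q)} needs separate attention; apart from this the argument is essentially a transcription of the symplectic case.
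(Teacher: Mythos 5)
Your proposal is correct and follows essentially the same route as the paper's own proof: number-theoretic elimination of Zsigmondy exceptions, reduction to maximal core-free factorisations via Lemma~\ref{maxcore}, a case-by-case scan of \cite[Tables~1--3]{LPS} to locate the pairs with $|Y\cap L|_{r}>1$, and assembly of Table~\ref{O(2m+1,q)} from the classifications in \cite{LX} and \cite{LWX}. The one point to tidy is that, as the titles indicate, \cite{LX} handles the factorisations with a soluble factor and \cite{LWX} the case where both factors are insoluble --- you (like the expository paragraph preceding Lemma~\ref{maxcore} in the paper) have these reversed, though this does not affect the substance of the argument.
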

\begin{proof}
Note that \(d\geqslant m-1\geqslant 3\) for \(\epsilon=-\) or \(\circ\) and \(d\geqslant m-2\geqslant 4\) for \(\epsilon=+\). Therefore \(d\geqslant 3\). Since \(p\) is odd we conclude by Corollary \ref{existppd} that \(\ppd(p,df)\neq\varnothing\). Let \(r\in\ppd(p,df)\). Then \(r\in\Pi(\POmega_{m}^{\epsilon}(q))\) and  \(r>df\). Since \(G=AB\), we may assume that \(r\) divides \(|A|\). Further, since \(G=HK\) is a core-free factorisation, by Lemma \ref{maxcore} there exists \(G^{*}\) such that \(L\triangleleft G^{*}\leqslant G\) and core-free maximal subgroups \(X\) and \(Y\) of \(G^{*}\) such that \(G^{*}=XY\) and \(A\cap L\leqslant X\cap L\) and \(B\cap L\leqslant Y\cap L\). Such core-free maximal factorisations  have been classified in \cite[Theorem A, Table 1,2,3]{LPS}, and all pairs \((X\cap L, Y\cap L)\) satisfy one of the following 18 cases:
\begin{description}
\item[(1)] \(L=\Omega_{m}^{\circ}(q)\), \(X\cap L\leqslant N_{1}^{-}\) and \(Y\cap L\leqslant P_{(m-1)/2}\);
\item[(2)] \(L=\Omega_{25}(3^{e})\), \(X\cap L\leqslant N_{1}^{-}\) and \(Y\cap L\leqslant\F_{4}(3^{e})\);
\item[(3)] \(L=\Omega_{13}(3^{e})\), \(X\cap L\leqslant N_{1}^{-}\) and \(Y\cap L\leqslant \PSp_{6}(3^{e}).2\);
\item[(4)] \(L=\Omega_{7}(q)\), \(X\cap L\leqslant \G_{2}(q)\) and \(Y\cap L\leqslant P_{1}, N_{i}^{\pm}\) for \(i=1,2\).
\item[(5)]\(L=\Omega_{7}(3)\), \(X\cap L\leqslant \G_{2}(3)\) and \(Y\cap L\leqslant \Sp_{6}(2)\) or \(\Sy_{9}\);
\item[(6)]\(L=\Omega_{7}(3)\), \(X\cap L\leqslant \Sy_{9}\) and \(Y\cap L\leqslant N^{+}_{1}\) or \(P_{3}\);
\item[(7)]\(L=\Omega_{7}(3)\), \(X\cap L\leqslant\Sp_{6}(2)\) and \(Y\cap L\leqslant N_{1}^{+}\) or \(P_{3}\);
\item[(8)] \(L=\Omega_{7}(3)\), \(X\cap L\leqslant2^{6}.\A_{7}\) and \(Y\cap L\leqslant P_{3}\);
\item[(9)] \(L=\POmega_{4}^{-}(q)\cong \PSL_{2}(q^{2})\), \(X\cap L\leqslant \D_{q^2+1}\) and \(Y\cap L\leqslant \E_{q^{2}}:\C_{\frac{q^2-1}{2}}\), where \(\E_{q^{2}}\) is an elementary abelian group of order \(q^{2}\);
\item[(10)] \(L=\POmega_{4}^{-}(3)\cong\PSL_{2}(9)\cong \A_{6}\), \(X\cap L=\A_{5}\) and \(Y\cap L=\PSL_{2}(5)\cong\A_{5}\);
\item[(11)] \(L=\POmega_{6}^{-}(3)\cong\PSU_{4}(3)\), \(X\cap L\leqslant \PSL_{3}(4)\) and \(Y\cap L\leqslant \PSp_{4}(3)\) or \(P_{i}\) for \(i=1,2\). Note that the \(P_i\) subgroup of \(\POmega_{6}^{-}(3)\) is also the \(P_{3-i}\) subgroup of \(\PSU_{4}(3)\) for \(i=1,2\) by \cite[pp.52]{Atlas};
\item[(12)] \(L=\POmega_{m}^{-}(q)\), \(X\cap L\leqslant \GU_{m/2}(q)\) and \(Y\cap L\leqslant P_{1}\) or \(N_{1}\) with \(m\geqslant 6\) and \(m\) is odd. Note that when \(m=6\) and \(L=\POmega_{6}^{-}(q)\), which is isomorphic to \(\PSU_{4}(q)\). Then by observing the factorisations of an almost simple group with socle \(\PSU_{4}(q)\) given in \cite[p.11]{LPS} we have one factorisation such that \(X\cap L\leqslant N_1\), \(Y\cap L\leqslant P_{2}\) or \(\PSp_{4}(q)\). However, by \cite[Table 3.2]{stephen} we have the \(P_2\) subgroup of \(\PSU_4(q)\) is a \(P_1\) subgroup of \(\POmega_{6}^{-}(q)\) and \(\PSp_{4}(q)\) is a \(N_1\) subgroup of \(\POmega_{6}^-(q)\), and by \cite[Table 8.10]{holt} the \(N_1\) subgroup of \(\PSU_{4}(q)\) is \(\GU_3(q)\). Hence that case can be incorporated into case (12);
\item[(13)] \(L=\POmega_{6}^{+}(q)\cong\PSL_{4}(q)\), \(X\cap L\leqslant \GL_{2}(q^{2}).2\) and \(Y\cap L\leqslant P^{\pm}_{3}\). Note that this case arises from the factorisation of the almost simple group with socle \(\PSL_{4}(q)\) given by \(X\cap L\leqslant \GL_{a}(q^b).b, Y\cap L\leqslant P_1\) or \(P_3\) in \cite[p.10, Table 1]{LPS}. However by observing \cite[Table 8.8, Table 8.31]{holt} we deduce that the \(P_1\) and \(P_3\) subgroups of \(\PSL_{4}(q)\) are the  \(P_3^+\) and \(P_3^-\) subgroups of \(\POmega_{6}^{-}(q)\);
\item[(14)] \(L=\POmega_{6}^{+}(q)\cong\PSL_{4}(q)\), \(X\cap L\leqslant\PSp_{4}(q)\) and \(Y\cap L\leqslant P^\pm_{3}\) or \(P_2\). Note that this case arises from the factorisation of an almost simple group with socle \(\PSL_4(q)\) such that \(X\cap L\leqslant\PSp_4(q), Y\cap L\leqslant P_1, P_3\) or \(\Stab(V_1\oplus V_3)\) given by \cite[p.10]{LPS}. As stated in case (13), the \(P_1\) and \(P_3\) subgroups of \(\PSL_{4}(q)\) are \(P_{3}^{\pm}\) subgroups of \(\POmega_{6}^{-}(q)\). Moreover, \(\Stab(V_1\oplus V_3)\) is \(P_{1,3}\) as a subgroup of \(\PSL_4(q)\), and  by observing \cite[Table 8.8, Table 8.31]{holt} we deduce that the \(P_{1,3}\) subgroup is a \(P_2\) subgroup of \(\POmega_6^{+}(q)\); 
\item[(15)] \(L=\POmega_{m}^{+}(q)\) with \(m\geqslant10\), \(X\cap L\leqslant N_{1}\) and \(Y\cap L\leqslant P_{i}\) for \(i=m/2, (m-2)/2\), or \(\GU_{m/2}(q).2\) (\(m/2\) even), or \((\PSp_{2}(q)\otimes \PSp_{m/2}(q)).2\) (\(m\) even), or \(\GL_{m/2}(q).2\) (\(m/2\) odd);
\item[(16)] \(L=\POmega_{m}^{+}(q)\) with \(m\geqslant 10\), \(X\cap L\leqslant N_{2}^{-}\) and \(Y\cap L\leqslant P_{m/2}\) or \(P_{(m-2)/2}\);
\item[(17)] \(L=\POmega_{m}^{+}(q)\) with \(m\geqslant 12\), \(X\cap L\leqslant\GU_{m/2}(q).2\) and \(Y\cap L\leqslant P_{1}\) with \(m\) even.
\item[(18)] \(L=\POmega_{16}^{+}(q)\), \(X\cap L\leqslant N_{1}\) and \(Y\cap L\leqslant\Omega_{9}(q).2\).
\end{description}
Proof of parts (i) and (ii): 

Note that \(\POmega_{4}^{-}(q)\cong \PSL_{2}(q^{2})\) gives examples in cases (9) and (10). For any of the cases (1)-(18), we can check that \(|Y\cap L|_{r}=1\) unless \(Y\cap L\) is listed in Table \ref{tab:YLneq1}.
Note \(r\in\ppd(q,df)\). By \cite[Lemma 2.2]{LX} we have \(|\Out(L)|_{r}=1\). Thus 
\[
    |B|_{r}=|Y|_{r}\leqslant|Y\cap L|_{r}\cdot|\Out(L)|_{r}=1
\]
if \(|Y\cap L|_{r}=1\). This completes the proof of part (i). Now let \(s\in\ppd(p,3f)\) with \((m,\epsilon)=(6,+)\) so \(L\) is in case (13) or (14). By observing cases (13) and (14) we have \(|X\cap L|_s=1\). Again using \cite[Lemma 2.2]{LX} we obtain \(|\Out(L)|_s=1\) and therefore 
\[
    |A|_{r}=|X|_r\leqslant|X\cap L|_r|\Out(L)|_r=1,
\]
proving part (ii).

Now we prove part (iii). Note that \(\ICF(A)=\ICF(A^{(\infty)})\) and \(\ICF(B)=\ICF(B^{(\infty)})\). For \(L=\POmega_{m}^{\epsilon}(q)\), all of the possible pairs \((L, A^{(\infty)}, B^{(\infty)})\) for \(A\) and \(B\) both insoluble have been classified in \cite[Theorem 3.1, 4.1, 5.1, 6.1, 7.1]{LWX}, and for at least one of \(A\) and \(B\) is soluble have been given in \cite[Theorem 1.1]{LX}. Note that some exceptional isomorphisms arise in the small dimensional cases, for instance, \(\POmega_{4}^{-}(q)\cong\PSL_{2}(q^{2})\cong\PSp_{2}(q^{2})\), \(\POmega_{6}^{-}(q)\cong\PSU_{4}(q)\) and \(\POmega_{6}^{+}(q)\cong\PSL_{4}(q)\). A straightforward observation of Table \ref{O(2m+1,q)} gives that \(\ICF(A)\cap\ICF(B)=\varnothing\) unless \((L,\ICF(A),\ICF(B))=(\POmega_{4}^{-}(3), \{\A_{5}\}, \{\A_{5}\})\) and for \(m\geqslant7\) (see rows 1-11 and 23-27), there exists \(T\in\ICF(A)\) such that \(|T|_{r}>1\).
\end{proof}

\begin{lemma}\label{8 +}
Let \(G\) be an almost simple group with socle \(L=\POmega_{8}^{+}(q)\), where \(q=p^{f}\) is odd. Suppose that \(G=AB\) is a core-free factorisation. Then, interchanging \(A\) and \(B\) if necessary, 
\begin{description}
\item[(i)]\((L, \ICF(A), \ICF(B))\) are as in Table \ref{P(8,+)}.
\item[(ii)] \(\ICF(A)\cap \ICF(B)=\varnothing\) unless \(\ICF(A)=\ICF(B)=\{\Omega_{7}(q)\}\). 
\item[(iii)]Moreover, \(\ppd(p,6f)\neq \varnothing\) and for \(r\in\ppd(p,6f)\) and \(T\in\ICF(A)\) we have \(|T|_{r}>1\).
\end{description}
\end{lemma}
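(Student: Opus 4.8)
The plan is to run the same argument as in the proof of Proposition~\ref{odd}, the socle $\POmega_8^+(q)$ having been excluded there precisely because of triality, so the only extra care needed is in handling the graph automorphisms of order $3$. I begin with part~(iii). Since $p$ is odd we have $6f\geqslant 6$, so neither exceptional case of Corollary~\ref{existppd} occurs and $\ppd(p,6f)\neq\varnothing$; fix $r\in\ppd(p,6f)$, so $r$ is an odd prime with $r\geqslant 6f+1>f$. Since $f,2f,3f<6f$, the definition of a primitive prime divisor gives $r\nmid q-1$, $r\nmid q^2-1$ and $r\nmid q^3-1$; as $q^6-1=(q^3-1)(q+1)(q^2-q+1)$ and $q+1\mid q^2-1$, it follows that $r\mid q^2-q+1=\Phi_6(q)$, hence $r\mid q^3+1$. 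Consequently $r$ divides each of $|\POmega_8^+(q)|$, $|\Omega_7(q)|$ and $|\PSU_4(q)|$, whereas a short check (for instance reducing modulo $r$) shows that $\Phi_6(q)$ divides none of $q-1,q^2-1,q^3-1,q^4-1$, so $r$ divides neither $|\PSL_4(q)|$ nor $|\PSp_4(q)|$; also $r\nmid|\Out(L)|$ since $r\geqslant 7$ and $r>f$. As $G=AB$, after interchanging $A$ and $B$ if necessary we may and do assume that $r$ divides $|A|$, and this labelling is retained throughout; part~(iii) will then be immediate once Table~\ref{P(8,+)} is established, by checking that every group in the $\ICF(A)$ column has order divisible by $r$.

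For parts~(i) and~(ii), apply Lemma~\ref{maxcore} to obtain $G^*$ with $L\triangleleft G^*\leqslant G$ and core-free maximal subgroups $X,Y$ of $G^*$ with $G^*=XY$, $A\cap L\leqslant X\cap L$ and $B\cap L\leqslant Y\cap L$. The maximal factorisations of almost simple groups with socle $\POmega_8^+(q)$ are listed in \cite[Theorem~A and Tables~1--3]{LPS}; unlike for the other orthogonal socles this list includes factorisations realised only in extensions of $L$ by a triality automorphism, so that besides the ``generic'' factors ($N_1$ of type $\Omega_7(q)$, the parabolics $P_1,P_2,P_3,P_4$, subgroups of type $\GU_4(q).2$, $(\PSp_2(q)\otimes\PSp_4(q)).2$ and $\GL_4(q).2$, together with the small-$q$ exceptions) one must also allow the three triality-inequivalent classes of $\Omega_7(q)$-subgroups and the triality images of the end-node parabolics. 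Since $\ICF(A)=\ICF(A^{(\infty)})=\ICF(A\cap L)$ and similarly for $B$, the composition-factor data are then read off from the classifications of such factorisations, namely \cite[Theorem~1.1]{LX} when at least one of $A,B$ is soluble and the appropriate theorem of \cite{LWX} when both are insoluble; this yields exactly the triples of Table~\ref{P(8,+)}. Part~(ii) follows by inspecting that table, the only possible coincidence being $\ICF(A)=\ICF(B)=\{\Omega_7(q)\}$, which comes from the factorisation $\Omega_8^+(q)=\Omega_7(q)\cdot\Omega_7(q)$ between two triality-inequivalent $\Omega_7(q)$-subgroups.

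Finally, to close part~(iii): with the labelling fixed above so that $r\mid|A|$, inspection of the $\ICF(A)$ column of Table~\ref{P(8,+)} shows that the insoluble composition factors occurring there are $\Omega_7(q)$ and $\PSU_4(q)$ (together with finitely many alternating or sporadic groups for small $q$, checked individually, using MAGMA as in Subsection~\ref{r:magma} where necessary), and by the first paragraph $r$ divides $|\Omega_7(q)|$, $|\PSU_4(q)|$ and the orders of those small exceptional factors, so $|T|_r>1$ for every $T\in\ICF(A)$, as required. The main obstacle is the triality bookkeeping: one must be certain that the full list of maximal factorisations of $\POmega_8^+(q)$ from \cite{LPS} has been used, including those existing only after adjoining a graph automorphism of order $3$, and that every reductive or parabolic factor has been identified correctly as a subgroup of $\POmega_8^+(q)$ rather than of an isomorphic classical group; the small cases (notably $q=3$) admit additional factorisations with alternating- or Lie-type factors and need separate verification.
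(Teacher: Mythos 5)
Your proof is correct and follows essentially the same route as the paper's: establish Table~\ref{P(8,+)} from the classifications of core-free factorisations of groups with socle $\POmega_8^+(q)$ in \cite{LWX} (both factors insoluble) and \cite[Theorem~1.1]{LX} (a soluble factor), and then read off (ii) and (iii) by inspection of the table. The paper's proof consists of little more than this one citation plus the remark that (ii) and (iii) are direct observations, so your verification of (iii) --- computing that $r\in\ppd(p,6f)$ divides $\Phi_6(q)\mid q^3+1$, hence divides $|\Omega_7(q)|$ and $|\PSU_4(q)|$ but neither $|\PSL_4(q)|$ nor $|\PSp_4(q)|$, and that the small-$q$ exceptional factors in the $\ICF(A)$ column all have order divisible by the relevant $\ppd$-prime --- is exactly the intended ``direct observation'' spelled out. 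Two small remarks. First, your preliminary detour through Lemma~\ref{maxcore} and the maximal factorisations of \cite{LPS} is redundant: the results of \cite{LX,LWX} already give the possible $(\ICF(A),\ICF(B))$ pairs directly (they were themselves derived from \cite{LPS}), and the paper invokes them without this intermediate step. Second, fixing the labelling by the criterion ``$r$ divides $|A|$'' before the table is even established is logically a little out of order; the interchange in the statement is the one that makes the table's labelling hold for (i), and (iii) should then be verified against that labelling. The conclusion is unaffected because every group in the table's $\ICF(A)$ column does have order divisible by $r$, but presenting (iii) as a post hoc check against the table (as you do in your final paragraph) is cleaner than the opening framing suggests. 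Your emphasis on triality is fair context for why $\POmega_8^+(q)$ was carved out of Proposition~\ref{odd}, though the cited classifications handle it internally, so no extra bookkeeping on your part is actually required.
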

\begin{table}
    \centering
    \begin{tabular}{c|c|c|l}
    \hline
    \(L\)& \(\ICF(A)\)    &  \(\ICF(B)\)& Conditions\\
     \hline
       \(\POmega_{8}^{+}(q)\)&\(\{\Omega_{7}(q)\}\)&\(\{\PSU_{4}(q)\}, \{\PSL_{a}(q^{b})\}\)&\\
       &&\( \{\PSp_{2}(q), \PSp_{4}(q)\}\)&\(4=ab, b\geqslant1\)\\
       &&\(\varnothing, \{\PSp_{a}(q^{b})\}\)&\\
       \(\POmega_{8}^{+}(q)\)&\(\{\PSU_{4}(q)\}\)&\(\{\PSL_{4}(q)\}\)&\\
       \(\POmega_{8}^{+}(q)\)&\(\{\Omega_{7}(q)\}\)&\(\{\Omega_{7}(q)\}\)&\(A=B^{\tau}\) for some \\
       &&&triality \(\tau\)\\
       &&\(\{\POmega_{8}^{-}(q^{1/2})\}\)&\\
       \(\POmega_{8}^{+}(3)\)&\(\{\Omega_{7}(3)\}\)&\(\varnothing, \{\A_{9}\}, \{\PSU_{4}(2)\}, \{\Sp_{6}(2))\}, \)&\\
       &&\(\{\A_{5}\}, \{\POmega_{8}^{+}(2)\}\)&\\
       \(\POmega_{8}^{+}(3)\)&\(\{\A_{t}\}, \{\PSU_{4}(3)\}\)& \(\{\PSL_{4}(3)\}\)&\(t=7, 8\) or \(9\)\\
       &\(\{\PSL_{3}(4)\}\), \(\{\Sp_{6}(2)\}\)&&\\
       \(\POmega_{8}^{+}(3)\)&\(\{\POmega_{8}^{+}(2)\}\)&\(\varnothing\), \(\{\PSL_{3}(3)\}\), \(\{\PSL_{4}(3)\}\)&\\
       \hline
    \end{tabular}
    \caption{Core-free factorisations \(G=AB\) for \(\Soc(G)=\POmega_{8}^{+}(q)\) where \(q=p^{f}\) is odd and \(q\geqslant3\).}
    \label{P(8,+)}
\end{table}
\begin{proof}
All of the core-free factorisations of \(G\) an almost simple group with socle \(\POmega_{8}^{+}(q)\) have been classified in \cite[Theorem 8.1]{LX} for both factors of \(G\) insoluble, and in \cite[Theorem 1.1]{LX} for at least one of the factors soluble. We merge those results together and obtain (i) and Table \ref{P(8,+)}. Parts (ii) and (iii) are direct observations from Table \ref{P(8,+)}.
\end{proof}

 \section{\(s\)-arc-transitivity of symplectic groups}\label{sect:mainthm}
 In this section we will determine an upper bound on \(s\) for a \(G\)-vertex-primitive \((G,s)\)-arc-transitive digraph such that \(\Soc(G)=\PSp_{2n}(q)'\) with \(n\geqslant2\). To achieve this goal we have to analyse each class of maximal subgroups of \(G\). First we will give the following hypothesis which will be used throughout the rest of the paper.
 
\begin{hypothesis}\label{general}
\rm{Let \(\Gamma\) be a \(G\)-vertex-primitive \((G,s)\)-arc-transitive digraph of valency at least \(3\), where \(s\geqslant2\) and \(G\) is almost simple with socle \(L=\PSp_{2n}(q)\) such that \(n\geqslant2\) and \(q=p^{f}\) for some prime \(p\). Take an arc \(u\to v\) in \(\Gamma\). Let \(\overline{g}\in L\) such that \(u^{\overline{g}}=v\) and let \(v^{\bar{g}}=w\). Then \(u\to v\to w\) is a \(2\)-arc in \(\Gamma\). Let \(X=\Sp_{2n}(q)\) acting naturally on the symplectic space \((V=\oplus_{i=1}^{n}\langle e_{i}, f_{i}\rangle,B)\), where \(B\) is an alternating form on \(V\) with \(B(e_{i}, e_{j})=B(f_{i}, f_{j})=0\) and \(B(e_{i}, f_{j})=\delta_{ij}\). Let \(\psi\) be the natural projection from \(X\) to \(L\), and \(g\) be a preimage of \(\overline{g}\) under \(\psi\).}
\end{hypothesis}

Under Hypothesis \ref{general}, if \(s\geqslant 3\), then by Lemma \ref{s-1} we find that \(\Gamma\) is \((L,2)\)-arc-transitive so that \(L_{v}=L_{uv}L_{vw}\) with \(L_{uv}^{\overline{g}}=L_{vw}\). Note that if \(\Gamma\) is \((L,2)\)-arc-transitive, then the composition of \(\psi\) and the action of \(L\) on the vertex set of \(\Gamma\) induces an action of \(X\) on the vertex set of \(\Gamma\). In particular, \(X\) acts \(2\)-arc-transitively on \(\Gamma\) and therefore \(X_{v}=X_{uv}X_{vw}\) such that \(X_{uv}^{g}=X_{vw}\).
The next result is from \cite[Lemma 5.4]{small}, which helps us rule out some small cases.

\begin{proposition}\label{citesmall}
Suppose that Hypothesis \ref{general} holds. Then: 
\begin{description}
    \item[(i)] if \(n=2\) then \(q\geqslant 41\);
    \item[(ii)] if \(n=3\) then \(q\geqslant7\);
    \item[(iii)] if \(n=4\) then \(q\geqslant3\).
\end{description}
\end{proposition}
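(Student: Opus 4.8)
The statement is the contrapositive of the assertion that, for $G$ almost simple with socle $\PSp_{2n}(q)$ and $n\in\{2,3,4\}$, there is no $G$-vertex-primitive $(G,2)$-arc-transitive digraph of valency at least $3$ once $q$ drops below the stated threshold; each threshold leaves only finitely many prime powers ($q\leqslant 40$ when $n=2$, $q\in\{2,3,4,5\}$ when $n=3$, and $q=2$ when $n=4$), so the proposition is a finite verification. By Hypothesis~\ref{ga}, a vertex stabiliser $G_v$ in such a digraph must be a \emph{maximal} subgroup of $G$ admitting a homogeneous factorisation $G_v=G_{uv}G_{vw}$ with $G_{uv}^{\,\overline g}=G_{vw}$, where $\overline g$ normalises no proper nontrivial normal subgroup of $G_v$ (Lemma~\ref{normal}) and the valency $|G_v:G_{uv}|$ is at least $3$. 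The plan is to run through the maximal subgroups of each of the finitely many $G$ and rule each one out as a possible $G_v$.

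First I would list, from \cite{kleidman} and \cite{holt}, all maximal subgroups $M$ of each of these $G$ up to conjugacy --- with their orders and isomorphism types, and including, when $n=2$, $q=2^f$ and $G\nleqslant\PGammaSp_4(2^f)$, the novelty classes $\mathcal A_1,\mathcal A_2,\mathcal A_3$. Each $M$ is first tested against the arithmetic necessary conditions of Lemma~\ref{abp}: a homogeneous factorisation $M=HK$ has $\Pi(H)=\Pi(K)=\Pi(M)$ and $|H|_p=|K|_p\geqslant |M|_p^{1/2}$ for every prime $p$. A primitive prime divisor $r\in\ppd(p,e)$ for a suitable large $e$ (available by Corollary~\ref{existppd}), often together with Burnside's $p^aq^b$ theorem, produces primes that divide $|M|$ to the first power only, forcing $H$ and $K$ to contain a common Sylow $r$-subgroup of $M$ and, with $M=HK$ and Lemma~\ref{normal}, typically yielding a contradiction; likewise Lemma~\ref{coprime ab} disposes at once of any $M$ of the form $C_a:K_0$ with $\gcd(a,|K_0|)=1$. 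The $\mathcal C_2$-subgroups $\Sp_{2m}(q)\wr\Sym_k$ are covered by Lemmas~\ref{Sp} and~\ref{k=2}, the handful of $(m,q)$ excluded from Hypothesis~\ref{wrspsk} being treated separately; and for a maximal subgroup that is itself almost simple --- such as a $\mathcal C_3$- or $\mathcal C_5$-subgroup --- one reads off from the known classification of its factorisations (for symplectic socles via Lemma~\ref{sp}, in general via \cite{LPS}) whether any of them can be homogeneous. The tiny exceptional cases $\PSp_4(2)\cong\Sym_6$ and $\PSp_4(3)\cong\PSU_4(2)$ can moreover be cross-checked against \cite{alternating, linear, LChenThesis}.

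Any maximal subgroup $M$ surviving this analysis I would settle by the MAGMA procedure of Subsection~\ref{r:magma}: build $M$, use \textbf{LowIndexSubgroups} to enumerate the pairs $(H,K)$ obeying the conditions above with $|H|=|K|$, retain those with $M=HK$, then those with $H\cong K$; if none survives, $M$ is excluded, and otherwise one checks directly, over representatives $g$ of the double cosets $G_v\backslash G/G_v$ with $g\notin G_v$, $g^{-1}\notin G_vgG_v$ and $|G_v:G_v\cap G_v^g|\geqslant 3$, whether the factorisation $G_v=(G_v\cap G_v^{g^{-1}})(G_v\cap G_v^g)$ of Lemma~\ref{0} holds while $g$ satisfies Lemma~\ref{normal}; in every instance it does not. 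The main obstacle is that this last, explicit, step is computationally out of reach for the larger groups --- $\PSp_4(q)$ with $q$ near $40$, $\PSp_6(q)$ with $q\in\{4,5\}$, and $\PSp_8(2)$ --- since $|G|$ is far too large to build; the way around it is that for exactly those groups the bulk of the maximal subgroups are removed arithmetically and the residue consists of subgroups whose structure is explicitly known from \cite{kleidman} and \cite{holt}, so one never constructs $G$ itself. A lesser difficulty is bookkeeping: making sure every Aschbacher class $\mathcal C_1,\dots,\mathcal C_9$ and every novelty class is represented exactly once and dealt with, the parabolic $\mathcal C_1$-subgroups and the $\mathcal C_9$-subgroups --- whose shape depends on congruence conditions on $q$ --- requiring the most care.
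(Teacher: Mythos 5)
The paper does not prove Proposition~\ref{citesmall} at all: it is stated as a direct citation of \cite[Lemma 5.4]{small}, with no argument given. Your blind attempt, by contrast, is a sketch of how one might establish the result from scratch, and it is essentially an outline of the kind of exhaustive case analysis that such a verification would require; it is plausible in broad strokes, but as you yourself acknowledge, it is not a proof.

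The gap in your proposal is genuine and is the one you flag yourself. You write that the final explicit check --- enumerating double-coset representatives $g$ of $G_v\backslash G/G_v$ and testing the factorisation $G_v=(G_v\cap G_v^{g^{-1}})(G_v\cap G_v^{g})$ together with the normaliser condition of Lemma~\ref{normal} --- ``is computationally out of reach for the larger groups,'' and that ``the way around it is that for exactly those groups the bulk of the maximal subgroups are removed arithmetically.'' That ``way around'' is not substantiated: you have not verified that the arithmetic filters (Lemma~\ref{abp}, primitive prime divisors, Lemma~\ref{coprime ab}, Lemmas~\ref{Sp} and~\ref{k=2}) actually do eliminate every maximal subgroup of, say, $\PSp_4(q)$ for each prime power $q\leqslant 40$, or of $\PSp_6(4)$, $\PSp_6(5)$ and $\PSp_8(2)$, and the $\mathcal C_1$-stabilisers of non-degenerate subspaces and the $\mathcal C_9$-subgroups in particular are not disposed of by simple order arguments. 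There is also a smaller issue: even where a homogeneous factorisation of $G_v$ survives the arithmetic tests, ruling it out requires the full machinery of Hypothesis~\ref{ga}, Lemma~\ref{0} and Lemma~\ref{normal} applied with a specific conjugating element $\overline g$, not merely the existence of some homogeneous factorisation --- your last step gestures at this but asserts the conclusion (``in every instance it does not'') without exhibiting the computation. So what you have is a credible research plan that, if carried through, would reprove \cite[Lemma 5.4]{small}; the paper avoids all of this by simply citing that result.
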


Before analysing each class of maximal subgroups of \(G\), we give some geometric facts about symplectic groups.

\begin{lemma}\label{exercise}
Let  \((V, B)\) be a symplectic space such that \(V=\oplus_{i=1}^{n}\langle e_{i}, f_{i}\rangle\), \(B(e_{i}, e_{j})=B(f_{i}, f_{j})=0\) and \(B(e_{i}, f_{j})=\delta_{ij}\).
\begin{description}
    \item[(i)] If \(V_{1}, V_{2} \leqslant V\) are maximal totally isotropic subspaces such that \(V_{1}\cap V_{2}=0\), then there exists \(h\in\Sp(V)\) such that \(V_{1}^{h}=\oplus_{i=1}^{n}\langle e_{i}\rangle\) and \(V_{2}^{h}=\oplus_{i=1}^{n}\langle f_{i}\rangle\).
    \item[(ii)] If \(V_{1}, V_{2}\leqslant V\) are maximal totally isotropic subspaces such that \(V_{1}\cap V_{2}=0\), then there exists \(h\in\Sp(V)\) such that \(V_{1}^{h}=V_{2}\) and \(V_{2}^{h}=V_{1}\).
    \item[(iii)] If \(V_{1}, V_{2}\leqslant V\) are totally isotropic subspaces such that \(V_{1}+V_{2}\) is non-degenerate, then \(V_{1}\cap V_{2}=0\) and \(\dim V_{1}=\dim V_{2}\). In particular, there exists \(h\in\Sp(V)\) swapping \(V_{1}\) and \(V_{2}\).
\end{description}
\end{lemma}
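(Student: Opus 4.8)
The plan is to deduce all three parts from two standard facts about $\Sp(V)$: that $\Sp(V)$ acts transitively on the set of symplectic bases $(x_1,\dots,x_n,y_1,\dots,y_n)$ of $V$ (those with $B(x_i,x_j)=B(y_i,y_j)=0$ and $B(x_i,y_j)=\delta_{ij}$), and that if $W\leqslant V$ is nondegenerate then $V=W\perp W^\perp$ with both summands nondegenerate, so that $\Sp(W)\times\Sp(W^\perp)\leqslant\Sp(V)$. The recurring linear-algebra input will be that two totally isotropic subspaces meeting trivially are paired perfectly by $B$ whenever their sum is nondegenerate.

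For part (i), first note that $V_1,V_2$ being maximal totally isotropic forces $\dim V_1=\dim V_2=n$, so from $V_1\cap V_2=0$ we get $V=V_1\oplus V_2$. I claim $B$ restricts to a perfect pairing $V_1\times V_2\to\mathbf{F}_q$: if $x\in V_1$ with $B(x,V_2)=0$ then, since also $B(x,V_1)=0$ as $V_1$ is totally isotropic, $x$ lies in the radical of $B$ and hence $x=0$; the argument on the other side is symmetric. Choosing a basis $u_1,\dots,u_n$ of $V_1$ and the dual basis $w_1,\dots,w_n$ of $V_2$ (so $B(u_i,w_j)=\delta_{ij}$), the tuple $(u_1,\dots,u_n,w_1,\dots,w_n)$ is a symplectic basis of $V$, and any $h\in\Sp(V)$ sending it to $(e_1,\dots,e_n,f_1,\dots,f_n)$ does the job.

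For part (ii) it suffices, by part (i) and conjugation, to exhibit an element of $\Sp(V)$ swapping $\oplus_i\langle e_i\rangle$ and $\oplus_i\langle f_i\rangle$, and the map $e_i\mapsto f_i$, $f_i\mapsto -e_i$ is readily checked to lie in $\Sp(V)$ and to interchange these two subspaces. For part (iii), if $x\in V_1\cap V_2$ then $B(x,V_1+V_2)=0$ (both $V_i$ being totally isotropic), so $x$ lies in the radical of $B$ restricted to $V_1+V_2$, which is trivial; hence $V_1\cap V_2=0$, and the same radical computation shows $B$ pairs $V_1$ and $V_2$ perfectly, whence $\dim V_1=\dim V_2$. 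Setting $W:=V_1+V_2$ and repeating the argument of part (i) \emph{inside} $W$ produces a symplectic basis of $W$ adapted to $W=V_1\oplus V_2$; applying on $W$ a swap map of the type used in part (ii), and extending it by the identity on $W^\perp$ (legitimate since $V=W\perp W^\perp$), yields the required $h\in\Sp(V)$.

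I do not anticipate a genuine obstacle: the whole lemma is elementary symplectic geometry in the spirit of Witt's extension theorem. The only points requiring a little care are tracking the direction of the group action when transporting the explicit swap map back by conjugation in parts (ii) and (iii), and observing in part (iii) that the nondegeneracy hypothesis on $V_1+V_2$ is precisely what yields both the triviality of $V_1\cap V_2$ and the orthogonal splitting $V=W\perp W^\perp$.
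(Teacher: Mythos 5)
Your proof is correct and follows essentially the same route as the paper's: reduce to a symplectic basis adapted to $V_1\oplus V_2$ via the transitivity of $\Sp(V)$ on symplectic bases, use the explicit swap $e_i\mapsto f_i$, $f_i\mapsto -e_i$, and in part (iii) work inside the non-degenerate subspace $W=V_1+V_2$ where $\Sp(W)\leqslant\Sp(V)$. The only differences are cosmetic: you prove part (i) directly from the perfect pairing $V_1\times V_2\to\mathbf{F}_q$ rather than citing Taylor's exercise, and you deduce $\dim V_1=\dim V_2$ from that same pairing rather than via the paper's dimension-comparison contradiction.
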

\begin{proof}
Part (i) is an immediate consequence of \cite[Exercise 8.2 (vii)]{Taylor}.

For part (ii), it follows from part (i) that there exists \(x\in\Sp(V)\) such that \(V_{1}^{x}=\oplus_{i=1}^{n}\langle e_{i}\rangle\) and \(V_{2}^{x}=\oplus_{i=1}^{n}\langle f_{i}\rangle\). Note that there exists \(y\in \Sp(V)\) such that
\(e_{i}^{y}=f_{i}\) and \(f_{i}^{y}=-e_{i}\) for \(1\leqslant i\leqslant n\). Let \(h=xyx^{-1}\). Then \(V_{1}^{h}=V_{2}\) and \(V_{2}^{h}=V_{1}\).

We now prove part (iii). For any \(u\in V_{1}+V_{2}\), \(u=u_{1}+u_{2}\) for some \(u_{1}\in V_{1}\) and \(u_{2}\in V_{2}\). Let \(v\in V_{1}\cap V_{2}\). Then \(B(v,u)=B(v,\sum_{i=1}^{2}u_{i})=0\) as each \(V_{i}\) is totally isotropic. Since \(V_{1}+V_{2}\) is non-degenerate this implies that \(v=0\) and hence \(V_{1}\cap V_{2}=0\). 
On the other hand, since \(V_{1}+V_{2}\) is non-degenerate, the stabiliser of \(V_{1}+V_{2}\) in \(\Sp(V)\) induces \(\Sp(V_{1}+V_{2})\). 

\medskip
\noindent
\textit{Claim:}
    \(\dim V_{1}=\dim V_{2}\).

\medskip    
\noindent
Suppose for a contradiction that \(\dim V_{1}<\dim V_{2}\). Then \(V_{1}\) is properly contained in some maximal totally isotropic subspace of \(V_{1}+V_{2}\) and therefore \(V_{1}\) is properly contained in \(V_{1}^{\perp}\). Thus there exists elements \(w_{1}\in V_{1}\) and \(w_{2}\in V_{2}\) such that \(w_{2}\neq 0\) and \(w_{1}+w_{2}\in V_{1}^{\perp}\). However, this implies that \(w_{2}\in V_{1}^{\perp}\cap V_{2}^{\perp}=(V_{1}+V_{2})^{\perp}\), which is a contradiction since \((V_{1}+V_{2})\cap (V_{1}+V_{2})^{\perp}=0\). This proves the Claim. 

\medskip

Thus \(W:=V_{1}+V_{2}\) is non-degenerate and therefore a symplectic geometry with respect to the restriction \(B|_{W}\). Note that \(V_{1}\) and \(V_{2}\) are maximal totally isotropic subspaces in \(W\) with \(V_{1}\cap V_{2}=0\). Hence by part (ii), there exists \(h\in\Sp(W)\leqslant\Sp(V)\) such that \(V_{i}^{h}=V_{2-i}\) for \(i=1,2\).
\end{proof}

\begin{corollary}\label{uw}
Let \((W, B)\) be a symplectic space of dimension \(2n\) and \(W_{0}\leqslant W\) be a totally isotropic subspace of dimension \(\ell\). Then there exists a totally isotropic \(\ell\)-space \(U_{0}\leqslant W\) such that \(U_{0}+W_{0}=\oplus_{i=1}^{\ell} L_{i}\), where \(L_{i}=\langle u_{i}, w_{i}\rangle\) are hyperbolic lines such that \(u_{i}\in U_{0}\) and \(w_{i}\in W_{0}\).
\end{corollary}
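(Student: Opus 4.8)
The strategy is to realise $U_0$ as a corrected ``dual basis'' of a chosen basis of $W_0$; in effect this is the usual procedure for extending a partial symplectic basis, specialised so as to produce exactly the hyperbolic-plane decomposition claimed. First I would fix a basis $w_1,\dots,w_{\ell}$ of $W_0$ and consider the linear map $W\to W_0^{*}$ sending $v$ to the functional $w\mapsto B(v,w)$ on $W_0$. Its kernel is $W_0^{\perp}$, which has dimension $2n-\ell$ since $B$ is non-degenerate, so the image has dimension $\ell=\dim W_0^{*}$ and the map is onto. Hence we may pick $u_1,\dots,u_{\ell}\in W$ with $B(u_i,w_j)=\delta_{ij}$ for all $i,j$. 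Pairing any linear relation against the $w_k$ shows that $u_1,\dots,u_{\ell}$ are linearly independent and that $\langle u_1,\dots,u_{\ell}\rangle\cap W_0=0$, so $\langle u_1,\dots,u_{\ell}\rangle+W_0$ has dimension $2\ell$.

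The vectors $u_i$ need not span a totally isotropic subspace, so next I would replace them by $u_i':=u_i+\sum_{j=1}^{\ell}c_{ij}w_j$ for suitable scalars $c_{ij}$, staying inside $\langle u_1,\dots,u_{\ell}\rangle+W_0$. Because $W_0$ is totally isotropic we still have $B(u_i',w_k)=B(u_i,w_k)=\delta_{ik}$, so the pairing with $W_0$ is untouched; and a short computation using $B(w_j,w_l)=0$ gives
\[
B(u_i',u_k')=B(u_i,u_k)+c_{ki}-c_{ik}.
\]
As $B$ is alternating we have $B(u_i,u_i)=0$ and $B(u_i,u_k)=-B(u_k,u_i)$, so the requirement $c_{ik}-c_{ki}=B(u_i,u_k)$ is consistent; concretely one may take $c_{ii}=0$, and $c_{ik}=B(u_i,u_k)$, $c_{ki}=0$ whenever $i<k$. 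With these choices $B(u_i',u_k')=0$ for all $i,k$, so $U_0:=\langle u_1',\dots,u_{\ell}'\rangle$ is a totally isotropic $\ell$-space with $B(u_i',w_j)=\delta_{ij}$, and $U_0\cap W_0=0$ as before.

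Finally I would set $L_i:=\langle u_i',w_i\rangle$. Each $L_i$ is a hyperbolic line because $B(u_i',w_i)=1$; the set $\{u_1',\dots,u_{\ell}',w_1,\dots,w_{\ell}\}$ is a basis of the $2\ell$-dimensional space $U_0+W_0$; and for $i\neq j$ the four products $B(u_i',u_j')$, $B(w_i,w_j)$, $B(u_i',w_j)$, $B(w_i,u_j')$ all vanish, so the $L_i$ are pairwise orthogonal. Hence $U_0+W_0=\oplus_{i=1}^{\ell}L_i$ with each $L_i$ as prescribed, which is the assertion. The only delicate point is the correction step of the second paragraph --- the replacement $u_i\mapsto u_i'$ must not spoil the relations $B(u_i,w_j)=\delta_{ij}$, which is precisely where total isotropy of $W_0$ is used. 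Alternatively one could argue on the high road: extend $W_0$ to a maximal totally isotropic subspace of $W$, invoke the standard existence of a symplectic basis of $W$ adapted to it (as in \cite{Taylor}), and read off $U_0$ and the $L_i$ directly; the computation above is simply a self-contained version of this.
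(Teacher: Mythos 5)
Your proof is correct. It is also genuinely different from the paper's: the paper reduces the problem to a standard configuration by invoking the transitivity of $\Sp(W)$ on maximal totally isotropic subspaces (extend $W_{0}$ to a maximal one, move that to $E=\langle e_{1},\dots,e_{n}\rangle$, then use $\Sp(W)_{E,F}\cong\GL(E)$ to move $W_{0}$ to $\langle e_{1},\dots,e_{\ell}\rangle$), from which $U_{0}=\langle f_{1},\dots,f_{\ell}\rangle$ and the $L_{i}$ are immediate. You instead build $U_{0}$ directly: solve $B(u_{i},w_{j})=\delta_{ij}$ using non-degeneracy, then make the $\ell$-space totally isotropic by the triangular correction $u_{i}\mapsto u_{i}'$, checking carefully that total isotropy of $W_{0}$ leaves the pairing $B(u_{i}',w_{j})=\delta_{ij}$ intact. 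Your version is more elementary and fully self-contained (and you correctly handle the alternating condition $B(v,v)=0$, which matters since the ambient field may have characteristic $2$), whereas the paper's is shorter at the cost of leaning on two transitivity facts that themselves require proof. The ``high road'' you sketch at the end is precisely the paper's argument.
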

\begin{proof}
Let \(\{e_{1},\ldots, e_{n},f_{1},\ldots,f_{n}\}\) be a symplectic basis for \(W\) and \(W_{1}\) be a maximal totally isotropic subspace of \(W\) such that \(W_{0}\leqslant W_{1}\). Since \(\Sp(W)\) acts transitively on the set of maximal totally isotropic subspaces of \(W\), there exists \(g\in\Sp(W)\) such that \(W_{1}^{g}=E\) where \(E:=\oplus_{i=1}^{n}\langle e_{i}\rangle\). Then, replacing \(W_{1}\) by \(W_{1}^{g}\), we may assume that \(W_{1}=\oplus_{i=1}^{n}\langle e_{i}\rangle\). Let \(F:=\oplus_{i=1}^{n}\langle f_{i}\rangle\). Note that \(\Sp(W)_{E,F}^{E}=\GL(E)\). Since \(\GL(E)\) acts transitively on the set of \(\ell\)-subspaces of \(E\), we may assume that \(W_{0}:=\oplus_{i=1}^{\ell}\langle e_{i}\rangle\). By letting \(U_{0}:=\oplus_{i=1}^{\ell}\langle f_{i}\rangle\), \(u_{i}:=e_{i}\) and \(v_{i}:=f_{i}\) for \(1\leqslant i\leqslant \ell\), we have \(U_{0}+W_{0}=\oplus_{i=1}^{\ell}\langle e_{i},f_{i}\rangle\) and \(L_{i}=\langle e_{i}, f_{i}\rangle\) are hyperbolic lines for all \(i\). Hence the result is proved.
\end{proof}

\subsection{\(\mathcal{C}_{1}\)-subgroups}

Recall from Subsection~\ref{cosgh} that each $(G,1)$-arc-transitive digraph $\Gamma$ is isomorphic to an orbital digraph of the form \(\Cos(G,G_{v},h)\) for some \(h\in G\) such that \(h^{-1}\notin G_{v}hG_{v}\), where $v$ is a vertex of $\Gamma$.
By \cite[Propositions 4.1.3 and 4.1.9]{kleidman}, for \(G\) acting on a symplectic space \(V\), if \(H\leqslant G\) is a maximal \(\mathcal{C}_{1}\)-subgroup, then \(H\) is the stabiliser of some subspace \(W\leqslant V\). In particular, \(W\) is either a totally isotropic subspace or a non-degenerate subspace. First we give an example to demonstrate that there exist \((G,1)\)-arc-transitive digraphs such that \(G_{v}\) is the stabiliser of a non-degenerate subspace. 
Let us consider \(V=\oplus_{i=1}^{n}\langle e_{i}, f_{i}\rangle\), where \(n\geqslant6\) and \(\{e_{1},\dots,f_{n}\}\) is a symplectic basis for \(V\). Let 
\[W_{1}=\langle e_{1}, f_{1}, e_{4}, f_{4}, e_{5}+f_{3}, f_{5}+e_{2}, e_{6}, f_{6},\ldots, e_{n}, f_{n}\rangle
\] 
and 
\[
W_{2}=\langle e_{1}, f_{1}, e_{2}, f_{2}, e_{3}, f_{3}, e_{6}, f_{6},\ldots, e_{n}, f_{n}\rangle.
\] 
Both \(W_{1}\) and \(W_{2}\) are non-degenerate of dimension \(2n-4\). Also note that \(W_{1}\cap W_{2}^{\perp}=\langle e_{4}, f_{4}\rangle\) and \(W_{2}\cap W_{1}^{\perp}=\langle e_{2}, f_{3}\rangle\). The former  is non-degenerate while the latter  is isotropic. Hence there is no element \(h\in G\) such that \((W_{1}, W_{2})^{h}=(W_{2}, W_{1})\), and hence the orbital containing \((W_{1}, W_{2})\) is not self-paired. Thus there exists \(h\in G\) such that \(h^{-1}\notin G_{W_{1}}hG_{W_{1}}\) and so \(\Cos(G,G_{W_{1}},h)\) is a digraph. By \cite[Lemma 2.4]{quasi}, \(\Cos(G,G_{W_{1}},h)\) is \(1\)-arc-transitive.

\begin{proposition}\label{c1nonde}
 Suppose that Hypothesis \ref{general} holds and \(G_{v}\) is a maximal \(\mathcal{C}_{1}\)-subgroup stabilising a proper nontrivial subspace \(W\leqslant V\). Then the following hold:
\begin{description}
    \item[(i)] \(W\) is non-degenerate;
     \item[(ii)]  $L$ is not $2$-arc transitive on $\Gamma$;
     \item[(iii)] \(\Gamma\) is not $(G,3)$-arc-transitive.
 \end{description}
\end{proposition}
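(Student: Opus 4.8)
The proof is an analysis of the factorisation $G_v = G_{uv}G_{vw}$ (Hypothesis~\ref{general}) when $G_v$ is the stabiliser of a proper nontrivial subspace $W\leqslant V$. Lifting to $X=\Sp_{2n}(q)$, write $X_v = X_{uv}X_{vw}$ with $X_{uv}^g = X_{vw}$; this is a homogeneous factorisation of $X_v = \Stab_X(W)$, and since $X_{uv},X_{vw}$ are conjugate in $X$ they fix subspaces $W_1,W_2$ of the \emph{same type and dimension} as $W$, with $X_{uv}\leqslant\Stab_X(W_1)$, $X_{vw}\leqslant\Stab_X(W_2)$. The key dichotomy is whether $W$ is totally isotropic or non-degenerate.

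\textbf{Part (i): $W$ is non-degenerate.} Suppose $W$ is totally isotropic of dimension $\ell$. The strategy is to reach a contradiction with Lemma~\ref{normal} (equivalently Lemma~\ref{HK}) by producing a subspace stabilised by both $X_{uv}$ and $X_{vw}$ and normalised by $g$. First I would argue that $W_1\cap W_2 = 0$: if $W_1\cap W_2\neq 0$ it is a nonzero totally isotropic subspace normalised by both $X_{uv}$ and $X_{vw}$, hence (taking its preimage structure in $G_v$ and noting $g$ conjugates $X_{uv}$ to $X_{vw}$) one gets a proper nontrivial subgroup of $G_v$ normalised by $g$, contradicting Lemma~\ref{normal}; alternatively apply Lemma~\ref{HK} directly with $T$ the pointwise or setwise stabiliser of a suitable flag. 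So $W_1\cap W_2=0$, and then $W_1+W_2$ has dimension $2\ell\leqslant n$ (if $2\ell > 2n$ we are done for dimension reasons; the totally isotropic constraint forces $\ell\leqslant n$, and $W_1\oplus W_2\leqslant V$). By Corollary~\ref{uw} applied inside a suitable non-degenerate space containing $W_1$, or directly by Lemma~\ref{exercise}(i)--(ii), $W_1$ and $W_2$ can be swapped by some $h\in\Sp(V)$ whenever their sum is non-degenerate; when $W_1+W_2$ is degenerate one instead extracts the radical $R=(W_1+W_2)\cap(W_1+W_2)^\perp$, a nonzero totally isotropic subspace normalised by $X_{uv}X_{vw}=X_v$ hence (again via Lemma~\ref{normal}) fixed by $g$ — but $g\notin X_v$ would then still normalise a proper subspace-stabiliser, the contradiction. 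The genuinely self-paired case $W_1\leftrightarrow W_2$ is excluded because $g$ maps $(u,v)$ to $(v,w)$, i.e.\ $g$ does not invert the arc; so the orbital is not self-paired and one shows $W_1,W_2$ cannot be interchanged by an element of $G_v$-coset structure compatible with $g$. This is where I expect the bookkeeping to be delicate: matching the group-theoretic condition ``$g$ normalises no proper nontrivial normal subgroup of $G_v$'' with the geometric condition on $W_1,W_2$, and handling the reducts $\PGammaSp$ vs $\Aut(\PSp_4(2^f))$ and the $L$ versus $G$ distinction.

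\textbf{Parts (ii) and (iii).} With $W$ non-degenerate of dimension $2n-2k$ say, $\Stab_L(W)$ has a normal subgroup inducing $\Sp(W)\times\Sp(W^\perp)$ on $W\oplus W^\perp$, with $\Sp(W^\perp)=\Sp_{2k}(q)$. For (ii): if $L$ were $2$-arc-transitive then $L_v = L_{uv}L_{vw}$ is a homogeneous factorisation of $\Stab_L(W)$; I would push this down to a homogeneous factorisation of a quotient involving $\PSp_{2n-2k}(q)$ (or of $\Sp(W^\perp)$), and invoke Lemma~\ref{sp} (for $q$ odd) together with the analysis of which composition factors can appear in $L_{uv}\cong L_{vw}$ — the point being that a stabiliser of a non-degenerate subspace is the $N_1$- (or $N_k$-)type subgroup, which Table~\ref{PSp(2m,q)} shows does \emph{not} occur as a factor in any homogeneous (i.e.\ $\ICF$-symmetric) factorisation, since its partner there is always a parabolic $P_1$ with a different composition-factor set; $\PSp_{2a}(q^b)$-type factors are ruled out as they are $\mathcal{C}_3$ not $\mathcal{C}_1$. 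For $q$ even one uses the evenness of $Z$ and Zsigmondy primes $r\in\ppd(p,2nf)$: by Lemma~\ref{sp}(i) (and its even analogue) such $r$ divides $|L_{uv}|$ but not $|L_{vw}|$, contradicting $L_{uv}\cong L_{vw}$. Then (iii) follows immediately: if $\Gamma$ were $(G,3)$-arc-transitive, Lemma~\ref{s-1} with $L\lhd G$ transitive on vertices gives that $L$ is $(G,2)$-arc-transitive, i.e.\ $L$ is $2$-arc-transitive on $\Gamma$, contradicting (ii). The main obstacle I anticipate is part~(i) — in particular cleanly disposing of the case where $W_1+W_2$ is non-degenerate and $W_1,W_2$ are abstractly interchangeable, by showing that any such interchange is incompatible with $g$ realising the arc $(u,v)\mapsto(v,w)$ rather than inverting it; the small-dimensional exclusions of Proposition~\ref{citesmall} will be needed to clear away degenerate low-rank coincidences.
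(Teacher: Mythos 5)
Your proposal diverges substantially from the paper's argument and contains material gaps. For part~(i), the paper's proof is a one-line citation: \cite[Lemma~2.13]{small} shows that every $G$-orbital on the set of totally isotropic $k$-subspaces of $V$ is already self-paired, so $\Cos(G,G_v,g)$ would be a graph, not a digraph. Your attempt to re-derive this does not hold up: you claim the radical $R$ of $W_1+W_2$ is ``normalised by $X_{uv}X_{vw}=X_v$'', but $X_{uv}$ stabilises only the subspaces attached to the vertices $u$ and $v$ while $X_{vw}$ stabilises only those attached to $v$ and $w$, so $R$ need not be $X_v$-invariant; and even if it were, $R$ is a subspace, not a proper nontrivial normal subgroup of $G_v$, so Lemma~\ref{normal} does not apply directly. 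The closing step of your part~(i) also reverses the logic --- the aim is to \emph{produce} an element of $\Sp(V)$ swapping $W_1$ and $W_2$ (forcing the orbital to be self-paired, the desired contradiction), not to exclude self-pairing.

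For part~(ii) the paper's argument is entirely geometric: it labels the vertices by conjugates of $W$, sets $U=W_1\cap W_2$ for the arc $W_1\to W_2$, and in each of the two cases ($U$ non-degenerate or $\Rad(U)\neq 0$) exhibits two $2$-arcs $W_1\to W_2\to W_3$ and $W_1\to W_2\to W_3'$ with $W_2\cap W_3 = U$ but $W_2\cap W_3'\neq U$, so no $z\in X$ can carry one to the other. Your alternative --- projecting the homogeneous factorisation $L_v=L_{uv}L_{vw}$ into $\Sp(W^\perp)$ and invoking Lemma~\ref{sp}/Table~\ref{PSp(2m,q)} --- is a genuinely different route, but your sketch conflates factorisations of $\PSp_{2n}(q)$ with factorisations of $L_v$ (the table concerns the former), and leaves several steps unjustified: $L_v$ is (essentially) a central product of two symplectic groups, so you must first show the projected factorisation is core-free before the classification applies; Lemma~\ref{sp} is stated only for $q$ odd and $m\geqslant 2$, leaving $q$ even and $\dim W^\perp=2$ untreated; and the case where one factor projects onto all of $\Sp(W^\perp)$ requires a separate Lemma~\ref{normal}-type conjugacy argument which you do not supply. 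These complications are exactly what the paper's geometric route was designed to avoid --- the introduction explicitly contrasts the ``geometrical approach'' for $\mathcal{C}_1$ with the factorisation machinery used for $\mathcal{C}_2$. Your reduction of (iii) to (ii) via Lemma~\ref{s-1} is correct and matches the paper.
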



\begin{proof}
The graph $\Gamma$ in Hypothesis~\ref{general} is isomorphic to \(\Cos(G, G_{v},g)\) (as defined in the first paragraph of Subsection \ref{cosgh}), where  \(u\to v\) is  an arc of \(\Gamma\) and $g\in G$ satisfies $u^g= v$, and  \(g^{-1}\notin G_{v}gG_{v}\). In this case $G_v$ is the stabiliser in $G$ of the subspace $W$. Note that \(W\) is either totally isotropic or non-degenerate. First assume that \(W\) is a \(k\)-dimensional totally isotropic subspace. Then it follows from \cite[Lemma 2.13]{small} that all orbitals of \(G\) acting on the set of totally isotropic \(k\)-subspaces in \(V\) are self-paired. Therefore \(\Cos(G, G_{v},g)\)  is indeed a graph. Since we are assuming that \(\Gamma\) is a \(G\)-vertex-primitive digraph in Hypothesis \ref{general}, this case cannot occur. Hence \(W\) is non-degenerate, proving part (i). 

\medskip


Next, we prove part (ii).  It is convenient to work with a more geometric labeling of $\Gamma$, namely we may take the vertex-set to be \(\Sigma=\{W^{h}\mid h\in G\}\), the arc-set to be \(A=\{W^{h}\to W^{gh}\mid h\in G\}\), and the arc $u\to v$ in Hypothesis~\ref{general} to be  \(W_{1}\to W_{2}\) with $W_1, W_2$ distinct elements of $\Sigma$ and $W_2=W_1^g$. Suppose for a contradiction that \(L\) acts \(2\)-arc-transitively on \(\Gamma\). 
Again it will be convenient to work with the natural action of $X=\Sp_{2n}(q)$ on $\Gamma$, since the permutation group on $\Sigma$ induced by $X$ is equal to $L$. 
 Let \(2k=\dim W_1\) and \(U=W_{1}\cap W_{2}\). We divide the analysis into two cases:

\medskip\noindent
\emph{Case $1$: \(U\) is non-degenerate.}\quad In this case, let \(\ell:=\frac{1}{2}\dim U\), so $1\leq \ell<k$ since $U=W_1\cap W_2$ and $W_1\ne W_2$. There exist hyperbolic lines \(L_{i}, T_{j}\) such that  \(U=\oplus_{i=1}^{\ell}L_{i}\), 
\(W_{1}=U\perp\oplus_{j=\ell+1}^{k}L_{j}\) and \(W_{2}=U\perp\oplus_{j=\ell+1}^{k}T_{j}\). Let \(x\) and \(y\) be elements in \(X\) such that:
\[
\begin{array}{cc}

L_{i}^{x}=\begin{cases}
     L_{i}, & 1\leqslant i\leqslant\ell\\
     T_{i}, &  \ell+1\leqslant j\leqslant k
     \end{cases}

     \text{and}
     &

   L_{i}^{y}=\begin{cases}T_{k}, & i=1\\
    L_{i}, &2\leqslant i\leqslant\ell\\
    T_{i}, &\ell+1\leqslant i\leqslant k-1\\
    L_{1}, &i=k
    \end{cases}
    \\
\end{array}\]
Note that the existence of \(x\) and \(y\) is guaranteed by the transitivity of \(X\) on the symplectic bases of \(V\). Note also \(W_{1}^{x}=W_{1}^{y}=W_{2}\). Let 
\(W_{3}:=W_{2}^{x}\) and \(W_{3}':=W_{2}^{y}\). Then we have \((W_{2}, W_{3})=(W_{1}, W_{2})^{x}\) and \((W_{2}, W_{3}')=(W_{1}, W_{2})^{y}\) are arcs in \(\Gamma\). Thus \(W_{1}\to W_{2}\to W_{3}\) and \(W_{1}\to W_{2}\to W_{3}'\) are \(2\)-arcs in \(\Gamma\). Since \(X\) acts \(2\)-arc-transitively on \(\Gamma\), there exists an element \(z\in X\) such that 
\[
    (W_{1}\to W_{2}\to W_{3})^{z}=W_{1}\to W_{2}\to W_{3}'.
\]
Since \(W_{1}\cap W_{2}=W_{2}\cap W_{3}=U\), it follows that \((W_{1}\cap W_{2})^{z}=(W_{2}\cap W_{3})^{z}\) and so \(W_{1}\cap W_{2}=W_{2}\cap W_{3}'\), which is a contradiction since \(W_{3}'=W_2^y\) contains \(L_{1}^{y}=T_{k}\) which implies that \(W_{2}\cap W_{3}'\) contains \(T_{k}\), while \(T_{k}\) is not contained in \(W_1\cap W_2=U\).

\medskip\noindent 
\emph{Case $2$: \(\Rad(U)\neq 0\).}\quad In this case, let $c=\dim \Rad(U)>0$ and \(2\ell=\dim U/\Rad(U)\geqslant 0\). Then \(\dim U=c+2\ell\), and this is less than \(\dim W_{1}=2k\) since \(U=W_{1}\cap W_{2}\) \(W_{1}\neq W_{2}\). It follows that \(c+\ell<k\). There exist hyperbolic lines \(L_{i}\) for $i\leqslant \ell$, such that \(U=\Rad(U)\perp\oplus_{i\leqslant\ell}L_{i}\) (where we may have $\ell=0$). Also, by Corollary \ref{uw}, there exist totally isotropic $c$-subspaces \(U_{1}\leqslant W_{1}\) and \(U_{2}\leqslant W_{2}\), and  hyperbolic lines \(L_{i}\) in \(W_{1}\), and \(T_{i}\) in \(W_{2}\), for $i=\ell+1, \dots, \ell+c$, such that \(U_{1}\oplus \Rad(U)=\oplus_{i=\ell+c}^{\ell+c}L_{i}\) and \(U_{2}\oplus \Rad(U)=\oplus_{i=\ell+1}^{\ell+c}T_{i}\) are nondegenerate $2c$-subspaces of $W_1$ and $W_2$, respectively. Then we may write \(L_{i}=\langle w_{i}, u_{i}\rangle\) and \(T_{i}=\langle t_{i}, v_{i}\rangle\), for \(\ell+1\leqslant i\leqslant \ell+c\), where each \(w_{i}, t_{i}\in \Rad(U)\) and \(u_{i}\in U_{1}\), \(v_{i}\in U_{2}\). Then the $w_i$ and the $t_i$ each form a basis of $\Rad(U)$, 
and we have 
\begin{equation}\label{e:U}
U=\langle w_{\ell+1},\ldots w_{\ell+c}\rangle\perp\oplus_{i\leqslant \ell}L_{i}=\langle t_{\ell+1},\ldots t_{\ell+c}\rangle\perp\oplus_{i\leqslant \ell}L_{i}.    
\end{equation}
 Finally, there are hyperbolic lines \(L_{i}\) in \(W_{1}\), and \(T_{i}\) in \(W_{2}\), for  \(i= \ell+c+1, \dots, k\), such that 
 
\[
W_{1}=(\oplus_{i\leqslant \ell}L_{i})\perp(\oplus_{i=\ell+1}^{\ell+c}L_{i})\perp(\oplus_{i=\ell+c+1}^{k}L_{i})
\]
and 
\[
W_{2}=(\oplus_{i\leqslant \ell}L_{i})\perp(\oplus_{i=\ell+1}^{\ell+c}T_{i})\perp(\oplus_{i=\ell+c+1}^{k}T_{i}).
\] 
 We note that the first summands of \(W_{1}\) and \(W_{2}\) may be zero since $\ell\geq0$,  but the second and third summands of these decompositions are always non-zero since both $c>0$ and $c+\ell<k$.
We now choose elements \(x, y\in X\) such that their actions on the set of hyperbolic lines $L_i$ in the above decomposition of $W_1$ satisfy the following:
\[\begin{array}{cc}
L_{i}^{x}=\begin{cases}L_{i}, &i\leqslant\ell\\

T_{i},  &\ell+1\leqslant i\leqslant k
\end{cases}
\quad \text{and}\quad  &
     \begin{cases}
     w_{i}^{x}=t_{i}, &\ell+1\leqslant i\leqslant \ell+c\\
     u_{i}^{x}=v_{i}, &\ell+1\leqslant i\leqslant \ell+c.\\
     \end{cases}
\end{array}    
\]
 and 

\[\begin{array}{cc}
L_{i}^{y}=\begin{cases}
L_i    & i\leqslant \ell\\
T_k & i=\ell+1\\
T_{i} & \ell+2\leqslant i\leqslant k-1\\
T_{\ell+1} & i=k

\end{cases}
\quad \text{with} \quad &
    \begin{cases}
  w_{i}^{y}=t_{i}, &\ell+2\leqslant i\leqslant \ell+c\\
  u_{i}^{y}=v_{i}, &\ell+2\leqslant i\leqslant \ell+c.\\
    \end{cases}
   \end{array}
\]

 Let \(W_{3} :=W_{2}^{x}\) and \(W_{3}' :=W_{2}^{y}\), and note that 
\[
W_{1}^{x}=(\oplus_{i\leqslant \ell}L_{i}^{x})\perp(\oplus_{i=\ell+1}^{k}L_{i}^{x})=(\oplus_{i\leqslant \ell}L_{i})\perp(\oplus_{i=\ell+1}^{k}T_{i})=W_{2}
\]
and 
\[
W_{1}^{y}=(\oplus_{i\leqslant \ell}L_{i}^{y})\perp L_{\ell+1}^{y}\perp(\oplus_{i=\ell+2}^{k-1}L_{i}^{y})\perp L_{k}^{y}=(\oplus_{i\leqslant \ell}L_{i})\perp T_{k}\perp(\oplus_{i=\ell+2}^{k-1}T_{i})\perp T_{\ell+1}=W_{2}.
\]

Then \((W_{2}, W_{3})=(W_{1}, W_{2})^{x}\) and \((W_{2}, W_{3}')=(W_{1}, W_{2})^{y}\) and these are arcs of \(\Gamma\). Therefore \(W_{1}\to W_{2}\to W_{3}\) and \(W_{1}\to W_{2}\to W_{3}'\) are \(2\)-arcs in \(\Gamma\), and since \(X\) acts \(2\)-arc-transitively on \(\Gamma\), there exists \(z\in X\) such that \((W_{1}\to W_{2}\to W_{3})^{z}=W_{1}\to W_{2}\to W_{3}'\). This implies that $U^z=(W_1\cap W_2)^z=W_1^z\cap W_2^z=W_1\cap W_2 = U$.


By the definition of $x$ we have \(U^x=(W_1\cap W_2)^x=W_1^x\cap W_2^x =W_2\cap W_3\), but also it follows from \eqref{e:U} that $U^x=U$.
Thus \(W_{1}\cap W_{2}=U=U^x=W_{2}\cap W_{3}\). This implies first that $U^y = (W_1\cap W_2)^y = W_1^y\cap W_2^y = W_2\cap W_3 =  U$, and also that \(U^z=(W_{1}\cap W_{2})^{z}=(W_{2}\cap W_{3})^{z} = W_{2}^z\cap W_{3}^{z}=W_2\cap W_3'\).  Moreover we showed in the previous paragraph that $U^z=U$, and hence we have $W_2\cap W_3' = U = W_2\cap W_3$. Since \(t_{\ell+1}\in \Rad(U)\subseteq U\), it follows that \(t_{\ell+1}^{y^{-1}}\in U^{y^{-1}}=U\). On the other hand $t_{\ell+1}^{y^{-1}}\in T_{\ell+1}^{y^{-1}}=L_k$ and $L_k\cap U=0$, so we have a contradiction.

Thus part (ii) of Proposition~\ref{c1nonde} is proved. For part (iii), if \(\Gamma\) were $(G,3)$-arc-transitive, then by Lemma \ref{s-1}, \(\Gamma\) would be \((L,2)\)-arc-transitive, contradicting part (ii). Thus part (iii) holds, and Proposition~\ref{c1nonde} is proved.
\end{proof}


\subsection{\(\mathcal{C}_{2}\)-subgroups}\label{sub:c2}

By \cite[Table 3.5.C]{kleidman}, for an almost simple group \(G\) with socle \(L=\PSp_{2n}(q)'\), if \(H\) is a maximal \(\mathcal{C}_{2}\)-subgroup of \(G\), then either 
\begin{enumerate}
    \item[(I)]  $H$ is a Type (I) subgroup, that is,  \(H\) stabilises a \emph{non-degenerate decomposition} \(V=\oplus_{i=1}^{k}W_{i}\) of the symplectic space \(V\) where \(W_{1},\ldots, W_{k}\) are non-degenerate $(2n/k)$-subspaces, where $k\geqslant 2$; or 
    \item[(II)] \(H\) is a Type (II) subgroup, that is, \(n\geqslant2\), \(q\) is odd, \(H\) stabilises a decomposition $V=W_1\oplus W_2$ with each $W_i$ a totally isotropic $n$-subspace,  and \(H\cap L\cong\GL_{n}(q)/\{\pm 1\}\).
\end{enumerate}

\subsubsection{\(\mathcal{C}_{2}\)-subgroups of Type (I)}

Let \(X_{v}\) be a maximal \(\mathcal{C}_{2}\)-subgroup of \(X=\Sp_{2n}(q)\) of Type (I). Then by \cite[Table 3.5 C]{kleidman}, we have \(X_{v}\cong \Sp_{2m}(q)\wr \Sy_{k}\), where \(n=mk\) and \(m\geqslant2\). Let \(M\) be the base subgroup of \(X_{v}\) and \(M_{i}\) be the \(i\)-th component of \(M\) for \(1\leqslant i\leqslant k\). Then \(M\cong\Sp_{2m}(q)^{k}\) and \(M_{i}\cong\Sp_{2m}(q)\) for \(1\leqslant i\leqslant k\). Indeed, \(M\) is the subgroup of \(X_{v}\) stabilising each of the non-degenerate subspaces \(W_{1},\ldots, W_{k}\) (as defined above). We will use this notation throughout the discussion of \(\mathcal{C}_{2}\)-subgroups of \(\Sp_{2n}(q)\) of Type (I).

\begin{lemma}\label{c2k2}
Suppose that Hypothesis \ref{general} holds and \(k=2\). If \(G_{v}\) is a Type (I) \(\mathcal{C}_{2}\)-subgroup of \(G\) with \((m,q)\neq (1,3), (1,9)\) and \((m,p)\neq (2,2)\), then \(\Gamma\) is not \((L,2)\)-arc-transitive.
\end{lemma}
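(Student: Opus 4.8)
The plan is to argue by contradiction. Suppose \(\Gamma\) is \((L,2)\)-arc-transitive. As observed just after Hypothesis~\ref{general}, this forces \(X=\Sp_{2n}(q)\) to act \(2\)-arc-transitively on \(\Gamma\), so that \(X_{v}=X_{uv}X_{vw}\) is a homogeneous factorisation with \(X_{uv}^{g}=X_{vw}\), where \(X_{v}\cong\Sp_{2m}(q)\wr\Sy_{2}\) is the Type~(I) \(\mathcal{C}_{2}\)-subgroup and \(g\) is the chosen preimage of \(\overline{g}\). First I would check that this puts us in the situation of Hypothesis~\ref{wrspsk} with \(k=2\), and that the extra restriction \((m,q)\neq(1,3)\) needed for Lemma~\ref{k=2} holds. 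Indeed \((m,p)\neq(2,2)\) and \((m,q)\neq(1,9)\) are assumed, while the remaining excluded pairs \((m,q)=(1,2),(1,3)\) cannot occur: if \(m=1\) then \(n=2m=2\), and Proposition~\ref{citesmall}(i) gives \(q\geqslant 41\).

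Next I would apply Lemma~\ref{k=2}. Together with the last paragraph of its proof, it tells us that, after interchanging \(M_{1}\) and \(M_{2}\) if necessary, one of \(X_{uv},X_{vw}\), say \(B\), satisfies \(B\leqslant M\) and \(B^{(\infty)}=M_{1}\), the copy of \(\Sp_{2m}(q)\) acting naturally on \(W_{1}\) and trivially on \(W_{2}\); while the other, say \(A\), satisfies \(A^{(\infty)}=\{(x,x^{\alpha})\mid x\in M_{1}\}\) for some \(\alpha\in\Aut(\Sp_{2m}(q))\), a diagonal copy of \(\Sp_{2m}(q)\) in \(M=M_{1}\times M_{2}\). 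Since \(X_{uv}^{g}=X_{vw}\), and \(Y^{(\infty)}\) is a characteristic subgroup of any group \(Y\), we get \((X_{uv}^{(\infty)})^{g}=X_{vw}^{(\infty)}\), so \(A^{(\infty)}\) and \(B^{(\infty)}\) are conjugate in \(X\).

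The contradiction will then come from a fixed-point count on \(V\). Since \(B^{(\infty)}=M_{1}\) fixes \(W_{2}\) pointwise, its fixed subspace on \(V\) has dimension at least \(\dim W_{2}=2m\geqslant 2\). On the other hand \(A^{(\infty)}\) acts as the full symplectic group on each of the non-degenerate summands \(W_{1}\) and \(W_{2}\) of \(V=W_{1}\perp W_{2}\) (the second via the surjective automorphism \(\alpha\)), and since \(\Sp_{2m}(q)\) has no nonzero fixed vector on its natural module, \(A^{(\infty)}\) fixes no nonzero vector of \(V\); that is, its fixed subspace is \(0\). As conjugate subgroups of \(X\leqslant\GL(V)\) have fixed subspaces of equal dimension, this is impossible, so \(\Gamma\) is not \((L,2)\)-arc-transitive. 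I expect the only delicate point to be extracting the precise structure of \(X_{uv}^{(\infty)}\) and \(X_{vw}^{(\infty)}\) from Lemma~\ref{k=2} and its proof — in particular that the diagonal factor acts nontrivially, through an automorphism, on both summands — after which the geometric contradiction is immediate. One could instead try to finish via Lemma~\ref{HK}, but since \(g\) need not lie in \(X_{v}\) some care would be required there, whereas the fixed-space argument sidesteps this entirely.
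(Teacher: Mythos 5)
Your proof is correct and follows the same strategy as the paper's: reduce to a homogeneous factorisation of \(X_{v}\cong\Sp_{2m}(q)\wr\Sy_{2}\), invoke Lemma~\ref{k=2} to identify \(X_{vw}^{(\infty)}\) with the factor \(M_{1}\) (acting trivially on \(W_{2}\)) and \(X_{uv}^{(\infty)}\) with a full diagonal \(\Sp_{2m}(q)\), and conclude that these subgroups cannot be conjugate in \(X\), contradicting \((X_{uv}^{(\infty)})^{g}=X_{vw}^{(\infty)}\). Your fixed-space dimension count is a cleaner, more explicit justification of the non-conjugacy than the paper's terse appeal to the differing induced actions on \(W_{1}\perp W_{2}\), and your check that the hypotheses of Lemma~\ref{k=2} are met (ruling out \((m,q)=(1,2)\) via Proposition~\ref{citesmall}) is also slightly more careful than the paper's implicit handling.
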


\begin{proof}
Suppose for a contradiction that \(\Gamma\) is \((L,2)\)-arc-transitive while Hypothesis \ref{general} holds and \(k=2\). Then \(\Gamma\) is therefore \((X,2)\)-arc-transitive (recall that \(X=\Sp_{2m}(q)\) as defined in Hypothesis \ref{general}). Thus \(X_{v}\) admits a homogeneous factorisation \(X_{v}=X_{uv}X_{vw}\) with \(X_{uv}^{g}=X_{vw}\). By Lemma \ref{k=2}, interchanging \(M_{1}\) and \(M_{2}\) if necessary, \(X_{vw}^{(\infty)}=M_{1}\), and there exists \(\alpha\in\Aut(\Sp_{2m}(q))\) such that \(X_{uv}^{(\infty)}=\{(x,x^{\alpha})\mid x\in M_{1}\}\). Therefore \(X_{vw}^{(\infty)}\) stabilises \(W_{1}\) and acts trivially on \(W_{2}\), while the induced action of \(X_{uv}^{(\infty)}\) on both \(W_{1}\) and \(W_{2}\) is \(\Sp_{m}(q)\). Thus \(X_{vw}^{(\infty)}\) and \(X_{uv}^{(\infty)}\) are not conjugate in \(X=\Sp_{2m}(q)\), which is a contradiction to the fact that \((X_{uv}^{(\infty)})^{g}=X_{vw}^{(\infty)}\). Thus the result follows.
\end{proof}
\begin{lemma}\label{sp23c2}
   Suppose that Hypothesis \ref{general} holds and \(G_{v}\) is a Type (I) \(\mathcal{C}_{2}\)-subgroup such that \((m,q)=(1,3)\). Then \(\Gamma\) is not \((G,3)\)-arc-transitive. Moreover, \(\Gamma\) is not \((L,2)\)-arc-transitive for \(n\neq6\).
\end{lemma}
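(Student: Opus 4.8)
The hypothesis gives us a Type (I) $\mathcal{C}_2$-subgroup $G_v$ with the block size parameter $(m,q)=(1,3)$, so $X_v\cong \Sp_2(3)\wr \Sy_k$ with $n=k$. The plan is to split according to the value of $k$: for small $k$ (roughly $k\leqslant 4$) a direct computational argument, and for $k\geqslant 5$ an appeal to Lemma~\ref{Sp23}, together with the factorisation results for symmetric groups. First I would record that, by Proposition~\ref{citesmall}, since $2n=2k$ we need only treat $k\geqslant 3$ (as $n\geqslant 2$), and moreover for $n\leqslant 4$ the small-field bounds in Proposition~\ref{citesmall} rule out $q=3$ altogether for $n=2$, but do allow $n=4$, $q=3$; so in fact the relevant values are $k=4$ and $k\geqslant 5$ (with $k=5$ excluded later). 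Actually, more carefully: $k=2$ corresponds to $n=2$, which is excluded by Proposition~\ref{citesmall}(i); $k=3$ to $n=3$, excluded by Proposition~\ref{citesmall}(ii) since that forces $q\geqslant 7$; so only $k\geqslant 4$ survives.

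For the case $k=4$ (so $n=4$, which Proposition~\ref{citesmall}(iii) permits with $q=3$), the plan is to suppose $\Gamma$ is $(G,3)$-arc-transitive, deduce via Lemma~\ref{s-1} that $\Gamma$ is $(L,2)$-arc-transitive and hence $(X,2)$-arc-transitive, so that $X_v=\Sp_2(3)\wr\Sy_4$ admits a homogeneous factorisation. This is precisely the group flagged in Subsection~\ref{r:magma}: the MAGMA computation described there shows $\Sp_2(3)\wr\Sy_4$ has no homogeneous factorisation at all, giving the contradiction. This simultaneously disposes of $(L,2)$-arc-transitivity for $n=4$. For $k\geqslant 6$, I would argue that if $\Gamma$ were $(G,3)$-arc-transitive then it is $(L,2)$-arc-transitive and so $(X,2)$-arc-transitive, whence $X_v=\Sp_2(3)\wr\Sy_k$ admits a homogeneous factorisation $X_v=HK$; this is exactly Hypothesis~\ref{wrspsk} with $(m,q)=(1,3)$, $k\geqslant 5$ (noting $(m,q)=(1,3)$ is explicitly allowed there — it is only Lemmas~\ref{prerequisites}–\ref{k=2} that exclude it). By Lemma~\ref{Sp23}, $k=6$ and, up to swapping, $\PSL_2(5)\trianglelefteq \pi(H)\leqslant \PGL_2(5)$ with $\A_5\trianglelefteq \pi(K)\leqslant \Sy_5$. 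So for $k\neq 6$ we already have a contradiction, and it remains to derive a contradiction when $k=6$.

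For $k=6$: here $n=6$, and now I would bring in the homogeneous factorisation structure more carefully, using the geometry. Since $X_v=\Sp_2(3)\wr \Sy_6$ acts on $V=\oplus_{i=1}^6\langle e_i,f_i\rangle$ stabilising the non-degenerate decomposition into the $2$-spaces $W_i=\langle e_i,f_i\rangle$, the quotient $\pi(K)\leqslant \Sy_6$ permutes the six blocks $W_1,\dots,W_6$, and the condition $\A_5\trianglelefteq\pi(K)\leqslant\Sy_5$ means $\pi(K)$ fixes one block, say $W_6$, and acts (essentially) $2$-transitively on the other five. The plan is to exploit Lemma~\ref{normal}: taking $g$ with $X_{uv}^g=X_{vw}$, $g$ cannot normalise any proper nontrivial normal subgroup of $X_v$, yet the component $M_6\cong\Sp_2(3)$ (the pointwise stabiliser of all $W_j$, $j\neq 6$, inside $M$) is normalised by $K=X_{vw}$ because $\pi(K)$ fixes $W_6$ — and one shows $M_6 \lhd X_{vw}$ and $M_6\lhd X_{uv}^g$, or rather that $g$ normalises some such subgroup, contradicting Lemma~\ref{normal}. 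Alternatively, and perhaps cleanly, I would mimic the endgame of the proof of Lemma~\ref{Sp}: in that lemma the case $k=6$, $\pi(H)=\PGL_2(5)$, $\pi(K)=\Sy_5$ was eliminated precisely because $M_6$ is normalised by $K$, contradicting the minimality structure (Claim~3 there); I expect the same mechanism, possibly repackaged through Lemma~\ref{HK} with $T=M_6$, to work here.

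\textbf{The main obstacle.} I expect the $k=6$ subcase to be the crux: for $k\leqslant 4$ the computer settles it, and for $k\geqslant 5$, $k\neq 6$ Lemma~\ref{Sp23} settles it, but when $k=6$ one must still rule out the surviving factorisation type of $\Sy_6$. The difficulty is that $\Sp_2(3)\wr\Sy_6$ is too large for a direct MAGMA check of the kind used for $\Sp_2(3)\wr\Sy_4$, so the argument must be structural: one needs to show that the candidate factors $H,K$ with $\pi(H)=\PGL_2(5)$, $\pi(K)=\Sy_5$ either fail to have $H\cong K$, or fail to multiply to $X_v$, or — most likely — produce a $g$ normalising $M_6$ (or another block-component), contradicting Lemma~\ref{normal} or Lemma~\ref{HK}. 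Getting the bookkeeping right for how $H\cap M$ and $K\cap M$ sit inside $M=\Sp_2(3)^6$, and in particular that the $\pi(K)$-fixed block forces a component to be normal in $K$ hence (after conjugating) a $g$-invariant subgroup, is the delicate part; this is the analogue of Claim~3 in Lemma~\ref{Sp} and I would follow that template closely.
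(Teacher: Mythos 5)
Your handling of $k=4$ (direct MAGMA check, as in Subsection~\ref{r:magma}) and of $k\geqslant 5$, $k\neq 6$ (appeal to Lemma~\ref{Sp23}) agrees with the paper. The gap is in the $k=6$ case, and you have correctly flagged it as the crux, but the route you propose there is unlikely to close it. Two concrete problems. First, the ``endgame of Lemma~\ref{Sp}'' you want to mimic — the observation that $\pi(K)\leqslant\Sy_5$ fixes a block, so $M_6\lhd K$, contradicting Claim~3 — only delivers a contradiction because Claims~1--3 of Lemma~\ref{Sp} have already pinned down $(H\cap M)'$ and its image $L$ as copies of $X^k=(Z.T)^k$ with $T=\PSp_{2m}(q)$ nonabelian simple, and established that any $N\lhd K$ with $N<L$ lies in $Z(L)$. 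For $(m,q)=(1,3)$ the group $T=\PSp_2(3)\cong\A_4$ is \emph{soluble}, so that entire structural edifice collapses; indeed this is precisely why Hypothesis~\ref{wrspsk} permits $(m,q)=(1,3)$ but Lemmas~\ref{prerequisites}, \ref{Sp} and \ref{k=2} exclude it, leaving only the weaker Lemma~\ref{Sp23}. Having $K\leqslant N_G(M_6)$ gives you nothing for Lemma~\ref{HK} without a corresponding statement about $H$, and $M_6$ is certainly not normal in $X_v$, so Lemma~\ref{normal} does not apply directly either.

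Second, and more fundamentally, the statement of the lemma itself signals that a purely structural exclusion of the $(L,2)$-arc-transitive case at $n=6$ is not available: the conclusion only asserts $\Gamma$ is not $(L,2)$-arc-transitive for $n\neq 6$, and for $n=6$ asserts only failure of $(G,3)$-arc-transitivity. Your plan treats all $k$ uniformly by trying to kill the homogeneous factorisation of $X_v$, but for $k=6$ Lemma~\ref{Sp23} only constrains the shape of a putative factorisation; it does not rule one out, and the paper never claims $\Sp_2(3)\wr\Sy_6$ has no homogeneous factorisation. What the paper actually does for $n=6$ is invoke the full strength of $(G,3)$-arc-transitivity via Lemma~\ref{valency}: the valency cubed must divide $|G_v|$, and a short computation with $|L_v|=2^{21}\cdot 3^8\cdot 5$ and $|\Out(L)|=2$ forces $2^{14}\cdot 3^6\cdot 5$ to divide $|X_{uv}|$. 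That order constraint brings the index bound for \textbf{LowIndexSubgroups} down to about $2304$, making the MAGMA search feasible — contrary to your assumption that $\Sp_2(3)\wr\Sy_6$ is ``too large for a direct MAGMA check''. Your proposal omits this use of Lemma~\ref{valency} entirely, and without it the $k=6$ case remains open.
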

\begin{proof}
First note that \(n=k\) since \(m=1\), and by Proposition \ref{citesmall} \(n\geqslant4\), since \(q\geqslant3\).

\medskip First assume that \(n=4\). Then \(L=\PSp_{8}(3)\). We check by MAGMA \cite{magma} (see Subsection \ref{r:magma}) that \(\Sp_{2}(3)\wr\Sy_{4}\) does not admit a homogeneous factorisation and therefore \(\Gamma\) is not \((X,2)\)-transitive (otherwise \(X_{v}=\Sp_{2}(3)\wr\Sy_{4}=X_{uv}X_{vw}\) is a homogeneous factorisation). This implies that \(\Gamma\) is not \((L,2)\)-arc-transitive and it follows from Lemma \ref{s-1} that \(\Gamma\) is not \((G,3)\)-arc-transitive.

\medskip Now assume that \(n=5\) or \(n\geqslant7\). Suppose for a contradiction that \(\Gamma\) is \((L,2)\)-arc-transitive. Then \(\Gamma\) is therefore \((X,2)\)-arc-transitive and \(X_{v}\) admits a homogeneous factorisation \(X_{v}=X_{uv}X_{vw}\). Since \(k\neq6\), it follows from Lemma \ref{Sp23} that \(\Sp_{2}(3)\wr\Sy_{k}\) does not admit any homogeneous factorisation, contradicting the fact that \(X_{v}=X_{uv}X_{vw}\) is a homogeneous factorisation and therefore \(\Gamma\) is not \((L,2)\)-arc-transitive. Then it follows from Lemma \ref{s-1} that \(\Gamma\) is not \((G,3)\)-arc-transitive.

\medskip It remains to consider \(n=6\), so \(L=\PSp_{12}(3)\). Suppose for a contradiction that \(\Gamma\) is \((G,3)\)-arc-transitive. Then it follows from Lemma \ref{s-1} that \(\Gamma\) is \((L,2)\)-arc-transitive and therefore \(L_{v}\) admits a homogeneous factorisation \(L_{v}=L_{uv}L_{vw}\). By \cite[Theorem 3.8]{Wilson} we have \(L_{v}\cong 2^{5}.(\PSp_{2}(3)\wr\Sy_{6})\) and so \(|L_{v}|=2^{21}\cdot3^{8}\cdot5\). Since \(\Gamma\) is \((G,3)\)-arc-transitive, it follows from Lemma \ref{valency} that \((|X_{v}|/|X_{uv}|)^{3}\) divides \(|G_{v}|\) and hence divides \(|L_{v}||\Out(L)|=2^{22}\cdot3^{8}\cdot5\). Note that \(|L_{v}|^{3}\) divides \(|X_{v}|^{3}\) and \(|X_{v}|^{3}\) divides \(2^{22}\cdot3^{8}\cdot5|X_{uv}|^{3}\). This implies that \(2^{63}\cdot3^{24}\cdot5^{3}=|L_{v}|^{3}\) divides \(2^{22}\cdot3^{8}\cdot5|X_{uv}|_{3}^{3}\) and this implies that \(2^{14}\cdot 3^{6}\cdot5\) divides \(|X_{uv}|\). We compute by MAGMA \cite{magma} (see Subsection \ref{r:magma}) that \(X_{v}=\Sp_{2}(3)\wr\Sy_{6}\) does not admit a factorisation such that the order of each factor is divisible by \(2^{14}\cdot3^{6}\cdot5\) and both factors are isomorphic, which is a contradiction. Thus \(\Gamma\) is not \((G,3)\)-arc-transitive.
\end{proof}

\begin{proposition}\label{p:notc2I}
Suppose that Hypothesis \ref{general} holds. If \(G_{v}\) is a maximal \(\mathcal{C}_{2}\)-subgroup of \(G\) of type I, then \(\Gamma\) is not \((G,3)\)-arc-transitive. Moreover, \(\Gamma\) is not \((L,2)\)-arc-transitive except possibly when \((m,q)= (1,9)\), or  \((m,q)= (1,3)\) with $k=6$, or \((m,p)\neq (2,2)\).
\end{proposition}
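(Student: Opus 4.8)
The plan is to reduce both assertions to the study of homogeneous factorisations of $\Sp_{2m}(q)\wr\Sy_k$ carried out in Subsection~\ref{s:homfact}. By \cite[Table~3.5.C]{kleidman} we have $X_v\cong\Sp_{2m}(q)\wr\Sy_k$ with $n=mk$ and $k\geqslant2$, and I keep the notation $M,M_1,\dots,M_k$ from the paragraph preceding the proposition. If $\Gamma$ is $(G,3)$-arc-transitive then by Lemma~\ref{s-1} it is $(L,2)$-arc-transitive, hence --- composing with $\psi$, as observed after Hypothesis~\ref{general} --- $(X,2)$-arc-transitive, so $X_v=X_{uv}X_{vw}$ is a homogeneous factorisation with $X_{uv}^{g}=X_{vw}$; the same holds if only $(L,2)$-arc-transitivity is assumed. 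It therefore suffices to show: (a) outside the three exceptional families there is no homogeneous factorisation $X_v=X_{uv}X_{vw}$ with $X_{uv}^{g}=X_{vw}$, so $\Gamma$ is neither $(L,2)$- nor $(G,3)$-arc-transitive; and (b) in each of these families such a factorisation is incompatible with $(G,3)$-arc-transitivity.

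For (a): the pair $(m,q)=(1,2)$ does not arise, since $\Sp_2(2)\wr\Sy_k\cong\mathrm{O}_2^-(2)\wr\Sy_k$ lies in an orthogonal (class~$\mathcal{C}_8$) subgroup of $\Sp_{2k}(2)$ and so is not maximal. If $(m,q)=(1,3)$ then $n=k\geqslant4$ by Proposition~\ref{citesmall}, and Lemma~\ref{sp23c2} gives the full conclusion, leaving only $n=k=6$ open for the $(L,2)$-statement. If $(m,q)\notin\{(1,2),(1,3),(1,9)\}$ and $(m,p)\neq(2,2)$, then $X_v$ satisfies Hypothesis~\ref{wrspsk}, and Lemma~\ref{Sp} (when $k\geqslant3$, giving no homogeneous factorisation at all) or Lemma~\ref{c2k2} (when $k=2$) shows that $\Gamma$ is not $(L,2)$-arc-transitive, hence not $(G,3)$-arc-transitive. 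Together with Lemma~\ref{sp23c2}, this proves the ``Moreover'' part with exceptions exactly $(m,q)=(1,9)$, $(m,q)=(1,3)$ with $k=6$, and $(m,p)=(2,2)$.

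For (b), in the families $(m,q)=(1,9)$ and $(m,p)=(2,2)$ Hypothesis~\ref{wrspsk} is unavailable, because the isomorphism $\PSp_2(9)\cong\A_6$ allows $\PSL_2(5)\cong\A_5$ as the insoluble composition factor $T$ of $\varphi_1(X_{uv}\cap M)$, and the graph automorphism of $\Sp_4(q)$ allows $T=\Sz(q)$ (when $q=2^{2e+1}$). Assuming $\Gamma$ is $(G,3)$-arc-transitive, I would re-run the analysis of Lemmas~\ref{prerequisites},~\ref{Sp} and~\ref{k=2} with this enlarged list for $T$, using in addition the valency bound of Lemma~\ref{valency} (namely $|G_{uv}|_{t}\geqslant|G_{v}|_{t}^{2/3}$ for every prime $t$, which is needed for the small values of $k$ that lie beyond the reach of the generic estimates in Lemma~\ref{Sp}). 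The new possibilities for $T$ are cheap to eliminate: $\Sz(q)\notin\ICF(\Sp_4(q)\wr\Sy_k)$, and $\A_5\cong\A_k$ only when $k=5$, so $T=\PSL_2(5)$ survives only for $k=5$, where $X_{uv}\cap M$ is forced to be essentially a full diagonal copy of $\SL_2(5)$, making $|X_{uv}|$ far below $|X_v|^{1/2}$ and contradicting Lemma~\ref{abp}. Hence $T=\PSp_{2m}(q)$ throughout, so $(X_{uv}\cap M)'=M$, $X_{uv}=M.\pi(X_{uv})$, and $\Sy_k=\pi(X_{uv})\pi(X_{vw})$ is a homogeneous factorisation of $\Sy_k$ by proper subgroups; as at the end of the proof of Lemma~\ref{Sp} this forces either $k\leqslant2$ --- finished as in Lemma~\ref{c2k2}, because $X_{uv}^{(\infty)}$ and $X_{vw}^{(\infty)}=M_1$ act differently on $V=W_1\oplus W_2$ and so cannot be conjugate in $X$ --- or $k=6$ with $\pi(X_{uv})=\PGL_2(5)$, $\pi(X_{vw})=\Sy_5$ and $M_6\triangleleft X_{vw}<M$, which is impossible. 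The few residual small parameters are cleared by a MAGMA computation as in Lemma~\ref{sp23c2}.

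The main obstacle is step (b): without Hypothesis~\ref{wrspsk} one must reprove the relevant portions of Subsection~\ref{s:homfact} from scratch, keeping track of the extra composition factors $\PSL_2(5)$ and $\Sz(q)$ and --- the delicate point --- allowing for the failure of the Aschbacher-type uniqueness statements that underlie the applications of Lemma~\ref{HK} (there can be more than one $\Sp_{2m}(q)$-class of the relevant subgroups when $q=9$ or when $q$ is even), and then combining this structural analysis for large parameters with the valency bound for small $k$ and a finite MAGMA check at the bottom. It is precisely because the valency bound is available only under the $(G,3)$-hypothesis that one can only assert ``not $(G,3)$-arc-transitive'', rather than ``not $(L,2)$-arc-transitive'', in these two exceptional families.
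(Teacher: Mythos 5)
Your Case (a) matches the paper exactly (the paper's Case (1)), and your observation that $(m,q)=(1,2)$ is not maximal is a correct side remark. Your Case (b), however, has several genuine problems, and it is not in fact a matter of ``re-running'' Lemmas~\ref{prerequisites}--\ref{k=2} with a longer list and then cheaply eliminating the extra entries.

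First, the extra composition factor $T$ for $(m,p)=(2,2)$ is not $\Sz(q)$: since $\Pi(\Sz(q))=\Pi(q)\cup\Pi(q-1)\cup\Pi(q^2+1)$ misses $\Pi(q+1)$ (nonempty and coprime to $2q(q-1)(q^2+1)$), $\Sz(q)$ never meets the prime-divisor criterion from \cite[Corollary 5]{transitive}. The group one must actually worry about is $\PSp_2(q^2)\cong\Omega_4^-(q)$; the obstruction is that the graph automorphism of $\Sp_4(2^f)$ gives two non-conjugate classes of such subgroups, which is exactly what breaks the Lemma~\ref{HK}/Aschbacher uniqueness step (your closing paragraph recognises this, but your earlier identification of the group is wrong). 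Second, your elimination of $T=\A_5$ for $(m,q)=(1,9)$ via ``$\A_5\cong\A_k$ only when $k=5$'' is not a proof, and the claim that Lemma~\ref{abp} then kills the $k=5$ case is simply false: with $X_{uv}\cap M\cong\SL_2(5)^5$ one computes $|X_{uv}|^2>|X_v|$ at every prime, so Lemma~\ref{abp} gives nothing. What the paper's Case (2) actually does is invoke the $(G,3)$-hypothesis via Lemma~\ref{valency}: with $d=3$ (for $q=9$) or $d$ a primitive prime divisor of $2^{2f}-1$ (or $d=7$ if $f=3$, for $m=p=2$), one has $|\varphi_1(X_{uv}\cap M)|_d=d^a$ while $|\Sp_{2m}(q)|_d=d^{2a}$, so the valency has $d$-part at least $d^{ak}$, forcing $d^{3ak}\leq|G_v|_d=d^{2ak}(k!)_d$, contradicting Lemma~\ref{sizeppd} for every $k\geq 2$ --- not merely for ``small $k$'' as you suggest. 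Third, even after forcing $T=\PSp_{2m}(q)$, you assert $(X_{uv}\cap M)'=M'$ without proving that the multiplicity $\ell$ equals $k$; the paper's Claim 2 establishes this by a second application of Lemma~\ref{valency} with a primitive prime divisor of $q^{2m}-1$. Finally, your last step is internally inconsistent: if $X_{uv}\geq M'$ then $X_{vw}\cong X_{uv}$ cannot have $X_{vw}^{(\infty)}=M_1$ (that is the conclusion of Lemma~\ref{k=2}, whose hypotheses are unavailable here). The paper instead sets $N:=(M')^{\overline g}$, shows $N/Z(M)^{\overline g}$ is a minimal normal subgroup of $X_{vw}/Z(M)^{\overline g}$, rules out $N\cap M\leq Z(M)^{\overline g}$ (it would force $|\PSp_{2m}(q)'|^k$ to divide $k!$), concludes $N\leq M$ so $N=M'$, and then contradicts Lemma~\ref{normal}. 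These four steps are exactly the content of the proof in Case (2); none of them is captured by your sketch.
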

\begin{proof}
The assertions follow from Lemma~\ref{sp23c2} when \((m,q)= (1,3)\). So we assume from now on that \((m,q)\neq (1,3)\), and divide our analysis into two cases:

\medskip\noindent
\emph{Case (1): \((m,q)\neq (1,9)\) and \((m,p)\neq (2,2)\).}  Note that \(\Gamma\) is not \((L,2)\)-arc-transitive when \(k=2\), by Lemma \ref{c2k2}. Thus it remains to discuss the situation when \(k\geqslant3\). Suppose that \(\Gamma\) is \((L,2)\)-arc-transitive with \(L_{v}=G_{v}\cap L\) being a \(\mathcal{C}_{2}\) subgroup of \(L\), then \(\Gamma\) is \((X,2)\)-arc-transitive. By Lemma \ref{0} we have \(X_{v}=X_{uv}X_{vw}\) with \(X_{uv}^{g}=X_{vw}\). Thus \(X_{v}=\Sp_{2m}(q)\wr \Sy_{k}\) admits a homogeneous factorisation. However, Lemma \ref{Sp} tells us that such a factorisation does not exist, which is a contradiction.

\medskip\noindent
\emph{Case (2): \((m,q)=(1,9)\) or \((m,p)=(2,2)\).} 
Suppose that \(G_{v}\) is a \(\mathcal{C}_{2}\) subgroup of \(G\) and \(\Gamma\) is \((G,3)\)-arc-transitive. Let us denote by \(\pi\) the projection from \(X_{v}\) to \(X_{v}/M\). Thus \(\pi(X_{v})=\Sy_{k}\), \(\pi(X_{uv})=X_{uv}M/M\) and \(\pi(X_{vw})=X_{vw}M/M\). From the factorisation \(X_{v}=X_{uv}X_{vw}\) we deduce that \(\pi(X_{v})=\pi(X_{uv})\pi(X_{vw})\). Then by Lemma \ref{ab} we find that at least one of \(\pi(X_{uv})\) or \(\pi(X_{vw})\) is a transitive subgroup of \(\Sy_{k}\). Without loss of generality, assume that \(\pi(X_{uv})\) is transitive. Note that \(X_{v}=\Sp_{2m}(q)^{k}: \Sy_{k}\). It follows from Lemma \ref{wreath} that \(\varphi_{1}(X_{uv}\cap M)\cong\cdots\cong \varphi_{k}(X_{uv}\cap M)\) and \(\Pi(\Sp_{2m}(q))=\Pi(\varphi_{1}(X_{uv}\cap M))\). Let \(Z\) be the center of \(\varphi_{1}(M)=\Sp_{2m}(q)\). Then \(Z=\C_{(q-1,2)}\) and \(\varphi_{1}(M)Z/Z=\PSp_{2m}(q)\). Thus \(\Pi(\varphi_{1}((X_{uv}\cap M)Z/Z))\supseteq \Pi(\varphi_{1}(X_{uv}\cap M))\backslash\Pi(Z)\supseteq\Pi(\PSp_{2m}(q))\backslash\{2\}\). Thereby we deduce from \cite[Table 10.3, Table 10.7]{transitive} that one of the following holds:
\begin{description}
\item[(i)] \(m=1, q=9\) and \(\varphi_{1}(X_{uv}\cap M)Z/Z=\A_{5}\);
\item[(ii)] \(m=p=2\), and \(\PSp_{2}(q^{2})'\triangleleft\varphi_{1}(X_{uv}\cap M)Z/Z\leqslant \PSp_{2}(q^{2}).2\);
\item[(iii)] \(\varphi_{1}(X_{uv}\cap M)Z/Z=\PSp_{2m}(q)\);
\end{description}

\medskip
\noindent
\textit{Claim~1:}
\(\varphi_{1}(X_{uv}\cap M)Z/Z=\PSp_{2m}(q)\).

\medskip
\noindent
Suppose that the assertion is not true. Then either case (i) or (ii) occurs. If \(f\neq3\) then \(\ppd(2,2f)\neq\varnothing\) by Corollary \ref{existppd}, choose \(r\in\ppd(2,2f)\) and note that \(r>2f\). We define \(d\) by
\[
d:=\begin{cases}
    3 & \text{if \((m,q)=(1,9)\)}\\
    r & \text{if \((m,p)=(2,2)\) and \(f\neq 3\)}\\
    7 & \text{if \((m,p,f)=(2,2,3)\)}.
\end{cases}
\]Then \(|\varphi_{1}(X_{uv}\cap M)Z/Z|_{d}>1\). Let us denote by \(d^{a}:=|\varphi_{1}(X_{uv}\cap M)|_{d}\) and note in particular that \(a=1\) if \((m,q)=(1,9)\). Then  \[|\Sp_{2m}(q)|_{d}=\begin{cases}3^{2}=d^{2a}& \text{if \((m,q)=(1,9)\)}\\ r^{2a}=d^{2a} &\text{if \(m,p)=(2,2)\) and \(f\neq3\) }\\7^{2}=d^{2a}& \text{if \((m,p,f)=(2,2,3)\)}.\end{cases}\] Hence 
 \(|X_{v}|_{d}=|\Sp_{2m}(q)|_{d}^{k}(k!)_{d}=d^{2ak}(k!)_{d}\). 
 On the other hand, 
\begin{equation}\nonumber
    |X_{uv}|_{d}\leqslant|\varphi_{1}(X_{uv}\cap M)|_{d}^{k}(k!)_{d}\leqslant|\Sp_{2m}(q)|_{d}^{k}(k!)_{d}=d^{ak}(k!)_{d}
\end{equation}Thus the valency of \(\Gamma\) has \(d\)-part 
\[
    \frac{|X_{v}|_{d}}{|X_{uv}|_{d}}\geqslant\frac{d^{2ak}(k!)_{d}}{d^{ak}(k!)_{d}}=d^{ak}.
\] Since \(\Gamma\) is \((G,3)\)-arc-transitive, we deduce from Lemma \ref{valency} that \(|G_{v}|\) is divisible by \(d^{3ak}\). Note that \(\Pi(\Out(L))=\Pi(f)\cup\{2\}\). Thus \(|\Out(L)|_{d}=(2f)_{d}=1\) and so\(|X_{v}|_{d}=|G_{v}|_{d}=r^{3ak}(k!)_{d}\). This leads to
\[
    d^{3ak}\leqslant|G_{v}|_{d}=d^{2ak}(k!)_{d},
\]
which implies that \(d^{ak}\leqslant(k!)_{p}\), contradicting Lemma \ref{sizeppd}. Thus none of cases (i)--(ii) is possible and so \(\varphi_{1}(X_{uv}\cap M)Z/Z\) is \(\PSp_{2m}(q)\), proving Claim~1.

\medskip

Then since \(\pi(X_{uv})\) is transitive, it follows that \(\ICF(X_{uv}\cap M)=\PSp_{2m}(q)'\) and this integer \(\ell:=\m_{X_{uv}\cap M}(\PSp_{2m}(q)')\) we have \(\ell\) divides \(k\).

\medskip
\noindent
\textit{Claim~2:}
\(\ell=k\).

\medskip
\noindent
Suppose for a contradiction that \(\ell<k\). In particular, \(\ell\leqslant k/2\). 
 Now \(\ppd(p,2mf)\neq\varnothing\) by Corollary \ref{existppd}. Let us take \(s\in\ppd(p,2mf)\) and set by \(s^{b}:=|\PSp_{2m}(q)|_{s}\). As \(\Pi(\Out(L))\subseteq\{2\}\cup\Pi(f)\) and \(s>2mf\geqslant 2f\), it follows that \(|\Out(L)|_{s}=1\) and 
\[
|\varphi_{1}(X_{uv}\cap M)|_{s}=|\varphi_{1}(X_{uv}\cap M)Z/Z|_{s}=|\PSp_{2m}(q)'|_{s}=|\PSp_{2m}(q)|_{s}. 
\]
Let us compute the \(s\)-part of \(|X_{uv}|\) and \(|X_{v}|\),
\(
    |X_{uv}|_{s}\leqslant|\PSp_{2m}(q)'|_{s}^{k/2}(k!)_{s}=s^{bk/2}(k!)_{s},
\)
and 
\(
 |X_{v}|_{s}=|\Sp_{2m}(q)|_{s}^{k}(k!)_{s}=s^{bk}(k!)_{s}
\).
Thus the valency of \(\Gamma\) has \(s\)-part
\[
    \frac{|X_{v}|_{s}}{|X_{uv}|_{s}}\geqslant\frac{s^{bk}(k!)_{s}}{s^{bk/2}(k!)_{s}}=s^{bk/2}.
\]
Since \(\Gamma\) is \((G,3)\)-arc-transitive, we deduce from Lemma \ref{valency} that \(|G_{v}|_{s}\) is divisible by \(s^{3bk/2}\). However, 
\[
    |G_{v}|_{s}\leqslant |\Out(L)|_{s}|L_{v}|_{s}=|\PSp_{2m}(q)'|_{s}^{k}(k!)_{s}=s^{bk}(k!)_{s}.
\]
This gives that \(s^{bk/2}\leqslant (k!)_{s}\), but this is impossible since we deduce by Lemma \ref{sizeppd} that \((k!)_{s}<s^{k/(s-1)}\leqslant s^{k/2}\), and the proof of claim is complete.

\medskip

Since \(\ell=k\), we have \(X_{uv}\cap M'\cong\Sp_{2m}(q)'^{k}\) and so \(M'\triangleleft X_{uv}\) and \(Z(M)\triangleleft X_{uv}\). Moreover,
\begin{equation}\nonumber
   M'/Z(M)\cong \PSp_{2k}(q)'^{k}
\end{equation}
is a minimal normal subgroup of \(X_{uv}/Z(M)\) as \(\pi(X_{uv})\) is transitive. Let \(N:=M'^{g}\). Since \(X_{vw}=X_{uv}^{g}\), we deduce that \(N/Z(M)^{g}\)(\(\cong\PSp_{2m}(q)'^{k}\)) is a minimal normal subgroup of \(X_{vw}/Z(M)^{g}\). It follows from  \(X_{vw}\cap M\triangleleft X_{vw}\) that \((X_{vw}\cap M)Z(M)^{g}/Z(M)^{g}\triangleleft X_{vw}/Z(M)^{g}\). Thus either \((N\cap M)Z(M)^{g}/Z(M)^{g}=1\), or \(N/Z(M)^{g}\leqslant(X_{vw}\cap M)Z(M)^{g}/Z(M)^{g} \). Suppose that \((N\cap M)Z(M)^{g}/Z(M)^{g}=1\). Then \(N\cap M\leqslant Z(M)^{g}\), which is soluble. Thus \(\ICF(N)=\ICF(N/(N\cap M))\) and so \(|N/(N\cap M)|\) is divisible by \(|\PSp_{2m}(q)'|^{k}\). Since \(N/(N\cap M)\lesssim \Sy_{k}\), this implies that \(k!\) is divisible by \(|\PSp_{2m}(q)'|^{k}\), which is impossible. Hence \(N/Z(M)^{g}\leqslant (X_{vw}\cap M)Z(M)^{g}/Z(M)^{g}\) and so \((N\cap M)Z(M)^{g}=N\). Then
\begin{equation}\nonumber
    N\cap M\geqslant (N\cap M)'=((N\cap M)Z(M)^{g})'=N'=N,
\end{equation}
and so \(N\leqslant M\). Thus \(N=N'\leqslant M'\) and so \(M'^{g}=M'\) due to \(|N|=|M'|\), contradicting Lemma \ref{normal}. Thus \(\Gamma\) is not \((G,3)\)-arc-transitive and this completes the proof.
\end{proof}

\subsubsection{\(\mathcal{C}_{2}\)-subgroups of Type (II)}
Recall that if \(G_{v}\) is a \(\mathcal{C}_{2}\)-subgroup of Type II of \(G\), then \(n\geqslant2\), \(q\) is odd, and  \(L_{v}=\GL_{n}(q).2/\{\pm 1\}\).

\begin{proposition}\label{C21}
Suppose that Hypothesis \ref{general} holds. Then \(G_{v}\) is not a maximal \(\mathcal{C}_{2}\)-subgroup of Type II. 
\end{proposition}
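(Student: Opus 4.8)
The plan is to exploit the homogeneous factorisation of $G_v$ supplied by Hypothesis~\ref{general}. Since $s\geqslant 2$, Hypothesis~\ref{ga} (via Lemma~\ref{0}) gives $G_v=G_{uv}G_{vw}$ with $G_{uv}^{\bar g}=G_{vw}$, where $\bar g\in L$ satisfies $u^{\bar g}=v$, $v^{\bar g}=w$ and $v\to w$ is an arc. As $G_v$ is a Type~(II) $\mathcal{C}_2$-subgroup, $q$ is odd and $G_v\cap L=L_v\cong\GL_n(q).2/\{\pm 1\}$; Proposition~\ref{citesmall} excludes the degenerate small cases, so $\PSL_n(q)$ is a nonabelian simple group. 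Since $G_v/L_v$ is soluble, $\ICF(G_v)=\{\PSL_n(q)\}$ with $\m_{G_v}(\PSL_n(q))=1$, and $D:=(G_v)^{(\infty)}=(L_v)^{(\infty)}$ is a central quotient of $\SL_n(q)$; thus $D$ is quasisimple with $D/Z(D)\cong\PSL_n(q)$, and it is a normal subgroup of $G_v$ that is nontrivial and proper (because $G_v/D$ is soluble and nontrivial, as $q\geqslant 3$).

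The crucial step is to prove that $\PSL_n(q)\in\ICF(G_{uv})$ (equivalently $\in\ICF(G_{vw})$). If this fails then, since the only insoluble composition factor available in $G_v$ is $\PSL_n(q)$, both $G_{uv}$ and $G_{vw}$ are soluble. By Lemma~\ref{abp} we have $\Pi(G_{uv})=\Pi(G_v)$ and $|G_{uv}|_t^{2}\geqslant|G_v|_t$ for every prime $t$. Choose $r\in\ppd(p,nf)$, which exists by Corollary~\ref{existppd} except when $n=2$ and $q+1$ is a power of $2$; then $r$ is odd, $r>nf$, and $r$ divides $|G_v|$, so $r\mid|G_{uv}|$. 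An element of order $r$ of $G_v$ lies in $\GL_n(q)$ and acts irreducibly on the natural $n$-dimensional module (its eigenvalues generate $\mathbf{F}_{q^n}$ over $\mathbf{F}_q$ exactly because $r$ is a primitive prime divisor), and by standard facts about primitive prime divisor elements a soluble subgroup of $\GL_n(q)$ containing such an element is conjugate into the normaliser of a Singer cycle, a group of order dividing $(q^n-1)\cdot n$; hence $|G_{uv}|_p\leqslant n_p$, which is far below $|G_v|_p^{1/2}$ (a large power of $p$), a contradiction. The remaining possibility, $n=2$ with $q+1$ a power of $2$ — so $q=p$ is a Mersenne prime with $q\geqslant 41$ by Proposition~\ref{citesmall} — is handled by a direct examination of the candidate homogeneous factorisations of the subgroup $\GL_2(q).2$ of $\Sp_4(q)$, using the subgroup structure of $\PSL_2(q)$ together with the constraint $|G_{uv}|\geqslant|G_v|^{1/2}$ and Lemma~\ref{abp} applied to the primes $p$ and a prime divisor of $(q-1)/2$: no factorisation with $G_{uv}\cong G_{vw}$ survives.

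With the crucial step established the argument finishes quickly. Since $G_{uv}\leqslant G_v$ we have $(G_{uv})^{(\infty)}\leqslant G_{uv}\cap D\leqslant D$, and $(G_{uv})^{(\infty)}$ is a perfect group having $\PSL_n(q)$ as a composition factor; a subgroup of the quasisimple group $D$ that has $\PSL_n(q)$ as a composition factor must map onto $D/Z(D)\cong\PSL_n(q)$, and being perfect it must equal $D$. Thus $(G_{uv})^{(\infty)}=D$, and likewise $(G_{vw})^{(\infty)}=D$. Consequently
\[
D^{\bar g}=\bigl((G_{uv})^{(\infty)}\bigr)^{\bar g}=(G_{vw})^{(\infty)}=D,
\]
so $\bar g$ normalises the proper nontrivial normal subgroup $D$ of $G_v$. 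Since $\Gamma$ is $G$-vertex-primitive it is connected, $v\to w$ is an arc and $v^{\bar g}=w$, so this contradicts Lemma~\ref{normal}. Hence $G_v$ is not a maximal $\mathcal{C}_2$-subgroup of Type~(II).

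I expect the main obstacle to be the crucial step, namely excluding soluble factors: the primitive prime divisor argument is clean for $n\geqslant 3$ (and for $n=2$ when a suitable $\ppd$ exists), but the residual case $n=2$ with $q$ a Mersenne prime must be treated by hand via the subgroups of $\PSL_2(q)$, and making the "soluble subgroup with a $\ppd$-element lies in a torus normaliser" assertion fully rigorous (ruling out imprimitive, tensor-induced and extraspecial-normaliser configurations) is the most delicate point.
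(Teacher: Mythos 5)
Your overall dichotomy is sound and mirrors the paper: if $\PSL_n(q)$ is a composition factor of both $G_{uv}$ and $G_{vw}$, then both contain $D:=G_v^{(\infty)}$ and $\overline g$ normalises it, contradicting Lemma~\ref{normal}; so (since $G_{uv}\cong G_{vw}$) you must rule out the case that both factors are soluble. Your final step recovering $(G_{uv})^{(\infty)}=D$ from the composition-factor hypothesis is correct. Where you diverge from the paper is the treatment of the soluble case: the paper feeds the factorisation $G_v=G_{uv}G_{vw}$ through \cite[Lemma 3.3]{small}, which invokes the classification of factorisations of almost simple groups with socle $\PSL_n(q)$ to force $n=2$, $q\geqslant 41$, and (after quotienting by the soluble radical) $\overline{G_{uv}}\cap\PSL_2(q)\leqslant \D_{q+1}$ and $\overline{G_{vw}}\cap\PSL_2(q)\leqslant \E_q{:}\C_{(q-1)/2}$; only then does the $p$-adic estimate $|G_{uv}|_p^2\leqslant (f)_p^4 < |G_v|_p$ close the argument. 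You instead try to bypass the factorisation classification with a primitive-prime-divisor argument, and this is where the genuine gaps lie.

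First, the assertion that ``a soluble subgroup of $\GL_n(q)$ containing an element of order $r\in\ppd(p,nf)$ lies in the normaliser of a Singer cycle'' is not a one-line standard fact. The $\ppd$-element certainly forces irreducibility and, since $r>nf\geqslant n$, rules out the imprimitive wreath subgroups and the parabolics; but a soluble irreducible overgroup of a $\ppd$-element can also sit inside a normaliser of a symplectic-type (extraspecial) $r$-group or inside a field-extension subgroup, so you need the full Guralnick--Penttila--Praeger--Saxl-style analysis and an explicit check that these alternatives still have small $p$-part (they do, but that needs to be verified, not assumed). Second — and this is the fatal gap — for $n=2$ and $q=p$ a Mersenne prime, $\ppd(p,2)=\varnothing$, and there is no substitute prime: every odd prime dividing $|G_v|$ divides $q-1$ or equals $p$, so nothing in your argument prevents $G_{uv}\cap L_v$ from lying inside a Borel subgroup of $\GL_2(q)/\{\pm 1\}$, which has $p$-part $q=p^f$, and then $|G_{uv}|_p^2\geqslant |G_v|_p$ and your counting yields no contradiction. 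What rescues the paper here is precisely the classification of factorisations of $\PSL_2(q)$: it guarantees that (at least) one factor of the induced factorisation of $\PGL_2(q)$ must be contained in $\D_{q+1}$ — a group of $p'$-order — rather than a Borel, and this cannot be deduced from order-divisibility alone. Your ``direct examination of candidate homogeneous factorisations'' for the Mersenne case would have to establish exactly this fact, which in effect means importing the factorisation classification you were trying to avoid.
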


\begin{proof}
Suppose that Hypothesis \ref{general} holds and \(G_{v}\) is a maximal \(\mathcal{C}_{2}\) subgroup such that \(L_{v}=\GL_{n}(q).2/\{\pm1\}\), with \(n\geqslant2\) and $q$ odd. 
By Proposition~\ref{citesmall} it follows that $G_v$ has a unique insoluble composition factor $\PSL_n(q)$, so that \cite[Lemma 3.3]{small} applies, and we note in particular that 
\[
\PSL_n(q)\not\in\{ A_6, M_{11}, M_{12}, \Sp_4(2^f), \POmega_8^+(q), \PSU_3(8),\PSU_4(2)\},
\] 
and that if $n=2$ then $q\geqslant 41$, and if $n=3$ then $q\geqslant7$.   Note also that \(L=\PSp_{2n}(q)\) and \(G_{v}^{(\infty)}\) is  quasisimple such that \(G_{v}^{(\infty)}/Z(G_{v}^{(\infty)})\cong\PSL_{n}(q)\). 
Moreover if both $G_{uv}$ and $G_{vw}$ have $\PSL_n(q)$ as a composition factor then both $G_{uv}$ and $G_{vw}$ contain $\SL_n(q)/(\{\pm 1\}\cap \SL_n(q))$  (a normal subgroup of $G_v$) and it follows that the element $\overline{g}$ in Hypothesis~\ref{general} normalises this normal subgroup, contradicting Lemma~\ref{normal}. Thus 
part (c) of \cite[Lemma 3.3]{small} does not hold, and so \cite[Lemma 3.3]{small} implies that the simple group $\PSL_n(q)$ lies in one of the families $\mathcal{T}_1$ or $\mathcal{T}_2$ defined in \cite[(2.1)]{small}. On checking the definitions of these families against the above display and noting that \(q\) is odd, we see that $\PSL_n(q)\in\mathcal{T}_2$, and so by \cite[Lemmas 2.7 and 3.3]{small}, setting \(\overline{G_{v}}=G_{v}/\R(G_{v})\) (recall that \(\R(G_{v})\) is the largest soluble normal subgroup of \(G_{v}\)), \(\overline{G_{uv}}=G_{uv}\R(G_{v})/\R(G_{v})\) and \(\overline{G_{vw}}=G_{vw}\R(G_{v})/\R(G_{v})\), then interchanging \(\overline{G_{uv}}\)  and \(\overline{G_{vw}}\) if necessary,
the following holds:

\begin{center}
\(n=2, q\geqslant 41\),  \(\overline{G_{uv}}\cap\PSL_{2}(q)\leqslant \D_{q+1}\) and \(\overline{G_{vw}}\cap \PSL_{2}(q)\leqslant \E_{q}:\C_{\frac{q-1}{2}}\),   
\end{center}
where \(\E_{q}\) is an elementary abelian group of order \(q\).

\noindent
Since $q=p^f$ is odd, we have
$|\overline{G_{uv}}\cap \PSL_{2}(q)|_{p}=|\D_{q+1}|_{p}=1$ and 
\begin{equation*}
    |\overline{G_{uv}}|_{p}\leqslant |\overline{G_{uv}}\cap\PSL_{2}(q)|_{p}\cdot|\Out(\PSL_{2}(q))|_{p}=(2f)_{p}=(f)_{p}.
\end{equation*}
On the other hand, for the soluble radical $R(G_v)$, the order \(|\R(G_{v}))|\) divides \(|Z(G_{v})|\cdot|\Out(L)|\) and so 
\begin{equation*}
    |\R(G_{v})|_{p}\leqslant|Z(G_{v})|_{p}\cdot|\Out(L)|_{p}=(q-1)_{p}(2f)_{p}=(f)_{p}.
\end{equation*}
This implies that 
\begin{equation}\label{48}
    |G_{uv}|_{p}^{2}\leqslant(|\overline{G_{uv}}|_{p}\cdot|\R(G_{v})|_{p})^{2}\leqslant (f)_{p}^{4}, 
\end{equation}
which in turn leads to $p^{4f/(p-1)}>(f!)_{p}^{4}\geqslant(f)_{p}^{4}\geqslant|G_{uv}|_{p}^{2}\geqslant|G_{v}|_{p}\geqslant|\GL_{n}(q)|_{p}\geqslant|\GL_{2}(q)|_{p}=p^{f},$
and hence $4>p-1$, so $p\leqslant 3$. Since \(q=p^{f}\) is odd, we have \(p=3\), and since \(q=3^{f}\geqslant 41\), also \(f\geqslant 4\). 
If \(f\equiv\pm1\pmod{3}\), then \((f)_{3}=1\), and so by \eqref{48}, \(|G_{uv}|_{3}^{2}=1\). On the other hand, \(|G_{v}|_{3}\geqslant |\GL_{n}(3^{f})|_{3}\geqslant 3^{f}>1=|G_{uv}|_{3}^{2}\), which is a contradiction. Therefore, \(f\equiv0\pmod{3}\), so $f=3k\geqslant6$. 

We claim that 
\((f)^{2}_{3}\leqslant (f!)_{3}\):  
if $f\not\equiv 0\pmod{9}$, then  \((f)^{2}_{3}=3^2\leqslant (f!)_{3}\). Similarly if $9$ divides $f$ but $f\not\equiv 0\pmod{27}$, then  \((f)^{2}_{3}=3^4\leqslant (f!)_{3}\). Thus we may assume that \((f)_{3}=3^{a}\) with \(a\geqslant 3\), so that $f=3^ak'$ with $k'$ coprime to $3$. Then  $f!$ is divisible by 
\[
3^ak'\cdot 3^{a-1}k' \dots  3k' = 3^{(a+1)a/2}\cdot(k')^a
\]
and it follows, since $(a+1)a/2\geqslant 2a$, that \( (f!)_{3} \geqslant  3^{2a}= (f)^{2}_{3}\), proving the claim.

Using the upper bound in  \eqref{48}, we have \(|G_{uv}|_{3}^{2}\leqslant (f)_{3}^{4}\leqslant (f!)_{3}^{2}<3^{f}\). On the other hand, \(|G_{v}|_{3}\geqslant |\GL_{n}(3^{f})|_{3}\geqslant 3^{f}\), so \(|G_{uv}|_{3}^{2}<|G_{v}|_{3}\), which is a contradiction. Thus Proposition~\ref{C21} is proved.


\end{proof}

\subsection{\(\mathcal{C}_{3}\)-subgroups}
Here \(G_{v}\) is a maximal \(\mathcal{C}_{3}\)-subgroup of the almost simple group $G$ where \(L=\Soc(G)=\PSp_{2n}(q)'\) with $q=p^f$ for a prime $p$, so by \cite[Propositions 4.3.7 and 4.3.10]{kleidman} and \cite[Table 3.5.C]{kleidman},  either 
\begin{enumerate}
    \item[(I)]  $G_v$ is a Type (I) subgroup, that is,  \(L_v=\PSp_{2a}(q^{b}).b\) where \(n=ab\) with \(a\geqslant 1\) and \(b\) a prime; or 
    \item[(II)] \(G_v\) is a Type (II) subgroup, that is,  \(L_v=\GU_{n}(q).2/\{\pm 1\}\) where \(n\geqslant2\) and \(q\) is odd.
\end{enumerate}

\begin{proposition}\label{p:c3I}
Suppose that Hypothesis \ref{general} holds. If \(G_{v}\) is a \(\mathcal{C}_{3}\)-subgroup of Type (I), as above, then \(\Gamma\) is not \((G,3)\)-arc-transitive, and moreover the parameters \((p,a)=(2,2)\). 
\end{proposition}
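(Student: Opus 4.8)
The plan is to exploit the homogeneous factorisation $G_v=G_{uv}G_{vw}$ provided by Hypothesis~\ref{general}, together with the classification of core-free factorisations of almost simple classical groups and (for the $3$-arc statement) the valency estimate of Lemma~\ref{valency}. Write $Q=q^{b}$, so that $X_v=\Sp_{2a}(Q).b$ with $n=ab$, $b$ prime and $Q\geqslant 4$; since $\Sp_{2a}(Q)$ is perfect, $X_v^{(\infty)}=\Sp_{2a}(Q)$, and this is the unique subgroup of $X_v$ isomorphic to $\Sp_{2a}(Q)$. Proposition~\ref{citesmall} removes the finitely many smallest pairs $(n,q)$.

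I would first prove that $(p,a)=(2,2)$. From $s\geqslant 2$ we have $G_v=G_{uv}G_{vw}$ with $G_{uv}\cong G_{vw}$ and $G_{uv}^{g}=G_{vw}$ for an element $g$ with $v^{g}=w$; in particular $\ICF(G_{uv})=\ICF(G_{vw})$. If $\PSp_{2a}(Q)$ were an insoluble composition factor of $G_{uv}$, then by perfectness $G_{uv}$ (hence also $G_{vw}$) would contain a copy of $\Sp_{2a}(Q)$, which being the unique such subgroup of $X_v$ would be normalised by $g$, contradicting Lemma~\ref{normal}; so $\PSp_{2a}(Q)\notin\ICF(G_{uv})$. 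Reducing to a core-free factorisation of the almost simple group with socle $\PSp_{2a}(Q)$ via Lemma~\ref{maxcore}, I would then invoke the factorisation classification --- Lemma~\ref{sp} and Table~\ref{PSp(2m,q)} when $Q$ is odd, the even-characteristic analogues of \cite{LX,LWX} when $Q$ is even, and the $\PSL_2$-factorisations of \cite{LPS} when $a=1$. A case check shows that the two factors of such a factorisation never share an insoluble composition factor, \emph{except} when the socle is $\PSp_4(Q)$ with $Q$ even: there the $\mathcal{C}_3$-subgroup $\Sp_2(Q^{2}).2$ and the $\mathcal{C}_8$-subgroup $\GO_4^-(Q)$ (each of order $2Q^{2}(Q^{4}-1)$) both have $\PSL_2(Q^{2})$ as their unique insoluble composition factor; and no almost simple group with socle $\PSL_2(Q)$ admits a homogeneous factorisation, disposing of $a=1$. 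Since $\ICF(G_{uv})=\ICF(G_{vw})$, this forces $(p,a)=(2,2)$ and $\ICF(G_{uv})=\ICF(G_{vw})=\{\PSL_2(q^{2b})\}$.

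To show $\Gamma$ is not $(G,3)$-arc-transitive, I would argue by contradiction. If it were, then by Lemma~\ref{s-1} $\Gamma$ is $(L,2)$-arc-transitive, so $X$ acts $2$-arc-transitively and $X_v=X_{uv}X_{vw}$ is a homogeneous factorisation with $X_{uv}^{g}=X_{vw}$; by the previous paragraph $(p,a)=(2,2)$ and $\ICF(X_{uv})=\{\PSL_2(q^{2b})\}$. Hence $D:=X_{uv}\cap\Sp_4(Q)$ has $\PSL_2(Q^{2})$ as its unique insoluble composition factor, so $D$ lies in a $\mathcal{C}_3$- or $\mathcal{C}_8$-subgroup of $\Sp_4(Q)$ of order $2Q^{2}(Q^{4}-1)$, giving $|X_{uv}|\leqslant 2Q^{2}(Q^{4}-1)b$. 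Writing $q=p^{f}$, choose $r\in\ppd(p,2bf)$; by Corollary~\ref{existppd} this set is non-empty unless $(q,b)=(2,3)$, which I treat separately with $r=7$. Then $r$ is odd with $r\nmid b$ and $r\nmid|\Out(L)|$; moreover $r\mid Q^{2}-1$ and $r\nmid Q^{2}+1$, so $|X_v|_{r}=(Q^{2}-1)_{r}^{2}=r^{2e}$ for some $e\geqslant 1$, while $|X_{uv}|_{r}\leqslant(Q^{4}-1)_{r}=r^{e}$. Thus the valency $\gamma=|X_v|/|X_{uv}|$ has $\gamma_{r}\geqslant r^{e}$, whereas $|G_v|_{r}=r^{2e}$; but Lemma~\ref{valency} would then force $r^{2e}=|G_v|_{r}\geqslant\gamma_{r}^{3}\geqslant r^{3e}$, which is impossible. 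The exceptional case $(q,b)=(2,3)$ runs identically: there $|X_v|_{7}=|\Sp_4(8)|_{7}=7^{2}$ while $|X_{uv}|_{7}\leqslant 7$.

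The main obstacle is the factorisation bookkeeping in the second step: one must check, across Lemma~\ref{sp} and Table~\ref{PSp(2m,q)}, the even-characteristic classifications of \cite{LX,LWX}, and the $\PSL_2$-socle case, that $(p,a)=(2,2)$ is genuinely the only configuration in which the two factors of a homogeneous factorisation can share an insoluble composition factor, and to identify that factor precisely as $\PSL_2(q^{2b})$ (which is also what pins down $|X_{uv}|_{r}$ in the final estimate). The valency argument itself is then routine, modulo the small ad hoc adjustment needed when the relevant primitive prime divisor set is empty.
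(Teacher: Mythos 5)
Your second half (ruling out \((G,3)\)-arc-transitivity once \((p,a)=(2,2)\) is known) is essentially the paper's argument: bound the \(r\)-part of \(|X_{uv}|\) for a primitive prime divisor \(r\), compare with \(|G_v|_r\) via Lemma~\ref{valency}; your choice \(r\in\ppd(p,2bf)\) and the containment of \(X_{uv}\cap\Sp_4(Q)\) in a \(\mathcal{C}_3\)- or \(\mathcal{C}_8\)-subgroup work, and your explicit treatment of the Zsigmondy exception \((q,b)=(2,3)\) is sound. The genuine gap is in your first step, the proof that \((p,a)=(2,2)\), and specifically in how you dispose of \(a=1\). Your criterion — that the two factors of the induced core-free factorisation must share an insoluble composition factor — is vacuous precisely in the generic \(a=1\) situation, where the factorisation of a group with socle \(\PSp_2(q^b)\) has one factor inside a Borel subgroup \(\E_{q^b}{:}\C_{(q^b-1)/d}\) and the other inside a dihedral group: both factors are soluble, so \(\ICF(G_{uv})=\ICF(G_{vw})=\varnothing\) gives no contradiction. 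Your fallback claim that ``no almost simple group with socle \(\PSL_2(Q)\) admits a homogeneous factorisation'' is false as stated (\(\PSp_2(9)\cong\A_6=\A_5\cdot\PSL_2(5)\) with isomorphic factors — exactly the case \((q,a,b)=(3,1,2)\), which the paper eliminates by Proposition~\ref{citesmall}, not by non-existence of the factorisation), and, more importantly, it is not the relevant statement: the isomorphism \(G_{uv}\cong G_{vw}\) does not descend to \(\overline{G_v}=G_v/\R(G_v)\), since \(G_{uv}\cap\R(G_v)\) and \(G_{vw}\cap\R(G_v)\) may differ; only the orders of \(G_{uv},G_{vw}\), their composition factors, and their prime divisor sets are controlled. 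So even a correct non-existence statement for homogeneous factorisations with socle \(\PSL_2(Q)\) would not finish \(a=1\).

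What is needed (and what the paper does, via \cite[Lemmas 2.6, 2.7 and 3.3]{small}) is a prime-divisor argument: for \(a=1\) take \(r\in\ppd(p,2bf)\), so \(r\mid q^b+1\) but \(r\nmid q^b(q^b-1)\), \(r\nmid|\Out(L)|\) and \(r\nmid|\R(G_v)|\); then the factor lying over the Borel side has order coprime to \(r\), contradicting \(\Pi(G_{uv})=\Pi(G_{vw})=\Pi(G_v)\ni r\) (Lemma~\ref{abp}). Two smaller points: your assertion that \(\PSp_{2a}(Q)\in\ICF(G_{uv})\) forces \(G_{uv}\) to contain a copy of \(\Sp_{2a}(Q)\) needs a short argument through the soluble radical (it is essentially \cite[Lemma 3.3]{small}, since \(G_v^{(\infty)}\) is quasisimple and the copy lives in \(X_{uv}\), not \(G_{uv}\) itself); and for \(p=2\), \(a\geqslant2\) your ``case check'' must also exclude the possibility that both factors are soluble, not only that they share an insoluble composition factor — the paper avoids all of this bookkeeping by quoting the tailored lists in \cite[Lemmas 2.6 and 2.7]{small}.
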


\begin{proof}
Let \(\overline{H}:=H\R(G_{v})/\R(G_{v})\) for any subgroup \(H\leqslant G_{v}\). Then \(\overline{G_{v}}\) is almost simple with socle \(\PSp_{2a}(q^{b})\) and \(\Pi(\R(G_{v}))\subseteq\Pi(\Out(L))\). Note that \(G_{v}^{(\infty)}\) is quasisimple, so we deduce from \cite[Lemma 3.3 (b)]{small} that \(\overline{G_{v}}=\overline{G_{uv}}\,\overline{G_{vw}}\) is a core-free factorisation and the list of possibilities for \(\{\overline{G_{uv}}, \overline{G_{vw}}\}\) is given in \cite[Lemma 2.6, Lemma 2.7]{small}. Thus, interchanging $\overline{G_{uv}}, \overline{G_{vw}}$ if necessary, one of the following holds:
\begin{description}
\item[(i)]\(a=1, b>1\), \(\overline{G_{uv}}\cap \PSp_{2}(q^{b})\leqslant \D_{2(q^{b}+1)/d}\) and \(\overline{G_{vw}}\cap\PSp_{2}(q^{b})\leqslant \E_{q^{b}}:\C_{\frac{q^{b}-1}{d}}\), where \(\E_{q^{2}}\) is an elementary abelian group of order \(q^{b}\) and \(d:=(2,q-1)\).
\item[(ii)]\((q,a,b)=(3,1,2)\), \(\overline{G_{uv}}\cap\PSp_{2}(9)=\A_{5}\) and \(\overline{G_{vw}}\cap\PSp_{2}(9)=\PSL_{2}(5)\).
\item[(iii)]\((p,a)=(2,2)\), 
\[
\PSp_{2}(2^{2bf})\leqslant \overline{G_{uv}}\cap\PSp_{4}(2^{bf})\leqslant\PSp_{2}(2^{2bf}).2\quad  \text{and}\quad \PSp_{2}(2^{2bf})\leqslant \overline{G_{vw}}\cap\PSp_{4}(2^{bf})\leqslant\PSp_{2}(2^{2bf}).2.\]
\end{description}
It follows from Proposition \ref{citesmall}  that case (ii) does not occur; in case (i), if $b=2$ then $q\geqslant41$, if $b=3$ then $q\geqslant7$, and $(b,q)\ne (4,2)$; and in case (iii), $(b,p^f)\ne (2, 2)$. Consider first case (i). Here, by Corollary \ref{existppd}, \(\ppd(p,2bf)\neq\varnothing\), so let  \(r\in\ppd(p,2bf)\). Then \(r\) does not divide \( p^{bf}-1=q^{b}-1\), and again by Corollary~\ref{existppd},  \(r>2bf\) so \(|\Out(\PSp_2(q^b))|_r=1\). Then 
\begin{equation*}
|\overline{G_{vw}}|_{r}\leqslant |\overline{G_{vw}}\cap \PSp_{2}(q^{b})|_{r}\cdot|\Out(\PSp_{2}(q^{b}))|_{r}\leqslant (p^{bf}-1)_{r}=1.
\end{equation*} 
As \(\Pi(\R(G_{v}))\subseteq\Pi(\Out(L))\) and \(\Pi(\Out(L))\subseteq \Pi(f)\cup\{2\}\), it follows that \(|\R(G_{v})|_{r}=1\) and so
\(|G_{vw}|_{r}\leqslant|\overline{G_{vw}}|_{r}\cdot|\R(G_{v})|_{r}=1\), which contradicts the fact that \(r\in\Pi(G_{v})=\Pi(G_{vw})\).

 Thus case (iii) holds, so \((p, a)=(2, 2)\), which is the first assertion of the lemma.  Suppose for a contradiction that \(\Gamma\) is \((G,3)\)-arc-transitive. By Corollary \ref{existppd}, \(\ppd(2,bf)\neq\varnothing\), and we let  
 \(r\in\ppd(2,bf)\) and \(r^{d}:=(2^{bf}-1)_{r}\). Then, again by  Corollary \ref{existppd}, $r>bf$ so \(|\Out(\PSp_{4}(2^{bf}))|_{r}=1\).   Thus \(|\PSp_{4}(2^{bf})\cap\overline{G_{uv}}|_{r}=r^{d}\) and \(|\PSp_{4}(2^{bf})|_{r}=r^{2d}\). Therefore 
 \[
 |\overline{G_{uv}}|_{r}\leqslant |\PSp_{4}(2^{bf})\cap\overline{G_{uv}}|_{r}\cdot|\Out(\PSp_{4}(2^{bf}))|_{r}=r^{d}.
 \]
 On the other hand, note that \(|\R(G_{v})|\) divides \(|\Out(\PSp_{2n}(q))|=|\Out(\PSp_{2n}(2^{f}))|=f\) (since $n=2b>2$). Since \(r>bf\) it follows that \(|\R(G_{v})|_{r}=1\), which implies that 
 $ |G_{uv}|_{r}=|\overline{G_{uv}}|_{r}\cdot|\R(G_{v})|_{r}=r^{d}$
 and $|G_{v}|_{r}=|\PSp_{4}(2^{bf})|_{r}\cdot|\R(G_{v})|_{r}=r^{2d}$.
 Thus the valency of \(\Gamma\) has \(r\)-part:
 \begin{equation*}
     \frac{|G_{v}|_{r}}{|G_{uv}|_{r}}=\frac{r^{2d}}{r^{d}}=r^{d}.
 \end{equation*}
 Since \(\Gamma\) is \((G,3)\)-arc-transitive, it follows from Lemma \ref{valency} that \(|G_{v}|\) is divisible by \(r^{3d}\), contradicting the fact that \(|G_{v}|_{r}=r^{2d}\) with \(d>0\). Thus the result is proved.
\end{proof}

Recall that if \(G_v\) is a \(\mathcal{C}_{3}\)-subgroup of Type (II), then  \(L_v=\GU_{n}(q).2/\{\pm 1\}\) where \(n\geqslant2\) and \(q\) is odd.

\begin{proposition}\label{p:c3II}
Suppose that Hypothesis \ref{general} holds. Then \(G_{v}\) is not a \(\mathcal{C}_{3}\)-subgroup of Type (II).
\end{proposition}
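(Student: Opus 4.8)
The plan is to mimic the proofs of Propositions~\ref{C21} and~\ref{p:c3I}. Suppose for a contradiction that $G_{v}$ is a $\mathcal{C}_{3}$-subgroup of Type (II), so that $L_{v}=\GU_{n}(q).2/\{\pm 1\}$ with $n\geqslant 2$ and $q=p^{f}$ odd. First I would record the structural facts needed to apply the reduction machinery: by Proposition~\ref{citesmall} the pair $(n,q)$ misses every small case, so $T:=\PSU_{n}(q)$ is nonabelian simple, it is the unique insoluble composition factor of $G_{v}$, and $G_{v}^{(\infty)}$ is quasisimple with $G_{v}^{(\infty)}/Z(G_{v}^{(\infty)})\cong T$; moreover $\Pi(\R(G_{v}))\subseteq\Pi(2f(q+1))$. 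Since $G_{v}^{(\infty)}\lhd G_{v}$ and $G_{uv}^{g}=G_{vw}$ with $g$ as in Hypothesis~\ref{general}, Lemma~\ref{normal} rules out both $G_{uv}$ and $G_{vw}$ containing $G_{v}^{(\infty)}$; hence part (c) of \cite[Lemma 3.3]{small} fails, and that lemma forces $T$ into one of the families $\mathcal{T}_{1}$ or $\mathcal{T}_{2}$ of \cite[(2.1)]{small}.

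For $n\geqslant 3$, the simple group $T=\PSU_{n}(q)$ with $q$ odd is neither an alternating group, nor $\PSL_{2}$ of any field, nor a group $\PSp_{4}(2^{e})$ (no such exceptional isomorphism survives the restrictions of Proposition~\ref{citesmall}; in particular the one small unitary group with core-free factorisations, $\PSU_{4}(3)\cong\POmega_{6}^{-}(3)$, admits no homogeneous factorisation, as one reads off Table~\ref{O(2m+1,q)}). Hence $T\notin\mathcal{T}_{1}\cup\mathcal{T}_{2}$, and the reduction already yields a contradiction.

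For $n=2$ we have $\PSU_{2}(q)\cong\PSL_{2}(q)$ with $q\geqslant 41$ odd, and \cite[Lemmas 2.6 and 2.7]{small} then give, after interchanging $G_{uv}$ and $G_{vw}$ if necessary and writing $\overline{G_{v}}=G_{v}/\R(G_{v})$, that $\overline{G_{uv}}\cap\PSL_{2}(q)\leqslant\D_{q+1}$ and $\overline{G_{vw}}\cap\PSL_{2}(q)\leqslant\E_{q}:\C_{\frac{q-1}{2}}$, exactly the configuration met in the proof of Proposition~\ref{C21}. Since $|\D_{q+1}|_{p}=1$ while $|\Out(\PSL_{2}(q))|_{p}$ and $|\R(G_{v})|_{p}$ both divide $(f)_{p}$ (as $p$ is odd), we get $|G_{uv}|_{p}\leqslant(f)_{p}^{2}$ and Lemma~\ref{sizeppd} yields $|G_{uv}|_{p}^{2}\leqslant(f)_{p}^{4}\leqslant(f!)_{p}^{4}<p^{4f/(p-1)}$, whereas $|G_{v}|_{p}\geqslant|\GU_{2}(q)|_{p}=|\SL_{2}(q)|_{p}=p^{f}$ and $G_{v}=G_{uv}G_{vw}$ is a homogeneous factorisation, so Lemma~\ref{abp} gives $|G_{uv}|_{p}^{2}\geqslant|G_{v}|_{p}$. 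Therefore $4>p-1$, forcing $p=3$, and then the $3$-adic computation in the proof of Proposition~\ref{C21} applies verbatim: $q=3^{f}\geqslant 41$ forces $f\geqslant 4$; if $f\not\equiv 0\pmod 3$ then $(f)_{3}=1$, contradicting $|G_{uv}|_{3}^{2}\geqslant|G_{v}|_{3}\geqslant 3^{f}$; and if $3\mid f$ then $(f)_{3}^{2}\leqslant(f!)_{3}$, so $|G_{uv}|_{3}^{2}\leqslant(f)_{3}^{4}\leqslant(f!)_{3}^{2}<3^{f}\leqslant|G_{v}|_{3}$, a final contradiction.

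The main obstacle is bookkeeping rather than a new idea: one must pin down $\R(G_{v})$ precisely enough to justify passing to $\overline{G_{v}}$ without loss, confirm that the entries of \cite[Lemmas 2.6 and 2.7]{small} relevant to unitary socles with $q$ odd reduce exactly to the $n=2$ configuration above (and that no small exceptional unitary socle sneaks in via an accidental isomorphism or an inhomogeneous factorisation), and for $n=2$ control the $p$-part of $|G_{uv}|$ tightly enough to run the Legendre estimate — all handled just as in the $\mathcal{C}_{2}$-Type (II) and $\mathcal{C}_{3}$-Type (I) cases.
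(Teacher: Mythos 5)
Your proposal is correct and runs along essentially the same lines as the paper's proof: reduce via $\R(G_v)$ to a core-free factorisation of the almost simple quotient, invoke \cite[Lemma 3.3]{small} and \cite[Lemmas 2.6, 2.7]{small} to pin down $n=2$ with $\overline{G_{uv}}\cap\PSU_2(q)\leqslant\D_{q+1}$ and $\overline{G_{vw}}\cap\PSU_2(q)\leqslant\E_q:\C_{(q-1)/2}$, then compare $p$-parts.

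The one place where you diverge is the final $p$-adic estimate, and your version is actually the more careful one. The paper's displayed inequality asserts $|G_{uv}|_p\leqslant|\overline{G_{uv}}|_p\cdot|\R(G_v)|_p=|\R(G_v)|_p=(f)_p$, which tacitly assumes $|\overline{G_{uv}}|_p=1$. But $\overline{G_{uv}}$ can meet the outer part of $\overline{G_v}$ in a $p$-element when $p\mid f$, so in general one only gets $|\overline{G_{uv}}|_p\leqslant(2f)_p=(f)_p$ and hence $|G_{uv}|_p\leqslant(f)_p^2$. Your bound $|G_{uv}|_p\leqslant(f)_p^2$ is the right one, and then $|G_{uv}|_p^2\leqslant(f)_p^4\leqslant(f!)_p^4<p^{4f/(p-1)}$ together with $|G_{uv}|_p^2\geqslant|G_v|_p\geqslant p^f$ forces $p=3$; the $3$-adic tail (split on whether $3\mid f$, and for $3\mid f$ the inequality $(f)_3^2\leqslant(f!)_3$) then kills that case as in Proposition~\ref{C21}. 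This fills a small gap in the paper's chain (which also leans on an unexplained equality $|\overline{G_{vw}}|_p=q$), so your rendition is tighter. One minor caution: the disposal of $n\geqslant 3$ should really be phrased, as the paper does, by checking that \cite[Lemmas 2.6, 2.7]{small} list no core-free factorisation with socle $\PSU_n(q)$, $n\geqslant3$, $q$ odd — your claim that $T\notin\mathcal{T}_1\cup\mathcal{T}_2$ amounts to the same thing but needs to be read off from the definitions in \cite[(2.1)]{small} rather than inferred from the excluded small isomorphisms.
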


\begin{proof}
 Let \(\overline{H}:=H\R(G_{v})/\R(G_{v})\) for any subgroup \(H\) of \(G_{v}\). By Proposition~\ref{citesmall}, $(n,q)\ne (2,3)$, and hence \(\overline{G_{v}}\) is almost simple with socle \(\PSU_{n}(q)\) and \(|\R(G_{v})|\) divides \(|Z(L_{v})|\cdot|\Out(L)|\). By \cite[Lemma 3.3]{small} the  factorisation \(\overline{G_{v}}=\overline{G_{uv}}\,\overline{G_{vw}}\) is core-free. Note that \(\PSU_2(q)\cong\PSL_2(q)\) and by \cite[Lemma 5.4]{small} there is no vertex-primitive \((G,2)\)-arc-transitive digraph for an almost simple group $G$ with socle \(\PSp_{4}(q)\) such that \(q=p^{f}\leqslant 37\). Thus either \(n\geqslant3\), or \(n=2\) with \(q> 37\). It  therefore follows from \cite[Lemmas 2.6 and 2.7]{small} that \(n=2\), \(\overline{G_{uv}}\cap \PSU_{2}(q)\leqslant\D_{q+1}\) and \(\overline{G_{vw}}\cap\PSU_{2}(q)\leqslant \E_{q}:\C_{\frac{q-1}{2}}\), where \(\E_{q}\) is an elementary abelian group of order \(q\).

Thus \(|G_{vw}|_{p}\geqslant|\overline{G_{vw}}|_{p}=q=p^{f}\). On the other hand, \(|\R(G_{v})|\) divides \(\frac{q+1}{2}|\Out(L)|\) (as \(|Z(L_{v})|=\frac{q+1}{2}\)) and \(|\Out(L)|\) divides \(4f\), and hence  \(|\R(G_{v})|\) divides \(2f(q+1)\) and
\begin{equation*}
    |G_{uv}|_{p}\leqslant|\overline{G_{uv}}|_{p}\cdot|\R(G_{v})|_{p}=|\R(G_{v})|_{p}=(f)_{p}\leqslant (f!)_{p}< p^{f/(p-1)}\leqslant p^{f/2} <|G_{vw}|_{p} 
\end{equation*}
which is a contradiction and proving the result.
\end{proof}

\subsection{\(\mathcal{C}_{4}\)-subgroups}\label{c4construction}

In this section we assume that $G_v$ is a maximal \(\mathcal{C}_{4}\)-subgroup, as in  \cite[Definition on page 128 and Proposition 4.4.11]{kleidman}, and we describe this situation as follows. 
Let \(V\) be a symplectic space of dimension \(2n\) over \(\mathbb{F}_{q}\) where \(q=p^{f}\) is odd, and let \(B\) be a non-degenerate alternating bilinear form on \(V\). Let \(U\) and \(W\) be  vector spaces over \(\mathbb{F}_{q}\) such that \(\dim U=2k\), \(\dim W=m\) and \(n=km\); and let \(B_{1}\) be an alternating bilinear form on \(U\)  and \(B_{2}\) be a symmetric bilinear form on \(W\). Define \(U\otimes W=\Span\{u\otimes w| u\in U, w\in W\}\), and define \((B_{1}\otimes B_{2})(u_{1}\otimes w_{1}, u_{2}\otimes w_{2}) := B_{1}(u_{1}, u_{2})B_{2}(w_{1}, w_{2})\) for \(u_{1},u_{2}\in U\) and \(w_{1}, w_{2}\in W\), and extend \(B_{1}\otimes B_{2}\) linearly to all of \(U\otimes W\). Then \((V, B)\cong(U\otimes W, B_{1}\otimes B_{2})\) is called a tensor decomposition of $V$. Suppose that \(L=\PSp_{2n}(q)\leqslant G\leqslant\Aut(\PSp_{2n}(q))\). Then the stabiliser in $G$ of the tensor decomposition \((U\otimes W, B_{1}\otimes B_{2})\) is a (usually) maximal \(\mathcal{C}_{4}\)-subgroup of \(G\), and its structure is given by \cite[Proposition 4.4.11]{kleidman}, see also Remark~\ref{rem:C4}). We use the following specialised version of Hypothesis~\ref{general} for the  \(\mathcal{C}_{4}\)  analysis.

\begin{hypothesis}\label{C4}
\rm{ Assume that the notation and assumptions of Hypothesis~\ref{general} hold, and in addition $G_v$ is the stabiliser of a tensor decomposition, as above so that, by \cite[Proposition 4.4.11]{kleidman} and \cite[Lemma 3.7.6]{holt}, $q$ is odd, and \(\Soc(K_{1})\times\Soc(K_{2})\leqslant G_{v}\leqslant K_{1}\times K_{2}\) where \(\Soc(K_{1})\cong\PSp_{2k}(q)\) and \(\Soc(K_{2})\cong \POmega_{m}^{\epsilon}(q)\), with \(km=n\) and \(m\geqslant3\), excluding the cases \((k,m,\epsilon)=(1,4,+)\) and \((m,q)=(3,3)\).
Moreover, let \(\pi_{i}\) be the projection from \(G_{v}\) to \(K_{i}\) for \(i=1,2\).
}
\end{hypothesis}
\begin{remark}\label{rem:C4}
{\rm 
We exclude the cases \((k,m,\epsilon)=(1,4,+)\) and \((m,q)=(3,3)\) in Hypothesis \ref{C4} because in these cases the $\mathcal{C}_{4}$-subgroup $G_v$ is not maximal as required for $G$ to be vertex-primitive on $\Gamma$: if \((m,q)=(3,3)\) then, by \cite[p 220 line 17]{kleidman},  \(G_{v}\) is contained in a \(\mathcal{C}_{2}\)-subgroup of \(G\); while if \((k,m,\epsilon)=(1,4,+)\), then \(n=2km=8\) and \(L=\PSp_{8}(q)\), and \((\Sp_{2}(q)\circ \GO^{+}_{4}(q)).2\) is properly contained in a \(\mathcal{C}_{7}\)-subgroup of \(\Sp_{8}(q)\), see \cite[Lemma 3.7.6]{holt} which lists all the maximal subgroups of \(\Sp_{8}(q)\). 
}
\end{remark}

Our objective is to prove the following result.

\begin{proposition}\label{p:c4-5}
Suppose that Hypothesis \ref{C4} holds. Then $\Gamma$ is not  $(G,3)$-arc-transitive. Moreover $\Gamma$ is not $(L,2)$-arc-transitive except possibly for $(m,2k,\epsilon) = (5,4,\circ), (3,2,\circ)$, or $(4,4,\pm)$. 
\end{proposition}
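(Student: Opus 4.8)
The plan is to follow the same strategy as for the other $\mathcal{C}_i$ cases: suppose $\Gamma$ is $(L,2)$-arc-transitive (which is implied by $(G,3)$-arc-transitivity via Lemma~\ref{s-1}), pass to the natural action of $X=\Sp_{2n}(q)$, and derive that $G_v$ (or its reduction modulo its soluble radical) admits a homogeneous factorisation $\overline{G_v}=\overline{G_{uv}}\,\overline{G_{vw}}$ with $\overline{G_{uv}}^{\,g}=\overline{G_{vw}}$. Since $G_v^{(\infty)}$ has exactly two insoluble composition factors coming from $\Soc(K_1)\cong\PSp_{2k}(q)$ and $\Soc(K_2)\cong\POmega_m^\epsilon(q)$, I would apply \cite[Lemma~3.3]{small} to conclude that $\overline{G_v}=\overline{G_{uv}}\,\overline{G_{vw}}$ is a core-free factorisation of an almost simple group related to one of these two socles, or more precisely that one obtains core-free factorisations of the almost simple groups with socle $\PSp_{2k}(q)$ (if $k\geq 2$) and with socle $\POmega_m^\epsilon(q)$ (if $m\geq 4$, $(m,\epsilon)\ne(4,+),(5,\circ),(8,+)$) governing the two projections $\pi_1,\pi_2$.

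The key step is then to invoke the factorisation classifications assembled in Section~\ref{s:factn}: Lemma~\ref{sp} for the $\PSp_{2k}(q)$-factor, Proposition~\ref{odd} for the $\POmega_m^\epsilon(q)$-factor with $m\geq4$ (and the small-dimensional exceptional isomorphisms $\POmega_4^-(q)\cong\PSL_2(q^2)$, $\POmega_6^\pm(q)\cong\PSU_4(q),\PSL_4(q)$), and Lemma~\ref{8 +} when $(m,\epsilon)=(8,+)$; for $m=3$ one uses $\Omega_3(q)\cong\PSL_2(q)$ and the linear-group results already available. For each factor, these results supply a primitive prime divisor $r\in\ppd(p,df)$ (where $d$ is essentially $2kf$ for the symplectic factor and the value in Proposition~\ref{odd}(i) for the orthogonal factor) with the property that $r$ divides the order of only one of the two factors. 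Combining this with the homogeneity constraint $\Pi(\overline{G_{uv}})=\Pi(\overline{G_{vw}})$ from Lemma~\ref{abp}, together with the fact that $\Pi(\R(G_v))\subseteq\Pi(\Out(L))\subseteq\Pi(f)\cup\{2\}$ so $|\R(G_v)|_r=1$, one derives $r\in\Pi(G_{uv})\cap\Pi(G_{vw})$ yet $r\nmid|\overline{G_{vw}}|$, a contradiction — ruling out $(L,2)$-arc-transitivity in all but the listed exceptional parameter triples $(m,2k,\epsilon)=(5,4,\circ),(3,2,\circ),(4,4,\pm)$, which correspond exactly to the rows of Tables~\ref{PSp(2m,q)}, \ref{O(2m+1,q)}, \ref{P(8,+)} where $\ICF(A)\cap\ICF(B)\ne\varnothing$ or where the primitive-prime-divisor argument degenerates.

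For the stronger assertion that $\Gamma$ is never $(G,3)$-arc-transitive, including those three exceptional triples, I would run the valency-order argument of Lemma~\ref{valency}: if $\Gamma$ were $(G,3)$-arc-transitive then for every prime $p$ dividing $|G_v|$ the cube of the $p$-part of the valency $\gamma = |G_v|/|G_{uv}|$ divides $|G_v|_p$. Choosing a suitable primitive prime divisor $r$ (for instance $r\in\ppd(p,2f)$ or $\ppd(p,4f)$ in the $(3,2,\circ)$ and $(5,4,\circ)$, $(4,4,\pm)$ cases) and estimating $|G_{uv}|_r$ from above via the structure of the factors in the relevant table rows, one obtains $\gamma_r \geq r^{c}$ with $c>0$, hence $r^{3c}\mid |G_v|$, while a direct count shows $|G_v|_r=|\POmega_m^\epsilon(q)|_r\cdot|\PSp_{2k}(q)|_r\cdot|\R(G_v)|_r = r^{2c}$ (the out-group contributing nothing since $r>2f$); this $2c<3c$ contradiction closes those cases. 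The small sporadic triples such as $(\PSp_2(3)\times\PGO_4^-(3)).2$ and its $.[4]$-extension are disposed of by the MAGMA computation already described in Subsection~\ref{r:magma} and recorded in Lemma~\ref{l:c4-1}. The main obstacle will be the bookkeeping in the exceptional orthogonal triples: in cases like $(m,\epsilon)=(4,-)$ one has $\POmega_4^-(q)\cong\PSL_2(q^2)$ and in $(5,\circ)$ one has $\Omega_5(q)\cong\PSp_4(q)$, so the factorisations must be read off the linear/symplectic tables rather than Proposition~\ref{odd} directly, and one must carefully verify that the tensor-product embedding $\Soc(K_1)\times\Soc(K_2)\leq G_v\leq K_1\times K_2$ does not itself force $\overline{G_{uv}}$ or $\overline{G_{vw}}$ to contain a normal subgroup that $g$ would have to normalise, contradicting Lemma~\ref{normal}; handling this requires a short separate argument analogous to the one used in Proposition~\ref{C21} for the $\SL_n(q)$ normal subgroup.
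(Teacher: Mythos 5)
Your plan correctly identifies the two main engines — passing to core-free factorisations of the two almost simple projections $\pi_1(G_v), \pi_2(G_v)$, then combining the factorisation classifications (Lemma~\ref{sp}, Proposition~\ref{odd}, Lemma~\ref{8 +}) with primitive prime divisor estimates and the valency bound of Lemma~\ref{valency} — and this is close to the paper's strategy. But there are two substantive gaps. First, you cite \cite[Lemma~3.3]{small} to pass from the homogeneous factorisation $G_v=G_{uv}G_{vw}$ to core-free factorisations of the projections, but that lemma requires $G_v^{(\infty)}$ to be quasisimple, and here $G_v$ satisfies $\Soc(K_1)\times\Soc(K_2)\leqslant G_v\leqslant K_1\times K_2$ with \emph{two} nonabelian simple composition factors, so $\overline{G_v}$ is not almost simple. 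Establishing that each $\pi_i(G_v)=\pi_i(G_{uv})\pi_i(G_{vw})$ is core-free genuinely requires the separate argument of the paper's Lemma~\ref{corefree}(i): one shows that if $\Soc(K_i)\unlhd\pi_i(G_{uv})$ then (using that $K_{3-i}$ has no section isomorphic to $\Soc(K_i)$) the actual subgroup $\Soc(K_i)$ lies in $G_{uv}$, and then $\Soc(K_i)^{\overline g}=\Soc(K_i)$, contradicting Lemma~\ref{normal}. This step is nontrivial and cannot be replaced by the citation you give.

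Second, in the exceptional triples the valency argument alone does not close the case — one ends up with $\Soc(K_i)$ normalised by $L_{uv}$ and $\Soc(K_j)$ related to $L_{vw}$, and must rule out both $\Soc(K_i)^{\overline g}=\Soc(K_i)$ (Lemma~\ref{normal}) and $\Soc(K_1)^{\overline g}=\Soc(K_2)$. The paper handles the latter via Lemma~\ref{tensor}(iii): when $2k\neq m$, the element $\overline g$ cannot conjugate $\Soc(K_1)$ to $\Soc(K_2)$ because their irreducible submodules in $V$ have different dimensions under the tensor decomposition. Your proposal gestures at needing a "short separate argument" about $\overline g$ and normal subgroups, by analogy with Proposition~\ref{C21}, but the mechanism here is geometric (submodule dimensions), not the $\SL_n(q)$-normal-subgroup argument, and without identifying it the $(5,4,\circ)$ and $q=9$ subcase of $(3,2,\circ)$ do not close. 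Relatedly, your ppd-disjointness argument degenerates not only in the three listed triples but also in $(m,\epsilon)=(8,+)$ with $2k\leqslant4$ (where $\ICF(A)=\ICF(B)=\{\Omega_7(q)\}$ occurs in Table~\ref{P(8,+)}); that case needs the more detailed computation of the paper's Lemma~\ref{l:c4-4}, so the claim that the disjointness argument alone rules out everything but the three exceptional triples overstates what it delivers.
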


First we establish some group theoretic facts. Recall from Hypothesis~\ref{general} that $\overline{g}\in L$ maps $(u,v)^{\overline{g}}=(v,w)$.

\begin{lemma}\label{corefree}\label{divisibility} \label{tensor}
Suppose that Hypothesis \ref{C4} holds and  $\Gamma$ is $(L,2)$-arc-transitive, and let  $i\in\{1,2\}$. 

\begin{description}
        \item[(i)]  If \(H:=\Soc(K_{i})\) is a non-abelian simple group and \(K_{3-i}\) has no section isomorphic to \(H\), then \(\pi_{i}(G_{v})=\pi_{i}(G_{uv})\pi_{i}(G_{vw})\) is a core-free factorisation.

        \item[(ii)] Suppose that $r$ is a prime such that \(r\)  divides \(|\Soc(K_{i})|\) but $r$ does not divide \(|K_{3-i}|\). Then $r$ divides both \(|\pi_{i}(G_{uv})|\) and \(|\pi_{i}(G_{vw})|\). Moreover, if  \(T\in\ICF(\pi_{i}(G_{uv}))\) and \(r\) divides \(|T|\), then \(T\in\ICF(\pi_{i}(G_{vw}))\).

        \item[(iii)] If  \(2k\neq m\), then \(\Soc(K_{1})^{\overline{g}}\neq\Soc(K_{2})\).
\end{description}
\end{lemma}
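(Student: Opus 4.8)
The plan is to prove the three parts of Lemma~\ref{tensor} in turn, using the structural description of $G_v$ in Hypothesis~\ref{C4} together with the consequences of $(L,2)$-arc-transitivity, namely that $L_v=L_{uv}L_{vw}$ is a homogeneous factorisation with $L_{uv}^{\overline{g}}=L_{vw}$ (so $\overline g$ conjugates one factor to the other), and the corresponding facts for $X=\Sp_{2n}(q)$ and $X_v=X_{uv}X_{vw}$.

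For part (i), I would argue as follows. Since $\Gamma$ is $(L,2)$-arc-transitive we have $L_v = L_{uv}L_{vw}$. Applying $\pi_i$ gives $\pi_i(G_v)=\pi_i(G_{uv})\pi_i(G_{vw})$ (intersecting with the appropriate overgroup of $L$ if needed, but the factorisation of images is immediate from a factorisation of the group). The group $\pi_i(G_v)$ is an almost simple group with socle $H=\Soc(K_i)$, because $\Soc(K_1)\times\Soc(K_2)\leqslant G_v\leqslant K_1\times K_2$ forces $\Soc(K_i)\leqslant\pi_i(G_v)\leqslant K_i$. To see that the factorisation is core-free, suppose a factor, say $\pi_i(G_{uv})$, contains $H$. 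Then $\pi_i(G_{uv})\cap L_v$ contains the full preimage description of $H$ as a normal subgroup, and since $H\cong\Soc(K_i)$ is not a section of $K_{3-i}$, this normal subgroup $N$ of $L_v$ (coming from $\Soc(K_i)$, a normal subgroup of $L_v$) is contained in $L_{uv}$. But $L_{uv}^{\overline g}=L_{vw}$, so $N^{\overline g}$ is a normal subgroup of the same isomorphism type contained in $L_{vw}$; uniqueness of such a normal subgroup of $L_v$ (again using that $K_{3-i}$ has no section isomorphic to $H$, so $\Soc(K_i)$ is the unique minimal normal subgroup of $L_v$ of that type) forces $N^{\overline g}=N$, contradicting Lemma~\ref{normal}. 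Hence neither factor contains $H$, so the factorisation is core-free.

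For part (ii), let $r$ divide $|\Soc(K_i)|$ but not $|K_{3-i}|$. Then $r$ divides $|\pi_i(G_v)|$ but not $|\pi_{3-i}(G_v)|$, so from the factorisation $\pi_i(G_v)=\pi_i(G_{uv})\pi_i(G_{vw})$ we get $|\pi_i(G_v)|_r \leqslant |\pi_i(G_{uv})|_r\,|\pi_i(G_{vw})|_r$, and since $|\pi_i(G_v)|_r=|G_v|_r\geqslant\gamma_r^3>1$ (or more simply $>1$ already, as $r$ divides $|\Soc(K_i)|$ and hence $|G_v|$), at least one factor has $r$ dividing its order; but actually I claim both do, because $\Pi(G_{uv})=\Pi(G_v)=\Pi(G_{vw})$ (the factors of a $(G,2)$-arc-transitive vertex stabiliser have the same prime spectrum, by the homogeneous factorisation property recorded in Hypothesis~\ref{ga} and Lemma~\ref{abp}), and $r\in\Pi(G_v)$ only ``sees'' $K_i$ so $r\in\Pi(\pi_i(G_{uv}))\cap\Pi(\pi_i(G_{vw}))$. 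For the moreover statement: if $T\in\ICF(\pi_i(G_{uv}))$ with $r\mid |T|$, then since $G_{uv}\cong G_{vw}$ we have $\ICF(G_{uv})=\ICF(G_{vw})$, and as $r$ does not divide $|K_{3-i}|$ the composition factor $T$ (which has order divisible by $r$) cannot be a section of $\pi_{3-i}(G_{vw})$, hence $T$ must appear as a composition factor of $\pi_i(G_{vw})$; so $T\in\ICF(\pi_i(G_{vw}))$.

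For part (iii), suppose for contradiction that $\Soc(K_1)^{\overline g}=\Soc(K_2)$ while $2k\neq m$. Then $\overline g$ conjugates $\PSp_{2k}(q)$ to $\POmega_m^\epsilon(q)$ inside $L=\PSp_{2n}(q)$; but conjugate subgroups are isomorphic, and $\PSp_{2k}(q)\cong\POmega_m^\epsilon(q)$ only in a short list of exceptional cases — checking orders and the classification of isomorphisms between finite simple classical groups, and recalling the cases excluded in Hypothesis~\ref{C4} ($(k,m,\epsilon)\ne(1,4,+)$, $(m,q)\ne(3,3)$) and the small-case restrictions from Proposition~\ref{citesmall}, such an isomorphism with $2k\neq m$ cannot occur. (For instance $\PSp_2(q)\cong\POmega_3(q)$ has $2k=2\ne 3=m$, but this is excluded because $\Soc(K_2)=\POmega_m^\epsilon(q)$ is required to be genuinely a simple orthogonal group; and $\POmega_4^-(q)\cong\PSL_2(q^2)$, $\POmega_6^\epsilon(q)\cong\PSL_4^{\pm}(q)$ are never symplectic of the right dimension.) Hence $\Soc(K_1)^{\overline g}\neq\Soc(K_2)$.

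The main obstacle I anticipate is part (iii): one must carefully enumerate, via order comparison and the known exceptional isomorphisms among low-dimensional finite simple classical groups, all coincidences $\PSp_{2k}(q)\cong\POmega_m^\epsilon(q)$, and verify that every case with $2k\ne m$ is ruled out either by the exclusions built into Hypothesis~\ref{C4} or by the smallness constraints of Proposition~\ref{citesmall}. Parts (i) and (ii) are then essentially formal once one has the normal-subgroup and prime-spectrum bookkeeping straight, the only subtlety being the correct handling of $G$ versus $L$ versus $X$ and the passage between a factorisation and its projections.
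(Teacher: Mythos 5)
Parts (i) and (ii) of your proposal follow essentially the same route as the paper (the $\ICF$ bookkeeping to push $H$ into $G_{uv}$ in part (i) should be spelled out a little more carefully, as you need $H\leqslant G_{uv}$ and not merely $H\leqslant\pi_i(G_{uv})$, but the idea is right).

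Part (iii) is where there is a genuine gap. Your plan is to show that $\PSp_{2k}(q)$ and $\POmega_m^{\epsilon}(q)$ are not abstractly isomorphic when $2k\neq m$, but that claim is simply false: $\PSp_{2}(q)\cong\POmega_{3}(q)$ and $\PSp_{4}(q)\cong\POmega_{5}(q)$ are honest isomorphisms of finite simple groups, with $(2k,m)=(2,3)$ and $(4,5)$ respectively, and neither is excluded by Hypothesis~\ref{C4} nor by Proposition~\ref{citesmall}. Indeed these are precisely Lines $1$ and $2$ of Table~\ref{t:c4}, where the paper later invokes part (iii) of this very lemma in Lemma~\ref{l:c4-6}; so your proof would fail in exactly the cases where the statement is needed. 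Your remark that $\POmega_3(q)$ is ``not genuinely a simple orthogonal group'' is incorrect, and you omit $\POmega_5(q)$ altogether. The correct argument, as in the paper, is not about abstract isomorphism but about the embedding in $\Sp(V)$: with respect to the tensor decomposition $V\cong U\otimes W$ of Hypothesis~\ref{C4}, $\Soc(K_1)$ acts on $V$ with every irreducible constituent of dimension $2k$ (the subspaces $U\otimes w_j$), while $\Soc(K_2)$ acts with every irreducible constituent of dimension $m$ (the subspaces $u_i\otimes W$). Conjugation by $\overline g\in L=\PSp_{2n}(q)$ preserves the multiset of irreducible constituent dimensions on $V$, so if $2k\neq m$ then $\Soc(K_1)^{\overline g}\neq\Soc(K_2)$, regardless of whether the two socles are abstractly isomorphic.
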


\begin{proof}
(i)\ Suppose to the contrary that the assumptions of (i) hold for $i$ and \(\pi_{i}(G_{v})=\pi_{i}(G_{uv})\pi_{i}(G_{vw})\) is not core-free. Note that \( H\unlhd \pi_i(G_v)\leqslant \Aut(H)\), so \(H= \Soc(\pi_{i}(G_{v}))\). Then without loss of generality, \(H\unlhd \pi_{i}(G_{uv})\), so \(H\in\ICF(G_{uv})\). Note that \[\ICF(G_{uv})=\ICF(G_{uv}\cap K_{i})\cup\ICF(\pi_{3-i}(G_{uv})) \] and \(\pi_{3-i}(G_{uv})\leqslant K_{3-i}\). Then \(H\notin\ICF(\pi_{3-i}(G_{uv}))\) since \(K_{3-i}\) has no section isomorphic to \(H\), and hence  \(H\in\ICF(K_{i}\cap G_{uv})\). This implies that \(H\leqslant G_{uv}\) and hence that \(H^{\overline{g}}\leqslant G_{vw}\). 
On the other hand, \(H^{\overline{g}}/(H^{\overline{g}}\cap K_{i})\cong\pi_{3-i}(H^{\overline{g}}).\) Since \(H^{\overline{g}}\cong H\) is a non-abelian simple group and \(K_{3-i}\) has no section isomorphic to \(H\), it follows that \(\pi_{3-i}(H^{\overline{g}})=1\). Hence \(H^{\overline{g}}\leqslant K_{i}\). Since $H$ is the unique subgroup of \(K_{i}\) isomorphic to \(H\), we conclude that \(H^{\overline{g}}=H\), contradicting Lemma \ref{normal}. This proves part (i).

\medskip
(ii)\ Since \(r\in\Pi(\Soc(K_{i}))\), it follows that \(r\in\Pi(G_{v})=\Pi(G_{uv})=\Pi(G_{vw})\). Also since  \(|K_{3-i}|_{r}=1\), we have
\[
|G_{uv}|_{r}=|K_{3-i}\cap G_{uv}|_{r}\cdot |\pi_{i}(G_{uv})|_{r}\leqslant|K_{3-i}|_{r}\cdot |\pi_i(G_{uv})|_{r}=|\pi_i(G_{uv})|_{r}\leqslant |G_{uv}|_{r},
\] 
so  \(|\pi_{i}(G_{uv})|_{r}=|G_{uv}|_{r}>1\). The same argument gives \(|\pi_{i}(G_{vw})|_{r}>1\). Now suppose that  \(T\in\ICF(\pi_{i}(G_{uv}))\) and \(|T|_{r}>1\). Since \(G_{uv}\cong G_{vw}\), it follows that \(T\in\ICF(\pi_{i}(G_{vw}))\). On the other, hand \(T\notin\ICF(\pi_{3-i}(G_{vw}))\) as \(|K_{3-i}|_{r}=1\). Thus \(T\in\ICF(\pi_{i}(G_{vw}))\), and part (ii) is proved.

\medskip 
(iii)\ By Hypothesis~\ref{C4}, $G_v$ is the stabiliser of the tensor decomposition \((V, B)\cong (U\otimes W, B_{1}\otimes B_{2})\), with \(\dim U=2k\), \(\dim W=m\), \(n=km\) and \(U, W, B_{1}, B_{2}\) as above. Suppose that $2k\ne m$, and let \(\mathcal{B}_{1}=\{u_{1},\ldots, u_{2k}\}\) and \(\mathcal{B}_{2}=\{w_{1},\ldots, w_{m}\}\) be bases for \(U\) and \(W\), respectively. Then \(K_{1}\) and \(K_{2}\) stabilise the direct sum decompositions \(\mathcal{D}_{1}=(U\otimes w_{1})\oplus\cdots\oplus(U\otimes w_{m})\) and \(\mathcal{D}_{2}=(u_{1}\otimes W)\oplus\cdots\oplus(u_{2k}\otimes W)\), respectively.
Moreover, \(\Soc(K_{i})\) fixes and acts irreducibly on each subspace in  \(\mathcal{D}_{i}\) for \(i=1,2\). Thus all the irreducible \(\Soc(K_{1})\)-submodules of \(V\) have dimension \(2k\) and all the irreducible \(\Soc(K_{2})\)-submoules of \(V\) have dimension \(m\). Since \(2k\neq m\),  the element \(\overline{g}\) does not conjugate \(\Soc(K_{1})\) to \(\Soc(K_{2})\).
\end{proof}

Next we prove some tight restrictions on the parameters $k, m$ and $\epsilon$.

\begin{lemma}\label{l:c4-1}
Suppose that Hypothesis \ref{C4} holds and that  $\Gamma$ is $(L,2)$-arc-transitive.  Then  $m, 2k, \epsilon$ satisfy one of the lines of Table~$\ref{t:c4}$.
\end{lemma}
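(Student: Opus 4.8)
The plan is to analyse the factorisation $G_v = G_{uv}G_{vw}$ via the two projections $\pi_1, \pi_2$ to the factors $K_1, K_2$ with socles $\PSp_{2k}(q)$ and $\POmega_m^\epsilon(q)$, and to extract heavy divisibility constraints from primitive prime divisors. First I would dispose of the genuinely small cases by hand or by MAGMA (these are presumably the ones appearing explicitly in Table~\ref{t:c4} and the exceptions in Proposition~\ref{p:c4-5}, e.g. $\PSp_2(3)\times\PGO_4^-(3)$-type configurations, which the paper has already set up computational machinery for in Subsection~\ref{r:magma} and which Lemma~\ref{l:c4-1}'s statement itself mentions). So from now on I would assume $k,m$ are large enough that $\Soc(K_1)=\PSp_{2k}(q)$ and $\Soc(K_2)=\POmega_m^\epsilon(q)$ are both honest non-abelian simple groups, with the genuinely sporadic small-parameter isomorphisms removed.

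The core of the argument is to apply Lemma~\ref{tensor}. By part~(iii), if $2k\ne m$ then $\overline g$ does not swap $\Soc(K_1)$ and $\Soc(K_2)$, so each projection $\pi_i(G_v)=\pi_i(G_{uv})\pi_i(G_{vw})$ is a genuine core-free factorisation of an almost simple group by part~(i) (after checking $K_{3-i}$ has no section isomorphic to $\Soc(K_i)$, which holds once the dimensions/parameters are distinct enough — this needs a small separate check when $2k$ could equal $m$, handled via part~(iii) failing only in that case). Now I invoke the classification results assembled in Section~\ref{s:factn}: Lemma~\ref{sp} for the factorisations of almost simple groups with socle $\PSp_{2k}(q)$ ($q$ odd), and Proposition~\ref{odd} together with Lemma~\ref{8 +} for those with socle $\POmega_m^\epsilon(q)$ ($q$ odd). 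These give finite explicit lists (Tables~\ref{PSp(2m,q)}, \ref{O(2m+1,q)}, \ref{P(8,+)}) of the possible triples $(\Soc, \ICF(A), \ICF(B))$. The key combinatorial step is then to match: the insoluble composition factors of $G_{uv}$ are $\ICF(\pi_1(G_{uv}))\cup\ICF(\pi_2(G_{uv}))$ (modulo solubility of the kernels, which I'd track carefully), and similarly for $G_{vw}$, and since $G_{uv}\cong G_{vw}$ we must have $\ICF(G_{uv})=\ICF(G_{vw})$. I would use part~(i) of Lemma~\ref{sp} and part~(i) of Proposition~\ref{odd}: there is a $\ppd$-prime $r$ (coming from $\ppd(p,2kf)$ for $K_1$, and from $\ppd(p,df)$ for $K_2$ with $d$ as in Proposition~\ref{odd}(i)) dividing the order of exactly one of the two factors of each projection-factorisation, which severely restricts which rows of the tables can simultaneously occur in the two projections while keeping $\ICF(G_{uv})=\ICF(G_{vw})$. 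Cross-referencing the tables, only a handful of parameter triples $(m,2k,\epsilon)$ survive, and these are exactly the rows of Table~\ref{t:c4}.

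The main obstacle I anticipate is the bookkeeping around the kernels of the projections and the solubility accounting: $\ICF(G_{uv})$ equals $\ICF(\pi_1(G_{uv}))\cup\ICF(\pi_2(G_{uv}))$ only up to the insoluble composition factors of $G_{uv}\cap \ker\pi_1$ and $G_{uv}\cap \ker\pi_2$, and one has to argue (using that $\Soc(K_1)\times\Soc(K_2)\leqslant G_v$ and that the two socles are non-isomorphic simple groups when $2k\ne m$) that nothing extra is contributed, or if something is, that it is one of the two socles and leads back into the same analysis. A second delicate point is the diagonal/swap case $2k=m$, where Lemma~\ref{tensor}(iii) gives no obstruction and $\overline g$ could conjugate $\Soc(K_1)$ to $\Soc(K_2)$; here $\PSp_{2k}(q)\cong\POmega_m^\epsilon(q)$ would be forced, which (for $q$ odd) essentially never happens except in tiny cases like $\PSp_4(q)\cong\POmega_5(q)$ — so I would handle $m=5, 2k=4, \epsilon=\circ$ and $m=3, 2k=2$ (where $\POmega_3(q)\cong\PSL_2(q)\cong\PSp_2(q)$) and the $\POmega_4^\pm$ exceptional isomorphisms as genuine special cases, which is precisely why they appear as exceptions in the statement of Proposition~\ref{p:c4-5} and in Table~\ref{t:c4}. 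Finally, once Lemma~\ref{l:c4-1} pins down the surviving triples, the non-$(G,3)$-arc-transitivity in Proposition~\ref{p:c4-5} will follow by a valency/$\ppd$-order argument of the type already used (Lemma~\ref{valency}): the $r$-part of the valency is too large for $|G_v|_r$ to accommodate a cube.
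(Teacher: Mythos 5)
Your proposal takes essentially the same approach as the paper: constrain both projections $\pi_i(G_v)=\pi_i(G_{uv})\pi_i(G_{vw})$ to core-free factorisations via Lemma~\ref{corefree}(i), apply primitive-prime-divisor divisibility to both factors via Lemma~\ref{divisibility}(ii), then cross-reference Lemma~\ref{sp}, Proposition~\ref{odd}, and Lemma~\ref{8 +} together with the constraint $\ICF(G_{uv})=\ICF(G_{vw})$ to eliminate all parameter triples outside Table~\ref{t:c4}. The one inaccuracy worth flagging is your framing of Lemma~\ref{tensor}(iii) as the engine of the argument and as the means of handling $2k=m$: the paper does not invoke part~(iii) in this lemma at all (it appears only later, in Lemma~\ref{l:c4-6}), because the hypothesis of Lemma~\ref{corefree}(i) --- that $K_{3-i}$ has no section isomorphic to $\Soc(K_i)$ --- holds automatically for $q$ odd regardless of whether $2k=m$ (for odd $q$ one never has $\PSp_{2k}(q)\cong\POmega_m^\epsilon(q)$), which is precisely what the paper notes at the start of its Claim~3.
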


\begin{table}
    \centering
    \begin{tabular}{cccc}
    \hline
        Line& \(m\)&\(2k\)&$\epsilon$  \\
         \hline
         $1$& \(5\)&\(4\)& \(\circ\)\\
         $2$& \(3\)&\(2\)& \(\circ\)\\
         $3$& \(8\)&\(2\) or \(4\)& \(+\)\\
         $4$& \(4\)&\(4\)& \(-\)\\
         \hline
    \end{tabular}
    \caption{Parameters for Lemma~\ref{l:c4-1}}
    \label{t:c4}
\end{table}
\begin{proof}
We prove the lemma via a series of three claims. Recall that $q$ is odd.

\medskip\noindent
\emph{Claim $1$,\quad One of the following holds: $m>2k$, or $(m,\epsilon)=(2k,-)$.}

Suppose that the claim is false. Then either $m<2k$, or $(m,\epsilon)=(2k,+)$. Since $m\geqslant3$ we have $2k\geqslant 4$, and so, by Corollary~\ref{existppd}, \(\ppd(p, 2kf)\neq\varnothing\) and we choose a prime $r\in\ppd(p, 2kf)$. Then $r$ divides $|\Soc(K_1)|$, and $r$ does not divide $|K_2|$.
Thus, by Lemma \ref{divisibility}(ii), $r$ divides both \(|\pi_{1}(G_{uv})|\) and \(|\pi_{1}(G_{vw})|\). Also \(\Soc(K_{1})=\PSp_{2k}(q)\) is a non-abelian simple group, and is not a section of $K_2$, so, by Lemma \ref{corefree}(i),   \(\pi_{1}(G_{v})=\pi_{1}(G_{uv})\pi_{1}(G_{vw})\) is a core-free factorisation,  However, an almost simple group with socle \(\PSp_{2k}(q)\) does not have a core-free factorisation with both subgroups having orders divisible by  \(r\) by Lemma \ref{sp}(i). Thus we have a contradiction, and Claim 1 is proved.

\medskip\noindent
\emph{Claim $2$,\quad  Either $(m,\epsilon)=(2k+2,+), (2k+1,\circ )$ or $(2k,-)$, or $(m,\epsilon)=(8,+)$ with $k\in\{1,2,3\}$ .}

By Claim 1, either $m > 2k$ or $(m,\epsilon)=(2k,-)$. 
Suppose that  $(m,\epsilon)$ is not one of the four pairs listed in Claim 2. Then either  
$m\geqslant 2k+3$ or $(m,\epsilon)=(2k+2,-)$. Note that \(m\geqslant4\), and, in particular, $(m,\epsilon)$ is not $(4,+)$ or $(8,+)$. Let
\[
d=\begin{cases}
m-1&\text{if \(m\) is odd and \(\epsilon=\circ\)} \\
m&\text{if \(m\) is even and \(\epsilon=-\)}\\
m-2&\text{if \(m\) is even and \(\epsilon=+\)}.
\end{cases}
\]
If \(\epsilon\neq-\) then \(m\geqslant 2k+3\geqslant5\) and so \(d\geqslant m-2\geqslant3\), while if \(\epsilon=-\) then \(d=m\geqslant4\). In both cases \(d\geqslant3\) and since \(p\) is odd,  \(\ppd(p,fd)\neq\varnothing\)  by Corollary~\ref{existppd}. Let \(r\in\ppd(p,fd)\). Then \(|\Soc(K_{2})|_{r}>1\) and \(|K_{1}|_{r}=1\). By Lemma \ref{divisibility}(ii), the prime \(r\) divides \(|\pi_{2}(G_{uv})|\) and \(|\pi_{2}(G_{vw})|\).
On the other hand, \(K_{1}\) has no section isomorphic to \(\Soc(K_{2})\) as \(r\nmid|K_{1}|\), so by Lemma \ref{corefree}(i), the  factorisation \(\pi_{2}(G_{v})=\pi_{2}(G_{uv})\pi_{2}(G_{vw})\) is core-free. As noted above, $(m,\epsilon)$ is not $(4,+)$ or $(8,+)$. Hence, since \(|\pi_{2}(G_{uv})|_{r}, |\pi_{2}(G_{vw})|_{r}>1\), the hypotheses of Lemma \ref{odd}(i) hold provided  $ (2k,m,\epsilon)\ne (2,5,\circ)$.  Thus it follows from Lemma \ref{odd}(i)  that one of:
\begin{center}
    (i)\  \(m=7\);\quad (ii)\ \((m,\epsilon,q)=(4,-,3)\); \quad (iii)\ \((m,\epsilon)=(4t,+)\);\quad or\quad 
     (iv)\ $ (2k,m,\epsilon)= (2,5,\circ)$.
\end{center}
We first consider cases (i) and (iii). Here \((\ICF(\pi_{2}(G_{uv})),\ICF(\pi_{2}(G_{vw})))\) are as in rows \(1-2\), \(7-11\) or \(23-27\) in Table \ref{O(2m+1,q)}. Then by part (iii) of Lemma~\ref{odd}, \(\ICF(\pi_{2}(G_{uv}))\cap\ICF(\pi_{2}(G_{vw}))=\varnothing\). On checking these rows in Table \ref{O(2m+1,q)}, and interchanging \(\pi_{2}(G_{uv})\) and \(\pi_{2}(G_{vw})\) if necessary, there exists \(T\in\ICF(\pi_{2}(G_{uv}))\) such that \(|T|_{r}>1\). However, this is a contradiction to Lemma \ref{divisibility}(ii).

Next we consider case (ii). Here \((m,\epsilon,q)=(4,-,3)\), and hence \(2k=2\) and so \(L=\PSp_{8}(3)\) and \(|\Aut(L):L|=2\). By \cite[Theorem 3.8]{Wilson} we have \(L_{v}\cong(\PSp_{2}(3)\times\PGO_{4}^{-}(3)).2\) and therefore 
\[
(\PSp_{2}(3)\times\PGO_{4}^{-}(3)).2\leqslant G_{v} \leqslant(\PSp_{2}(3)\times\PGO_{4}^{-}(3)).[4].
\] 
However, we check by MAGMA \cite{magma} (see Subsection~\ref{r:magma}) that there is no homogeneous factorisation \(G_{v}=G_{uv}G_{vw}\) with \(G_{uv}\) and \(G_{vw}\) conjugate in \(G\), which is a contradiction.

Finally we consider  case (iv)\ $ (2k,m,\epsilon)= (2,5,\circ)$.
Note that \(\Soc(K_{2})\cong\POmega_{5}(q)\cong\PSp_{4}(q)\). Since \(K_{1}\cong\PSL_{2}(q)\) has no section isomorphic to \(\Soc(K_{2})\cong\PSp_{4}(q)\), it follows from Lemma \ref{divisibility}(i) that \(\pi_{2}(G_{v})=\pi_{2}(G_{uv})\pi_2(G_{vw})\) is a core-free factorisation. By Corollary~\ref{existppd}, \(\ppd(p,4f)\neq\varnothing\) and we choose \(r\in\ppd(p,4f)\).  Then \(|\Soc(K_{2})|_{r}>1\) and \(|K_{1}|_{r}=1\).  It follows from Lemma~\ref{divisibility} (ii) that \(r\) divides \(|\pi_{2}(G_{uv})|\) and \(|\pi_{2}(G_{vw})|\).    However,  by Lemma \ref{sp} (i),  there is no core-free factorisation \(\pi_{2}(G_{v})=\pi_{2}(G_{uv})\pi_2(G_{vw})\) with \(|\pi_{2}(G_{uv})|_{r}>1\) and \(|\pi_{2}(G_{vw})|_{r}>1\), and we have a contradiction.

\medskip\noindent
\emph{Claim $3$,\quad  \((m,2k, \epsilon)\) are as in one of the lines of Table~\ref{t:c4}.}

 Suppose that Claim 3 is false. Recall that, by Hypothesis~\ref{C4},  $m\geqslant3$, $q$ is odd, \((2k, m,\epsilon)\neq (2,4,+)\), and \((m,q)\ne(3,3)\). It therefore follows from Claim 2 that 
 \[
 m=\begin{cases}
 2k+1\geqslant7&\text{if}\quad \epsilon=\circ\\
 2k\geqslant6&\text{if}\quad\epsilon=-\\
 2k+2\geqslant6&\text{if}\quad\epsilon=+.\end{cases}
 \]
 Also, since \(q\) is odd,  \(K_{i}\) has no section isomorphic to \(\Soc(K_{3-i})\) for \(i=1,2\). Hence  by Lemma \ref{corefree}(i),  for each \(i=1,2\), the factorisation \(\pi_{i}(G_{v})=\pi_{i}(G_{uv})\pi_{i}(G_{vw})\) is core-free. Let us denote by \(H_{i}:=\pi_{i}(G_{uv})\), by \(M_{i}:=\pi_{i}(G_{vw})\) for \(i=1,2\), and by \(X_{i}:=\ICF(H_{i})\cup\ICF(M_{i})\) for \(i=1,2\). 
 First consider the case where \((m, 2k, \epsilon)=(6,4, +)\). By Corollary~\ref{existppd}, \(\ppd(p, 3f)\neq\varnothing\) and we choose a prime \(s\in\ppd(p,3f)\). Then \(|\Soc(K_{2})|_{s}>1\) and  \(|K_{1}|_{s}=1\), and also \(s\in\Pi(G_{uv})=\Pi(G_{vw})\) since \(\Pi(G_{v})=\Pi(G_{uv})=\Pi(G_{vw})\).
 This implies that \(s\in\Pi(H_{2})\cap\Pi(M_{2})\). However by Lemma \ref{odd}(ii) there is no such core-free factorisation of an almost simple group with socle \(\POmega_{6}^{+}(q)\), which is a contradiction.
 
 Thus \(2k\geqslant6\). By rows 1-3 of Table \ref{PSp(2m,q)} in Lemma \ref{sp} we have \(\ICF(H_{1})\cap\ICF(M_{1})=\varnothing\) and one of the following occurs:
\begin{description} 
\item[(i)] \(X_{1}=\{\PSp_{2a}(q^{b}), \PSp_{2k-2}(q)\}\) with \(2k=2ab\) and $b>1$;
\item[(ii)] \((2k,q)=(6,3)\), \(X_{1}=\{\PSL_{2}(13), \PSp_{4}(3)\}\);
\item[(iii)] \((2k,q)=(6,3)\), \(\{\PSL_{2}(27)\}\subseteq X_{1}\subseteq \{\PSL_{2}(27), \A_{5}\}\).
\end{description}
Since \(\ICF(H_{1})\cap\ICF(M_{1})=\varnothing\), this together with the fact that \begin{equation}\label{eqnHM1}
\ICF(H_{1})\subseteq \ICF(G_{uv})=\ICF(G_{vw})\subseteq \ICF(M_{1})\cup\ICF(M_{2})
\end{equation} 
implies that \(\ICF(H_{1})\subseteq\ICF(M_{2})\). Similarly, 
\begin{equation*}
\ICF(M_{1})\subseteq \ICF(G_{vw})=\ICF(G_{uv})\subseteq \ICF(H_{1})\cup\ICF(H_{2})
\end{equation*} 
and hence \(\ICF(M_{1})\subseteq\ICF(H_{2})\). Therefore
\begin{equation}\label{eqnHM2}
X_1=\ICF(H_{1})\cup\ICF(M_{1})\subseteq\ICF(M_{2})\cup\ICF(H_{2})=X_2.
\end{equation}

If $(m,2k,\epsilon)=(8,6,+)$ then by Lemma~\ref{8 +}, some row of Table~\ref{P(8,+)} should contain all the simple groups in $X_1$, and this fails for each case (i)--(iii). Thus if $\epsilon=+$ then $m=2k+2\geq 10$, so in all cases $m\geq 2k\geq 6$ and  $(m, \epsilon)\ne (8,+)$. Therefore Proposition \ref{odd} applies with $d=2k\geq 6$.

Suppose first that $d=2k=6$, Then $ \Soc(K_2)=\POmega_7(q)$ or $\POmega_6^-(q)$, and by Proposition \ref{odd}(iii), one 
of the Lines of Table~\ref{O(2m+1,q)} must contain all the groups in $X_1$. The only Line for which this happens is Line 1 with $q=3$, $\Soc(K_2)=\POmega_7(q)$ and $X_1=\{ \PSL_2(q^3)\}$ (as in case (iii) above). However for this Line  $X_2=\{ \POmega_6^-(3), \PSL_2(27)\}$, and it follows from  Proposition~ \ref{odd} (iii) that \(\ICF(H_{2})\cap\ICF(M_{2})=\varnothing\). Then applying the argument in \eqref{eqnHM1} and \eqref{eqnHM2} to \(H_{2}\) and \(M_{2}\) in place of \(H_{1}\) and \(M_{1}\), we conclude that \(X_{2}\subseteq X_{1}\), which is a contradiction. Thus $d=2k\geq 8$, and hence $X_1$ is as in case (i). Then by  Proposition \ref{odd}(iii), one 
of the Lines of Table~\ref{O(2m+1,q)} must contain both $\PSp_{2a}(q^b)$ and $\PSp_{2k-2}(q)$, for some prime $b$ such that $k=ab$. There are no Lines with this property, so we have a contradiction.  This completes the proof of Claim 3, and hence of Lemma~\ref{l:c4-1}.



\end{proof}

We now deal with the lines of Table~$\ref{t:c4}$ separately. First we consider Line $3$, the only line for plus type groups.

\begin{lemma}\label{l:c4-4}
Suppose that Hypothesis \ref{C4} holds and \((m,\epsilon)=(8,+)\) and \(2k\leqslant4\). Then \(\Gamma\) is not \((L,2)\)-arc-transitive.
\end{lemma}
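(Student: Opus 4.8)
The plan is to derive a contradiction from the assumption that $\Gamma$ is $(L,2)$-arc-transitive when $(m,\epsilon)=(8,+)$ and $2k\in\{2,4\}$. In this situation $\Soc(K_1)=\PSp_{2k}(q)$ with $2k\leqslant 4$ and $\Soc(K_2)=\POmega_8^+(q)$, and, since $q$ is odd, neither $K_i$ has a section isomorphic to $\Soc(K_{3-i})$ (the orders are too different: $|\POmega_8^+(q)|$ is divisible by primitive prime divisors $\ppd(p,6f)$ which cannot divide $|\PSp_4(q)|$, and conversely $\POmega_8^+(q)$ is simple of much larger order). Hence by Lemma~\ref{corefree}(i) the projected factorisations $\pi_i(G_v)=\pi_i(G_{uv})\pi_i(G_{vw})$ are core-free for $i=1,2$. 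I will exploit Lemma~\ref{8 +}, which classifies the core-free factorisations of an almost simple group with socle $\POmega_8^+(q)$: writing $X_2=\ICF(\pi_2(G_{uv}))\cup\ICF(\pi_2(G_{vw}))$, this set must appear in a single row of Table~\ref{P(8,+)}, and by Lemma~\ref{8 +}(iii) there is a primitive prime divisor $r\in\ppd(p,6f)$ dividing $|T|$ for some $T\in\ICF(\pi_2(G_{uv}))$.

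The key step is to track where the insoluble composition factors of $G_{uv}$ and $G_{vw}$ can live. Since $G_{uv}\cong G_{vw}$, we have $\ICF(G_{uv})=\ICF(G_{vw})$, and $\ICF(G_{uv})=\ICF(G_{uv}\cap K_2)\cup\ICF(\pi_1(G_{uv}))\subseteq\ICF(\pi_2(G_{uv}))\cup\ICF(\pi_1(G_{uv}))$, with $\pi_1(G_{uv})\leqslant K_1$ having only composition factors among those of $\PSp_{2k}(q)$ with $2k\leqslant 4$. The primitive prime divisor $r\in\ppd(p,6f)$ does not divide $|K_1|$ (since $|\PSp_4(q)|_r=1$ as $6f>4f$), so by Lemma~\ref{divisibility}(ii), $r$ divides $|\pi_2(G_{uv})|$ and $|\pi_2(G_{vw})|$, and the factor $T$ with $r\mid|T|$ lies in $\ICF(\pi_2(G_{uv}))\cap\ICF(\pi_2(G_{vw}))$. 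By Lemma~\ref{8 +}(ii), $\ICF(\pi_2(G_{uv}))\cap\ICF(\pi_2(G_{vw}))=\varnothing$ unless both equal $\{\Omega_7(q)\}$; hence $\ICF(\pi_2(G_{uv}))=\ICF(\pi_2(G_{vw}))=\{\Omega_7(q)\}$, and in particular $T=\Omega_7(q)$. Therefore $\ICF(G_{uv})\subseteq\{\Omega_7(q)\}\cup\ICF(\pi_1(G_{uv}))$, and since $\Omega_7(q)$ is not a section of $\PSp_4(q)$ (again $|\Omega_7(q)|_r>1$ while $|\PSp_4(q)|_r=1$), the factor $\Omega_7(q)$ occurs with multiplicity exactly one in $G_{uv}$ and arises from $G_{uv}\cap K_2$, i.e. $G_{uv}$ contains a subgroup $S_1$ with $S_1/Z\cong\Omega_7(q)$ contained in $K_2$; the same holds for $G_{vw}$ with a subgroup $S_2\leqslant K_2$. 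Then $S_1^{\overline g}=S_2$ (as $\overline g$ maps $G_{uv}$ to $G_{vw}$ and $\Omega_7(q)$ is the unique nonabelian composition factor involved), so $\overline g$ normalises a subgroup of $G_v$ — here I will need to be slightly careful and instead argue via conjugacy of $\Omega_7(q)$-type subgroups inside $K_2=\POmega_8^+(q).\!\,[\,\cdot\,]$ together with Lemma~\ref{HK}, exactly as in the proof of Proposition~\ref{p:notc2I}: all such subgroups of $K_2$ form a single $K_2$-class (or a bounded number of classes), so after adjusting we obtain $g'\in G$ with $G_{uv}\leqslant N_G(S)$ and $G_{vw}\leqslant N_G(S^{g'})$ with $S$ not normal in $G$, contradicting Lemma~\ref{HK}.

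The main obstacle I anticipate is the last step: ruling out the ``diagonal'' possibility $\ICF(\pi_2(G_{uv}))=\ICF(\pi_2(G_{vw}))=\{\Omega_7(q)\}$ cleanly. The subtlety is that there can be several classes of $\Omega_7(q)$-subgroups inside $\POmega_8^+(q)$ related by triality, and the containing subgroup of $G_v$ might mix a genuine $N_1$-type orthogonal subgroup with a triality image; Lemma~\ref{8 +} records precisely this in the row ``$A=B^\tau$ for some triality $\tau$''. To handle it I will show that $\overline g$ would have to conjugate one $\Omega_7(q)$-type subgroup of $K_2$ to another, and since $\overline g\in L=\PSp_{2n}(q)$ acts on $V=U\otimes W$ preserving the tensor decomposition up to the action of $G_v$, it cannot realise an outer triality automorphism of $\Soc(K_2)$; combined with the fact that $G_v\leqslant K_1\times K_2$ with $K_2$ inducing at most $\PGO_8^+(q)$-type automorphisms, I can conclude $S_1$ and $S_2$ are $K_2$-conjugate, feed this into Lemma~\ref{HK}, and finish. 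If the triality bookkeeping proves too delicate, the fallback is a purely order-theoretic argument: compare $|G_{uv}|^2\geqslant|G_v|$ using $|G_{uv}|\leqslant|N_{K_2}(\Omega_7(q))|\cdot|\R(G_v)|$ and $|G_v|\geqslant|\PSp_{2k}(q)|\cdot|\POmega_8^+(q)|$, and check this fails because $|\POmega_8^+(q)|/|\Omega_7(q)|^2$ is small while $|\PSp_{2k}(q)|$ is not, yielding the contradiction directly.
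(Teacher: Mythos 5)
Your proof starts along the same lines as the paper: you correctly invoke Lemma~\ref{corefree}(i) and Lemma~\ref{8 +} to pin down that there must be a common insoluble composition factor $\Omega_7(q)$ in $\ICF(\pi_2(G_{uv}))\cap\ICF(\pi_2(G_{vw}))$, using a primitive prime divisor $r\in\ppd(p,6f)$. Up to the point where $\ICF(H_2)=\ICF(M_2)=\{\Omega_7(q)\}$, your argument matches the paper's. The divergence, and the gaps, come at the finish.

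\textbf{The triality objection is fatal to your main argument.} You yourself flag the row ``$A=B^\tau$ for some triality $\tau$'' in Table~\ref{P(8,+)}. When $H_2\cap\Soc(K_2)$ and $M_2\cap\Soc(K_2)$ are $\Omega_7(q)$-subgroups lying in different triality classes, they are not conjugate inside $K_2$ (which only induces duality-type, not triality, automorphisms of $\POmega_8^+(q)$), nor inside $G_v\leqslant K_1\times K_2$. Thus the hypothesis of Lemma~\ref{HK}, namely that the two normalised subgroups be conjugate inside the group being factored ($G_v$), can genuinely fail. Your attempted fix --- that ``$\overline g\in L$ acts on $V=U\otimes W$ preserving the tensor decomposition up to the action of $G_v$, so it cannot realise triality'' --- is not correct: $\overline g$ maps $v$ to $w$, so it conjugates $G_v$ to $G_w$ and does \emph{not} preserve the tensor decomposition stabilised by $G_v$; there is no reason for $\overline g$ to induce only inner/diagonal automorphisms of $\Soc(K_2)$. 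So the diagonal case $A=B^\tau$ remains unresolved.

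\textbf{The fallback order argument also fails.} You claim $|G_{uv}|^2\geqslant|G_v|$ should be violated because $|\POmega_8^+(q)|/|\Omega_7(q)|^2$ is small while $|\PSp_{2k}(q)|$ is not, but these two facts push in the same direction, not opposite directions. Writing the (loose) bound $|G_{uv}|\leqslant |K_1|\cdot|\Omega_7(q)|\cdot|\Out(\POmega_8^+(q))|$ gives
\[
\frac{|G_{uv}|^2}{|G_v|}\ \lesssim\ \frac{|K_1|^2}{|\PSp_{2k}(q)|}\cdot\frac{|\Omega_7(q)|^2}{|\POmega_8^+(q)|}\ \geqslant\ |\PSp_{2k}(q)|\cdot\tfrac{d}{4}\,q^6(q^2-1)(q^6-1),
\]
which is far greater than $1$, so no order contradiction is obtained this way. (Also, the bound $|G_{uv}|\leqslant |N_{K_2}(\Omega_7(q))|\cdot|\R(G_v)|$ in your fallback is wrong: it ignores $G_{uv}\cap K_1$, which may be as large as $|K_1|$.)

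\textbf{What the paper actually does} at this point is quite different: having established $H_2\cap\Soc(K_2)=M_2\cap\Soc(K_2)\cong\Omega_7(q)$, it switches to a second primitive prime divisor $r\in\ppd(p,4f)$ (which does divide $|\PSp_4(q)|$ when $2k=4$) and exploits the $3$-arc-transitivity hypothesis via Lemma~\ref{valency} to show $|H_1|_r>1$ and $|M_1|_r>1$. This forces $2k=4$, and then by Lemma~\ref{sp}(i) neither $H_1$ nor $M_1$ can be part of a core-free factorisation with $r$ dividing both sides, so (WLOG) $\Soc(K_1)\leqslant H_1$. Tracking this through the composition factor structure shows $\Soc(K_1)\lhd G_{uv}$ and then $\Soc(K_1)^g\leqslant K_1$, so $\Soc(K_1)^g=\Soc(K_1)$, contradicting Lemma~\ref{normal}. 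The crucial pivot is to switch from $\Omega_7(q)$-subgroups of $K_2$ (where triality makes conjugacy delicate) to $\Soc(K_1)$, which is canonical and for which no triality issue arises. If you want to rescue your line of attack, the simplest route is to adopt that pivot.
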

\begin{proof}
Suppose that  \(\Gamma\) is \((L,2)\)-arc-transitive, and note that \(K_{1}\) has no section isomorphic to \(\POmega_{8}^{+}(q)\). Thus by Lemma \ref{corefree}, \(\pi_{2}(G_{v})=\pi_{2}(G_{uv})\pi_{2}(G_{vw})\) is core-free. Let us denote by \(H_{i}:=\pi_{i}(G_{uv})\) and \(M_{i}:=\pi_{i}(G_{vw})\) for \(i=1,2\). Then by Lemma \ref{8 +}, interchanging \(H_{2}\) and \(M_{2}\) if necessary, there exists \(T\in\ICF(H_{2})\) such that \(|T|_{r}>1\) for any \(r\in\ppd(p,6f)\).

On the other hand, since \(T\in\ICF(H_{2})\) we have \(T\in\ICF(G_{uv})=\ICF(G_{vw})\). Let \(r\in\ppd(p,6f)\).  Note that \(|K_{1}|_{r}=1\), and so \(T\notin\ICF(M_{1})\). This together with the fact that \(\ICF(G_{vw})\subseteq\bigcup_{i=1}^{2}\ICF(M_{i})\) implies that  \(T\in\ICF(M_{2})\). Thus \(\ICF(M_{2})\cap\ICF(H_{2})\neq \varnothing\). By Lemma \ref{8 +} we have \(\ICF(H_{2})=\ICF(M_{2})=\{\Omega_{7}(q)\}\). By checking \cite[Table 4]{LPS} we conclude that \(H_{2}\cap\Soc(K_{2})=M_{2}\cap\Soc(K_{2})=\Omega_{7}(q)\). For \(r\in\ppd(q,4f)\) let \(r^{c}:=(q^{4}-1)_{c}\). Then \(r>4f\) and \(c\geqslant1\) and \(|\POmega_{8}^{+}(q)|_{r}=r^{2c}\). Moreover, \(|H_{2}\cap\Soc(K_{2})|_{r}=|\Omega_{7}(q)|_{r}=r^{c}\) and \(|\Out(\Soc(K_{2}))|_{r}=(24f)_{r}=1\). This implies that \(|H_{2}|_{r}=r^{c}\) and similarly \(|M_{2}|_{r}=r^{c}\). Assume that \(\Gamma\) is \((G,3)\)-arc-transitive. Then let \(\gamma\) be the valency of \(\Gamma\) we deduce from Lemma \ref{valency} that \(|G_{v}|_{r}\geqslant \gamma_{r}^{3}\). Note that \(\gamma=|G_{v}|/|G_{uv}|\) and \(|G_{uv}|_{r}=|H_{1}|_{r}r^{c}\). Thus the valency \(\gamma\) of \(\Gamma\) has \(r\)-part
\[
    \frac{|G_{v}|_{r}}{|G_{uv}|_{r}}=\frac{|K_{1}|r^{c}}{|H_{1}}.
\]
Since \(|G_{v}|_{r}=|K_{1}|_{r}r^{2c}>\gamma_{r}^{3}\) we have 
\(
   |K_{1}|_{r}r^{2c}>(|K_{1}|_{r}r^{c}/|H_{1}|)^{3}
\)
and it gives 
\(|H_{1}|_{r}> r^{c/3}|K_{1}|_{r}^{2/3}>1\).
With the similar argument on \(G_{vw}\), we obtain that \(|M_{1}|_{r}>1\). Note that \(|H_{1}|_{r}=|M_{1}|_{r}=1\) when \(2k=2\). Thus it suffices to consider the case when \(2k=4\). By Lemma \ref{sp} there is no core-free factorisation for an almost simple group with socle \(\PSp_{4}(q)\) with \(q\) odd and both factors having orders divisible by \(r\). Hence at least one of \(H_{1}\) or \(M_{1}\) contains \(\Soc(K_{1})\). Without loss of generality, assume that \(\Soc(K_{1})\leqslant H_{1}\). This implies that \(\Soc(K_{1})\cong \PSp_{2k}(q)\in \ICF(G_{uv})\). Since \(G_{uv}/(G_{uv}\cap K_{1})=H_{2}\) and \(\ICF(H_{2})=\{\Omega_{7}(q)\}\) we have \(\PSp_{2k}(q)\in\ICF(G_{uv}\cap K_{1})\). Thus \(\Soc(K_{1})\trianglelefteq G_{uv}\). Moreover, \(\PSp_{4}(q)\cong\Soc(K_{1})^{g}\trianglelefteq G_{vw}\). Since \(G_{vw}/(G_{vw}\cap K_{1})\cong M_{2}\) and \(\ICF(M_{2})=\{\Omega_{7}(q)\}\) we have \(\Soc(K_{1})^{g}\leqslant K_{1}\). In particular, \(\Soc(K_{1})^{g}=\Soc(K_{1})\), contradicting Lemma \ref{normal} and proving the result.
\end{proof}

Next we deal with Line 4 of Table~$\ref{t:c4}$, the only line for minus type groups, but this time we only disprove 
\((G,3)\)-arc-transitivity.

\begin{lemma}\label{l:c4-5}
Suppose that Hypothesis \ref{C4} holds and \((m, 2k,\epsilon)=(4, 4, -)\). Then \(\Gamma\) is not \((G,3)\)-arc-transitive.
\end{lemma}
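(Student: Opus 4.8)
The plan is to exploit the structural constraints already accumulated in Hypothesis~\ref{C4} together with Lemma~\ref{l:c4-1}: we are now in the specific situation $(m,2k,\epsilon)=(4,4,-)$, so $n=km=4$, $L=\PSp_8(q)$ with $q=p^f$ odd, $\Soc(K_1)\cong\PSp_4(q)$ and $\Soc(K_2)\cong\POmega_4^-(q)\cong\PSL_2(q^2)$, and by Remark~\ref{rem:C4} we may assume $q\geqslant 5$ (indeed Proposition~\ref{citesmall}(iii) gives $q\geqslant 3$ and the excluded case $(m,q)=(3,3)$ does not apply here, but $\POmega_4^-(3)\cong\A_6$ is a genuinely small case so some care is needed — one can either invoke Proposition~\ref{citesmall} more carefully or handle $q=3$, i.e.\ $L=\PSp_8(3)$, directly by MAGMA as was done for Line~3 of Table~\ref{t:c4} in Lemma~\ref{l:c4-5}'s sibling cases). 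Suppose for a contradiction that $\Gamma$ is $(G,3)$-arc-transitive; then by Lemma~\ref{s-1} $\Gamma$ is $(L,2)$-arc-transitive, so Lemma~\ref{l:c4-1} and the lemmas preceding it apply, and in particular $G_v=G_{uv}G_{vw}$ is a homogeneous factorisation with $G_{uv}^{\overline g}=G_{vw}$.

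The core of the argument will be a counting/divisibility contradiction driven by Lemma~\ref{valency}, exactly in the style of the proofs of Proposition~\ref{p:c3I} and Lemma~\ref{l:c4-4}. First I would pin down $\ICF(G_{uv})$ and $\ICF(G_{vw})$. Since $K_1$ has no section isomorphic to $\Soc(K_2)\cong\PSL_2(q^2)$ for $q$ odd and vice versa, Lemma~\ref{corefree}(i) gives that both $\pi_1(G_v)=\pi_1(G_{uv})\pi_1(G_{vw})$ and $\pi_2(G_v)=\pi_2(G_{uv})\pi_2(G_{vw})$ are core-free factorisations of almost simple groups with socles $\PSp_4(q)$ and $\PSL_2(q^2)$ respectively. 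For the $\PSp_4(q)$ factor I would apply Lemma~\ref{sp} and its Table~\ref{PSp(2m,q)}: the relevant rows for $\PSp_4(q)$, $q$ odd, are the $\{\PSp_2(q^2)\}$-row (with $\ICF$ of the other factor $\varnothing$) and, only when $q=3$, the $\PSp_4(3)\cong\PSU_4(2)$-rows; moreover by Lemma~\ref{sp}(i), for $r\in\ppd(p,4f)$ (nonempty by Corollary~\ref{existppd} since $p$ is odd) we get $r\mid|\pi_1(G_{uv})|$ and $r\nmid|\pi_1(G_{vw})|$ (after possibly swapping), so one of $\pi_1(G_{uv}),\pi_1(G_{vw})$ has order coprime to $r$ while $|\PSp_4(q)|_r=r^{2c}$ with $r^c=(q^4-1)_r$, $c\geqslant1$. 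Combining this with $\Soc(K_2)\cong\PSL_2(q^2)$ — whose factorisations are the single line $\varnothing,\varnothing$ in Table~\ref{O(2m+1,q)} (row $12$), unless $q=3$ — forces $\ICF(G_{uv})$ and $\ICF(G_{vw})$ each to be a subset of $\{\PSp_2(q^2),\PSL_2(q^2)\}=\{\PSp_2(q^2)\}$, and in fact both equal $\{\PSp_2(q^2)\}$ by $G_{uv}\cong G_{vw}$. So up to conjugacy $G_{uv}\cap\Soc(K_1)$ and $G_{vw}\cap\Soc(K_1)$ both lie inside the $\PSp_2(q^2).2$-subgroup, giving tight bounds $|\pi_1(G_{uv})|_r=|\pi_1(G_{vw})|_r=r^{c}$ (since $|\Out(\PSp_4(q))|_r=1$ as $r>4f$).

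With these $r$-part bounds in hand I would run the valency estimate: by Lemma~\ref{corefree}(ii) (applied with $i=1$, $r\nmid|K_2|$) we get $|G_{uv}|_r=|\pi_1(G_{uv})|_r=r^c$, while $|G_v|_r=|\Soc(K_1)|_r\cdot|\Out|_r$-type considerations give $|G_v|_r=r^{2c}$ times a factor coprime to $r$ (here one uses $|\R(G_v)|$ divides $|Z(L_v)|\cdot|\Out(L)|$, whose $r$-part is $1$, exactly as in Proposition~\ref{p:c3II}). Hence the valency $\gamma=|G_v|/|G_{uv}|$ has $\gamma_r=r^{c}$ with $c\geqslant1$, so $\gamma_r^3=r^{3c}>r^{2c}=|G_v|_r$, contradicting Lemma~\ref{valency}. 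This closes the main case. The residual obstacle — and the step I expect to be the real annoyance rather than a deep difficulty — is the small-field case $q=3$ (where $\POmega_4^-(3)\cong\A_6$ opens extra factorisation rows in the tables and the $\ppd$ argument can degenerate since $\ppd(3,4)$ exists but is small, $r=5$): for this I would either note $L=\PSp_8(3)$, look up $L_v\cong(\PSp_2(3)\times\PGO_4^-(3)).2$ via \cite[Theorem 3.8]{Wilson} so $G_v\leqslant(\PSp_2(3)\times\PGO_4^-(3)).[4]$, and invoke the MAGMA verification already cited in Subsection~\ref{r:magma} and used in Lemma~\ref{l:c4-1} that this group admits no homogeneous factorisation $G_v=G_{uv}G_{vw}$ with $G_{uv},G_{vw}$ conjugate in $G$ — which is stronger than what we need and immediately rules out $(L,2)$-arc-transitivity, hence $(G,3)$-arc-transitivity. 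Thus in every case $\Gamma$ is not $(G,3)$-arc-transitive, proving Lemma~\ref{l:c4-5}.
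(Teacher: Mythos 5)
Your overall strategy (a $\ppd$-driven divisibility/valency contradiction in the spirit of Proposition~\ref{p:c3I} and Lemma~\ref{l:c4-4}) is the right one, and it is indeed the route the paper takes, but several of your individual steps do not hold, and together they form genuine gaps rather than mere untidiness.

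First, you invoke Lemma~\ref{corefree}(i) for \emph{both} $i=1$ and $i=2$, asserting that neither $K_i$ has a section isomorphic to $\Soc(K_{3-i})$. This is false for $i=2$: here $\Soc(K_2)\cong\POmega_4^-(q)\cong\PSL_2(q^2)\cong\PSp_2(q^2)$, and $\PSp_4(q)$ contains a $\mathcal{C}_3$-subgroup $\PSp_2(q^2).2$, so $K_1$ \emph{does} have a section isomorphic to $\Soc(K_2)$. The hypothesis of Lemma~\ref{corefree}(i) therefore fails with $i=2$, and you cannot conclude that $\pi_2(G_v)=\pi_2(G_{uv})\pi_2(G_{vw})$ is core-free. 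This is why the paper's proof uses only the $\pi_1$-factorisation. Consequently, your subsequent determination of $\ICF(G_{uv})$ and $\ICF(G_{vw})$ via Row 12 of Table~\ref{O(2m+1,q)} is unjustified, and the sentence ``and in fact both equal $\{\PSp_2(q^2)\}$ by $G_{uv}\cong G_{vw}$'' is also internally inconsistent with your immediately preceding conclusion that (after swapping) $r\nmid|\pi_1(G_{vw})|$, hence $\ICF(\pi_1(G_{vw}))=\varnothing$.

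Second, the valency step is based on ``Lemma~\ref{corefree}(ii) (applied with $i=1$, $r\nmid|K_2|$).'' But $r\in\ppd(p,4f)$ divides $q^2+1$, which divides $|\PSL_2(q^2)|$, so $r$ \emph{does} divide $|K_2|$. Lemma~\ref{corefree}(ii) is not applicable, and the key identity $|G_{uv}|_r=|\pi_1(G_{uv})|_r$ does not follow; a priori $|G_{uv}|_r$ could be as large as $r^{2a}=|G_v|_r$, which kills the valency inequality you wanted. The paper avoids this by running the argument the other way round: it first uses the elementary bound $|L_{uv}|_r\leqslant|\pi_1(L_{uv})|_r\cdot|\pi_2(L_{uv})|_r$ together with Lemma~\ref{valency} to prove that both $|\pi_1(L_{uv})|_r>1$ and $|\pi_1(L_{vw})|_r>1$, and only then derives a contradiction from Lemma~\ref{sp}(i), which says one of these must have trivial $r$-part. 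Also, your separate worry about $q=3$ is not needed; the paper's argument is uniform in $q$, since $\ppd(3,4)=\{5\}$ and Lemma~\ref{sp}(i) applies to $\PSp_4(3)$ as well.
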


\begin{proof}
Suppose that  \(\Gamma\) is \((G,3)\)-arc-transitive. Here \(\Soc(K_{2})=\POmega_4^-(q) \cong\PSL_{2}(q^{2})\) and \(K_{2}\) has no section isomorphic to \(\Soc(K_{1})\cong\PSp_{4}(q)\). If follows from Lemma \ref{corefree} that \(\pi_{1}(L_{v})=\pi_{1}(L_{uv})\pi_{1}(L_{vw})\) is a core-free factorisation. On the other hand \(\ppd(p,4f)\neq\varnothing\) since \(p\) is odd. Let \(r\in\ppd(p,4f)\) and let \(r^{a}:=(q^{2}+1)_{r}\). Then \(|\PSp_{4}(q)|_{r}=|\PSL_{2}(q^{2})|_{r}=r^{a}\) and \(|L_{v}|_{r}=r^{2a}\). On the other hand, since \(|\Out(L)|_{r}=(2f)_{r}=1\) we have \(|G_{v}|_{r}\leqslant|L_{v}|_{r}\cdot |\Out(L)|_{r}=r^{2a}\).

\medskip
\noindent
\emph{Claim.}\label{44}\quad 
\(|\pi_{1}(L_{uv})|_{r}>1\) and  \(|\pi_{1}(L_{vw})|_{r}>1\).

\medskip
Suppose for a contradiction that \(|\pi_{1}( L_{uv})|_{r}=1\). Then 
\(
    |L_{uv}|_{r}\leqslant|\pi_{1}( L_{uv})|_{r}|\cdot \pi_{2}(L_{uv})|_{r}=|\pi_{2}(L_{uv})|_{r}\leqslant(q^{2}+1)_{r}=r^{a}
\).
Thus the valency of \(\Gamma\) has \(r\)-part
\[
    \frac{|L_{v}|_{r}}{|L_{uv}|_{r}}\geqslant\frac{r^{2a}}{r^{a}}=r^{a}.
\]
Since \(\Gamma\) is \((G,3)\)-arc-transitive it follows from Lemma \ref{valency} that \(|G_{v}|\) is divisible by \(r^{3a}\), contradicting the fact that \(|G_{v}|_{r}\leqslant r^{2a}\). A similar argument shows that \(|\pi_{1}(L_{vw})|_{r}>1\). Thus the claim is proved.

\medskip
 However, by Lemma \ref{sp} there is no core-free factorisation of an almost simple group with socle \(\PSp_{4}(q)\) with \(q\) odd such that both factors have orders divisible by \(r\). This contradicts the Claim, and thereby proves the lemma.
\end{proof}

It remains to consider lines 1-2 of Table~\ref{t:c4}, the lines for orthogonal groups of odd dimension.

\begin{lemma}\label{l:c4-6}
Suppose that Hypothesis \ref{C4} holds and \((m, 2k,\epsilon)=(5, 4, \circ)\) or \( (3,2,\circ)\). Then \(\Gamma\) is not \((G,3)\)-arc-transitive.
\end{lemma}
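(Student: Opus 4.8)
The plan is to exploit the exceptional isomorphisms that occur in lines 1 and 2 of Table~\ref{t:c4}, which collapse these $\mathcal{C}_4$ configurations into situations already handled. In line 2 we have $(m,2k,\epsilon)=(3,2,\circ)$, so $\Soc(K_1)\cong\PSL_2(q)$ and $\Soc(K_2)\cong\POmega_3(q)\cong\PSL_2(q)$, with $n=km=1\cdot\frac{m\cdot 2k}{?}$; more precisely $2n=2km$ gives $n=3$, so $L=\PSp_6(q)$ and the $\mathcal{C}_4$-subgroup has socle $\PSL_2(q)\times\PSL_2(q)$. In line 1, $(m,2k,\epsilon)=(5,4,\circ)$ gives $\Soc(K_2)\cong\POmega_5(q)\cong\PSp_4(q)$ and $\Soc(K_1)\cong\PSp_4(q)$, with $n=km=2\cdot 5=10$, so $L=\PSp_{20}(q)$ and $G_v$ has socle $\PSp_4(q)\times\PSp_4(q)$. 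The first thing I would do is set up, as in the previous three lemmas, $H_i:=\pi_i(G_{uv})$, $M_i:=\pi_i(G_{vw})$, and use Lemma~\ref{corefree}(i) to conclude that each projected factorisation $\pi_i(G_v)=H_iM_i$ is core-free (noting that since $q$ is odd, $K_1$ and $K_2$ have no section isomorphic to each other's socle in either line --- in line 1 both socles are $\PSp_4(q)$, so I must instead invoke Lemma~\ref{corefree}(iii) with $2k=4\ne 5=m$ to rule out $\Soc(K_1)^{\overline g}=\Soc(K_2)$, and argue directly that each $\pi_i(G_v)=H_iM_i$ is core-free since $H$ normal in one factor would force, via $\Soc(K_1)^{\overline g}$, a contradiction with Lemma~\ref{normal}).

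The core of the argument is the same ppd-counting used in Lemmas~\ref{l:c4-4} and~\ref{l:c4-5}. For line 1: pick $r\in\ppd(p,4f)$, which exists by Corollary~\ref{existppd} since $p$ is odd, and set $r^a:=|\PSp_4(q)|_r$, so $a\geq 1$ and $|\Soc(K_1)|_r=|\Soc(K_2)|_r=r^a$, whence $|G_v|_r\leq |L_v|_r|\Out(L)|_r = r^{2a}$ because $r>4f$ forces $|\Out(L)|_r=1$. If $|\pi_1(G_{uv})|_r=1$ then $|G_{uv}|_r\leq|\pi_2(G_{uv})|_r\leq r^a$, so the valency has $r$-part at least $r^a$, and $(G,3)$-arc-transitivity together with Lemma~\ref{valency} forces $r^{3a}\mid|G_v|$, contradicting $|G_v|_r\leq r^{2a}$; the same applies to $\pi_1(G_{vw})$. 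Hence $|\pi_1(G_{uv})|_r>1$ and $|\pi_1(G_{vw})|_r>1$, so $\pi_1(G_v)=H_1M_1$ is a core-free factorisation of an almost simple group with socle $\PSp_4(q)$ ($q$ odd) in which both factors have order divisible by $r\in\ppd(p,4f)$ --- directly contradicting Lemma~\ref{sp}(i). For line 2, $L=\PSp_6(q)$, $\Soc(K_1)\cong\Soc(K_2)\cong\PSL_2(q)$; here I take $r\in\ppd(p,2f)$ (which exists unless $q+1$ is a power of $2$; in that residual case one takes instead a primitive prime divisor of $q^2-1$ noting $q\geq 7$ by Proposition~\ref{citesmall}(ii), or argues with the prime $2$ and the explicit Sylow structure), set $r^a:=|\PSL_2(q)|_r=(q-1)_r/\ldots$ appropriately, and run the identical valency argument: $|G_v|_r\leq r^{2a}$, each $\pi_1(G_{uv}), \pi_1(G_{vw})$ has order divisible by $r$, and then Lemma~\ref{sp}(i) applied to socle $\PSL_2(q)$... but $\PSL_2(q)$ is not covered by Lemma~\ref{sp} (which requires $m\geq 2$), so here I would instead use the classification of factorisations of $\PSL_2(q)$ directly (or cite \cite{LPS} / \cite{small} Lemmas~2.6--2.7 as in Proposition~\ref{p:c3II}): a core-free factorisation of an almost simple group with socle $\PSL_2(q)$ has one factor inside $\D_{q\pm 1}$ and the other inside $\E_q{:}\C_{(q-1)/2}$, and the first of these has $r$-part $1$ for a suitable ppd $r$, giving the contradiction.

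The main obstacle I anticipate is the line-2 case, precisely because $\PSp_6(q)$ is small-dimensional, $\PSL_2(q)$ falls outside the range of Lemma~\ref{sp}, and Zsigmondy's theorem has a genuine exception for $q^2-1$ when $q+1$ is a power of $2$. Handling that exceptional $q$ will require either descending to the prime $2$ and using the explicit structure of $L_v$ from \cite[Table 3.5.C]{kleidman} (the $\mathcal{C}_4$-subgroup of $\PSp_6(q)$ is $(\Sp_2(q)\circ\GO_3(q)).[\,\cdot\,]$, so $|L_v|_2$ is controlled) and comparing $|G_v|_2$ against $\gamma_2^3$ via Lemma~\ref{valency}, or a small-group MAGMA check as described in Subsection~\ref{r:magma} for the finitely many surviving values of $q$ permitted by Proposition~\ref{citesmall}(ii). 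I would also double-check that the case $(m,q)=(3,3)$ is genuinely excluded by Hypothesis~\ref{C4} (it is, by Remark~\ref{rem:C4}), so that line 2 only needs to be treated for $q\geq 7$, $q$ odd, which keeps the exceptional-$q$ analysis finite and manageable.
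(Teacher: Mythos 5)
Your proposal shares the paper's opening move --- the ppd-valency count showing that $|\pi_i(G_{uv})|_r>1$ and $|\pi_i(G_{vw})|_r>1$ for a suitable primitive prime divisor $r$ --- but then goes wrong at the pivotal step: you conclude that each $\pi_i(G_v)=H_iM_i$ is a \emph{core-free} factorisation and therefore contradicts Lemma~\ref{sp}(i). That conclusion is not available. Lemma~\ref{corefree}(i) requires that $K_{3-i}$ have \emph{no} section isomorphic to $\Soc(K_i)$, and in both lines this hypothesis fails: in line $1$, $\Soc(K_1)\cong\PSp_4(q)\cong\POmega_5(q)\cong\Soc(K_2)$, and in line $2$, $\Soc(K_1)\cong\PSL_2(q)\cong\POmega_3(q)\cong\Soc(K_2)$. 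Your attempted workaround --- that if $\Soc(K_i)\trianglelefteq H_i$ then $\Soc(K_1)^{\overline g}$ forces a contradiction with Lemma~\ref{normal} --- does not go through, because $\Soc(K_i)\leqslant\pi_i(G_{uv})$ does not imply $\Soc(K_i)\leqslant G_{uv}$. Tracing the proof of Lemma~\ref{corefree}(i), one sees that the composition factor $\Soc(K_i)$ of $G_{uv}$ might be contributed by $\pi_{3-i}(G_{uv})$ rather than by $G_{uv}\cap K_i$ (e.g.\ $G_{uv}$ could be of diagonal type with trivial intersection with $K_i$), and this possibility cannot be excluded precisely because the two socles are isomorphic.

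In fact the paper's argument runs in the \emph{opposite} direction: from the same ppd-divisibility claim it deduces via Lemma~\ref{sp}(i) that the factorisation $\pi_i(L_v)=\pi_i(L_{uv})\pi_i(L_{vw})$ is \emph{not} core-free, i.e.\ one factor contains the whole socle. From there one must carefully track $Y:=\Soc(K_2)^{g}$ inside $L_{vw}$: one shows $Y\cap K_2=1$ (else $g$ normalises $\Soc(K_2)$), so $Y\cong\pi_1(Y)=K_1$, hence $K_1\leqslant L_{vw}$, and then either $\Soc(K_2)^g=K_1$ --- ruled out by Lemma~\ref{tensor}(iii) precisely because $2k\neq m$ --- or $L_{vw}$ (and hence $L_{uv}$) contains $\Soc(G_v)=K_1\times\Soc(K_2)$, which $g$ would then normalise, contradicting Lemma~\ref{normal}. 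This post-ppd analysis is the real content of the lemma and is entirely absent from your proposal. For line $2$ the situation is worse still, since the additional exceptional cases ($q$ a Mersenne prime, $q=9$) are resolved in the paper by invoking the classification of transitive subgroups from \cite[Table~10.3]{transitive} together with further valency estimates at $p=2$ and $p=3$; your sketch acknowledges these cases but does not actually close them. So while the ppd-valency opening is on the right track, the proof as proposed has a genuine gap at the core-freeness step and cannot be completed along the lines you suggest.
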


\begin{proof}
Suppose that  \(\Gamma\) is \((G,3)\)-arc-transitive, so  by Lemma \ref{s-1}, $\Gamma$ is \((L,2)\)-arc-transitive. In particular, \(L_{v}=L_{uv}L_{vw}\) with \(L_{uv}^{g}=L_{vw}\). Let us denote by \(H_{i}:=\pi_{i}(G_{uv})\) and \(M_{i}:=\pi_{i}(G_{vw})\) for \(i=1,2\). 

\medskip\noindent
\emph{Case: \((m, 2k,\epsilon)=(5, 4, \circ)\).} \quad Here the argument is similar in spirit to that of the previous lemma. We have  \(\PGO_{5}(q)\cong\PSp_{4}(q).2\). Moreover, \(\ppd(p,4f)\neq\varnothing\) since \(p\) is odd. Let \(r\in\ppd(p, 4f)\), let \(r^{a}:=(q^{4f}-1)_{r}\), and note that $r> 4f > |\Out(L)|$ so $|\Out(L)|_r=1$. Then \(|\PGO_{5}(q)|_{r}=|\PSp_{4}(q)|_{r}=r^{a}\) and \(|L_{v}|_{r}=r^{2a}\).

\medskip
\noindent
\emph{Claim 1.}\quad 
For each $i\in\{1,2\}$, \(|K_{i}\cap L_{uv}|_{r}\) and \(|K_{i}\cap L_{vw}|_{r}>1\).

\medskip
 Suppose for a contradiction that, for some $i$, we have \(|K_{i}\cap L_{uv}|_{r}=1\). Then 
\(|L_{uv}|_{r}=|K_{i}\cap L_{uv}|_{r}\cdot |\pi_{3-i}(L_{uv})|_{r}=|\pi_{3-i}(L_{uv})|_{r}\leqslant r^{a}
\).
This implies that the valency of \(\Gamma\) has \(r\)-part
\[
    \frac{|L_{v}|_{r}}{|L_{uv}|_{r}}\geqslant\frac{r^{2a}}{r^{a}}=r^{a}.
\]
Since \(\Gamma\) is \((G,3)\)-arc-transitive, it follows from Lemma \ref{valency} that \(|G_{v}|\) is divisible by \(r^{3a}\). On the other hand, \(|G_{v}|_{r}\leqslant|L_{v}|_{r}\cdot |\Out(L)|_{r}=|L_{v}|_{r}=r^{2a}\) and
we have a contradiction. Thus \(|K_{i}\cap L_{uv}|_{r}>1\), and the same argument shows that  \(|K_{i}\cap L_{vw}|_{r}>1\). Since the same argument works for $i=1$ and $i=2$, Claim 1 is proved. 

\medskip
 Since \(K_{i}\cap L_{uv}\trianglelefteq\pi_{i}(L_{uv})\) and \(K_{i}\cap L_{vw}\trianglelefteq\pi_{i}(L_{vw})\) for \(i=1\) and \(2\), it follows from Claim 1 that \(|\pi_{i}(L_{uv})|_{r}>1\) and \(|\pi_{i}(L_{vw})|_{r}>1\).  Then  by Lemma \ref{sp}(i). the factorisation \(\pi_{i}(L_{v})=\pi_{i}(L_{uv})\pi_{i}(L_{vw})\) is not core-free for $i=1, 2$. 
  Thus, at least one of \(\pi_{2}(L_{uv})\) or \(\pi_{2}(L_{vw})\) contains \(\Soc(K_{2})\) we, and without loss of generality we assume that \(\Soc(K_{2})\triangleleft\pi_{2}(L_{uv})\), so \(\Soc(\pi_{2}(L_{uv}))=\Soc(K_{2})\). Now $r$ does not divide $|\Out(K_2)|$, and hence, by Claim 1,  \(L_{uv}\cap \Soc(K_{2})\) is nontrivial. Thus  \(L_{uv}\cap \Soc(K_{2})\) is a nontrivial normal subgroup of $\pi_2(L_{uv})\cap \Soc(K_2)=\Soc(K_2)$, and since  \(\Soc(K_{2})\) is simple it follows that $L_{uv}$ contains $\Soc(K_2)$ as a normal subgroup.
  
  Consider $L_{vw}=L_{uv}^g$ with $g$ as in Hypothesis~\ref{C4}. The nonabelian simple subgroup $Y:=\Soc(K_2)^g$ is contained in $L_{vw}<G_v$, and as $K_2\unlhd G_v$, the intersection $Y\cap K_2\unlhd Y$ and hence $Y\cap K_2$ is either $Y$ or $1$. Suppose the former holds. Then $Y\leq K_2$ and the only subgroup of $K_2$ isomorphic to $Y$ is $\Soc(K_2)$. It follows that $\Soc(K_2)=Y=\Soc(K_2)^g$, that is, $g$ normalises the normal subgroup $\Soc(K_2)$ of  $G_v$, contradicting Lemma~ \ref{normal}. Thus $Y\cap K_2=1$ and hence $Y\cong \pi_1(Y)\leqslant \pi_1(G_v)=K_1$. As the only subgroup of $K_1$ isomorphic to $Y$ is $\Soc(K_1)=K_1$, we have $\pi_1(Y)=K_1$.   Further, since $r$ does not divide $|\Out(K_1)|$, it follows from Claim 1 that $L_{vw}\cap K_1\ne 1$, so $L_{vw}\cap K_1$ is a nontrivial normal subgroup of the simple group $\pi_1(L_{vw})=K_1$, and hence $K_1\leqslant L_{vw}=L_{uv}^g$. If $\PSp_4(q)$ has multiplicity $1$ as a composition factor of $L_{vw}$, then \(K_{1}\) is the unique normal subgroup of \(L_{vw}\) isomorphic to \(\PSp_{4}(q)\) and therefore \(\Soc(K_{2})^{g}=K_{1}\). However, this is a contradiction to Lemma \ref{tensor}(iii). Therefore $L_{vw}$ has at least two composition factors isomorphic to \(\PSp_{4}(q)\), and this implies that $L_{vw}$ contains $Y_1:= K_1\times \Soc(K_2)$ (a subgroup of index $2$ in $L_v$, and equal to $\Soc(G_v)$). As $L_{uv}\cong L_{vw}$ it follows that $Y_1\leq L_{uv}$ also, and hence that $g$ normalises $Y_1$, again contradicting Lemma~\ref{normal}.  Therefore we conclude that \((m, 2k,\epsilon)\ne (5, 4, \circ)\),

\medskip\noindent
\emph{Case:  \((m, 2k,\epsilon)=(3, 2, \circ)\).}\quad Here $n=mk=3$ and so by Proposition~\ref{citesmall}(ii), $q\geq7$. Recall that $p$ is odd. If \(q\) is not a Mersenne prime then \(\ppd(p,2f)\neq \varnothing\); we choose \(d\in\ppd(p,2f)\) and set \(d^{c}:=(p^{2f}-1)_{d}\). 

\medskip\noindent
\emph{Claim~2. \quad For some $i$, one of \(L_{uv}\cap K_{i}\) or \(L_{vw}\cap K_{i}\) does not contain \(\Soc(K_{i})\).}

\medskip
Suppose to the contrary that \(\Soc(K_{i})\) is contained in \(L_{uv}\cap K_{i}\) and \(L_{vw}\cap K_{i}\) for \(i=1\) and \(2\). In particular, \(\Soc(L_{v})=\Soc(K_{1})\times\Soc(K_{2})\trianglelefteq L_{uv}\) and \(\Soc(L_{v})^{g}\trianglelefteq L_{vw}\). Since \(\Soc(L_{v})\) is the unique subgroup of \(L_{v}\) isomorphic to \(\PSL_{2}(q)^{2}\), it follows that \(\Soc(L_{v})^{g}=\Soc(L_{v})\), contradicting Lemma \ref{normal}. Thus Claim 2 is proved

\medskip\noindent
\emph{Claim~3. \quad Let \(i\in\{1,2\}\). Then \(|K_{i}\cap L_{uv}|_{p}>1\) and \(|K_{i}\cap L_{vw}|_{p}>1\), and moreover if \(q\) is not a Mersenne prime, then also \(|K_{i}\cap L_{uv}|_{d}>1\) and \( |K_{i}\cap L_{vw}|_{d}>1\).}
  
\medskip
Suppose to the contrary that \(|K_{i}\cap L_{uv}|_{p}=1\) for some $i$. Then 
\[
    |L_{uv}|_{p}=|K_{i}\cap L_{uv}|_{p}|\cdot\pi_{3-i}(L_{uv})|_{p}=|\pi_{3-i}(L_{uv})|_{p}\leqslant|\PSL_{2}(q)|_{p}=p^{f}.
\]
On the other hand, \(|L_{v}|_{p}=|\PSL_{2}(q)|_{p}^{2}=p^{2f}\) and this implies that the valency of \(\Gamma\) has \(p\)-part
\[
    \frac{|L_{v}|_{p}}{|L_{uv}|_{p}}\geqslant\frac{p^{2f}}{p^{f}}=p^{f}.
\]
Since \(\Gamma\) is \((G,3)\)-arc-transitive, it follows from Lemma \ref{valency} that \(|G_{v}|_{p}\geqslant p^{3f}\). On the other hand \(|G_{v}|_{p}\leqslant|L_{v}|_{p}|\Out(L)|_{p}\), so
\[
    (2f)_{p}=|\Out(L)|_{p}\geqslant\frac{|G_{v}|_{p}}{|L_{v}|_{p}}\geqslant\frac{p^{3f}}{p^{2f}}=p^{f},
\]
which is impossible since $(2f)_{p}=(f)_{p}\leqslant (f!)_{p}<p^{f/(p-1)}\leqslant p^{f/2}$. Hence \(|K_{i}\cap L_{uv}|_{p}>1\) for each $i$, and the same argument shows that \(|L_{vw}\cap K_{i}|_{p}>1\) for each $i$, proving the first part of Claim~3. Now assume that \(q\) is not a Mersenne prime and $d$ is as above.  Suppose that \(|L_{uv}\cap K_{i}|_{d}=1\) for some $i$.  Then 
\[
    |L_{uv}|_{d}=|L_{uv}\cap K_{i}|_{d}|\cdot\pi_{3-i}(L_{uv})|_{d}=|\pi_{3-i}(L_{uv})|_{d}\leqslant|\PSL_{2}(q)|_{d}=d^{c}.
\]
Note that \(|L_{v}|_{d}=|\PSL_{2}(q)|^{2}_{d}=d^{2c}\) and this implies that the valency of \(\Gamma\) has \(d\)-part
\[
    \frac{|L_{v}|_{d}}{|L_{uv}|_{d}}\geqslant\frac{d^{2c}}{d^{c}}=d^{c}.
\]
Since \(\Gamma\) is \((G,3)\)-arc-transitive, it follows from Lemma \ref{valency} that \(|G_{v}|_{d}\geqslant d^{3c}\) and hence
\[
d^{3c}\leqslant|G_{v}|_{d}\leqslant|L_{v}|_{d}|\Out(L)|_{d}=d^{2c}(2f)_{d}=d^{2c},
\]
which is a contradiction.  Thus  \(|L_{uv}\cap K_{i}|_{d}>1\) for each $i$, and the same argument shows that \(|L_{vw}\cap K_{i}|_{d}>1\) for each $i$, proving Claim~3.

\medskip
Suppose first that $q$ is not a Mersenne prime and $q\ne 9$. Then by Claim 3 and \cite[Table 10.3]{transitive}, for each $i$, \(\Soc(K_{i})\) is contained in \(K_i\cap L_{uv}\) and \(K_i\cap L_{vw}\), contradicting Claim 2. 

Suppose next that $q=p=2^t-1$ is a Mersenne prime (where $t\geqslant3$ as $q\geqslant7$). Then  by Claim 3 and \cite[Table 10.3]{transitive} we deduce that, for each $i$, either \(K_i\cap L_{uv}\leqslant \C_{p}:\C_{\frac{p-1}{2}}\) or $K_i\cap L_{uv}$ contains $\Soc(K_i)$, and  similarl; either \(K_i\cap L_{vw}\leqslant \C_{p}:\C_{\frac{p-1}{2}}\) or $K_i\cap L_{vw}$ contains $\Soc(K_i)$.  By Claim 2, at least one of these groups is soluble, without loss of generality  \(K_i\cap L_{uv}\leqslant \C_{p}:\C_{\frac{p-1}{2}}\), and we note that this subgroup has odd order. Thus  
\[
    |L_{uv}|_{2}=|K_i\cap L_{uv}|_{2}\cdot |\pi_{3-i}(L_{uv})|_{2}=|\pi_{3-i}(L_{uv})|_{2}\leqslant|\PGO_{3}(q)|_{2}=2^{t+1}.
\]
On the other hand, \(|L_{v}|_{2}=2|\PSL_{2}(q)|^{2}=2^{2t+1}\) and this implies that the valency of \(\Gamma\) has \(2\)-part:
\[
    \frac{|L_{v}|_{2}}{|L_{uv}|_{2}}\geqslant\frac{2^{2t+1}}{2^{t+1}}=2^{t}.
\]
Since \(\Gamma\) is \((G,3)\)-arc-transitive, it follows from Lemma \ref{valency} that \(|G_{v}|_{2}\geqslant 2^{3t}\). However, this would imply that
\[
2^{3t}\leqslant|G_{v}|_{2}\leqslant|L_{v}|\cdot |\Out(L)|_{2}=2^{2t+1}(2)_{2}=2^{2t+2},
\]
which is a contradiction  since \(t\geqslant3\). 

This leaves the exceptional case where $q=9$ and here by Claim 3,  for each $i$, \(\Soc(K_i)\cap L_{uv}\) is $\A_6$ or $\A_5$, and  \(\Soc(K_i)\cap L_{vw}\)  is $\A_6$ or $\A_5$. Also, by Claim 2, at least one of these groups is $\A_5$ and we note that $L_{uv}\cong L_{vw}$. Without loss of generality we assume that \(\Soc(K_i)\cap L_{uv}\)  is $\A_5$, and we claim that \(\pi_{j}(L_{uv})\geqslant \A_{6}\), where $\{i,j\}=\{1,2\}$. Suppose that \(\pi_{j}(L_{uv})\not\geqslant \A_{6}\). Then \(|\pi_{j}(L_{uv})|_{3}\leqslant3\) and so
\[
|L_{uv}|_{3}=|L_{uv}\cap K_{i}|_{3}\cdot |\pi_{j}(L_{uv})|_{3}\leqslant3^{2}.
\]
This implies that the valency of \(\Gamma\) has \(3\)-part
\[
\frac{|L_{v}|_{3}}{|L_{uv}|_{3}}=\frac{(6!)_{3}^{2}}{3^{2}}=3^{2}.
\]
Since \(G\) acts \(3\)-arc-transitively on \(\Gamma\), it follows that \(|G_{v}|_{3}\geqslant (3^{2})^{3}=3^{6}\). 
On the other hand, \(|G_{v}|_{3}\leqslant |L_{v}|_{3}\cdot |\Out(L)|_{3}=3^{4}\cdot |\Out(L)|_{3}=3^{4}\) as \(|\Out(\PSp_{6}(9))|_{3}=1\), which is a contradiction. 
Hence \(\pi_{j}(L_{uv})\) contains \(\A_{6}\). Since \(\Soc(K_{j})\cap L_{uv}\) is a non-trivial normal subgroup of \(\pi_{j}(L_{uv})\) it follows that \(\Soc(K_{j})\cap L_{uv}=\A_{6}=\Soc(K_j)\). 
Thus $\Soc(L_{uv})\cong \A_6\times \A_5$  and contains $\Soc(K_{j})$. Since $L_{uv}\cong L_{vw}$, also $\Soc(L_{vw})\cong \A_6\times \A_5$. If   $\Soc(K_{j})$ is contained in $L_{vw}$, then it is the unique normal subgroup of both $L_{uv}$ and $L_{vw}$ isomorphic to $\A_6$ and hence \(\Soc(K_{j})^{\overline{g}}=\Soc(K_{j})\), contradicting Lemma \ref{normal}. Thus $\Soc(K_{j})\cap L_{vw} = \A_5$ and $\Soc(K_{i})$ is the unique normal subgroup of $L_{vw}$ isomorphic to $\A_6$,  and so \(\Soc(K_{j})^{\overline{g}}=\Soc(K_{i})\). However, this is a contradiction to Lemma \ref{divisibility} (iii). Thus the result is proved.
\end{proof}

We now draw together these lemmas to prove Proposition~\ref{p:c4-5}.

\medskip\noindent
\emph{Proof of Proposition~\ref{p:c4-5}.}\quad Suppose that Hypothesis~\ref{C4} holds and that $\G$ is $(L,2)$-arc-transitive. Then by Lemma~\ref{l:c4-1}, $m,k,\epsilon$ satisfy one of the lines of Table~\ref{t:c4}. By Lemma~\ref{l:c4-4}, Line 3 of Table~\ref{t:c4} does not hold. Also by Lemma~\ref{l:c4-5} and~\ref{l:c4-6}, for the other three lines of Table~\ref{t:c4}, $\G$ is not $(G,3)$-arc-transitive, but it is not proved that it cannot be $(L,2)$-arc-transitive. This completes the proof.

\subsection{\(\mathcal{C}_{5}\)-subgroups}

Here \(G_{v}\) is a maximal \(\mathcal{C}_{5}\)-subgroup of the almost simple group $G$ where \(\Soc(G)=\PSp_{2n}(q)\) with $q=p^f$ for a prime $p$, so by \cite[Proposition 4.5.4]{kleidman},  \(L_{v}=G_{v}\cap L\cong \PSp_{2n}(q^{1/r}).c\), for some prime $r$ dividing $f$, where \(c:=(2,q-1,r)\). 

\begin{proposition}\label{p:c5}
Suppose that Hypothesis \ref{general} holds and that \(G_{v}\) is a maximal \(\mathcal{C}_{5}\)-subgroup of \(G\) as above. Then $\Gamma$ is not  $(G,3)$-arc-transitive. Moreover $\Gamma$ is not $(L,2)$-arc-transitive except possibly for  \((n,p)=(2,2)\) for some $f\geqslant 6$.

\end{proposition}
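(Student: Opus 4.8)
The plan: since $(G,3)$-arc-transitivity forces $(L,2)$-arc-transitivity by Lemma~\ref{s-1}, I would first assume $\Gamma$ is $(L,2)$-arc-transitive and derive a contradiction in all cases outside the stated exception, and then separately rule out $(G,3)$-arc-transitivity in that exceptional case. Assuming $(L,2)$-arc-transitivity, Lemma~\ref{0} gives that $L_{v}=L_{uv}L_{vw}$ is a homogeneous factorisation with $L_{uv}^{\overline{g}}=L_{vw}$, and it is core-free (both factors are proper, as the valency is at least $3$, and a homogeneous factorisation of an almost simple group is automatically core-free). Here $L_{v}=G_{v}\cap L\cong\PSp_{2n}(q_{0}).c$ with $q_{0}:=q^{1/r}=p^{f_{0}}$, $f_{0}=f/r$, so for $(2n,q_{0})\neq(4,2)$ the group $L_{v}$ is almost simple with socle $\PSp_{2n}(q_{0})$.

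For $q_{0}$ odd the contradiction is almost immediate. By Lemma~\ref{sp}(i), $\ppd(p,2nf_{0})\neq\varnothing$, and, interchanging $L_{uv}$ and $L_{vw}$ if necessary, $r'\nmid|L_{vw}|$ for every $r'\in\ppd(p,2nf_{0})$. But $r'$ divides $q_{0}^{2n}-1$ and hence $|\PSp_{2n}(q_{0})|$, and $r'$ is odd, so $|L_{v}|_{r'}\geqslant r'$; thus Lemma~\ref{abp} gives $|L_{vw}|_{r'}^{2}\geqslant|L_{v}|_{r'}\geqslant r'$, i.e.\ $r'\mid|L_{vw}|$ — a contradiction. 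So $\Gamma$ is not $(L,2)$-arc-transitive, and hence (Lemma~\ref{s-1}) not $(G,3)$-arc-transitive, when $q_{0}$ is odd.

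For $q_{0}=2^{f_{0}}$ even with $2n\geqslant6$ or $q_{0}\geqslant4$, the socle $\PSp_{2n}(q_{0})$ is still simple and I would run the same argument using the classification of factorisations of almost simple groups with socle $\PSp_{2n}(q_{0})$ from \cite{LPS, LX, LWX}: for a primitive prime divisor $r'$ of $2^{2nf_{0}}-1$ — which exists unless $(n,f_{0})=(3,1)$ — one of the two factors (the one contained in a parabolic subgroup, an $\mathrm{O}_{2n}^{+}(q_{0})$, or an $\Sp_{2}(q_{0})\times\Sp_{2n-2}(q_{0})$) has trivial $r'$-part, again contradicting Lemma~\ref{abp}. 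The configurations escaping this — the $\G_{2}(q_{0})$-factorisations when $n=3$, and the socles $\PSp_{6}(2),\PSp_{8}(2),\dots$ arising when $(n,f_{0})=(3,1),(4,1),\dots$ — are excluded instead by the incompatibility of insoluble composition factors (e.g.\ $\G_{2}(q_{0})\not\cong\Omega_{6}^{\pm}(q_{0})$ and $\Omega_{2n}^{\pm}(2)\not\cong\Sp_{2n-2}(2)$), together with the relation $\ICF(L_{uv})=\ICF(L_{vw})$ coming from $L_{uv}\cong L_{vw}$. Thus $\Gamma$ is not $(L,2)$-arc-transitive here either.

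The one remaining possibility is $(2n,q_{0})=(4,2)$, i.e.\ $(n,p)=(2,2)$ with $f_{0}=1$: then $f=r$ is prime and $q=2^{f}\geqslant41$ forces $f\geqslant6$, and $L_{v}=\Sp_{4}(2)\cong\Sy_{6}$ does admit the homogeneous factorisation $\Sy_{6}=\Sy_{5}\cdot\Sy_{5}$ arising from its exceptional outer automorphism (the point-stabiliser $\Sy_{5}$ and the transitive $\Sy_{5}\cong\PGL_{2}(5)$), which cannot be excluded on group-theoretic grounds — this is the stated exception. To rule out $(G,3)$-arc-transitivity here: if $\Gamma$ were $(G,3)$-arc-transitive it would be $(L,2)$-arc-transitive with $L_{v}\cong\Sy_{6}$, so $\Sy_{6}=L_{uv}L_{vw}$ would be a homogeneous factorisation; a short check of the subgroup lattice of $\Sy_{6}$ shows every such factorisation has both factors of order $120$, so the valency is $\gamma=6$ and Lemma~\ref{valency} forces $|G_{v}|_{3}\geqslant\gamma_{3}^{3}=27$. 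But $|G_{v}|\leqslant|\Sy_{6}|\cdot|\Out(L)|=720\cdot2f$ with $f=r$ an odd prime $\geqslant7$, whence $|G_{v}|_{3}\leqslant9<27$ — a contradiction. This completes the proof.

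The hard part will be the even-characteristic case. The paper's convenient Table~\ref{PSp(2m,q)} covers only $q$ odd, so for $p=2$ one must extract from \cite{LPS, LX, LWX} the orthogonal-versus-parabolic dichotomy for factorisations of $\Sp_{2n}(q_{0})$ and dispose of the low-rank degeneracies by hand; and, crucially, one must recognise that $\Sp_{4}(2^{f})$ is genuinely special — its extra outer automorphism and the novelty maximal subgroups mentioned in the introduction yield the homogeneous factorisation $\Sy_{6}=\Sy_{5}\cdot\Sy_{5}$, blocking a purely group-theoretic conclusion — so that the residual $(G,3)$-arc-transitivity has to be killed instead by the multiplicity bound of Lemma~\ref{valency} combined with the forced primality of $f$.
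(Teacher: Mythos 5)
Your treatment of the odd-$q_0$ case is correct, and takes a genuinely different, more self-contained route than the paper: you apply Lemma~\ref{sp}(i) directly to the almost simple group $L_v$ (its homogeneous factorisation is core-free by the usual Lemma~\ref{normal} argument, since $\Soc(L_v)$ is the unique subgroup of $L_v$ isomorphic to $\PSp_{2n}(q_0)$), and Lemma~\ref{abp} immediately yields the contradiction; the paper instead funnels this case through \cite[Lemmas~2.6, 2.7, 3.3]{small} to isolate $\Soc(L_v)=\PSp_4(3)$ and only then invokes \cite[Proposition~3.3]{linear}. That half is fine.

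The even-characteristic half, however, has a genuine gap that, if uncorrected, would make the stated exception wrong. You claim that for $q_0=2^{f_0}$ with $2n\geqslant 6$ or $q_0\geqslant 4$ a primitive prime divisor $r'$ of $2^{2nf_0}-1$ divides only one factor, because the other sits in a parabolic, $\mathrm{O}_{2n}^{+}(q_0)$, or $\Sp_2(q_0)\times\Sp_{2n-2}(q_0)$; from this you deduce that the sole escape is $\Sp_4(2)\cong\Sy_6$. This fails for $n=2$, $q_0\geqslant 4$: your list of containments omits $\mathrm{O}_{2n}^{-}(q_0)$, which \emph{does} carry the primitive prime divisor, and the exceptional duality of $\Sp_4(q_0)$ ($q_0$ even) interchanges the $\mathcal{C}_3$-subgroup $\Sp_2(q_0^{2}).2$ with the $\mathcal{C}_8$-subgroup $\mathrm{O}_4^{-}(q_0)\cong\Sp_2(q_0^{2}).2$, producing a homogeneous factorisation $\Sp_4(q_0)=\mathrm{O}_4^{-}(q_0)\cdot(\Sp_2(q_0^{2}).2)$ for \emph{every} even $q_0$, with both factors of socle $\Sp_2(q_0^{2})$ and hence nontrivial $r'$-part. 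This is precisely what the paper's citation \cite[Proposition~3.3 and Table~1]{linear} records: for $p=2$ the classification forces $n=2$ and $\Soc(L_{uv})\cong\Soc(L_{vw})\cong\Sp_2(4^{f/r})'$ with $f/r\geqslant 1$ unrestricted, which is why the Proposition's exception is the whole family $(n,p)=(2,2)$, not merely the subfield $q_0=2$. As a consequence your refutation of $(G,3)$-arc-transitivity, which argues only with $L_v=\Sy_6$, valency $6$, and $f=r$ prime $\geqslant 7$, covers only $f/r=1$; you still need to dispose of $f/r\geqslant 2$. The paper does so by showing the $2$-part of the valency is at least $2^{2f/r-1}$, so that Lemma~\ref{valency} together with Corollary~\ref{n-3} excludes $r=2$ and forces $f/r\in\{1,2\}$, after which a $3$-part count completes the argument.
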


\begin{proof}
Assume that $\G$ is $(L,2)$-arc-transitive, and note that this property holds if $\G$ is $(G,3)$-arc-transitive by Lemma~\ref{s-1}. Then \(L_{v}=L_{uv}L_{vw}\) with \(L_{uv}\cong L_{vw}\).

Let $\R(G_v)$ be the soluble radical of $G_v$. Then \(G_{v}/\R(G_{v})\) is an almost simple group with socle \(\PSp_{2n}(q^{1/r})'\). For a subgroup $H\leqslant G_v$ we write \(\overline{H}:=H\R(G_{v})/\R(G_{v})\). Then by Lemma~\ref{normal}, \(G_{v}/\R(G_{v})\)  cannot be contained in both $\overline{G_{uv}}$ and  $\overline{G_{vw}}$. 

\medskip\noindent
\emph{Case: $p=2$.} \quad Here $R(L_v)=1$ and $L_v$ is an almost simple group with a homogeneous factorisation $L_v=L_{uv}L_{vw}$. It follows from \cite[Proposition 3.3 and Table 1]{linear} that $n=2$ and $\Soc(L_{uv})\cong \Soc(L_{vw})\cong \Sp_2(4^{f/r})'$, with $f/r\geqslant 1$. By Lemma~\ref{citesmall}, \(q=2^{f}\geqslant 41\)  and so  $f\geqslant 6$. 
We have \(|L_{v}|_{2}=2^{4f/r}\) and \(|L_{uv}|_{2}\leqslant2|\PSp_{2}(4^{f/r})|_{2}=2^{1+2f/r}\), and so the valency of \(\Gamma\) has \(2\)-part:
\[
    \frac{|L_{v}|_{2}}{|L_{uv}|_{2}}\geqslant\frac{2^{4f/r}}{2^{1+2f/r}}=2^{\frac{2f}{r}-1}.
\]

 Suppose for a contradiction that \(\Gamma\) is \((G,3)\)-arc-transitive.  Then it follows from Lemma \ref{valency} that \(|G_{v}|_{2}\geqslant 2^{\frac{6f}{r}-3}\). On the other hand, 
\[
    |G_{v}|_{2}\leqslant|L_v|_2|\Out(L)|_2=2^{4f/r}(2f)_{2}\leqslant 2^{\frac{4f}{r}+1}(f)_{2}.
\]
Now \(2^{\frac{4f}{r}+1}(f)_{2}\geqslant 2^{\frac{6f}{r}-3}\) and so  \((f)_{2}\geqslant2^{\frac{2f}{r}-4}\). If \(r=2\), then \((f)_{2}\geqslant 2^{f-4}\), but this is a contradiction to Corollary \ref{n-3} since \(f\geqslant6\). Therefore \(r\) is odd and \((\frac{f}{r})_{2}=(f)_{2}\geqslant 2^{\frac{2f}{r}-4}\); and this implies that $f/r=1$ or $2$. In these cases $|G_v|_3=3^2$, while $|G_{uv}|_3=3$, and the same argument leads to a contradiction as follows: the $3$-part of the valency is $|G_v|_3/|G_{uv}|_3 = 3$ so that $|G_v|_3\geq 3^3$ by Lemma~\ref{valency}.   Thus Proposition~\ref{p:c5} is proved for $p=2$.

\medskip\noindent
\emph{Case: $p$ odd.} \quad Here we note that the hypotheses of \cite[Lemma 3.3]{small} hold for $G_v$ with $M=\PSp_{2n}(q^{1/r})$ and $M$ is not a composition factor of both $G_{uv}$ and $G_{vw}$. We deduce from \cite[Lemmas 2.6, 2.7 and 3.3]{small} that \(\overline{G_{v}}=\overline{G_{uv}}\,\overline{G_{vw}}\) is a core-free factorisation and that \(M=\PSp_4(3)\cong \PSU_4(2)\) with $p^{f/r}=3$, so $f=r$, and $L_v\cong \PSp_4(3).c$ with $c=(2,r)$.  Thus \(L_{v}\) is  almost simple with socle \(\PSp_{4}(3)\), and it follows from \cite[Proposition 3.3]{linear} that \(L_{v}\) does not admit a homogeneous factorisation, contradicting the fact that \(L_{v}=L_{uv}L_{vw}\) with \(L_{uv}\cong L_{vw}\). This completes the proof of Proposition \ref{p:c5}. 
\end{proof}

\subsection{\(\mathcal{C}_{6}\)-subgroups}
Here \(G_{v}\) is a maximal \(\mathcal{C}_{6}\)-subgroup  of the almost simple group $G$ where \(\Soc(G)=\PSp_{2n}(q)\) with $q=p^f$ for a prime $p$, so by \cite[Proposition 4.6.9]{kleidman},  \(n=2^{m-1}\) and \(q=p\) is odd with \(L_{v}=\C_{2}^{2m}.K\), where \(K=\Omega_{2m}^{-}(2).c\) and $c=1$ if \(q\equiv \pm 3\pmod{8}\) and  $c=2$ if  \(q\equiv\pm1\pmod{8}\). 

\begin{proposition}\label{p:c6}
Suppose that Hypothesis \ref{general} holds. Then \(G_{v}\) is not a maximal \(\mathcal{C}_{6}\)-subgroup of $G$.
\end{proposition}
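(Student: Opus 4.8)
\noindent\emph{Proof strategy.}\quad Assume that $\Gamma$ is $(G,2)$-arc-transitive; this holds under Hypothesis~\ref{general}, and also whenever $\Gamma$ is $(G,3)$-arc-transitive, by Lemma~\ref{s-1}. Then by Hypothesis~\ref{ga} (via Lemma~\ref{0}), $G_v=G_{uv}G_{vw}$ is a homogeneous factorisation with $G_{uv}^{\overline{g}}=G_{vw}$, and the aim is to derive a contradiction. Recall $L_v=\C_2^{2m}.\Omega_{2m}^-(2).c$ with $n=2^{m-1}\geqslant 2$ (so $m\geqslant 2$), $q=p$ odd and $|\Out(L)|=2$. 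Write $N:=O_2(G_v)=\C_2^{2m}$ and $\overline{G_v}:=G_v/N$; then $\overline{G_v}$ is almost simple with socle $S:=\Omega_{2m}^-(2)$ (nonabelian simple, as $m\geqslant 2$), the module $N$ is the natural $\mathbb{F}_2\overline{G_v}$-module and is irreducible, and $P:=N.S=G_v^{(\infty)}$ is perfect and characteristic in $G_v$ with $|G_v:P|=c'\in\{1,2\}$.

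For the two small values $m\in\{2,3\}$ (that is, $n\in\{2,4\}$), the group $G_v$ is, for each $m$, one of a short explicit list of finite groups, independent of $p$ (since $c$ depends only on $q\bmod 8$ and $|\Out(L)|=2$); I would verify by MAGMA \cite{magma} (as in Subsection~\ref{r:magma}) that none of them admits a homogeneous factorisation, contradicting $G_v=G_{uv}G_{vw}$. So assume from now on that $m\geqslant 4$; by Corollary~\ref{existppd} we may fix $r\in\ppd(2,2m)$, and then $r>2m$, so $r$ divides none of $|N|$, $c$ or $|\Out(L)|$, whence $|G_v|_r=|S|_r=(2^m+1)_r>1$; moreover $r$ is coprime to $|P_1|$ for any parabolic subgroup $P_1$ of $S$, since $|P_1|$ is a product of a power of $2$ with factors $2^j-1$ for $j<2m$.

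The first step is to show that $\overline{G_{uv}}\supseteq S$ or $\overline{G_{vw}}\supseteq S$. If not, then $\overline{G_v}=\overline{G_{uv}}\,\overline{G_{vw}}$ is a core-free factorisation of an almost simple group with socle $\Omega_{2m}^-(2)$; by Lemma~\ref{maxcore} and the classification of maximal factorisations of the even-characteristic orthogonal groups \cite{LPS} (the analogue for $q=2$ of Lemma~\ref{sp}(i) and Proposition~\ref{odd}(i)), one of $\overline{G_{uv}}\cap S$, $\overline{G_{vw}}\cap S$ — say $\overline{G_{vw}}\cap S$ — lies in a subgroup of $S$ of order coprime to $r$, and then $r\nmid|\overline{G_{vw}}|$, so $r\nmid|G_{vw}|$; since $G_{uv}\cong G_{vw}$ this forces $r\nmid|G_{uv}|$ and hence $r\nmid|G_v|$, a contradiction. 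So, without loss of generality, $S\leqslant\overline{G_{uv}}$, whence $S\in\ICF(G_{uv})=\ICF(G_{vw})$, and since $S$ is the unique insoluble composition factor of $\overline{G_v}$ and occurs with multiplicity $1$, also $S\leqslant\overline{G_{vw}}$.

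Now put $A_1:=G_{uv}\cap P$ and $B_1:=G_{vw}\cap P$. The previous step gives $A_1N=B_1N=P$, so $A_1\cap N$ and $B_1\cap N$ are $\mathbb{F}_2S$-submodules of the irreducible module $N$ and hence each equals $1$ or $N$. If $A_1=P$ or $B_1=P$, then one of $G_{uv},G_{vw}$ contains $P=G_v^{(\infty)}$; using $G_{uv}^{\overline{g}}=G_{vw}$ and the fact that a perfect subgroup of index dividing $c'$ in $G_v$ must equal $P$, it follows that $\overline{g}$ normalises $P$, which if $c'=2$ contradicts Lemma~\ref{normal}, while if $c'=1$ then $P=G_v$ and the relevant factor equals $G_v$, making the valency $1$ — again impossible. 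Otherwise $A_1\cap N=B_1\cap N=1$, so $N.S$ is split and $A_1,B_1$ are complements to $N$ in $P$ with $A_1\cong B_1\cong S$, $A_1=G_{uv}^{(\infty)}\triangleleft G_{uv}$ and $B_1=G_{vw}^{(\infty)}\triangleleft G_{vw}$; since $H^1(S,N)=0$ for $m\geqslant 4$ (a standard fact about the natural module of $\Omega_{2m}^-(2)$), all complements to $N$ in $P$ form a single $N$-conjugacy class of size $|N|>1$, so $A_1$ is not normal in $G_v$ and $B_1=A_1^x$ for some $x\in N\leqslant G_v$. Applying Lemma~\ref{HK} inside $G_v$, with $T=A_1$, the element $x$, $H=G_{uv}\leqslant N_{G_v}(A_1)$ and $K=G_{vw}\leqslant N_{G_v}(B_1)=N_{G_v}(A_1^x)$, yields $G_{uv}G_{vw}\neq G_v$, contradicting Hypothesis~\ref{ga}. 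The main obstacle is this last case: one needs the (classical) vanishing of $H^1(\Omega_{2m}^-(2),N)$ so that the complements are conjugate, together with sufficiently precise information about the even-characteristic orthogonal factorisations for the $\ppd$-step, and one should be prepared to enlarge the range of $m$ treated by MAGMA in order to absorb any small-dimensional exceptions.
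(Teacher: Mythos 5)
Your proposal follows the same overall skeleton as the paper's proof — small cases $m\in\{2,3\}$ by MAGMA, then for $m\geq 4$ show that $\Omega_{2m}^-(2)$ lies in both $\overline{G_{uv}}$ and $\overline{G_{vw}}$, then reduce to $G_{uv}\cap N$ being $1$ or $N$ by irreducibility of the natural module, and derive a contradiction in each case — but you use genuinely different tools at two key junctures. First, to force the socle into both factors, you invoke a $\ppd(2,2m)$ argument together with the factorisation classification in \cite{LPS} for even-characteristic minus-type orthogonal groups; the paper instead applies \cite[Corollary~5]{transitive}, which directly converts the prime-divisor coincidence $\Pi(\overline{G_{uv}})\cup\{2\}=\Pi(\Omega_{2m}^-(2))$ into containment of the socle, and so bypasses any case-by-case check of factorisations. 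Second, in the case $G_{uv}\cap N = 1 = G_{vw}\cap N$, you conjugate $A_1=G_{uv}\cap P$ to $B_1=G_{vw}\cap P$ by an element of $N$ and invoke Lemma~\ref{HK}; the paper instead applies \cite[Corollary~1.5]{transitive} to show $\Omega_{2m}^-(2)\leqslant L_{uvw}\leqslant L_{vw}$, deduces $\Soc(L_{uv})=\Soc(L_{vw})$, and then contradicts Lemma~\ref{normal} after observing (via $G_v=G_{uv}G_{vw}$ and characteristicity of socles) that this common simple subgroup is normal in $G_v$.

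The one point I would press you on is the cohomological assertion $H^1(\Omega_{2m}^-(2),N)=0$ for $m\geqslant4$, which is essential to conclude $B_1=A_1^x$ with $x\in N\leqslant G_v$ so that Lemma~\ref{HK} applies. This is not an elementary fact, and the natural $\mathbb{F}_2$-module for orthogonal groups over $\mathbb{F}_2$ is precisely the kind of minimal module where $H^1$ can be nonzero in low rank (and the ambient $\Sp_{2m}(2)$ itself has $H^1(\Sp_{2m}(2),\mathbb{F}_2^{2m})\cong\mathbb{F}_2$). If the vanishing fails for some $m$, your complements $A_1$ and $B_1$ need not be $G_v$-conjugate and the Lemma~\ref{HK} step breaks down, with no obvious repair. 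Notice that your own construction already yields $A_1^{\overline g}=B_1$ (since $A_1=\Soc(G_{uv})$, $B_1=\Soc(G_{vw})$ once you observe $G_{uv}\cap N=1$ makes $G_{uv}$ almost simple with socle $A_1$), but that element $\overline g$ lies in $L$, not in $G_v$, so it cannot feed Lemma~\ref{HK} directly. The paper's route via \cite[Corollary~1.5]{transitive} is precisely what converts this into the needed $G_v$-internal identification $A_1=B_1$ without any cohomology input, and I would recommend replacing the $H^1$ step with that argument (or at least citing a precise reference for the vanishing over the exact range $m\geqslant4$). Similarly, while your $\ppd$-plus-factorisation approach in Step~1 should go through by inspection of \cite[Table~4]{LPS} (every maximal factor of $\Omega_{2m}^-(2)$ other than $\GU_m(2)$ has order coprime to a primitive prime divisor of $2^{2m}-1$, and $\GU_m(2)$ is never paired with another factor carrying that prime), the paper's invocation of \cite[Corollary~5]{transitive} is shorter and carries less risk of a missed exceptional factorisation.
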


\begin{proof}
Suppose that Hypothesis \ref{general} holds and that \(G_{v}\) is a maximal \(\mathcal{C}_{6}\) subgroup with $G_v$ and $K$ as above.
If \(n\leqslant 4\), then all the possibilities for \(G_{v}\) are given in \cite[Table 8.1, 8.12]{holt} and in all these cases, a computation in MAGMA \cite{magma} (see Subsection \ref{r:magma}) shows that \(G_{v}\) has no homogeneous factorisation \(G_{v}=G_{uv}G_{vw}\) such that \(|G_{v}|/|G_{uv}|\geqslant3\). Thus we may assume that \(n\geqslant 8\), so $m\geq 4$. 

Note that \(G/L\leqslant\Out(L)\cong \C_{2}\), and that $G=LG_v$ (since $L$ is vertex-transitive) so $G_v/L_v\cong G/L$. Let \(\pi\) be the natural projection from \(G_{v}\) to \(G_{v}/L_{v}\). Then \(\pi(G_{uv})\pi(G_{vw})=\pi(G_{v})\leq\C_{2}\). In particular, at least one of \(\pi(G_{uv})\) or \(\pi(G_{vw})\) equals \(\pi(G_{v})\). Without loss of generality, assume that \(\pi(G_{uv})=\pi(G_{v})\). For any subgroup \(H\leqslant G_{v}\), we denote \(H\R(G_{v})/\R(G_{v})\) by \(\overline{H}\), where $R(G_v)$ is the soluble radical of $G_v$.

Note that \(\overline{G_{v}}\) is an almost simple group with socle \(\overline{K'} \cong \Omega_{2m}^{-}(2)\). Observe that \(\Pi(K)=\Pi(G_{v})\). This together with the facts that \(\Pi(G_{v})=\Pi(G_{uv})=\Pi(G_{vw})\) and 
\[
    \Pi(G_{uv})=\Pi(\overline{G_{uv}})\cup\{2\}=\Pi(\overline{G_{vw}})\cup\{2\}
    \]
    implies that \(\Pi(\overline{G_{uv}})\cup \{2\}=\Pi(\overline{G_{vw}})\cup\{2\}=\Pi(K)\). By \cite[Corollary 5]{transitive} it follows that \(\Omega_{2m}^{-}(2)\leqslant \overline {G_{uv}}\cap \overline{G_{vw}}\). 
    Since \(G_{uv}\cong G_{vw}\), and since \(G_{uv}/L_{uv}\) and \(G_{vw}/L_{vw}\) are both soluble, we have  \(\Omega_{2m}^{-}(2)\leqslant\overline{L_{uv}}\cap \overline{L_{vw}}\), in particular, 
\[
\Omega_{2m}^{-}(2)\leqslant L_{uv}\C_{2}^{2m}/\C_{2}^{2m}.
\]
Note that \(\Soc(K)\cong \Omega_{2m}^{-}(2)\) is irreducible on \(\C_{2}^{2m}\) and therefore \(G_{uv}\cap \C_{2}^{2m}=1\) or \(\C_{2}^{2m}\). 
Suppose that \(G_{uv}\cap\C_{2}^{2m}=\C_{2}^{2m}\). Then \(G_{uv}\) contains the subgroup \(X= \C_{2}^{2m}.\Omega_{2m}^{-}(2)\) which is a characteristic subgroup of \(G_{v}\). Since \(G_{uv}^{\overline{g}}=G_{vw}\), we have \(X^{\overline{g}}\leqslant G_{vw}\). However, since \(X\) is the unique subgroup of \(G_{v}\) isomorphic to \(\C_{2}^{2m}.\Omega_{2m}^{-}(2)\), we conclude that \(X^{\overline{g}}=X\), contradicting Lemma \ref{normal}.

Therefore \(G_{uv}\cap\C_{2}^{2m}=1\). In particular, \(\Soc(L_{uv})=\Omega_{2m}^{-}(q)\). Since \(L_{uv}L_{vw}\subseteq L_{v}\), it follows that, for any odd prime \(r\), 
\[
    \frac{|L_{uv}|_{r}\cdot |L_{vw}|_{r}}{|L_{uvw}|_{r}}\leqslant |L_{v}|_{r}=|\Omega_{2m}^{-}(2)|_{r}.
\]
Then since \(|L_{uv}|_{r}=|L_{vw}|_{r}=|\Omega_{2m}^{-}(2)|_{r}\), we deduce that \(|L_{uvw}|_{r}=|\Omega_{2m}^{-}(2)|_{r}\). Thus by \cite[Corollary 1.5]{transitive} applied to $L_{uv}$, we have \(\Omega_{2m}^{-}(2)\leqslant L_{uvw}\). This implies that \(\Soc(L_{uv})\leqslant L_{uvw}\) and so \(\Soc(L_{uvw})=\Soc(L_{uv})\). Thus
\[
\Soc(L_{uv})= \Soc(L_{uvw}) \leqslant L_{uvw}  \leqslant L_{vw} 
\]
and since $L_{vw}\cong L_{uv}$ is almost simple with socle $\Omega_{2m}^{-}(q)$, it follows that $\Soc(L_{vw})= \Soc(L_{uv})$, and hence 
 \(\Soc(L_{uv})^{\overline{g}}=\Soc(L_{uv})\), contradicting Lemma \ref{normal} and proving the result.
\end{proof}

\subsection{\(\mathcal{C}_{7}\)-subgroups}

Here \(G_{v}\) is a maximal \(\mathcal{C}_{7}\)-subgroup  of the almost simple group $G$ where \(\Soc(G)=\PSp_{2n}(q)\) with $q=p^f$ for a prime $p$, so by \cite[Proposition 4.7.4]{kleidman},  \(2n=(2m)^k\), $k>1$,  \(qk\) is odd,  \((m,q)\neq(1,3)\), and \(L_{v}=L\cap G_{v}\cong \PSp_{2m}(q)^k.2^{k-1}.\Sy_{k}\) of index $2$ in  \(K =\PGSp_{2m}(q)\wr S_{k}\).  

\begin{proposition}\label{p:c7}
Suppose that Hypothesis \ref{general} holds and that  \(G_{v}\) is a maximal \(\mathcal{C}_{7}\)-subgroup. Then $\G$ is not $(G,3)$-arc-transitive.
\end{proposition}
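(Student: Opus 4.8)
The plan is to adapt, essentially verbatim, the strategy of Case~(2) in the proof of Proposition~\ref{p:notc2I}, replacing the $\mathcal{C}_2$ wreath product by the tensor-induced $\mathcal{C}_7$-subgroup. Suppose for a contradiction that $\Gamma$ is $(G,3)$-arc-transitive; then by Lemma~\ref{s-1} it is $(L,2)$-arc-transitive, so $L_v=L_{uv}L_{vw}$ is a homogeneous factorisation with $L_{uv}^{\overline g}=L_{vw}$. By \cite[Proposition 4.7.4]{kleidman}, $q$ is odd, $k\geqslant 3$ is odd, $(m,q)\neq(1,3)$, and $L_v\cong\PSp_{2m}(q)^k.2^{k-1}.\Sy_k$; in particular $T:=\PSp_{2m}(q)$ is a nonabelian simple group. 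Let $B\trianglelefteq L_v$ be the kernel of the action of $L_v$ on the $k$ tensor factors, so $B\cong\PSp_{2m}(q)^k.2^{k-1}$ and $M:=B^{(\infty)}\cong T^k$ is characteristic in $L_v$; let $\pi\colon L_v\to L_v/B\cong\Sy_k$ be the quotient map and $\varphi_i$ the projection of $M$ onto its $i$-th factor. Since $\pi(L_v)=\Sy_k=\pi(L_{uv})\pi(L_{vw})$, Lemma~\ref{ab} shows one of $\pi(L_{uv}),\pi(L_{vw})$ is transitive; say $\pi(L_{uv})$ is. Then by Lemma~\ref{wreath} (with obvious modifications for the soluble $2^{k-1}$ layer), the $\varphi_i(L_{uv}\cap M)$ are pairwise isomorphic and $\Pi(\varphi_1(L_{uv}\cap M))\supseteq\Pi(T)$.

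The first task is to show $T\leqslant\varphi_1(L_{uv}\cap M)$, by the argument of Claim~1 in Case~(2) of Proposition~\ref{p:notc2I}. Since $\varphi_1(L_{uv}\cap M)\leqslant T$ has $\Pi(\varphi_1(L_{uv}\cap M))\cup\{2\}\supseteq\Pi(T)$, the classification of \cite[Corollaries 5 and 6, Tables 10.3 and 10.7]{transitive} used in Lemma~\ref{prerequisites} shows, as $q$ is odd, that either $\varphi_1(L_{uv}\cap M)=T$, or $m=2$ with $\PSp_2(q^2)\trianglelefteq\varphi_1(L_{uv}\cap M)\leqslant\PSp_2(q^2).2$, or $\varphi_1(L_{uv}\cap M)$ is one of the small subgroups occurring in cases (iii)--(v) of the proof of Lemma~\ref{prerequisites}. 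In each non-generic case one produces a prime $r$ --- a primitive prime divisor of $p^{2mf}-1$ when one exists, and otherwise $2$, $3$, or $5$ according to the exception --- for which $|\varphi_1(L_{uv}\cap M)|_r$ is smaller than $|T|_r$ by a factor whose $k$-th power is large. Carrying the (small) factor $|\Out(L)|_r$ when $r=2$, and noting $r\nmid|\Out(L)|$ otherwise since $r>2mf\geqslant|\Out(L)|$, one compares $|L_v|_r=|T|_r^k(k!)_r$ with $|L_{uv}|_r\leqslant|\varphi_1(L_{uv}\cap M)|_r^{\,k}(k!)_r$ to bound below the $r$-part of the valency $|L_v|/|L_{uv}|$, and then Lemma~\ref{valency}, together with $|G_v|_r\leqslant|\Out(L)|_r|T|_r^k(k!)_r$ and the Legendre bound $(k!)_r<r^{k/(r-1)}$ of Lemma~\ref{sizeppd}, yields a numerical contradiction. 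Hence $T\leqslant\varphi_1(L_{uv}\cap M)$, so $\ICF(L_{uv}\cap M)=\{T\}$; set $\ell:=\m_{L_{uv}\cap M}(T)$, which divides $k$ by Scott's Lemma \cite[Theorem 4.16]{csaba}. If $\ell<k$ then $\ell\leqslant k/2$, and for a primitive prime divisor $s$ of $p^{2mf}-1$ the bound $|L_{uv}|_s\leqslant|T|_s^{k/2}(k!)_s$ together with $|L_v|_s=|T|_s^k(k!)_s$ and Lemma~\ref{valency} again gives a contradiction (the Mersenne exceptions being handled as above with $s=2$). Therefore $\ell=k$.

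The final step is the last paragraph of the proof of Case~(2) of Proposition~\ref{p:notc2I}. Since $\m_{L_{uv}\cap M}(T)=k=\m_M(T)$ and $T$ is simple, Scott's Lemma forces $L_{uv}\cap M=M$, that is $M\leqslant L_{uv}$. Put $N:=M^{\overline g}\leqslant L_{vw}$; then $N\cong T^k$ and, as $\pi(L_{uv})$ is transitive, $M$ is a minimal normal subgroup of $L_{uv}$, so $N$ is a minimal normal subgroup of $L_{vw}$. Since $L_{vw}\cap B\trianglelefteq L_{vw}$, minimality gives $N\cap B\in\{1,N\}$. If $N\cap B=1$ then $N\cong\pi(N)\leqslant\Sy_k$ would be a subgroup of $\Sy_k$ with $k$ composition factors isomorphic to $T$, so $|T|^k\mid k!$; but this is impossible, because $(|T|^k)_p=p^{fm^2k}>p^k>(k!)_p$ when $(m,f)\neq(1,1)$, while when $m=f=1$ we have $T=\PSL_2(q)$ with $|T|_2\geqslant4$ for odd $q$, so $(|T|^k)_2\geqslant2^{2k}>2^k>(k!)_2$. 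Hence $N\leqslant L_{vw}\cap B$, and as $N$ is perfect, $N\leqslant B^{(\infty)}=M$; since $|N|=|M|$ this gives $M^{\overline g}=M$, contradicting Lemma~\ref{normal}.

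The main obstacle, as in the $\mathcal{C}_2$ Type~(I) analysis, is the case-checking in the second paragraph: one must verify, for every non-generic possibility for $\varphi_1(L_{uv}\cap M)$ --- including those with $T\cong\A_5$ or $\A_6$, and the Mersenne-prime and small-field exceptions where $\ppd(p,2mf)=\varnothing$ --- that some prime $r$ makes the $r$-part of the valency too large to be compatible with Lemma~\ref{valency} and the order of $\Out(L)$. A secondary point is to check that the central-product structure of the $\mathcal{C}_7$-subgroup causes no difficulty; this is resolved by working throughout inside $L$, where $L_v\cong\PSp_{2m}(q)^k.2^{k-1}.\Sy_k$ and the soluble $2^{k-1}$ layer is harmless.
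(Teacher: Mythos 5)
Your proposal follows essentially the same route as the paper's own proof, which itself adapts Case~(2) of the proof of Proposition~\ref{p:notc2I}: one of $\pi(L_{uv}),\pi(L_{vw})$ is transitive, the diagonal projection is classified via \cite{transitive}, Scott's Lemma forces $\ell=k$ so that $T^k\leqslant L_{uv}$, and Lemma~\ref{normal} then gives the contradiction. Your closing argument (that $N\cap B\in\{1,N\}$, that $N\cap B=1$ is impossible because $|T|^k\nmid k!$, and that otherwise $N$ is perfect so $N\leqslant B^{(\infty)}=M$, forcing $M^{\overline g}=M$) is correct.

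The genuine gap is in the prime you propose for ruling out the non-generic diagonals. You suggest taking $r\in\ppd(p,2mf)$, but for the case $m=2$ with $\PSp_{2}(q^{2})\trianglelefteq\varphi_{1}(L_{uv}\cap M)\leqslant\PSp_{2}(q^{2}).2$, every $r\in\ppd(p,4f)$ satisfies $|\PSp_{2}(q^{2})|_{r}=(q^{4}-1)_{r}=|\PSp_{4}(q)|_{r}$, so there is no saving at all and the valency argument collapses. The paper instead uses the defining characteristic $p$ in all of its exceptional cases (including $q=9$, $q=7$ and $q=3$): one checks $|\varphi_{1}(L_{uv}\cap M)|_{p}\leqslant p^{m^{2}f/2}$ against $|T|_{p}=p^{m^{2}f}$, which gives the required gap; the prime $2$ is reserved for the Mersenne case only. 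Using $p$ also sidesteps a secondary weakness in your $\ell<k$ step: the $2^{k-1}$ layer of $L_{v}$ contributes to the $2$-part of $L_{uv}\cap M$, so $s=2$ does not give the clean bound $|L_{uv}\cap M|_{2}\leqslant|T|_{2}^{\ell}(k!)_{2}$ you rely on, whereas for odd $p$ the $\C_{2}$ composition factors are invisible and, combined with the sharper $\ell\leqslant k/3$ (available since $k$ is odd), the Legendre inequality closes directly from $|L_{uv}|_{p}^{2}\geqslant|L_{v}|_{p}$ without invoking Lemma~\ref{valency}. Finally, your citation to cases (iii)--(v) of Lemma~\ref{prerequisites} omits $(m,q)=(1,9)$ with $\varphi_{1}(L_{uv}\cap M)$ lying over $\A_{5}$, which is excluded from that lemma's hypotheses but does arise for $\mathcal{C}_{7}$-subgroups; the same $p$-part bound disposes of it.
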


\begin{proof}
Suppose that Hypothesis \ref{general} holds, that \(G_{v}\) is a maximal \(\mathcal{C}_{7}\) subgroup with $G_v$ and $K$ as above, and that $\G$ is $(G,3)$-arc-transitive. Then by Lemma \ref{s-1}, \(\Gamma\) is \((L,2)\)-arc-transitive, and by Lemma \ref{0},  \(L_{v}=L_{uv}L_{vw}\) where \(L_{uv}^{\overline{g}}=L_{vw}\). 

Let \(M:=\prod_{i=1}^{k}M_{i}\) denote the base group of \(K\), and let \(\pi\) and \(\varphi_{i}\) denote the projections from \(K\) to \(K/M\), and from \(M\) to \(M_{i}\) for \(1\leqslant i\leqslant k\), respectively. Since \(|K:L_{v}|=2\) we have \(|\pi(K):\pi(L_{v})|\leq 2\)  and so \(\A_{k}\leqslant\pi(L_{v})=\pi(L_{uv})\pi(L_{vw})\) (recall that $k$ is odd). By \cite[Lemma 2.5]{linear} at least one of \(\pi(L_{uv})\) or \(\pi(L_{vw})\) is a transitive subgroup of \(\Sy_{k}\), and without loss of generality, we assume that \(\pi(L_{uv})\) is transitive. Then by \cite[Lemma 3.5]{linear},  \(\varphi_{1}(L_{uv}\cap M)\cong\cdots\cong\varphi_{k}(L_{uv}\cap M)\) and \(\Pi(M_{1})\subseteq\Pi(\varphi_{1}(L_{uv}\cap M))\). Let \(T_{i}:=\varphi_{i}(L_{uv}\cap M)\cap \Soc(M_{i})\), for \(i=1,\ldots, k\). Then since 
\[
\Pi(\varphi_{1}(L_{uv}\cap M))\subseteq\Pi(T_{1})\cup\{2\}
\] 
and \(\Pi(M_{1})=\Pi(\PSp_{2m}(q))\) we have  
\[
\Pi(T_{1})\cup\{2\}=\Pi(\PSp_{2m}(q)).
\] 
Therefore we deduce from \cite[Corollary 5, Table 10.3, Table 10.7]{transitive} that one of the following holds (recall that $q$ is odd).
\begin{description}
\item[(i)] \(m=1, q=2^{a}-1\) is a Mersenne prime such that \(a\geqslant3\), \(T_{1}\leqslant q:((q-1)/2)\);
\item[(ii)] \(m=1,q=9\) and \(T_{1}=\A_{5}\);
\item[(iii)] \(m=2, q=7\) and \(T_{1}= \A_{7}\);
\item[(iv)] \(m=2, q=3\) and \(T_{1}= \Sy_5\) or \(2^4.\A_5\);
\item[(v)] \(m=2\) and \(\PSp_{2}(q^2)\triangleleft T_{1}\leqslant\PSp_{2}(q^{2}).2\).
\item[(vi)] \(\PSp_{2m}(q)\leqslant \varphi_{1}(L_{uv}\cap M)\)
\end{description}
Note that case (iv) arises from the isomorphism \(\PSp_{4}(3)\cong\PSU_{4}(2)\). 

\medskip



Assume that (i) holds. Then \(|\varphi_{1}(L_{uv}\cap M)|_{2}\leqslant|T_{1}|_{2}\cdot2=|q(\frac{q-1}{2})|_{2}\cdot2=2\) and therefore \(|L_{uv}|_{2}^{2}\leqslant|\varphi_{1}(L_{uv}\cap M)|^{2k}_{2}(k!)_{2}^{2}\leqslant 2^{2k}(k!)_{2}^{2}\). On the other hand, since \(a\geqslant3\) we have \(|\PSp_{2}(q)|_{2}=2^{a}\geqslant 2^{3}\). Hence
\[
|L_{v}|_{2}=2^{-1}|K|_{2}=2^{-1}|\PSp_{2m}(q)|^{k}_{2}2^{k}(k!)_{2} = 2^{(a+1)k-1}(k!)_{2}\geq 2^{4k-1}(k!)_{2}.
\]
Note that \(|L_{v}|_{2}\leqslant|L_{uv}|_{2}^2\).  This implies that \((k!)_{2}\geqslant 2^{2k-1}>2^{k}\), which is a contradiction.

\medskip

Next assume that one of the cases (ii), (iii), (iv) or (v) holds. Now \(|L_{v}|_{p}=|\PSp_{2m}(q)|_{p}^{k}(k!)_{p}=(q^{m^{2}k})_{p}(k!)_{p}=p^{m^{2}fk}(k!)_{p}\), and a careful check shows that in each case \(|\varphi_{1}(L_{uv}\cap M)|_{p}\leqslant p^{m^{2}f/2}\). Therefore
\[
|L_{uv}|_{p}\leq  |\varphi_{1}(L_{uv}\cap M)|_{p}^{k}\cdot (k!)_{p}\leq p^{m^{2}fk/2}\cdot (k!)_{p}.
\]
Hence, the valency of \(\Gamma\) has \(p\)-part 
\[
\frac{|L_{v}|_{p}}{|L_{uv}|_{p}}\geqslant\frac{p^{m^{2}fk}(k!)_{p}}{p^{m^{2}fk/2}(k!)_{p}}=p^{m^{2}fk/2}.
\]
Since \(\Gamma\) is \((G,3)\)-arc-transitive, it follows from Lemma~\ref{valency} that \(|G_{v}|\) is divisible by \(p^{3m^{2}fk/2}\). On the other hand, \(|\Out(L)|_{p}=(f)_{p}\), and we note that   \(f\leqslant m^{2}fk/3\) since  \(k\geqslant3\). Thus
\[
p^{3m^{2}fk/2}\leqslant|G_{v}|_{p}=|L_{v}|_{p}\cdot |\Out(L)|_{p}\leqslant p^{m^{2}fk}(k!)_{p}(f)_{p}<p^{m^{2}fk}(k!)_{p}p^{f}<p^{4m^{2}fk/3}(k!)_{p},
\]
which implies that \((k!)_{p}>p^{2m^{2}fk/3}\geqslant p^{2k/3}\), and this is a contradiction to Lemma \ref{sizeppd}.




\medskip


Thus case (vi) holds and \(\PSp_{2m}(q)\leqslant\varphi_{1}(L_{uv}\cap M)\). Let  \(\ell:=\m_{L_{uv}\cap M}(\PSp_{2m}(q))\). Then since \(\pi(L_{uv})\) is transitive, it follows from Scott's Lemma \cite[Theorem 4.16]{csaba} that \(\ell\) divides \(k\). Since \(k\) is odd either \(\ell\leqslant\frac{k}{3}\) or \(\ell=k\). 
Suppose that \(\ell\leqslant\frac{k}{3}\). Since the set of  composition factors of \(L_{uv}\cap M\) is equal to that of \(\varphi_{1}(L_{uv}\cap M)\) , and since the set of composition factors of \(\varphi_{1}(L_{uv}\cap M)\) is either \(\{\PSp_{2m}(q)\}\) or \(\{\PSp_{2m}(q), \C_{2}\}\), we deduce that 
\[
|L_{uv}|_{p}^{2}\leqslant |\PSp_{2m}(q)|_{p}^{2\ell}(k!)_{p}^{2}\leqslant p^{2m^{2}fk/3}(k!)_{p}^{2}.
\] 
Since \(|L_{uv}|_{p}^{2}\geqslant|L_{v}|_{p}\) and \(|L_{v}|_{p}=p^{m^{2}fk}(k!)_{p}\), it follows that 
\((k!)_{p} \geqslant p^{m^{2}fk/3}\). This together with the fact that \((k!)_{p}<p^{k/(p-1)}\) implies that \((p-1)m^{2}f<3\). However, this is a contradiction since \((m,p,f)\neq(1,3,1)\). Thus \(\ell=k\) and \(\Soc(M)=\PSp_{2m}(q)^{k}\leqslant L_{uv}\). Moreover, since \(\pi(L_{uv})\) is transitive, \(\Soc(M)\) is a minimal normal subgroup of \(L_{uv}\). As \(L_{uv}^{\overline{g}}=L_{vw}\), it follows that \(\Soc(M)^{\overline{g}}\) is a minimal normal subgroup of \(L_{vw}\) isomorphic \(\PSp_{2m}(q)^{k}\). Then since \(L_{vw}\cap M\) is normal in \(L_{vw}\), we conclude that either \((L_{vw}\cap M)\cap\Soc(M)^{\overline{g}}=M\cap \Soc(M)^{\overline{g}}\) is trivial, or \(\Soc(M)^{\overline{g}}\leqslant \Soc(M)^{\overline{g}}\cap M\). If \(M\cap\Soc(M)^{\overline{g}}=1\), then \(\Soc(M)^{\overline{g}}\cong\PSp_{2m}(q)^{k}\lesssim S_{k}\), which is impossible. Thus \(\PSp_{2m}(q)^{k}\cong\Soc(M)^{\overline{g}}\leqslant M\), but \(\Soc(M)\) is the unique subgroup in \(M\) isomorphic to \(\PSp_{2m}(q)^{k}\), so we have \(\Soc(M)^{\overline{g}}=\Soc(M)\), which contradicts Lemma \ref{normal} and proves the result.
\end{proof}

\subsection{\(\mathcal{C}_{8}\)-subgroups}

Here \(G_{v}\) is a maximal \(\mathcal{C}_{8}\)-subgroup  of the almost simple group $G$ where \(\Soc(G)=\PSp_{2n}(q)\) with $q=p^f$ for a prime $p$, so by \cite[Proposition 4.8.6 and Table 4.8.A]{kleidman}, $p=2$, and $\Soc(G_v)=\Omega_{2n}^\pm(q)$. We show that this case does not arise, using a result of Inglis.


\begin{proposition}\label{p:c8}
Suppose that Hypothesis \ref{general} holds. Then \(G_{v}\) is not a maximal \(\mathcal{C}_{8}\)-subgroup.
\end{proposition}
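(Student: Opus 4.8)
\textbf{Proof proposal for Proposition~\ref{p:c8}.}

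The plan is to follow the same pattern as the earlier $\mathcal{C}_i$ propositions: assume for a contradiction that $\Gamma$ is $(G,2)$-arc-transitive (which is implied if $\Gamma$ is $(G,3)$-arc-transitive, by Lemma~\ref{s-1}), so that Hypothesis~\ref{ga} gives a homogeneous factorisation $G_v = G_{uv}G_{vw}$ with $G_{uv}^{\overline{g}}=G_{vw}$. Here $p=2$, $q=2^f$, and $G_v$ is an almost simple group with socle $\Omega_{2n}^\epsilon(q)$ for $\epsilon\in\{+,-\}$, by \cite[Proposition 4.8.6 and Table 4.8.A]{kleidman}. Since $\Soc(G_v)$ is a nonabelian simple group and $G_v^{(\infty)}$ is quasisimple, \cite[Lemma 3.3]{small} applies: either $\Soc(G_v)$ is a composition factor of both $G_{uv}$ and $G_{vw}$, in which case (as in the proof of Proposition~\ref{C21}) both factors contain $\Soc(G_v)$ as a normal subgroup and $\overline{g}$ normalises it, contradicting Lemma~\ref{normal}; or $\Soc(G_v)$ lies in one of the families $\mathcal{T}_1$, $\mathcal{T}_2$ of \cite[(2.1)]{small}, and $\overline{G_v}=\overline{G_{uv}}\,\overline{G_{vw}}$ (mod soluble radical) is a core-free factorisation whose factors are described in \cite[Lemmas 2.6, 2.7]{small}.

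I would then invoke the classification of core-free factorisations of almost simple orthogonal groups: for $\epsilon=+$ and $2n=8$ this is Lemma~\ref{8 +}, for $\epsilon=-$ (any $n$) and for $\epsilon=+$ with $2n\ne 8$ this is Proposition~\ref{odd}, except that those results are stated for $q$ odd. However, the result of Inglis referred to in the text (presumably Inglis's theorem that $\Omega_{2n}^\epsilon(q)$ with $q$ even has no core-free factorisation with both factors insoluble, or more precisely that the only factorisations are the geometric ones that do not produce a homogeneous factorisation) is the key input for $q$ even. The structure of the argument is: since $\Gamma$ has valency at least $3$, the factors $G_{uv},G_{vw}$ are proper in $G_v$; by Inglis's result the only core-free factorisations $\overline{G_v}=\overline{G_{uv}}\,\overline{G_{vw}}$ of an almost simple group with socle $\Omega_{2n}^\epsilon(2^f)$ have a factor contained in a parabolic $P_1$ (the stabiliser of a nonsingular or singular $1$-space) and the complementary factor an $\Sp_{2n-2}(q)$-type or $O_{2n-1}$-type subgroup — and these two factors are never isomorphic (one is soluble-by-$\PSp_{2n-2}$ while the other is a point stabiliser of much larger order, or they have different composition factors). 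So the homogeneity $G_{uv}\cong G_{vw}$ fails, a contradiction. For the handful of small exceptional cases ($2n=8$ triality, small $q$) I would either read them off Lemma~\ref{8 +} / Proposition~\ref{odd} after noting the exceptional isomorphisms $\Omega_4^-(q)\cong\PSL_2(q^2)$, $\Omega_6^-(q)\cong\PSU_4(q)$, $\Omega_6^+(q)\cong\PSL_4(q)$ (but these only occur for specified small $q$), or dispatch them with MAGMA as in Subsection~\ref{r:magma}, checking that $G_v$ admits no homogeneous factorisation with index-quotient at least $3$.

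The main obstacle I anticipate is the $q$ even case not being covered by Proposition~\ref{odd} or Lemma~\ref{8 +} as stated: I would need to cite Inglis's classification of factorisations of $\Omega_{2n}^\pm(2^f)$ (or the relevant tables in \cite{LPS}, \cite{LX}, \cite{LWX} specialised to characteristic $2$) and verify that in every such factorisation $G_v=AB$, the set $\ICF(A)\cap\ICF(B)$ is empty or the two factors have provably non-isomorphic structure. A secondary subtlety is confirming, via Proposition~\ref{citesmall}, that the small-dimensional cases $2n=4,6,8$ with small $q$ are either excluded outright or are finite in number and hence amenable to direct computation. Once those inputs are in place the argument is a routine order-of-magnitude / composition-factor comparison together with an application of Lemma~\ref{normal} to rule out the ``both factors contain the socle'' branch, exactly mirroring the earlier propositions in this section.
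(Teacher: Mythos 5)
Your proposal misidentifies both the content of Inglis's theorem and, as a consequence, the entire strategy of the paper's proof. Inglis \cite[Theorem 1]{inglis} is not a classification of factorisations of even-characteristic orthogonal groups; it is the statement that for the embedding $\mathrm{O}^{\pm}_{2n}(2^f)\leqslant\Sp_{2n}(2^f)$, every orbital of the symplectic group (and hence of any overgroup $G$) acting on the cosets of the orthogonal subgroup is self-paired. That single fact already finishes the proof at the $s=0$ level: a self-paired orbital corresponds to a \emph{graph}, not a digraph, because the relation $\to$ it defines is symmetric rather than antisymmetric. So no $G$-vertex-primitive digraph with $G_v$ a $\mathcal{C}_8$-subgroup exists at all, and there is nothing to factorise. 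The paper's proof is two lines for exactly this reason.

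Your proposed route — deduce a homogeneous factorisation of $G_v$ from $2$-arc-transitivity and then rule it out using classifications of factorisations of orthogonal groups — has a genuine gap that you yourself flag: Proposition~\ref{odd} and Lemma~\ref{8 +} are proved in the paper only for $q$ odd, and the $\mathcal{C}_8$ case occurs only for $q$ even. Your plan to fill this by citing Inglis does not work because Inglis's paper contains no such classification; you would instead have to extract the $q$-even factorisations of $\Omega^{\pm}_{2n}(q)$ from \cite{LPS,LX,LWX} and carry out a composition-factor analysis comparable in length to Lemmas~\ref{odd} and~\ref{8 +}. You should also note that $\mathrm{P\Omega}^{\pm}_{2n}(2^f)$ does admit nontrivial maximal factorisations for $q$ even (e.g.\ the cases arising from $\mathrm{O}^{\pm}_{2n}(q)\geqslant\Sp_{2n-2}(q)$ and parabolics), so the check is not vacuous. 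In short: the idea that the earlier $\mathcal{C}_i$ pattern applies here is not wrong in principle, but it is far harder than necessary and your version of it rests on a misreading of the reference; the self-pairedness argument supersedes it entirely.
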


\begin{proof}
If \(G_{v}\) is a maximal \(\mathcal{C}_{8}\) subgroup, then $q$ is even and by \cite[Theorem 1]{inglis},  every orbital for the action of \(G\) on the set of right cosets of \(G_v\), and hence for the $G$-action on $\Omega$, is self-paired.
\end{proof}

\subsection{\(\mathcal{C}_{9}\)-subgroups}

Here \(G_{v}\) is a maximal \(\mathcal{C}_{9}\)-subgroup  of the almost simple group $G$ where \(\Soc(G)=\PSp_{2n}(q)\) with $q=p^f$ for a prime $p$, If $n\ne 2$ then \(G\leqslant\PGammaSp_{4}(q)\) and this family of subgroups is called $\mathcal{S}$ in \cite{kleidman}, and is defined in \cite[Definition, p. 3]{kleidman}. In particular $G_v$ is almost simple. Moreover if $n=2$, then it follows from \cite[Theorem 3.7, 3.8]{Wilson}  that either \((n,p)=(2,2)\) and \(G\nleqslant\PGammaSp_{4}(q)\), or again \(G_{v}\) is almost simple.

\begin{proposition}\label{p:c9}
    Suppose that Hypothesis \ref{general} holds and that \(G_{v}\) is a maximal \(\mathcal{C}_{9}\)-subgroup. Then $\Gamma$ is not $(G,3)$-arc-transitive. 
\end{proposition}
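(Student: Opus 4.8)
Suppose for a contradiction that $\Gamma$ is $(G,3)$-arc-transitive. Then $\Gamma$ is $(G,2)$-arc-transitive, so by Lemma~\ref{0} there is a homogeneous factorisation $G_{v}=G_{uv}G_{vw}$ with $G_{uv}^{\overline{g}}=G_{vw}$. By the discussion preceding the proposition, $G_{v}$ is almost simple with socle $S$, so its soluble radical $\R(G_{v})=1$; since $S$ is the unique minimal normal subgroup of $G_{v}$, Lemma~\ref{normal} shows that neither $G_{uv}$ nor $G_{vw}$ contains $S$, and hence $G_{v}=G_{uv}G_{vw}$ is a core-free factorisation. The hypotheses of \cite[Lemma 3.3]{small} therefore hold, and part (c) of that lemma (which asserts that both $G_{uv}$ and $G_{vw}$ have $S$ as a composition factor) is ruled out by Lemma~\ref{normal}; so $S$ lies in one of the families $\mathcal{T}_{1}$ or $\mathcal{T}_{2}$ of \cite[(2.1)]{small}, and by \cite[Lemmas 2.6 and 2.7]{small}, interchanging $G_{uv}$ and $G_{vw}$ if necessary, the triple $(S,\,G_{uv}\cap S,\,G_{vw}\cap S)$ lies in the explicit short list recorded there. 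Writing $d=(2,q_{0}-1)$, the possibilities (up to the isomorphisms $\PSL_{2}(9)\cong\A_{6}$ and $\PSp_{4}(3)\cong\PSU_{4}(2)$) are: (a) $S=\PSL_{2}(q_{0})$ with $q_{0}=p_{0}^{f_{0}}$, where $G_{uv}\cap S\leqslant\D_{2(q_{0}+1)/d}$ and $G_{vw}\cap S\leqslant \E_{q_{0}}:\C_{(q_{0}-1)/d}$; (b) $S=\PSp_{4}(q_{0})$ with $q_{0}$ even and $\PSp_{2}(q_{0}^{2})\leqslant G_{uv}\cap S,\ G_{vw}\cap S\leqslant\PSp_{2}(q_{0}^{2}).2$; or (c) $S$ is one of finitely many sporadic socles — among them $\A_{6}$, $M_{11}$, $M_{12}$, $\POmega_{8}^{+}(q')$, $\PSU_{3}(8)$, $\PSp_{4}(3)$ — each carrying a specific homogeneous factorisation listed in \cite{small}.

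In case (a), I would take a primitive prime divisor $r\in\ppd(p_{0},2f_{0})$, which exists by Corollary~\ref{existppd} unless $q_{0}=8$, or $q_{0}=p_{0}$ with $q_{0}+1$ a power of $2$. Then $r$ divides $q_{0}+1$ but not $q_{0}(q_{0}-1)$, so $r$ divides $|S|$, and hence $|G_{v}|$, but does not divide $|\E_{q_{0}}:\C_{(q_{0}-1)/d}|=q_{0}(q_{0}-1)/d$; moreover $r>2f_{0}\geqslant|\Out(S)|$, so $r\nmid|\Out(S)|$. Since $r$ divides $|G_{v}|=|G_{uv}||G_{vw}|/|G_{uv}\cap G_{vw}|$, it divides $|G_{uv}|$ or $|G_{vw}|$, and because $G_{uv}\cong G_{vw}$ it divides both; hence $r$ divides $|G_{vw}\cap S|$, contradicting $G_{vw}\cap S\leqslant\E_{q_{0}}:\C_{(q_{0}-1)/d}$. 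For the bounded exceptions ($q_{0}=8$, or $q_{0}=p_{0}$ a Mersenne prime) one argues with the $p_{0}$-part instead: there $|G_{uv}\cap S|$ is coprime to $p_{0}$, so $|G_{uv}|_{p_{0}}=|G_{vw}|_{p_{0}}\leqslant|\Out(S)|_{p_{0}}$ and thus $|G_{v}|_{p_{0}}\leqslant|\Out(S)|_{p_{0}}^{2}$, which together with $|G_{v}|_{p_{0}}\geqslant|S|_{p_{0}}=q_{0}$ and, where needed, the inequality $\gamma_{p_{0}}^{3}\leqslant|G_{v}|_{p_{0}}$ of Lemma~\ref{valency} (with $\gamma=|G_{v}:G_{uv}|$ the valency) gives a contradiction.

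In case (b), since $|G_{uv}\cap S|_{2}\leqslant|\PSp_{2}(q_{0}^{2}).2|_{2}=2q_{0}^{2}$ while $|S|_{2}=q_{0}^{4}$, and $|\Out(S)|_{2}$ is a small power of $2$, the valency satisfies $\gamma_{2}=|G_{v}|_{2}/|G_{uv}|_{2}\geqslant q_{0}^{2}/c$ for an absolute constant $c$ while $|G_{v}|_{2}\leqslant c'q_{0}^{4}$; then the inequality $\gamma_{2}^{3}\leqslant|G_{v}|_{2}$ of Lemma~\ref{valency} forces $q_{0}$ into a finite set, and these values are eliminated by matching $S$ against the list of its irreducible symplectic representations (to determine the pairs $(2n,q)$ for which $S$ is a $\mathcal{C}_{9}$-subgroup of $\PSp_{2n}(q)$), using Proposition~\ref{citesmall} to exclude the small $q$, and finishing with a MAGMA computation as in Subsection~\ref{r:magma} where necessary. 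Case (c) is a finite check of the same kind: for each sporadic socle one lists the few pairs $(2n,q)$ for which it arises as a $\mathcal{C}_{9}$-subgroup of $\PSp_{2n}(q)$, discards the small ones via Proposition~\ref{citesmall}, and for the remainder verifies in MAGMA that the corresponding almost simple group $G_{v}$ admits no homogeneous factorisation $G_{v}=G_{uv}G_{vw}$ with $|G_{v}:G_{uv}|\geqslant3$ and $G_{uv}$, $G_{vw}$ conjugate in $G$. I expect the main difficulty to be precisely this bookkeeping — identifying, for each socle $S$ surviving the generic prime-counting arguments, exactly which symplectic groups admit $S$ as a $\mathcal{C}_{9}$-subgroup and then closing out those finitely many configurations; the generic arguments themselves are routine once \cite{small} has pinned down the factorisation of $G_{v}$.
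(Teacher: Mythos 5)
The paper's own proof is very short: it first treats the special case $(n,p)=(2,2)$ with $G\not\leqslant\PGammaSp_4(q)$, where $L_v=G_v^{(\infty)}=\Sz(q)$, by applying \cite[Lemma 3.3]{small} to $G_v/\R(G_v)$ and noting that no core-free factorisation of an almost simple group with socle $\Sz(q)$ appears in \cite[Lemmas 2.6, 2.7]{small}; in all remaining cases $G_v$ is almost simple and the paper simply cites \cite[Corollary 3.4]{linear}, which is precisely the general statement that an almost simple vertex stabiliser forces $s\leqslant 2$. Your proposal, by contrast, discards the black-box citation and tries to re-derive that corollary from \cite[Lemma 3.3]{small} together with ppd-counting, Lemma~\ref{valency}, and MAGMA checks. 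That is a genuinely different and self-contained route, and if carried out completely it would subsume \cite[Corollary 3.4]{linear} — but it is also substantially more work than the paper needs, since the corollary is already proved in \cite{linear}.

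The concrete gap is your very first structural claim. You write that ``by the discussion preceding the proposition, $G_v$ is almost simple with socle $S$, so its soluble radical $\R(G_v)=1$.'' The discussion actually says the opposite: $G_v$ is almost simple \emph{except} in the case $(n,p)=(2,2)$, $q$ even, $G\not\leqslant\PGammaSp_4(q)$, where $L_v=\Sz(q)$ but $G_v$ itself need not embed in $\Aut(\Sz(q))$ (for instance $G_v/L_v$ can involve the graph automorphism of $\PSp_4(q)$, which does not restrict to an automorphism of $\Sz(q)$, so $C_{G_v}(L_v)$ and hence $\R(G_v)$ can be nontrivial). Your case analysis (a)/(b)/(c) also simply never produces a socle $\Sz(q)$, so the Suzuki possibility silently falls through the argument rather than being eliminated. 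To repair this you would need to run the core-free factorisation argument on $\overline{G_v}:=G_v/\R(G_v)$ rather than on $G_v$, as the paper does, and explicitly observe that $\Sz(q)$ does not occur in \cite[Lemmas 2.6, 2.7]{small}. Beyond that, the generic steps in case (a) are fine in outline, but cases (b) and (c) are not actually arguments yet — identifying which pairs $(2n,q)$ admit a given $S$ as a $\mathcal{C}_9$-socle is nontrivial bookkeeping, and you would also need to verify the hypotheses of \cite[Lemma 3.3]{small} (which exclude a list of exceptional socles) before invoking it, rather than deferring all of this to ``a MAGMA computation where necessary.''
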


\begin{proof}
Suppose that Hypothesis \ref{general} holds and \(G_{v}\) is a \(\mathcal{C}_{9}\)-subgroup. If \((n,p)=(2,2)\) and \(G\nleqslant\PGammaSp_{4}(q)\), then by \cite[Table 8.14]{holt} we have \(L_{v}=G_{v}^{(\infty)}=\Sz(q)\) and \(q\geqslant 8\). Let \(R\) be the soluble radical of \(G_{v}\) and \(\overline{S}:=SR/R\) for any \(S\leqslant R\). Then by \cite[Lemma 3.3]{small} applied to $G$, the subgroups \(\overline{G_{uv}}\) and \(\overline{G_{vw}}\) are core-free in \(\overline{G_{v}}\), and \(\overline{G_{v}}=\overline{G_{uv}}\,\overline{G_{vw}}\) satisfies either \cite[Lemma 2.6]{small} or \cite[Lemma 2.7]{small}. However, there is no such core-free factorisation given in \cite[Lemma 2.6 and Lemma 2.7]{small}, which is a contradiction. 
Thus \(G_{v}\) is almost simple, and in this case it follows from \cite[Corollary 3.4]{linear} that $\Gamma$ is not $(G,3)$-arc-transitive.
\end{proof}

\subsection{\(\mathcal{A}_{1}\): \(\{U, W\}\) Decomposition Stabilisers}

By \cite[Section 14]{asch}, if \(G_{v}\) is a maximal subgroup of  the almost simple group $G$  with \(\Soc(G)=\PSp_{2n}(q)\) (where $q=p^f$ for a prime $p$), and if $G_v$ does not lie in  \(\mathcal{C}_{i}\) for any $i\leq 9$, then \(n=2\), \(q\) is even, $G$ contains a graph automorphism, and \(G_{v}\) lies in one of  three families \(\mathcal{A}_{1}, \mathcal{A}_{2}\),  \(\mathcal{A}_{3}\) of maximal novelty subgroups of \(G\), see also \cite[Section 7.2.2]{holt}. We treat these cases in this and the following two subsections. In this subsection we deal with $G_v$ in the family $\mathcal{A}_{1}$ which we describe in the next paragraph.

\medskip

Here \(V=( \oplus_{i=1}^{2}\langle e_{i}, f_{i}\rangle, B)\) is a symplectic space over \(\mathbb{F}_{q}\) with \(q\)  a power of \(2\), and \(B(e_{i}, e_{j})=B(f_{i}, f_{j})=0\) and \(B(e_{i}, f_{j})=\delta_{ij}\). A totally isotropic subspace \(V'\) is called a \emph{totally isotropic point} if \(\dim(V')=1\), and a \emph{totally isotropic line} if \(\dim (V')=2\). If \(U\), \(W\) are a totally isotropic point and line, respectively, such that \(U< W\), then the stabiliser \(H\) in the group \(\G=\Aut(\Sp(V))\) of the unordered pair \(\{U, W\}\) is maximal in $G$ and is called a maximal  $\mathcal{A}_{1}$-subgroup. 

\begin{proposition}\label{p:a1}
    Suppose that Hypothesis \ref{general} holds. and that \(\G=\Aut(\Sp(V))\), with $(n,p)=(2,2)$. Then \(G_{v}\) is not a maximal \(\mathcal{A}_{1}\)-subgroup of $G$.
\end{proposition}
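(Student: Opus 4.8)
The plan is to show that, when $G_v$ is a maximal $\mathcal{A}_1$-subgroup, every orbital of $G$ on the vertex set of $\Gamma$ is self-paired, so that $\Gamma$ cannot arise as an anti-symmetric coset digraph $\Cos(G,G_v,g)$ — contradicting the fact that $\Gamma$ is a digraph of valency at least $3$. First I would pin down $G_v$. Recall from \cite[Section 14]{asch} and \cite[Section 7.2.2]{holt} that, since $G_v$ is a maximal $\mathcal{A}_1$-subgroup, $q$ is even, $G\not\leqslant\PGammaSp_4(q)$ (so $G$ contains an exceptional graph automorphism of $L=\Sp_4(q)=\PSp_4(q)$), and $G_v=N_G(P_0)$ where $P_0$ is a Borel subgroup of $L$; indeed, as no element of $L$ interchanges the subspaces $U$ and $W$ (they have different dimensions), $L_v=\Stab_L(U)\cap\Stab_L(W)=P_1\cap P_2=P_0$, the stabiliser in $L$ of the full isotropic flag $U<W$. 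Since $P_0=N_G(P_0)\cap L$ and Borel subgroups of $L$ are self-normalising in $L$, distinct Borel subgroups of $L$ have distinct $G$-normalisers; hence the action of $G$ on the vertex set $[G:G_v]$ is permutationally equivalent to the conjugation action of $G$ on the set $\mathcal{B}$ of Borel subgroups of $L$, and in particular $|[G:G_v]|=|[L:P_0]|=|\mathcal{B}|$.

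Next I would describe the $G$-orbitals on $\mathcal{B}\times\mathcal{B}$ via the $BN$-pair structure of $L$. For $P,Q\in\mathcal{B}$ let $\mathrm{pos}(P,Q)\in W:=W(C_2)\cong\D_8$ be their Weyl distance; then $L$ is transitive on the pairs of Borel subgroups at each fixed Weyl distance, and $\mathrm{pos}(Q,P)=\mathrm{pos}(P,Q)^{-1}$. So the $L$-orbitals on $\mathcal{B}\times\mathcal{B}$ are the sets $\mathcal{O}_w^L$ ($w\in W$) of pairs at Weyl distance $w$, and $\mathcal{O}_w^L$ is paired with $\mathcal{O}_{w^{-1}}^L$. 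The induced action of $G/L$ on $\{\mathcal{O}_w^L:w\in W\}\cong W$ is trivial for field automorphisms (which fix a standard Borel and stabilise each Bruhat cell), and equals the nontrivial diagram automorphism $\tau$ of the Coxeter diagram $C_2$ (interchanging the two Coxeter generators $s_1,s_2$) for a graph automorphism. Since $G$ contains a graph automorphism, the $G$-orbitals on $[G:G_v]$ are exactly the unions $\mathcal{O}_\Theta:=\bigcup_{w\in\Theta}\mathcal{O}_w^L$, one for each $\langle\tau\rangle$-orbit $\Theta$ on $W$.

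The final step is an elementary computation in $\D_8$. The paired orbital of $\mathcal{O}_\Theta$ is $\mathcal{O}_{\Theta^{-1}}$, where $\Theta^{-1}=\{w^{-1}:w\in\Theta\}$ is again a $\langle\tau\rangle$-orbit (since $\tau$ commutes with inversion), so $\mathcal{O}_\Theta$ is self-paired precisely when $w^{-1}\in\Theta=\{w,w^{\tau}\}$ for some, equivalently every, $w\in\Theta$. In $\D_8$ every element other than $s_1s_2$ and $s_2s_1$ is an involution or the identity, and hence equals its own inverse, while $(s_1s_2)^{\tau}=s_2s_1=(s_1s_2)^{-1}$ and $(s_2s_1)^{\tau}=s_1s_2=(s_2s_1)^{-1}$. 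Thus $w^{-1}\in\{w,w^{\tau}\}$ for all $w\in W$, so every $G$-orbital on $[G:G_v]$ is self-paired, i.e.\ $g^{-1}\in G_vgG_v$ for all $g\in G$. But by Hypothesis~\ref{general} and the first paragraph of Subsection~\ref{cosgh}, $\Gamma\cong\Cos(G,G_v,g)$ for some $g\in G\setminus G_v$ with $g^{-1}\notin G_vgG_v$ — a contradiction. Hence $G_v$ is not a maximal $\mathcal{A}_1$-subgroup.

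The step I expect to require the most care is the second one: making fully rigorous that the $G/L$-action on the set of $L$-orbitals (equivalently, on the Bruhat cells) factors through the diagram automorphism of $W$, which amounts to checking that field and graph-field automorphisms stabilise every Bruhat cell while a pure graph automorphism permutes the cells exactly as the diagram symmetry prescribes, together with the standard identification of $[G:G_v]$ with $\mathcal{B}$ and of the $L$-orbitals on $\mathcal{B}\times\mathcal{B}$ with $W$. Everything after that is the short dihedral-group check above, and no appeal to the homogeneous factorisation machinery or to Lemma~\ref{normal} is needed.
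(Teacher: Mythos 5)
Your proof is correct, and it takes a genuinely different route from the paper. The paper proves the same underlying fact — every orbital of $G$ on the set $\Omega$ of flags $\{U,W\}$ is self-paired (Lemma~\ref{a1}) — by a case analysis on $\dim(W_1\cap W_2)\in\{0,1,2\}$, treating the disjoint case by explicit symplectic-basis manipulations via Lemma~\ref{exercise}, the intermediate case by identifying $\Omega$ with the edge set of a bipartite distance-transitive graph of diameter $4$ and using uniqueness of geodesics of length $3$, and the $W_1=W_2$ case by $2$-transitivity of $\GL(W_1)$. You instead identify $\Omega$ with the set $\mathcal{B}$ of Borel subgroups of $L=\Sp_4(q)$ (since $L_v=P_1\cap P_2=P_0$ is a Borel, $P_0$ is self-normalising in $L$, and $G_v=N_G(P_0)$), note that the $L$-orbitals on $\mathcal{B}\times\mathcal{B}$ are indexed by the Weyl group $W=W(C_2)\cong\D_8$ with $\mathcal{O}_w$ paired to $\mathcal{O}_{w^{-1}}$, and use the fact that a graph automorphism acts on these orbitals via the Coxeter diagram symmetry $\tau$ whereas field automorphisms act trivially; the result then reduces to the elementary observation that $w^{-1}\in\{w,w^{\tau}\}$ for every $w\in\D_8$. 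This is a cleaner and more conceptual derivation of the self-paired property and would generalise naturally to other groups with a $BN$-pair and a graph automorphism, at the cost of invoking standard but nontrivial facts about buildings, Bruhat cells, and the action of outer automorphisms on them. One small slip in your closing caveat: a graph-field automorphism (an odd power of the generating graph automorphism $\gamma$ of the cyclic outer automorphism group) does \emph{not} stabilise each Bruhat cell — it permutes them by $\tau$, just like a pure graph automorphism — but since your main argument only uses that $G/L$ maps onto $\langle\tau\rangle$ whenever $G\not\leqslant\PGammaSp_4(q)$, and that the image acts as $\langle\tau\rangle$ on the $L$-orbitals, the conclusion is unaffected.
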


Proposition\ref{p:a1} follows immediately from the following Lemma~\ref{a1}, which shows that every orbital of the $G$-action on the set $\Omega$ of such unordered pairs  \(\{U, W\}\) is self-paired, and hence this $G$-action does not even yield a $1$-arc-transitive digraph. 

\begin{lemma}\label{a1}
Let \(V, G, \Omega\) be as above, and let  \(\{U_{1}, W_{1}\}, \{U_{2}, W_{2}\}\in\Omega\).
Then there exists an element \(g\in G\) such that \(\{U_{1}, W_{1}\}^{g}=\{U_{2}, W_{2}\}\) and \(\{U_{2}, W_{2}\}^{g}=\{U_{1}, W_{1}\}\).
\end{lemma}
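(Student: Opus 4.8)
The plan is to show that any two flags $\{U_1,W_1\}$ and $\{U_2,W_2\}$ (each consisting of a totally isotropic point contained in a totally isotropic line) are interchanged by a single element of $G$, so that the orbital of $G$ on $\Omega$ containing this pair is self-paired. Since $G$ contains $\Sp_4(q)$ (indeed $G=\Aut(\Sp_4(q))$ here), and $\Sp_4(q)$ is already transitive on $\Omega$ (all such flags are equivalent under $\Sp(V)$), it suffices to find $g\in\Sp(V)$ with $U_1^g=U_2$, $W_1^g=W_2$, $U_2^g=U_1$, $W_2^g=W_1$; that is, a symplectic transformation of order dividing $2$ (or at least an involution-like element) swapping the two flags. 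First I would reduce to a normal form: using transitivity of $\Sp(V)$ on flags, fix $U_1=\langle e_1\rangle$, $W_1=\langle e_1,e_2\rangle$, and then analyse $\{U_2,W_2\}$ relative to this.

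The key case division is according to the relative position of $W_1$ and $W_2$ inside $V$ (and of $U_1,U_2$). One should look at $W_1\cap W_2$ and $W_1\cap W_2^{\perp}$, $W_2\cap W_1^{\perp}$, exactly as in the earlier $\mathcal{C}_1$ analysis and in Lemma~\ref{exercise}. When $W_1\cap W_2=0$, Lemma~\ref{exercise}(ii) directly provides $h\in\Sp(V)$ swapping $W_1$ and $W_2$; one then has to be careful that $h$ can be chosen to also swap $U_1$ and $U_2$, which amounts to choosing the symplectic basis witnessing $W_1\cap W_2=0$ so that $U_1$ and $U_2$ are among the chosen basis vectors — the element of the form $e_i\mapsto f_i$, $f_i\mapsto -e_i$ (or $+e_i$ in characteristic $2$) that reverses the two Lagrangians will then swap the points too. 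When $\dim(W_1\cap W_2)=1$, one further distinguishes whether $W_1\cap W_2$ equals $U_1$, equals $U_2$, equals both, or equals neither, and in each subcase builds the swapping element by hand from an adapted symplectic basis: decompose $V$ as an orthogonal sum of a hyperbolic plane (or a suitable $4$-space) containing the "moving" part and its perpendicular complement containing the common part, and define $g$ to be the involution that exchanges the relevant basis vectors on the moving part and is the identity (or a controlled map) on the complement. When $W_1=W_2$ but $U_1\ne U_2$, we only need an element of $\Sp(W_1^{\perp}/\ldots)$-type, namely an element stabilising the line $W_1$ and swapping the two points inside it; since $q$ is even, $\Sp(V)_{W_1}$ induces the full $\GL(W_1)$ on $W_1$ (indeed $\Sp_4(q)_{W_1}$ acts as $\GL_2(q)$ on $W_1$), and $\GL_2(q)$ certainly contains an element swapping two given $1$-spaces, e.g.\ a suitable transvection-product; lift this back to an element swapping the flags (here $W_1^g=W_2=W_1$ automatically and one arranges $U_1\leftrightarrow U_2$).

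The cleanest way to organise all of this uniformly is probably to prove the slightly stronger statement: for any two such flags there is $g\in\Sp(V)$ of order $\leqslant 2$ with $(\{U_1,W_1\},\{U_2,W_2\})^g=(\{U_2,W_2\},\{U_1,W_1\})$, by constructing $g$ as a product of at most two commuting involutions each supported on a nondegenerate subspace of dimension $2$ or $4$. Concretely: let $W=\langle W_1,W_2\rangle$ (of dimension $2$, $3$, or $4$); if $W$ is nondegenerate, work inside $\Sp(W)$ and extend by the identity on $W^{\perp}$, using Lemma~\ref{exercise} to produce the swap; if $W$ is degenerate, pass to $\Rad(W)$ and use Corollary~\ref{uw} to split off hyperbolic lines pairing $\Rad(W)$ with complementary isotropic vectors, reducing again to the nondegenerate situation on a $4$-dimensional subspace. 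The main obstacle — and the place that needs genuine care rather than routine computation — is the bookkeeping in the mixed subcases where $\dim(W_1\cap W_2)=1$ and the common line is one of the points $U_1$ or $U_2$ but not the other (or where $U_1,U_2$ both lie in $W_1\cap W_2$ but $U_1\ne U_2$): there one must simultaneously control the action on the point and on the line, and verify that the natural swapping involution really does respect the flag structure (i.e.\ sends the point of flag~$1$ to the point of flag~$2$, not to some other $1$-space of $W_2$). Since $n=2$ the number of cases is small and each is an explicit $2\times2$ or $4\times4$ computation, so once the case list is pinned down the verifications are short.
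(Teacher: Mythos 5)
Your case split and the handling of Cases~1 and~3 match the paper's: for $W_1\cap W_2=0$ you both reduce to $W_1=\langle e_1,e_2\rangle$, $W_2=\langle f_1,f_2\rangle$, $U_1=\langle e_1\rangle$ via Lemma~\ref{exercise} and then exhibit an explicit involution (the paper makes the final reduction of $U_2$ a little more precise, by observing that $\Sp(V)_{W_1,W_2,U_1}$ fixes $U_1^\perp\cap W_2=\langle f_2\rangle$ and is transitive on the other $q$ points of $W_2$, so one may assume $U_2\in\{\langle f_1\rangle,\langle f_2\rangle\}$); and for $W_1=W_2$ both arguments come down to $2$-transitivity of $\GL(W_1)$. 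The genuine divergence is in Case~2 ($\dim(W_1\cap W_2)=1$), which you correctly identify as the one needing care: you propose to grind through the subcases ($X:=W_1\cap W_2$ equal to one of $U_1,U_2$, to both, or to neither) by constructing adapted symplectic bases and explicit involutions on $2$- or $4$-dimensional nondegenerate pieces. The paper instead sidesteps all of this bookkeeping by recalling (from \cite[p.~360, Prop.~7.2.3]{holt}) that $\Omega$ is the edge set of a $G$-distance-transitive bipartite graph on $\Delta_1\cup\Delta_2$ of diameter $4$, in which every pair at distance $3$ is joined by a \emph{unique} path of length $3$. In the subcases $X\notin\{U_1,U_2\}$ and $X=U_i\ne U_{3-i}$ one reads off a pair of vertices at distance $3$ whose reversing element must also reverse the unique $3$-path $(U_1,W_1,X,W_2)$ (resp.\ $(W_i,U_i,W_{3-i},U_{3-i})$), and hence swaps the two flags; only the subcase $X=U_1=U_2$ is done algebraically, via $2$-transitivity of $\Sp_2(q)$ on the projective line of $X^\perp/X$. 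Your route should work too since $\dim V=4$ keeps the matrix computations small, but it requires actually carrying out and checking each verification, whereas the distance-transitivity argument gets the element for free and is conceptually cleaner; if you want a uniform argument that also generalises more readily, it is worth learning this incidence-graph viewpoint.
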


In the proof we use the fact, see \cite[p. 360]{holt}, that $\Omega$ is the edge set of a bipartite distance transitive graph $\Gamma$ with vertex set $\Delta_1\cup\Delta_2$, where $\Delta_i$ is the set of totally isotropic $i$-subspaces of $V$, for $i=1,2$, The automorphism group $\Aut(\Gamma)$ is $G$, the diameter  of $\Gamma$ is $4$, and by \cite[Proposition 7.2.3]{holt}, each pair of vertices at distance $3$ in $\Gamma$ is joined by a unique path of length $3$.

\begin{proof}
We divide the analysis into three cases:

\medskip\noindent
\emph{Case $1$: \(W_{1}\cap W_{2}=0.\)}\quad It is easiest to treat this case algebraically. Since \(W_{1}\) and \(W_{2}\) are maximal totally isotropic subspaces of \(V\)  with trivial intersection, by Lemma \ref{exercise}(i) there exists \(h\in \Sp(V)\) such that \(W_{1}^{h}=\langle e_{1}, e_{2}\rangle\) and \(W_{2}^{h}=\langle f_{1}, f_{2}\rangle\). Thus we assume that \(W_{1}=\langle e_{1}, e_{2}\rangle\) and \(W_{2}=\langle f_{1}, f_{2}\rangle\). Moreover, since the stabiliser  $\Sp(V)_{W_1,W_2}$ induces $\GL_2(q)$ on $W_1$, we may further choose \(U_{1}=\langle e_{1}\rangle\). Then, since $\Sp(V)_{W_1,W_2, U_1}$ fixes $U_1^\perp\cap W_2=\langle f_2\rangle$, and is transitive on the $q$  other $1$-subspaces of $W_2$, we may assume that $U_2= \langle f_2\rangle$ or $U_2=\langle f_1\rangle$. In the former case,  the  element  \(g\in \GL(V)\) such that \(e_{i}^{g}=f_{3-i}, f_{i}^{g}=e_{3-i}\), for \(i=1,2\), preserves the form $B$ (since $q$ is even) and hence lies in $\Sp(V)$ and interchanges \(\{U_{1}, W_{1}\}\) and \(\{U_{2}, W_{2}\}\). Similarly in the latter case the  element  \(g\in \GL(V)\) such that \(e_{i}^{g}=f_{i}, f_{i}^{g}=e_{i}\), for \(i=1,2\), lies in $\Sp(V)$ and interchanges \(\{U_{1}, W_{1}\}\) and \(\{U_{2}, W_{2}\}\).

\medskip\noindent
\emph{Case $2$: \(\dim (W_{1}\cap W_{2})=1\).}\quad 
In this case we make use of properties of the distance transitive graph $\Gamma$ discussed above. Let $X:=W_1\cap W_2$. If $X\not\in\{U_1, U_2\}$, then in $\Gamma$ the distances $d(U_1, W_2)=3$ and $d(U_2,W_1)=3$ and so there exists $g\in G$ such that $(U_1,W_2)^g=(U_2,W_1)$. Further (see \cite[Proposition 7.2.3]{holt}), $g$ must map the unique $3$-path $(U_1, W_1, X, W_2)$ to the $3$-path $(U_2, W_2, X, W_1)$, and hence $g$ interchanges  \(\{U_{1}, W_{1}\}\) and \(\{U_{2}, W_{2}\}\). Next suppose that $X=U_i\ne U_{3-i}$ for some $i\in\{1,2\}$. Then in $\Gamma$, $d(W_i,U_{3-i})=3$
 and $P:=(W_i, U_i, W_{3-i}, U_{3-i})$ is the unique corresponding $3$-path. Thus there exists $g\in G$ such that $(W_i,U_{3-i})^g=(U_{3-i}, W_i)$, and $g$ must reverse the $3$-path $P$, and hence must interchange  \(\{U_{1}, W_{1}\}\) and \(\{U_{2}, W_{2}\}\). Finally suppose that $X=U_1=U_2$, so that $d(W_1,W_2)=2$. Here we argue algebraically: $W_1/X$ and $W_2/X$ are two $1$-subspaces of the nondegenerate $2$-space $X^\perp/X$, and as the group induced by $\Sp(V)_{X}$ on $X^\perp/X$ is $\Sp_2(q)$ which is $2$ transitive on $1$-spaces, it follows that $\Sp(V)_X$ contains an element $g$ which interchanges $W_1$ and $W_2$ and hence also  interchanges  \(\{U_{1}, W_{1}\}\) and \(\{U_{2}, W_{2}\}\).

\medskip\noindent 
\emph{Case $3$: \(W_{1}=W_{2}\)}\quad In this case, if \(U_{1}=U_{2}\), then the result holds with \(g=1\). So assume that \(U_{1}\neq U_{2}\). Then as \(\Sp(V)_{W_1}\) induces \(\GL(W_{1})\) on \(W_{1}\), which is $2$-transitive on the $1$-subspaces of $W_1$,  it follows that $\Sp(V)_{W_1}$ contains an element $g$ which interchanges $U_1$ and $U_2$ and hence also  interchanges  \(\{U_{1}, W_{1}\}\) and \(\{U_{2}, W_{2}\}\). 
\end{proof}


\subsection{\(\mathcal{A}_{2}\): Normalisers of \(\C_{q\pm1}^{2}\)}

Here we treat the second family \(\mathcal{A}_{2}\) of maximal novelty subgroups of \(G\leqslant\Aut(\Sp(V))\).  These subgroups are the normalisers in $G$ of subgroups \(M=\C_{q-1}^{2}\) (with $q>4$) or  \(M=\C_{q+1}^{2}\) of $\Sp(V)$. As in \cite[Proposition 7.2.7]{holt}, we view $M$ as a subgroup of a $\mathcal{C}_2$-subgroup $\Sp_2(q)\wr S_2$ of $\Sp(V)$,  $N_G(M)$ is maximal  in $G$, and  \(N_{\Sp(V)}(M) = M: \D_8 \cong \C_{q\pm1}^{2}: \D_{8}\).




\begin{proposition}\label{p:a2}
    Suppose that Hypothesis \ref{general} holds.  If $G_v$ is a maximal \(\mathcal{A}_{2}\)-subgroup, then \(\Gamma\) is not \((G,3)\)-arc-transitive.
\end{proposition}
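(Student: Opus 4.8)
The plan is to follow the same pattern used for the other small-case novelty subgroups. Suppose that Hypothesis~\ref{general} holds with $G_v$ a maximal $\mathcal{A}_2$-subgroup, so $(n,p)=(2,2)$, $G\leqslant\Aut(\Sp(V))$ with $G$ containing a graph automorphism, and $L_v=G_v\cap L$ is contained in the image in $L=\PSp_4(q)=\Sp_4(q)$ of $N_{\Sp(V)}(M)=M{:}\D_8$ with $M=\C_{q-1}^2$ (and $q>4$) or $M=\C_{q+1}^2$. In particular $|L_v|$ divides $8(q\mp1)^2$, and since $G/L$ is a $2$-group arising from field and graph automorphisms, $|G_v|$ divides $8f(q\mp1)^2$ up to a bounded power of $2$; more precisely $|G_v|$ divides $|N_{\Sp(V)}(M)|\cdot|\Out(L)|$ and $|\Out(\PSp_4(2^f))|=2f$, so $|G_v|_2\leqslant 2^c f_2$ for a small constant $c$. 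The only primes dividing $|G_v|$ are $2$ and the odd primes dividing $(q\mp1)$.

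First I would suppose for contradiction that $\Gamma$ is $(G,3)$-arc-transitive, so by Lemma~\ref{s-1} it is $(L,2)$-arc-transitive, giving a homogeneous factorisation $L_v=L_{uv}L_{vw}$ with $L_{uv}^{\overline g}=L_{vw}$. The key observation is that $L_v$ is a \emph{soluble} group (it has order dividing $8(q\mp1)^2$), indeed $L_v$ is metabelian-by-$\D_8$, with $M$ (or its image) abelian of rank $2$ and index dividing $8$. I would then apply Lemma~\ref{valency}: for each prime $r$ dividing $|L_v|$ we need $|G_v|_r\geqslant\gamma_r^3$ where $\gamma=|G_v|/|G_{uv}|$ is the valency. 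Pick $r\in\ppd(p,2f)$ if $M=\C_{q+1}^2$ (so $r\mid q+1$, $r>2f$, hence $|\Out(L)|_r=1$), or $r\in\ppd(p,f)$ if $M=\C_{q-1}^2$ and $q-1$ is not a power of $2$ (using $q>4$ and Corollary~\ref{existppd} to guarantee existence, again $r>f$ so $|\Out(L)|_r=1$). Writing $r^a$ for the $r$-part of $q\mp1$, we get $|G_v|_r=|L_v|_r=r^{2a}$ exactly (since $M$ has rank $2$ and $r$ is coprime to $|\D_8|$ and to $|\Out(L)|$). Since $L_v$ is soluble and $r$ is odd, the factor $L_{uv}$ — being isomorphic to $L_{vw}$ and of index dividing something coprime to... — must have $|L_{uv}|_r=r^a$ or $r^{2a}$; I would rule out $r^{2a}$ by Lemma~\ref{normal}: if $r^{2a}\bigmid|L_{uv}|$ then $L_{uv}$ contains the full Sylow $r$-subgroup $R\cong\C_r^a\times\C_r^a$... actually the cleaner route is: the Hall $\{r\}$-subgroup of the abelian normal subgroup $M$ is characteristic in $L_v$, so if it lies in both $L_{uv}$ and $L_{vw}$ then $\overline g$ normalises it, contradicting Lemma~\ref{normal}; hence, after interchanging $L_{uv},L_{vw}$ if necessary, $|L_{uv}|_r\leqslant r^a$ (it cannot contain the rank-$2$ piece). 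Then $\gamma_r=|G_v|_r/|G_{uv}|_r\geqslant r^{2a}/r^a=r^a$, so $\gamma_r^3\geqslant r^{3a}>r^{2a}=|G_v|_r$, contradicting Lemma~\ref{valency}.

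This handles both subfamilies except the leftover cases where no suitable $\ppd$ exists: for $M=\C_{q+1}^2$ the Zsigmondy exception is $(p,2f)=(2,6)$, i.e.\ $q=8$; and for $M=\C_{q-1}^2$ one must separately treat $q-1$ a power of $2$ (Fermat prime power situations, i.e.\ $q\in\{5,\dots\}$ — but here $q=2^f$ so $q-1$ a power of $2$ forces $q=2$, excluded, or the Zsigmondy exception $f\leqslant 2$, i.e.\ $q\leqslant 4$, also excluded by $q>4$), so in fact only $q=8$ (and possibly $q=16,32$ where $f_2$ competes with the $\ppd$ bound) need a direct check. For these finitely many small groups $G$ with socle $\PSp_4(8)$, $\PSp_4(16)$, $\PSp_4(32)$ and $G_v$ an explicitly known $\mathcal{A}_2$-subgroup, I would invoke the MAGMA computation described in Subsection~\ref{r:magma}: compute all homogeneous factorisations $G_v=G_{uv}G_{vw}$ with $|G_v|/|G_{uv}|\geqslant 3$ and $G_{uv},G_{vw}$ conjugate in $G$, and verify the list is empty, or alternatively verify directly via Lemma~\ref{valency} that the $2$-part bound $|G_v|_2\leqslant 2^c f_2$ already forbids $(G,3)$-arc-transitivity.

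The main obstacle I anticipate is not the $\ppd$ argument itself but being careful about the exact structure of $L_v$ and $G_v$ — in particular nailing down that $|G_v|_r=r^{2a}$ with $r\in\ppd$, which requires knowing that the $\D_8$ on top and the graph/field automorphisms contribute nothing to the $r$-part, and that the rank of the abelian part is exactly $2$ (so the Sylow $r$-subgroup is $\C_{r^a}^2$, not larger). Once that structural bookkeeping is pinned down, the inequality $\gamma_r^3>|G_v|_r$ is immediate. A secondary nuisance is cleanly identifying which small $q$ escape the Zsigmondy argument and confirming they are either excluded by Proposition~\ref{citesmall} (which gives $q\geqslant41$ when $n=2$!) — and indeed this is the cleanest resolution: \emph{Proposition~\ref{citesmall}(i) already forces $q\geqslant41$}, so $q\in\{8,16,32\}$ never occur, and the $\ppd$ argument applies uniformly with no computational cases at all. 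I would restructure the final proof around that point: note $q\geqslant41$, deduce $\ppd(p,2f)\neq\varnothing$ (resp.\ $\ppd(p,f)\neq\varnothing$ when $q-1$ is not $2$-power; but $q=2^f\geqslant41$ makes $q-1$ never a $2$-power and never a Zsigmondy exception), and run the $r$-part contradiction.
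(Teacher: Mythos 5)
Your proposal is correct, but its final step is genuinely different from the paper's. Both proofs rest on the same initial observation: for an odd prime $r$ dividing $|M|=|\C_{q\pm1}^2|$, the Sylow $r$-subgroup $P\cong\C_{r^a}^2$ of $L_v$ is normal and contains a unique $\C_r^2$, namely $\Omega_1(P)$, which is characteristic in $L_v$; any non-cyclic subgroup of $P$ contains $\Omega_1(P)$, so $\Omega_1(P)\leqslant L_{uv}$ would force $\Omega_1(P)^{\overline g}=\Omega_1(P)$ by uniqueness, contradicting Lemma~\ref{normal}; hence $L_{uv}\cap P$ is cyclic of order exactly $r^a$. (This is the content of your ``cannot contain the rank-$2$ piece'' parenthetical, and it is the step that actually carries the argument; your preceding sentence about the full Hall $\{r\}$-subgroup lying in both $L_{uv}$ and $L_{vw}$ is a weaker observation that by itself only gives $|L_{uv}|_r<r^{2a}$, not $\leqslant r^a$.) From here your route is numerical: $\gamma_r\geqslant r^{2a}/r^a=r^a$, so Lemma~\ref{valency} would force $|G_v|_r\geqslant r^{3a}>r^{2a}$, a contradiction -- but this needs $r\nmid|\Out(L)|=2f$, hence your use of $\ppd$ and of Proposition~\ref{citesmall} to secure $q\geqslant41$ (which in particular eliminates $q=8$, where $q+1=9$ has no prime coprime to $2f=6$). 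The paper's route is structural: it shows $R=L_{uv}\cap P$ is a full diagonal $\{(x,x^\alpha)\}$, then uses the element $bc$ of the $\D_8$ on top, whose conjugation inverts the second coordinate, giving $x=x^{-1}$ for all $x\in R$ -- impossible since $r$ is odd. That argument works for any $r\in\Pi(q\pm1)$, invokes neither Lemma~\ref{valency} nor Proposition~\ref{citesmall}, and in fact rules out $(L,2)$-arc-transitivity rather than only $(G,3)$-arc-transitivity, so it is more self-contained and slightly stronger; yours trades that for avoiding the $\D_8$ bookkeeping at the price of the $\ppd$ constraint. One factual slip: contrary to your parenthetical, $q=64$ (that is, $f=6$) \emph{is} a Zsigmondy exception for $\ppd(2,f)$ in the $\C_{q-1}^2$ case even though $q\geqslant41$; the fix is harmless -- take $r=7$, which divides $q-1=63$ and is coprime to $2f=12$ -- but the claim as stated should be corrected.
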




\begin{proof}
Suppose for a contradiction that \(G_{v}\) is a maximal \(\mathcal{A}_{2}\)-subgroup and that \(s\geqslant3\). Then by Lemma \ref{s-1}, \(\Gamma\) is \((L,2)\)-arc-transitive and so \(L_{v}=L_{uv}L_{vw}\) with \(L_{uv}^{g}=L_{vw}\). Also $L_v=G_v\cap L =M:\D_8 \cong \C_{q\pm1}^{2}: \D_{8}$, and the complement $\D_8=\langle b,c\rangle$ such that \((x,y)^{b}=(y^{-1},x)\) and \((x,y)^{c}=(y,x)\) for all \((x,y)\in M\).  Let \(M = M_1\times M_2\), and let \(\varphi_{i}\) (for $i=1,2$) and \(\pi\) be the natural projections  \(\varphi_i:(M\to M_{i}\) and \(\pi:L_{v} \to L_{v}/M\). 

Since \(|M|\) is odd,  \(|\pi(L_{uv})|_{2}=|L_{uv}|_{2}=|L_{vw}|_{2}=|\pi(L_{vw})|_{2}\).
Moreover, \(\pi(L_{uv})\pi(L_{vw})=\pi(L_{v})=\D_{8}\), and it follows that \(|\pi(L_{uv})|, |\pi(L_{vw})|\geqslant 4\). This implies that \(b^{2}\in\pi(L_{uv}) \cap \pi(L_{vw})\), and we note that \((x,y)^{b^{2}}=(x^{-1},y^{-1})\) for all \((x,y)\in M\). Moreover, since \(L_{uv}\cap M\) and \(L_{vw}\cap M\) are generated by the odd-ordered elements of \(L_{uv}\) and \(L_{vw}\), respectively, it follows since  \(L_{uv}^{g}=L_{vw}\) that  \((L_{uv}\cap M)^{g}=L_{vw}\cap M\). Hence  
\[
\pi(L_{uv})\cong L_{uv}/(L_{uv}\cap M)\cong L_{uv}^{g}/(L_{uv}\cap M)^{g}=L_{vw}/(L_{vw}\cap M)\cong\pi(L_{vw}).
\]

Since \(\pi(L_{uv})\pi(L_{vw})=\pi(L_{v})=\D_{8}\), we deduce that either (i) \(\pi(L_{uv})=\pi(L_{vw})=\D_{8}\), or (ii) \(\pi(L_{uv})=\langle b^{2},d\rangle\) and \(\pi(L_{vw})=\langle b^{2}, d'\rangle\) where \(\{d,d'\}=\{c,bc\}\). In case (ii) exactly one of $\pi(L_{uv})$ and $\pi(L_{vw})$ contains $bc$, and hence exactly one of these groups interchanges $M_1$ and $M_2$. We may assume (in either case (i) or case (ii)) that $\pi(L_{uv})$ interchanges $M_1$ and $M_2$ (since the argument in case (ii) where  $\pi(L_{vw})$ interchanges $M_1$ and $M_2$ is similar). Thus we have \(\varphi_{1}(L_{uv}\cap M)\cong \varphi_{2}(L_{uv}\cap M)\). 

Let \(r\in\Pi(M)=\Pi(q\pm1)\) and denote by \(r^{a}:=(q\pm1)_{r}\). Then \(|M|_{r}=r^{2a}\) and \(|\Syl_{r}(M)|=1\) (as \(M\) is abelian). Let \(P=P_1\times P_2\cong C_{r^a}^2\) be the unique Sylow \(r\)-subgroup of \(M\), then since \(|L_{v}|_{r}=|M|_{r}\), it follows that \(P\in\Syl_{r}(L_{v})\).  Let \(R\in\Syl_{r}(L_{uv}\cap M)\), then \(\Syl_{r}(L_{uv}\cap M)=\{R\}\) (as \(L_{uv}\cap M\) is abelian). Consequently, \(\Syl_{r}(L_{uv}^{g})=\Syl_{r}(L_{vw})=\{R^{g}\}\).
Note that \(R\cap R^{g}\leqslant L_{uv}\cap L_{vw}\). Thus \(|R\cap R^{g}|_{r}\leqslant|L_{uv}\cap L_{vw}|_{r} \) , which implies that 
\[
   |P|_{r}=|L_{v}|_{r}=\frac{|L_{uv}|_{r}\cdot|L_{vw}|_{r}}{|L_{uv}\cap L_{vw}|_{r}}\leqslant\frac{|R|_{r}\cdot|R^{g}|_{r}}{|R\cap R^{g}|_{r}}=|RR^{g}|.
\]
Since \(RR^{g}\leqslant P\), it follows that \(P=RR^{g}\).

\medskip
\noindent
\textit{Claim:}
\(|R\cap M_{i}|_{r}=1\) for \(i=1,2\), and \(R=\{(x, x^{\alpha})\mid x\in P_1\}\) for some automorphism $\alpha$ of $P_1=\C_{r^{a}}$.

\medskip
\noindent
Note that  \(R\cap M_{1}\cong R\cap M_{2}\) as \(\pi(L_{uv})\) acts transitively on \(\{M_{1}, M_{2}\}\) and \(R\) is normal in \(L_{uv}\). Suppose for a contradiction that \(|R\cap M_{i}|_{r}>1\) for \(i=1,2\). Consequently there exists \(R_{0}\leqslant R\) such that 
\[
R_{0}=(R_{0}\cap M_{1})\times (R_{0}\cap M_{2})
\] 
and \(R_{0}\cap M_{i}\cong \C_{r}\) for \(i=1,2\). In particular, \(R_{0}\cong \C_{r}^{2}\). However, since there is a unique subgroup in \(L_{v}\) isomorphic to \(\C_{r}^{2}\), we have \(R_{0}^{g}\leqslant L_{uv}^{g}=L_{vw}\), and this is a contradiction to Lemma \ref{normal}. Thus \(|R\cap M_{i}|_{r}=1\) for each $i$. This implies that $R\cong \varphi_i(R)\leqslant P_i=C_{r^a}$ for each $i$. 
On the other hand
\[
|R|_{r}=|L_{uv}|_{r}\geqslant|L_{v}|_{r}^{1/2}=r^{a}
\] 
and hence \(|R|=r^{a}\), and it follows that $R$ is a diagonal subgroup of $P=P_1\times P_2$. Thus $R$ is as claimed, for some automorphism $\alpha$ of $C_{r^a}$, and
the claim is proved.

\medskip

Since \(R\) is the unique Sylow \(r\)-subgroup of \(L_{uv}\), it follows that  \(R\triangleleft L_{uv}\), and since $\pi(L_{uv})$ contains $bc$, the group  \(L_{uv}\) contains an element \(mbc\) for some \(m\in M\). In particular, \(R^{mbc}=R\) and since \(M\) is abelian, 
\[
    (x, x^{\alpha})^{mbc}=    (x, x^{\alpha})^{bc}= ((x^{\alpha})^{-1}, x)^{c}=(x,(x^{\alpha})^{-1})\in R
\] 
for all \((x, x^{\alpha})\in R\). Thus,  for all \(x\in R\), \(x^{\alpha}=(x^{\alpha})^{-1}=(x^{-1})^{\alpha}\), and hence $x=x^{-1}$, which is a contradiction as \(r\) is odd.
We conclude that \(\Gamma\) is not \((G,3)\)-arc-transitive.
\end{proof}

\subsection{\(\mathcal{A}_{3}\): Normalisers of \(\C_{q^{2}+1}\)}

Finally we treat the third family \(\mathcal{A}_{3}\) of maximal novelty subgroups of \(G\leqslant \Aut(\Sp(V))\). These subgroups are the normalisers in \(G\) of subgroups \(M=\C_{q^{2}+1}\) of \(\Sp(V)\). It is shown in \cite[Proposition 7.2.7]{holt} that \(N_{G}(M)\) is a maximal subgroup of \(G\) and \(N_{\Sp(V)}(M)=M:4=\C_{q^{2}+1}:4\).

\begin{proposition}\label{a3}
Suppose that Hypothesis \ref{general} holds. If $G_v$ is a maximal \(\mathcal{A}_{3}\)-subgroup, then \(\Gamma\) is not \((G,3)\)-arc-transitive. 
 
\end{proposition}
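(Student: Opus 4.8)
The plan is to mimic the structure of the $\mathcal{A}_{2}$ analysis in Proposition~\ref{p:a2}, but now with the much simpler base group $M=\C_{q^{2}+1}$ being cyclic. First I would set up the standard reductions: suppose for a contradiction that $\Gamma$ is $(G,3)$-arc-transitive. Then by Lemma~\ref{s-1} it is $(L,2)$-arc-transitive, so $L_{v}=L_{uv}L_{vw}$ is a homogeneous factorisation with $L_{uv}^{g}=L_{vw}$, where $L_{v}=G_{v}\cap L=N_{\Sp(V)}(M)=M{:}4\cong\C_{q^{2}+1}{:}4$ (using that $q$ is even here and $\PSp_{4}(q)=\Sp_{4}(q)$). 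Let $\pi$ be the projection $L_{v}\to L_{v}/M\cong\C_{4}$.

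The heart of the argument is then an application of Lemma~\ref{coprime ab}. Since $q$ is a power of $2$, $q^{2}+1$ is odd, so $\gcd(q^{2}+1,4)=1$; thus $L_{v}=T{:}K$ with $T\cong\C_{q^{2}+1}$ cyclic and $|K|=4$ coprime to $|T|$. Lemma~\ref{coprime ab} then immediately gives that $\Gamma$ is at most $1$-arc-transitive, contradicting $s\geqslant 2$. This is essentially a one-line proof; the only things to check carefully are that the hypotheses of Lemma~\ref{coprime ab} are genuinely met — namely that $L_{v}$ really does split as $\C_{q^{2}+1}{:}\C_{4}$ (which is exactly the structure recorded from \cite[Proposition 7.2.7]{holt} in the paragraph preceding the proposition) and that the relevant arc $u\to v$ and element $\overline{g}$ from Hypothesis~\ref{general} play the roles of the arc and $g$ in Lemma~\ref{coprime ab}. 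One should also note that $L$ acts $2$-arc-transitively (via Lemma~\ref{s-1}) so that the lemma may be applied with $G$ replaced by $L$ and $G_{v}$ by $L_{v}$; alternatively one can apply it directly to $X=\Sp_{4}(q)$ acting on $\Gamma$, whose vertex stabiliser $X_{v}$ has the same structure $\C_{q^{2}+1}{:}4$.

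The main (and only real) obstacle is making sure the coprimality and split-extension hypotheses of Lemma~\ref{coprime ab} are satisfied exactly as stated, and that there is no subtlety when passing from $G_{v}$ to $L_{v}$ (e.g. the full $G_{v}$ could be $\C_{q^{2}+1}{:}4.[\text{field auts}]$, but since we work inside $L$ after invoking Lemma~\ref{s-1}, only $L_{v}=\C_{q^{2}+1}{:}4$ matters). No Zsigmondy-prime bookkeeping or MAGMA computation is needed — this case is strictly easier than the $\mathcal{A}_{2}$ case because $M$ is cyclic rather than $\C_{q\pm1}^{2}$, so Lemma~\ref{coprime ab} applies directly rather than requiring the Sylow-subgroup diagonal-subgroup analysis of Proposition~\ref{p:a2}. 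Thus the proof should be just a couple of sentences invoking Lemma~\ref{s-1} and then Lemma~\ref{coprime ab}.
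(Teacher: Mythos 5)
Your proposal is correct and matches the paper's proof exactly: invoke Lemma~\ref{s-1} to pass to $(L,2)$-arc-transitivity, observe that $L_v = \C_{q^2+1}{:}4$ with $\gcd(q^2+1,4)=1$ since $q$ is even, and conclude by Lemma~\ref{coprime ab}. Your additional remarks about why the $\mathcal{A}_3$ case is strictly easier than $\mathcal{A}_2$ (cyclic base group versus $\C_{q\pm1}^2$) are accurate, and you correctly identified the only hypothesis needing verification.
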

\begin{proof}
Suppose for a contradiction that \(G_{v}\) is a maximal \(\mathcal{A}_{3}\) subgroup, and that \(s\geqslant3\). Then by Lemma \ref{s-1}, \(\Gamma\) is \((L,2)\)-arc-transitive.
On the other hand \(L_{v}=\C_{q^{2}+1}:4\) with $\gcd(q^2+1, 4)=1$ (since \(q\) is even), and we have a contradiction by Lemma \ref{coprime ab}. 
\end{proof}

\section{Proof of Theorem \ref{maintheorem}}

We now prove Theorem \ref{maintheorem}. Suppose that $G, \Gamma, s$ satisfy the hypotheses of Theorem~\ref{maintheorem}, and that $s\geqslant2$. 
Since \(G\) acts primitively on the vertex set of \(\Gamma\), it follows that \(G_{v}\) is a maximal subgroup of \(G\). Then by  \cite[Section 14]{asch}, \(G_{v}\) is either a maximal \(\mathcal{C}_{i}\)-subgroup  of \(G\) for some $i\leqslant 9$,   or \(n=2\), \(q\) is even, and \(G_{v}\) is an \(\mathcal{A}_{1}, \mathcal{A}_{2}\), or \(\mathcal{A}_{3}\) maximal novelty subgroup of \(G\). These cases are treated separately in Propositions \ref{c1nonde}, \ref{p:notc2I}--\ref{p:c3II}, \ref{p:c4-5}, \ref{p:c5}--\ref{p:a1} and \ref{p:a2}--\ref{a3}, and together show that \(s=2\). Therefore Theorem \ref{maintheorem} is proved.


\end{document}